\newtheorem{thm}{Theorem}[section]
\newtheorem{lem}[thm]{Lemma}
\newtheorem{prop}[thm]{Proposition}
\newtheorem{cor}[thm]{Corollary}
\theoremstyle{definition}\newtheorem{defn}[thm]{Definition}
\newtheorem{eg}[thm]{Example}
\newtheorem*{rmk}{Remark}
\newtheorem*{thm*}{Theorem}
\newtheorem*{cor*}{Corollary}
\newcommand{\HHH}{{\mathcal H}}
\newcommand{\HH}{{H}}
\newcommand{\KK}{{\mathcal K}}
\newcommand{\FF}{{\mathcal F}}
\newcommand{\FFF}{{\mathbf F}}
\newcommand{\RR}{{\mathbb R}}
\newcommand{\EE}{{\mathbb E}}
\newcommand{\CC}{{\mathbb C}}
\newcommand{\NN}{{\mathbb N}}
\newcommand{\ZZ}{{\mathbb Z}}
\newcommand{\nn}{\mathfrak{n}}
\newcommand{\mm}{\mathfrak{m}}
\newcommand{\frakA}{\mathfrak{A}}
\begin{document}
	\title{On von Neumann algebras generated by free Poisson random weights}
	\author{Zhiyuan Yang\footnote{ 
			Texas A\&M University, College Station, TX, 77843, United States. Email: zhiyuanyang@tamu.edu}}
	\maketitle
	\begin{abstract}
		We study a generalization of free Poisson random measure by replacing the intensity measure with a n.s.f. weight $\varphi$ on a von Neumann algebra $M$. We give an explicit construction of the free Poisson random weight using full Fock space over the Hilbert space $L^2(M,\varphi)$ and study the free Poisson von Neumann algebra $\Gamma(M,\varphi)$ generated by this random weight. This construction can be viewed as a free Poisson type functor for left Hilbert algebras similar to Voiculescu's free Gaussian functor for Hilbert spaces.  When $\varphi(1)<\infty$, we show that $\Gamma(M,\varphi)$ can be decomposed into free product of other algebras. For a general weight $\varphi$, we prove that $ \Gamma(M,\varphi) $ is a factor if and only if $ \varphi(1)\geq 1 $ and $ M\neq \CC $. The second quantization of subunital weight decreasing completely positive maps are studied. By considering a degenerate version of left Hilbert algebras, we are also able to treat free Araki-Woods algebras as special cases of free Poisson algebras for degenerate left Hilbert algebras. We show that the L\'{e}vy-It\^o decomposition of a jointly freely infinitely divisible family (in a tracial probability space) can in fact be interpreted as a decomposition of a degenerate left Hilbert algebra. Finally, as an application, we give a realization of any additive time-parameterized free L\'{e}vy process as unbounded operators in a full Fock space. Using this realization, we show that the filtration algebras of any additive free L\'{e}vy process are always interpolated group factors with a possible additional atom.
	\end{abstract}
	
		\tableofcontents
	\section{Introduction}
	
	Free Gaussian (semicircular) families have been intensively studied from both the probability and operator algebra perspectives, beginning with Voiculescu’s free Gaussian functor \cite{VDN92}, which identified the free group factors with the von Neumann algebras generated by free Gaussian families. Among the subsequent developments. Shlyakhtenko \cite{Shl97} introduced the free quasi-free Gaussian functor, which generalizes Voiculescu’s framework to the nontracial setting. In this setting, one considers a complex Hilbert space $\HHH$ with a modular structure in the sense of \cite{RV77}, meaning that $ \HHH $ is equipped with a densely defined closed conjugate linear involution $S$. Shlyakhtenko introduced the free Araki-Woods algebras associated with $ \HHH $ and $S$. These algebras are generated by the field operators $ X(h):=l(Sh)+l^*(h) \in B(\FF(\HHH))$ where $l(Sh)$ and $l^*(h)$ are the creation and annihilation operators on the full Fock space $\FF(\HHH)$ for $ h $ in the domain of $S$.
	
	While free Poisson families have also been widely studied, there has been little emphasis on the von Neumann algebras they generate and the second quantization maps between these algebras. In this paper, we explore this perspectives. To be precise, we consider a similar functor for left Hilbert algebras, or equivalently, von Neumann algebras with n.s.f. (normal semifinite faithful) weights which have natural modular structures according to the Tomita-Takesaki theory. Instead of defining the field operator $ X(h) $ to be the sum of a creation and an annihilation operator, we will let $ X(h) $ be the sum of a creation, an annihilation, and a preservation operator (which arises from the algebraic structure). Specifically, for a left Hilbert algebra $\frakA$, we will consider the von Neumann algebra $\Gamma(\frakA)$ generated by $X(h):= l(h)+l^*(Sh)+\Lambda(\pi_{l}( h )) $ for $h\in \frakA$, where $ \Lambda(\pi_{l}( h )): f_1\otimes \cdots \otimes f_n\mapsto (hf_1)\otimes f_2\otimes \cdots\otimes f_n $ is the preservation operator for $ \pi_l(h)\in B(\overline{\frakA}) $, and $ S$ the involution of $ \frakA $. The next paragraph will discuss why the operators $X(h)$ lead to the formation of free Poisson families.
	
	In free probability, it is well-known that an operator of the form $ l(h)+l^*(h)+ \Lambda(T)$ on the full Fock space naturally gives us a freely infinitely divisible variable with respect to the vacuum state (see \cite{GSS92}\cite{speicher1998combinatorial}\cite{NS06}\cite{MS06}). And in particular, when $\frakA$ is a von Neumann algebra $M$ with a faithful normal state and  $h\in M_{s.a.}$, the operator $ X(h):=\ell(h)+\ell^*(h)+\Lambda(\pi_l(h)) $ has centered compound free Poisson distribution whose intensity measure is the distribution of $h$. Therefore, the linear map $X:\frakA\to \Gamma(\frakA)$ can be considered as a noncommutative version of (centered) free Poisson random measure.
	Indeed, similar to classical Poisson random measure, \cite{BT05} (see also \cite{Ans02}\cite{BP14}\cite{AG15}) defined a free Poisson random measure over a localizable measure space $ (\Theta,\mathcal{E},\nu) $ to be a map $ Y $ from $ \mathcal{E}_0: = \{E\in \mathcal{E}: \nu(E)< \infty\} $ to a $W^*$-probability space $(\mathcal{A},\tau)$ such that
	\begin{enumerate}
		\item For any $ E\in \mathcal{E}_0 $, $Y(E)\in \mathcal{A}$ is a self-adjoint element with the free Poisson distribution of intensity $ \nu(E) $.
		\item For disjoint sets $E_1,\cdots,E_n$, $ Y(E_1),\cdots,Y(E_n) $ are freely independent, and $ Y(\cup_{i=1}^{n}E_i) = \sum_{i=1}^{n}Y(E_i) $.
	\end{enumerate}
	One can check that if we consider the commutative Hilbert algebra $L^\infty(\Theta,\mathcal{E},\nu  ) \cap L^2(\Theta,\mathcal{E},\nu  )$, and denote the random variable $ Y(E) := l(\mbox{1}_E)+l^*(\mbox{1}_E)+\Lambda(\mbox{1}_E)+\nu(E)1 = X(\mbox{1}_E)+ \nu(E)1$ on the full Fock space over $ L^2(\Theta,\nu) $, then the family $\{Y(E)\}_{E\in \mathcal{E}_0}$ satisfy all the conditions above (see e.g. \cite{BLKL18}). In particular, the map $E\mapsto X(\mbox{1}_E)$ can be considered as a centered version of free Poisson random measure. Now, when we replace $ (\Theta,\mathcal{E},\nu) $ by a von Neumann algebra with weight $ (M,\varphi) $ (with the corresponding left Hilbert algebra denoted $\frakA_\varphi$), we call the linear map $ X:\frakA_\varphi\to \Gamma(\frakA_\varphi)$ the centered free Poisson random weight with the intensity weight $ \varphi $. In this case, we also denote $ \Gamma(M,\varphi):= \Gamma(\frakA_\varphi) $. We will see that the random weight $X$ has properties similar to those of the free Poisson random measure:
	\begin{enumerate}
		\item For each projection $ p\in M $ with $ \varphi(p)<\infty $, $ X(p) $ is a self-adjoint operator with centered free Poisson distribution of intensity $\varphi(p)$.
		\item For orthogonal projections $p_1,\cdots,p_n\in M$, $X(p_1),\cdots,X(p_n)$ are freely independent, and $ X(\sum p_i)=\sum X(p_i) $.
	\end{enumerate}
	We also point out that in classical probability, this type of Poisson random weight is studied in the recent preprint \cite{chen2023noncommutative} by Chen and Junge, and therefore our free Poisson random weight can be considered as a free analogue of the construction in \cite{chen2023noncommutative}.
	
	We are mainly interest in the properties of the free Poisson von Neumann algebra $ \Gamma(M,\varphi):= \Gamma(\frakA_\varphi) $ generated by the free Poisson random weight $X$. Additionally, we study the second quantization construction within the free Poisson framework. Notably, we find that the appropriate morphism for constructing second quantization is given by normal, subunital, weight-decreasing, completely positive maps. We summarize our main results as following.
	
	\begin{thm*}
		Let $\varphi$ be a n.s.f. weight on a von Neumann algebra $ M $ and $ \Gamma(M,\varphi):=\Gamma(\frakA_\varphi) \subseteq B(\FF(L^2(M,\varphi))) $ be the free Poisson von Neumann algebra generated by the centered free Poisson random weight $X$.
		\begin{enumerate}
			\item (Standardness) The vacuum vector $\Omega$ of the Fock space is cyclic and separating for $\Gamma(M,\varphi)$. In particular, the vacuum state $ \varphi_\Omega = \langle \Omega,\cdot \Omega\rangle $ is faithful and normal on $ \Gamma(M,\varphi) $.
			\item (Factorality) $\Gamma(M,\varphi)$ is a factor if and only if $ \varphi(1)\geq 1 $ and $ M\neq \CC $. And when $ \Gamma(M,\varphi) $ is a factor, it has type $ \text{II}_1 $ or $ \text{III}_\lambda $ with $ \lambda\neq 0 $.
			\item When $ \varphi(1)=\alpha <\infty $, we have the isomorphism
				$$ \Gamma(M,\varphi) \simeq \begin{cases} \left(\underset{\alpha}{L(\ZZ)\ast (M,\frac{1}{\alpha}\varphi)}\right)\oplus \underset{1-\alpha}{\CC},\quad \varphi(1)=\alpha \leq 1\\
				L(\ZZ)\ast p \left((M,\frac{1}{\alpha}\varphi)\ast ( \overset{p}{\underset{1/\alpha}{\CC}}\oplus\overset{ 1-p}{\underset{1-1/\alpha}{\CC}} )\right) p, \quad  \varphi(1) = \alpha> 1.\end{cases}$$
			\item (Second quantization) If $ \psi$ is a n.s.f. weight on an another von Neumann algebra $ N $, and $ T:M\to N $ is a normal subunital weight decreasing completely positive map, then there exists a unique unital state preserving completely positive map $ \Gamma(T):\Gamma(M,\varphi)\to \Gamma(N,\psi) $ preserving the Wick products.
		\end{enumerate}
	\end{thm*}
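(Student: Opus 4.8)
I would treat cyclicity and the separating property separately. For cyclicity, introduce the Wick (normal-ordered) products: for $\xi_1,\dots,\xi_n$ in the dense subspace $\frakA_\varphi\subseteq L^2(M,\varphi)$, expand $X(\xi_1)\cdots X(\xi_n)$ and subtract the lower-order ``contractions'' coming from the annihilation and preservation parts, obtaining a bounded $W(\xi_1\otimes\cdots\otimes\xi_n)\in\Gamma(M,\varphi)$ whose component of top tensor degree applied to $\Omega$ is exactly $\xi_1\otimes\cdots\otimes\xi_n$. Since $X(h)\Omega=h$ and the annihilation/preservation parts never raise the tensor degree, an induction on $n$ shows $\Gamma(M,\varphi)\Omega$ contains every finite-particle vector, hence is dense; this is cyclicity. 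For the separating property it suffices to put a copy of some algebra with $\Omega$ cyclic inside the commutant: I would define right field operators $X_r(h)=r(h)+r^*(S'h)+\rho(\pi_r(h))$ from the right creation/annihilation and the right regular representation of $\frakA_\varphi$, check $[X(h),X_r(k)]=0$ directly on elementary tensors (this is where the left Hilbert algebra axioms enter), and repeat the Wick argument on the right to get $\Omega$ cyclic for $\Gamma_r:=W^*(\{X_r(h)\})\subseteq\Gamma(M,\varphi)'$. Then $\Omega$ is cyclic for $\Gamma(M,\varphi)'$, i.e.\ separating for $\Gamma(M,\varphi)$, and $\varphi_\Omega$ is an automatically normal, faithful state.

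\textbf{Part (3) (finite-weight decomposition).} Write $\varphi(1)=\alpha<\infty$ and $\psi=\tfrac{1}{\alpha}\varphi$. The generators $X(\hat x)$ ($x\in M$) factor on the Fock space through the single creation operator $v:=l(\hat1)$ (with $v^*v=\alpha\cdot1$, since $\|\hat 1\|^2=\varphi(1)=\alpha$) and the preservation representation $x\mapsto\Lambda(\pi_l(x))$. The plan is to compute the joint $\varphi_\Omega$-distribution of $\{X(\hat x)\}_{x\in M}$ via the standard Fock-space free-cumulant rules — sums over arrangements of creation/annihilation pairings with preservation operators in between, expressible entirely through the moments of $(M,\psi)$ — and to match it with the joint distribution of the corresponding elements of the target algebra. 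For $\alpha=1$ the target is $L(\ZZ)\ast(M,\psi)$, where the $L(\ZZ)$-part comes from the extra creation/annihilation degree of freedom along the cyclic vector, which on its own is a single centered free Poisson element of rate $1$ generating a diffuse abelian $L(\ZZ)\cong L^\infty(\mathbb{T})$, free from the rest. For $\alpha\neq1$ the defect $v^*v=\alpha\cdot1\neq1$ must be absorbed: if $\alpha\le1$ by a direct summand $\CC$ of weight $1-\alpha$ (mirroring the atom at $-\alpha$ of the centered free Poisson law of rate $\alpha<1$), and if $\alpha>1$ by realizing $\Gamma(M,\varphi)$ as the corner, cut by a projection of trace $1/\alpha$, of the free product of $(M,\psi)$ with a two-atom algebra, free with $L(\ZZ)$. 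Since $\Gamma(M,\varphi)$ is in standard form by (1) and the joint distribution of a generating family determines the pair (algebra, state), matching cumulants yields the stated isomorphism; keeping precise track of the trace/weight normalizations of the summands and of the compressing projection is the delicate part.

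\textbf{Part (2) (factoriality and type).} For $\varphi(1)<\infty$ everything follows from (3): if $M=\CC$ then $\Gamma(\CC,\varphi)=L(\ZZ)\ast\CC=L(\ZZ)$ (or this $\oplus\CC$), abelian, never a factor; if $\varphi(1)<1$ there is a nonzero central summand $\CC$; and if $\varphi(1)\ge1$ and $M\neq\CC$, the displayed algebra is a free product (or a corner of one) of the diffuse abelian $L(\ZZ)$ with an algebra $\neq\CC$, hence a factor by the classical factoriality criterion for free products with a diffuse subalgebra. For a general n.s.f.\ $\varphi$ I would reduce to the finite case along a cofinal family of projections $e\in M^\varphi$ with $1\le\varphi(e)<\infty$ and $eMe\neq\CC$ (available since $\varphi(1)=\infty$ forces $M^\varphi$ to be non-trivial and infinite): the inclusion $eMe\hookrightarrow M$ is a weight-preserving normal $*$-homomorphism and $x\mapsto exe$ is a normal unital weight-decreasing completely positive map (using $e\in M^\varphi$ one checks $\varphi(x)=\varphi(exe)+\varphi(e^\perp xe^\perp)\ge\varphi(exe)$), so by (4) one gets an increasing family of factor subalgebras $\Gamma(eMe,\varphi|_{eMe})\subseteq\Gamma(M,\varphi)$ with $\varphi_\Omega$-preserving conditional expectations $E_e$ converging to the identity; then for central $z$, $E_e(z)$ lies in the trivial center of the factor $\Gamma(eMe,\varphi|_{eMe})$, so $z=\lim_eE_e(z)\in\CC$. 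Finally the type is read from modular theory: one shows $\Delta_{\varphi_\Omega}=\bigoplus_{n\ge0}\Delta_\varphi^{\otimes n}$, so $\mathrm{Sp}(\Delta_{\varphi_\Omega})$ is the closed multiplicative set generated by $\mathrm{Sp}(\Delta_\varphi)$; when $\Gamma(M,\varphi)$ is a factor the resulting Connes invariant is $\{1\}$ (type $\mathrm{II}_1$) or a nontrivial closed subgroup of $\RR_{>0}$ together with $0$, and as a subgroup of $\RR_{>0}$ is cyclic or dense, only $\mathrm{III}_\lambda$ ($0<\lambda<1$) or $\mathrm{III}_1$ can occur, never $\mathrm{III}_0$.

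\textbf{Part (4) (second quantization) and the main obstacle.} Given $T:M\to N$ normal, subunital, weight-decreasing and completely positive, Kadison--Schwarz gives $T(x)^*T(x)\le\|T(1)\|\,T(x^*x)\le T(x^*x)$, whence $\psi(T(x)^*T(x))\le\psi(T(x^*x))\le\varphi(x^*x)$; thus $\hat x\mapsto\widehat{T(x)}$ extends to a contraction $T_2:L^2(M,\varphi)\to L^2(N,\psi)$ and $\FF(T_2)=\bigoplus_{n\ge0}T_2^{\otimes n}$ is a vacuum-fixing contraction of Fock spaces. Define $\Gamma(T)$ on the weakly total set of Wick products by $W(\Xi)\mapsto W(\FF(T_2)\Xi)$; using $\FF(T_2)\Omega=\Omega$ and the moment formulas from (1) one checks this is well defined, normal, unital and $\varphi_\Omega$-preserving, and uniqueness is then immediate from density. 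Complete positivity I would obtain by dilation: take a Stinespring-type dilation of $T$ through a normal $*$-homomorphism $\pi$ into a weighted von Neumann algebra, arranging the auxiliary weight so that $\pi$ is weight-preserving (hence $\Gamma(\pi)$ is a normal $*$-homomorphism) and $T$ factors, through $\pi$ and through compressions (whose $\Gamma$'s are conditional expectations), at the level of weighted algebras; then $\Gamma(T)$ is the composition, which is completely positive. The main obstacle, I expect, is (3): pinning down the free-product picture with the exact trace/weight normalizations — the emergence of $L(\ZZ)$ as the ``diffuse part'' and the compression/atom correction when $\alpha\neq1$ — and, resting on it, arranging the dilations and exhaustions in (4) and (2) so that one may legitimately reduce to that picture; the infinite-weight factoriality and the complete positivity of $\Gamma(T)$ both hinge on producing these auxiliary weighted algebras with the right modular compatibility.
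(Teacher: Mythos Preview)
Your outline for (1) matches the paper's argument closely. For (3) your moment-matching strategy is correct in spirit, though the paper carries it out via a specific model: it realizes $Y_\alpha(u)$ as $\alpha sus$ with $s$ free from $M$ and $s^2\sim\pi_{1/\alpha}$, then matches moments using the Kreweras-complement formula $M_n(x_1y_1,\dots,x_ny_n)=\sum_\pi R_\pi(x_i)M_{K(\pi)}(y_i)$ together with $|\pi|+|K(\pi)|=n+1$. Your $v=l(\hat 1)$ heuristic is pointing at the same phenomenon but does not by itself produce the free product/compression picture. For (4) the paper does not dilate $T$ through a weight-preserving $*$-homomorphism of weighted von Neumann algebras; instead it works directly on Fock space: it forms the correspondence $\tilde\HHH_T=L^2(M)\oplus\HHH_T\oplus L^2(N)$, an $M$-linear isometry $k_M:L^2(M)\to\tilde\HHH_T$ and an $N$-linear coisometry $p_N:\tilde\HHH_T\to L^2(N)$, embeds $\Gamma(M,\varphi)$ into $B(\FF(\tilde\HHH_T))$ via $I$, and sets $\Gamma(T)(X)=\FF(p_N)I(X)\FF(p_N)^*$. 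Complete positivity is then immediate, and the Wick formula (which involves no inner products or multiplications) shows the image lands in $\Gamma(N,\psi)$. Your Stinespring-with-weights route may be workable but the auxiliary weight compatibility you need is not standard.

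There is a genuine gap in (2). Your inductive-limit argument assumes a cofinal family of projections $e\in M_\varphi$ with $1\le\varphi(e)<\infty$, and you assert this is ``available since $\varphi(1)=\infty$ forces $M^\varphi$ to be non-trivial and infinite.'' This is false: a n.s.f.\ weight need not be \emph{strictly semifinite}, i.e.\ $\varphi|_{M_\varphi}$ need not be semifinite, and then there may be \emph{no} nonzero projection in $M_\varphi$ with finite weight at all (take $M=B(\HHH)$, $\varphi=\mathrm{Tr}(h\,\cdot)$ with $h$ having no point spectrum). The paper isolates the \emph{strictly semifinite support} $z_{ssf}(\varphi)\in\mathcal Z(M_\varphi)$, shows that every eigenvector of $\Delta_\varphi$ lies in $L^2(z_{ssf}Mz_{ssf})$, and deduces that every eigenoperator of $\sigma^{\varphi_\Omega}$ (in particular every central element and the whole centralizer) lies in $\Gamma(z_{ssf}Mz_{ssf},\varphi|)$. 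Your exhaustion then handles the strictly semifinite part, while on the complement one argues by freeness: any central $x$ sits in $\Gamma(z_{ssf}Mz_{ssf})$, is free from any nonconstant $y\in\Gamma(z_{ssf}^\perp Mz_{ssf}^\perp)$, yet commutes with it, forcing $x\in\CC$.

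Your type argument has the same blind spot. You equate the Connes invariant with $\mathrm{Sp}(\Delta_{\varphi_\Omega})$, but $S(M)\cap\RR_+^\times=\Gamma(\sigma^{\varphi_\Omega})$ is an \emph{intersection} of Arveson spectra over corners by centralizer projections, and equals $\mathrm{Sp}(\Delta_{\varphi_\Omega})$ only once you know $\Gamma(M,\varphi)_{\varphi_\Omega}$ is a factor. The paper proves this factoriality of the centralizer in the strictly semifinite case (via the same inductive limit plus a free-product result giving $(M_\varphi)'\cap M^\omega=\CC$), and in the non-strictly-semifinite case shows instead that the centralizer is trivial (hence type $\mathrm{III}_1$) when $z_{ssf}=0$, or uses freeness with the $z_{ssf}^\perp$-part to force $\Gamma(\sigma^{\varphi_\Omega})=[0,\infty)$ when $z_{ssf}\neq 0$. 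Without this, your exclusion of type $\mathrm{III}_0$ is unjustified.
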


	Another perspective we want to study is the Fock space model for (not necessarily compactly supported) freely infinitely divisible distributions. In studying these distributions, we find it natural to consider a degenerate variant of left Hilbert algebras, which we refer to as pseudo left Hilbert algebras (see Definition \ref{defn pseudo left Hilbert algebra}). This slight generalization allows us to unify both semicircular and free Poisson variables within the same free Poisson algebra $\Gamma(\frakA)$. Moreover, we observe that the L\'{e}vy-It\^{o}'s decomposition can be interpreted as a decomposition of a pseudo left Hilbert algebras constructed via free cumulants. Using this, we give a prove of free L\'{e}vy-It\^{o}'s decomposition using pseudo left Hilbert algebras.
	
	If we assume that the weight $\varphi$ is tracial, we are also allowed to deal with freely infinitely divisible distributions with noncompact support by considering operators affiliated with the free Poisson algebra $\Gamma(M,\varphi)$. Unlike the classical case, a freely infinitely divisible distribution has compact support if and only if its L\'{e}vy measure has compact support. Therefore, in the free L\'{e}vy-It\^{o}'s decomposition, the only essential unbounded part is the free compound Poisson variable for which we will use a different operator model which has a more clear structure for the study of affiliated operators (see Section 4 for details).
	
	As a application, we prove a convergence for higher variantion of a free L\'{e}vy process in the sense of \cite{AW18}, which leads to the following corollary about the filtration algebras of a free L\'{e}vy process.
	
	\begin{cor*}
		Let $ Z_t $ be a additive free L\'{e}vy process with L\'{e}vy triplet $ (a,b,\rho) $ in some $W^*$-probability space, then for any $t>0$, the filtration von Neumann algebra $ M_{t} = W^*((Z_s)_{s\leq t}) $ is an interpolated free group factor (with a possible additional atom)
		$$ M_{t} \simeq \begin{cases}
			L(\FFF_{\infty}), \quad \mbox{if }b\neq 0 \mbox{ or } \rho(\RR)=\infty,\\
			L(\FFF_{ 2t\rho(\RR) }), \quad \mbox{if }b= 0 \mbox{ and } 1\leq t\rho(\RR)<\infty\\
			\underset{t\rho(\RR)}{L(\FFF_{ 2})}\oplus \underset{1-t\rho(\RR)}{\CC}, \quad \mbox{if }b= 0 \mbox{ and } t\rho(\RR)<1.
		\end{cases} $$
	\end{cor*}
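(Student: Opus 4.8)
The plan is to push everything through the Fock-space realization of the process constructed in Section~4 together with the free L\'{e}vy--It\^{o} decomposition established there. Since the triplet is $(a,b,\rho)$ and the increments are self-adjoint, the ambient $W^*$-probability space is tracial, and the realization writes, up to the additive scalar $as$ coming from the drift, $Z_s = as + Z_s^{\mathrm{sc}} + Z_s^{\mathrm{jp}}$, where $(Z_s^{\mathrm{sc}})_s$ is a free Brownian motion with variance parameter $b$, $(Z_s^{\mathrm{jp}})_s$ is a free compound Poisson process with L\'{e}vy measure $\rho$ realized inside the free Poisson algebra $\Gamma\big(L^\infty([0,t]\times\RR),\mathrm{Leb}\otimes\rho\big)$ (of total weight $t\rho(\RR)$), and the families $(Z_s^{\mathrm{sc}})_s$ and $(Z_s^{\mathrm{jp}})_s$ are freely independent. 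The drift adds nothing to the generated algebra, so the first thing to establish is that the filtration $(Z_s)_{s\le t}$ in fact generates the \emph{whole} free product
$$ M_t \;\cong\; N_t^{\mathrm{sc}}\ast N_t^{\mathrm{jp}},\qquad N_t^{\mathrm{sc}}:=W^*\big((Z_s^{\mathrm{sc}})_{s\le t}\big),\quad N_t^{\mathrm{jp}}:=\Gamma\big(L^\infty([0,t]\times\RR),\mathrm{Leb}\otimes\rho\big), $$
in a trace-preserving way; in other words, the path must recover the semicircular and jump components separately. Classically this is automatic (the jumps are visible in the path), and here it is exactly the content of the higher-variation convergence of Section~4, which extracts enough functionals of the underlying free Poisson random measure from the increments to generate $N_t^{\mathrm{jp}}$.

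Next I would identify the two factors. For $N_t^{\mathrm{sc}}$: the increments of $Z^{\mathrm{sc}}$ over a finite partition of $[0,t]$ form a free semicircular family, so by Voiculescu's free Gaussian functor $N_t^{\mathrm{sc}}$ is generated by a semicircular family indexed by the Hilbert space $L^2([0,t],b\,\mathrm{Leb})$; hence $N_t^{\mathrm{sc}}\cong L(\FFF_\infty)$ when $b\neq 0$ and $N_t^{\mathrm{sc}}=\CC$ when $b=0$. For $N_t^{\mathrm{jp}}$, note that since the time interval $[0,t]$ is diffuse, $L^\infty([0,t]\times\RR)$ with the normalized weight is a diffuse abelian tracial von Neumann algebra, hence isomorphic to $L(\ZZ)=L(\FFF_1)$, \emph{provided} $\alpha:=t\rho(\RR)<\infty$. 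Feeding $(M,\tfrac1\alpha\varphi)\cong L(\ZZ)$ into part~(3) of the main theorem, and invoking the arithmetic of interpolated free group factors (Voiculescu--R\u{a}dulescu--Dykema): $L(\FFF_r)\ast L(\FFF_s)=L(\FFF_{r+s})$, the compression rule $pL(\FFF_r)p=L(\FFF_{1+(r-1)/\mathrm{tr}(p)^2})$, and $L(\FFF_r)\ast(\underset{\lambda}{\CC}\oplus\underset{1-\lambda}{\CC})=L(\FFF_{r+2\lambda(1-\lambda)})$ for $0<\lambda<1$, one computes $L(\ZZ)\ast L(\ZZ)=L(\FFF_2)$ when $\alpha\le 1$ and $L(\ZZ)\ast p\big(L(\ZZ)\ast(\underset{1/\alpha}{\CC}\oplus\underset{1-1/\alpha}{\CC})\big)p=L(\FFF_{2\alpha})$ when $\alpha>1$. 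This gives $N_t^{\mathrm{jp}}\cong L(\FFF_{2t\rho(\RR)})$ if $1\le t\rho(\RR)<\infty$ and $N_t^{\mathrm{jp}}\cong\underset{t\rho(\RR)}{L(\FFF_2)}\oplus\underset{1-t\rho(\RR)}{\CC}$ if $t\rho(\RR)<1$ (the value $t\rho(\RR)=0$ giving $N_t^{\mathrm{jp}}=\CC$). When $t\rho(\RR)=\infty$, partition $\RR=\bigsqcup_k E_k$ with each $\rho(E_k)<\infty$; since $\Gamma$ turns this direct-sum decomposition of the underlying algebra into a free product of the pieces $\Gamma(L^\infty([0,t]\times E_k),\mathrm{Leb}\otimes\rho|_{E_k})$, each of free dimension $2t\rho(E_k)$ and with $\sum_k t\rho(E_k)=\infty$, while by part~(2) of the main theorem $N_t^{\mathrm{jp}}$ is a (tracial, hence $\mathrm{II}_1$) factor, it is the interpolated free group factor of infinite free dimension, $N_t^{\mathrm{jp}}\cong L(\FFF_\infty)$.

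It remains to assemble the cases, using once more $L(\FFF_r)\ast L(\FFF_s)=L(\FFF_{r+s})$ and $L(\FFF_\infty)\ast A=L(\FFF_\infty)$ for every nontrivial tracial $A$: if $b\neq 0$, then $N_t^{\mathrm{sc}}=L(\FFF_\infty)$ absorbs $N_t^{\mathrm{jp}}$ and $M_t\cong L(\FFF_\infty)$; if $b=0$ and $\rho(\RR)=\infty$, then $M_t=N_t^{\mathrm{jp}}\cong L(\FFF_\infty)$; if $b=0$ and $1\le t\rho(\RR)<\infty$, then $M_t=N_t^{\mathrm{jp}}\cong L(\FFF_{2t\rho(\RR)})$; and if $b=0$ and $t\rho(\RR)<1$, then $M_t=N_t^{\mathrm{jp}}\cong\underset{t\rho(\RR)}{L(\FFF_2)}\oplus\underset{1-t\rho(\RR)}{\CC}$, an interpolated free group factor with an additional atom. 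The one genuinely hard point is the identification $M_t\cong N_t^{\mathrm{sc}}\ast N_t^{\mathrm{jp}}$: one must recover the whole free Poisson random measure from the operators $(Z_s)_{s\le t}$ through the higher-variation limits, and --- because $\rho$, hence the jump sizes, need not be compactly supported --- carry this out for \emph{unbounded} operators affiliated with the $\mathrm{II}_1$ factor $\Gamma(L^\infty([0,t]\times\RR),\mathrm{Leb}\otimes\rho)$, which is precisely what the cleaner operator model of Section~4 is designed for; everything downstream is bookkeeping with standard free-group-factor formulas, the only mild subtlety being to keep track of the atom when $t\rho(\RR)<1$.
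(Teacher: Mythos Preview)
Your proposal is correct and follows essentially the same route as the paper. Both arguments hinge on (i) realizing the process in the Fock space $\Gamma(\frakA_\rho^+\otimes\frakA_{\mathcal{L}})$, (ii) using the higher-variation limits (Proposition~\ref{square variation} and the subsequent Theorem) to show that $(Z_s)_{s\le t}$ generates the full free Poisson algebra $\Gamma(\frakA_\rho^+\otimes\frakA_{\mathcal{L}|_{[0,t]}})$ when $b\neq 0$ (resp.\ $\Gamma(\frakA_\rho\otimes\frakA_{\mathcal{L}|_{[0,t]}})$ when $b=0$), and (iii) reading off the isomorphism class from the finite-weight structure theorems and free-group-factor arithmetic. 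The only cosmetic difference is that the paper packages step~(iii) through Corollary~\ref{filtration of free poisson} (decomposing $[0,\alpha]$ into unit intervals and applying Dykema's Proposition~1.7), whereas you apply the $\alpha>1$ case of the main theorem directly together with the compression formula; both computations yield $L(\FFF_{2\alpha})$. Your treatment of the case $\rho(\RR)=\infty$ via a countable partition is also what the paper implicitly uses. A small bookkeeping note: the Fock-space realization and higher-variation results live in Sections~7--8, not Section~4; Section~4 supplies only the $sus$ operator model you invoke for the unbounded jumps.
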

	
	The paper is organized as follows. Section 2 recalls some basic facts about free cumulants, Fock space, (pseudo) left Hilbert algebras, weights on von Neumann algebras, and completely positive maps. In Section 3, we construct the free Poisson random weight and the free Poisson von Neumann algebra for a (pseudo) left Hilbert algebra or a n.s.f. weight. In Section 4, we describe the isomorphism class of free Poisson algebras when the weight is finite. Section 5 is devoted to the second quantization of normal subunital weight-decreasing completely positive maps. In Section 6, we discuss factoriality and type classification of the free Poisson algebra. As an application, in Section 7, we prove a multivariable version of L\'{e}vy-It\^{o}'s decomposition in a tracial probability space. In section 8, we give a realization of a (unbounded) freely L\'{e}vy process as affiliated operators of the free Poisson algebra and compute the isomorphism class of its filtration algebras.

	\section{Preliminaries}
 	\subsection{Freely independence and free cumulants}
 	A \textit{non-commutative probability space} is a pair $(\mathcal{A},\varphi)$, where $\mathcal{A}$ is a unital (*)-algebra over $\CC$ and $\varphi:\mathcal{A}\to \CC$ is a linear functional such that $ \varphi(1)=1 $. If $\mathcal{A}$ is a von Neumann algebra and $\varphi$ is a faithful normal state, then $(\mathcal{A},\varphi)$ is called a \textit{$W^*$-probability space}. Throughout this paper we mainly focus on $W^*$-probability spaces.
 	
 	A family $\{\mathcal{A}_i\}_{i\in I}$ of unital subalgebras of $ \mathcal{A} $ are called free (or freely independent) from each other if for all $ n\geq 1 $, $ \varphi(x_1x_2\cdots x_n) =0$ whenever $ \varphi(x_j)=0 $ for all $j=1,\cdots,n$ and $ x_j\in A_{i_j} $ with $ i_1\neq i_2\neq\cdots \neq i_n $. We say a family of elements $\{x_i\}_{i\in I} $ are free (resp. $*$-free) if the algebras generated by each $x_i$ (resp. $\{x_i,x_i^*\}$) are free.
 	
	A \textit{partition} $\pi$ of a finite set $ S $ is a collection $ \pi= \{V_1,\cdots,V_s\} $ of disjoint nonempty subsets of $S$ such that $ \cup_{i=1}^{s}V_i = S $. Those $ V_i $'s are called blocks of $\pi$, and the number of the blocks in $ \pi $ is denoted by $ |\pi| $. We denote by $ \mathcal{P}(S) $ the set of all partitions of $S$. If $ S $ is a totally ordered finite set, a partition $ \pi\in \mathcal{P}(S) $ is called \textit{noncrossing} if there are no different blocks $V_p$, $ V_q $ with $s_1,s_2 \in V_p$, $t_1,t_2\in V_q$ such that $ s_1<t_1<s_2<t_2 $. We will denote by $ \mathcal{NC}(S) $ the set of noncrossing partitions of $ S $. When $ S $ is the set of integers $ [n]:=\{1,2,\cdots,n\} $, we will also write $ \mathcal{P}(n):= \mathcal{P}([n]) $ and $ \mathcal{NC}(n):=\mathcal{NC}([n]) $.
	
	The set of partitions $ \mathcal{P}(S) $ can be equipped with the reverse refinement order, i.e. for two partitions $\pi,\sigma\in \mathcal{P}(S)$ we say $ \pi\geq \sigma $ if each block of $ \sigma $ is contained in a block of $ \pi $.
	
	\begin{defn}
		For a fixed integer $ n\geq 1 $, consider two copies of the ordered set $ [n]=\{1,\cdots,n\} $, $ [\bar{n}]=\{\bar{1},\cdots,\bar{n}\} $ and their union $S=\{1,\bar{1},\cdots,n,\bar{n}\}$ with the order $ 1<\bar{1}<\cdots<n<\bar{n} $. For a given $ \pi \in \mathcal{NC}(n) $, its \textit{Kreweras complement} $K(\pi)\in \mathcal{NC}(\bar{n})\simeq \mathcal{NC}(n)$ is defined as the biggest element in $ \mathcal{NC}(\bar{n}) $ such that $ \pi\cup \sigma \in \mathcal{P}(S)$ is still a noncrossing partition. 
	\end{defn}
	
	The Kreweras complement $ K:\mathcal{NC}(n)\to \mathcal{NC}(n) $ gives a self-bijection of $ \mathcal{NC}(n) $ and has the important property that $ |\pi|+|K(\pi)|= n+1$ for any $ \pi\in \mathcal{NC}(n) $. Its square $ K^2:\mathcal{NC}(n)\to \mathcal{NC}(n) $ is given by a `cyclic permutation', in the sense that for a noncrossing partition $ \pi=\{ \{i^1_1,\cdots,i^1_{k_1}\}, \cdots,\{i^s_1,\cdots,i^s_{k_s} \}\}\in \mathcal{NC}(n) $, $ K^2(\pi)=\gamma(\pi):=\{ \{\gamma(i^1_1),\cdots,\gamma(i^1_{k_1})\}, \cdots,\{\gamma(i^s_1),\cdots,\gamma(i^s_{k_s}) \}  \} $ where $ \gamma\in S_n $ is the cyclic permutation $ \gamma=( n,n-1,\cdots,1 ) $. In particular, for any $ \pi\in\mathcal{NC}(n) $, we also have $ K(\pi)=\gamma( K^{-1}(\pi) ) $.
	
	The $n$-th moments of $(\mathcal{A},\varphi)$ is the $n$-linear functional $ M_{n}:\mathcal{A}^n\to \CC $ such that $ M_{n}(x_1,\cdots,x_n) = \varphi(x_1\cdots x_n) $. More generally, for a partition $ \pi\in \mathcal{P}(n)$, we define the multilinear functional $ M_{\pi}:\mathcal{A}^n\to \CC $ as $$ M_{\pi}( x_1,\cdots,x_n ):= \prod_{V\in \pi} M_{|V|}( x_i: i\in V )= \prod_{V\in \pi} M_{|V|}( x_{v_1},\cdots,x_{v_r} ),  $$
	where $ V=\{v_1,\cdots,v_r\} $ and $ v_1<\cdots<v_r $.
	
	\begin{defn}
		For $n\geq 1$, the \textit{free cumulants} $R_n$ is the $n$-linear functional $ R_n:\mathcal{A}^n\to \CC $ defined recursively by the relations
		$$ M_{n}(x_1,\cdots,x_n)=\sum_{\pi\in \mathcal{NC}(n)}R_\pi(x_1,\cdots,x_n) ,\quad \forall x_1,\cdots,x_n\in \mathcal{A}, $$
		where for each $ \pi\in \mathcal{NC}(n) $,
		$$ R_{\pi}( x_1,\cdots,x_n ):= \prod_{V\in \pi} R_{|V|}( x_i: i\in V ).  $$
	\end{defn}

	Using free cumulants, we have another characterization of freeness: two families $F_1,F_2\subseteq \mathcal{A}$ are free if and only if $ R_n(x_1,\cdots,x_n)=0 $ whenever $ x_j\in F_1\cup F_2 $ and for some $j_1,j_2$, $ x_{j_1}\in F_1 $ and $ x_{j_2}\in F_2 $.
		
	Recall that a linear functional $\psi$ on $\mathcal{A}$ is \textit{tracial} if $ \psi(xy)=\psi(yx) $ for all $x,y\in \mathcal{A}$. In particular, for a tracial state, we always have that $M_n$ is cyclic symmetric $M_n(x_1,\cdots,x_n)= M_n(x_n,x_1,\cdots,x_{n-1})$, which also implies that $R_n$ is cyclic symmetric.
	
	When $ x,y\in \mathcal{A} $ are free, the moments and free cumulants of their product $ xy $ can be calculated via the Kreweras complements.
	
	\begin{thm}[\cite{nica1996multiplication}]\label{thm moments of product}
		Let $ (\mathcal{A},\varphi) $ be a non-commutative probability space with two free unital subalgebras $ \mathcal{A}_1,\mathcal{A}_2 $. If $ x_1,\cdots,x_n\in \mathcal{A}_1 $ and $ y_1,\cdots,y_n\in \mathcal{A}_2 $, then
		$$ R_n(x_1y_1,\cdots,x_ny_n)=\sum_{\pi\in \mathcal{NC}(n)}R_{\pi}(x_1,\cdots,x_n)R_{K(\pi)}(y_1,\cdots,y_n),$$
		and
		\begin{equation}\label{moments of product}
			M_{n}(x_1y_1,\cdots,x_ny_n)=\sum_{\pi\in \mathcal{NC}(n)}R_{\pi}(x_1,\cdots,x_n)M_{K(\pi)}(y_1,\cdots,y_n)= \sum_{\pi\in \mathcal{NC}(n)}M_{K^{-1}(\pi)}(x_1,\cdots,x_n)R_\pi(y_1,\cdots,y_n). 
		\end{equation} 
	\end{thm}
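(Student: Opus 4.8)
The plan is to derive all three identities from just two ingredients: the defining moment–cumulant relations summed over $\mathcal{NC}$, and the vanishing of mixed free cumulants across the free subalgebras $\mathcal{A}_1,\mathcal{A}_2$; everything else is bookkeeping with the Kreweras complement.

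\textbf{Step 1 (moment formula).} Relabel the alternating word by $z_{2i-1}=x_i$, $z_{2i}=y_i$, so that $M_n(x_1y_1,\dots,x_ny_n)=\varphi(z_1z_2\cdots z_{2n})=\sum_{\tau\in\mathcal{NC}(2n)}R_\tau(z_1,\dots,z_{2n})$. Since $x_i\in\mathcal{A}_1$ and $y_i\in\mathcal{A}_2$ come from free subalgebras, the cumulant characterization of freeness recalled above forces each block of a non-vanishing $\tau$ to lie entirely in the odd positions $\{1,3,\dots,2n-1\}$ or entirely in the even positions $\{2,4,\dots,2n\}$, so $\tau=\pi\sqcup\rho$ with $\pi$ on the odd and $\rho$ on the even positions. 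Identifying the odd positions with $[n]$ and the even ones with $[\bar n]$ via $2i-1\mapsto i$, $2i\mapsto\bar i$, the ambient order on $\{1,\dots,2n\}$ becomes precisely the order $1<\bar1<\cdots<n<\bar n$ from the definition of the Kreweras complement, and $\pi\sqcup\rho$ is noncrossing exactly when $\rho\le K(\pi)$. Summing first over such $\rho$ and using $M_\sigma(y_1,\dots,y_n)=\sum_{\rho\in\mathcal{NC},\ \rho\le\sigma}R_\rho(y_1,\dots,y_n)$ for the noncrossing partition $\sigma=K(\pi)$ yields $M_n(x_1y_1,\dots,x_ny_n)=\sum_{\pi\in\mathcal{NC}(n)}R_\pi(x_1,\dots,x_n)M_{K(\pi)}(y_1,\dots,y_n)$. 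Resumming the same double sum in the other order — over $\pi$ for fixed $\rho$, where the maximal $\pi$ with $\pi\sqcup\rho$ noncrossing is $K^{-1}(\rho)$ — gives the second expression $\sum_{\pi\in\mathcal{NC}(n)}M_{K^{-1}(\pi)}(x_1,\dots,x_n)R_\pi(y_1,\dots,y_n)$.

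\textbf{Step 2 (cumulant formula).} Induct on $n$, the case $n=1$ being $\varphi(x_1y_1)=\varphi(x_1)\varphi(y_1)$ (freeness). In $M_n(z)=\sum_{\sigma\in\mathcal{NC}(n)}R_\sigma(z)=R_n(z)+\sum_{\sigma\neq 1_n}R_\sigma(z)$, each $\sigma\neq 1_n$ has all blocks of size $<n$, so the inductive hypothesis applies in every block; collecting the block contributions and writing $K_\sigma(\pi)$ for the Kreweras complement of $\pi$ relative to $\sigma$, one gets $R_\sigma(z)=\sum_{\pi\in\mathcal{NC},\ \pi\le\sigma}R_\pi(x)R_{K_\sigma(\pi)}(y)$. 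Interchanging the sums and using that $\sigma\mapsto K_\sigma(\pi)$ is a bijection of $\{\sigma\in\mathcal{NC}(n):\sigma\ge\pi\}$ onto $\{\rho\in\mathcal{NC}(n):\rho\le K(\pi)\}$ sending $1_n$ to $K(\pi)$, one obtains $\sum_{\sigma\neq 1_n}R_\sigma(z)=\sum_{\pi\in\mathcal{NC}(n)}R_\pi(x)\bigl(M_{K(\pi)}(y)-R_{K(\pi)}(y)\bigr)$. Subtracting this from the moment formula of Step 1 leaves exactly $R_n(x_1y_1,\dots,x_ny_n)=\sum_{\pi\in\mathcal{NC}(n)}R_\pi(x)R_{K(\pi)}(y)$. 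Conversely, re-expanding $M_{K(\pi)}(y)=\sum_{\rho\le K(\pi)}R_\rho(y)$ recovers the moment formula from the cumulant formula, so once the combinatorics is in place the three identities are equivalent.

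\textbf{Main obstacle.} The moment–cumulant manipulations are routine; the genuinely combinatorial input — and the step I would treat most carefully — is the behaviour of the Kreweras complement under restriction to the blocks of a noncrossing partition: the map $\sigma\mapsto K_\sigma(\pi)$ should be an order-reversing bijection of $[\pi,1_n]\cap\mathcal{NC}(n)$ onto $[0_n,K(\pi)]\cap\mathcal{NC}(n)$, together with the rigidity that any noncrossing $\pi\sqcup\rho$ with $\pi,\rho$ supported on the odd/even positions satisfies $|\pi|+|\rho|\le n+1$, with equality iff $\rho=K(\pi)$. These are standard facts about the lattice $\mathcal{NC}(n)$ (already implicit in the stated identity $|\pi|+|K(\pi)|=n+1$), and with them the argument is purely formal. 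An alternative that avoids the induction is to invoke the general formula expressing free cumulants with products as entries as a sum over $\{\pi:\pi\vee\sigma=1_{2n}\}$, with $\sigma$ the interval partition $\{\{1,2\},\{3,4\},\dots\}$, and then delete the mixed cumulants by freeness; but identifying the surviving $\pi$'s with the pairs $(\pi_x,K(\pi_x))$ requires the same lattice combinatorics.
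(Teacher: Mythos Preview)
The paper does not actually prove this theorem: immediately after the statement it writes ``For a proof, see \cite{nica1996multiplication}\cite{NS06}.'' Your proposal is correct and is essentially the standard argument found in those references --- expanding $\varphi(x_1y_1\cdots x_ny_n)$ as a sum over $\mathcal{NC}(2n)$, killing mixed cumulants by freeness, identifying the surviving partitions with pairs $(\pi,\rho)$ satisfying $\rho\le K(\pi)$, and then using the relative Kreweras complement bijection $[\pi,1_n]\to[0_n,K(\pi)]$ for the cumulant identity --- so there is nothing to compare beyond noting that you have supplied what the paper outsources.
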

	For a proof, see \cite{nica1996multiplication}\cite{NS06}.
	
	When there are several algebras or states, we also write $ M_{\pi}=M_{\pi,\mathcal{A}}=M_{\pi,\varphi} $ and $ R_{\pi}=R_{\pi,A}=R_{\pi,\varphi} $ to emphasize that the corresponding moments and cumulants are taken for the pair $(\mathcal{A},\varphi)$.
	
	\subsection{Free cumulants of operators on full Fock spaces}
	Throughout the paper, we will always assume the inner product $\langle \cdot,\cdot\rangle$ of a Hilbert space is linear on the second component and conjugate linear on the first component.
	
	The \textit{full Fock space} $ \FF(\HHH) $ over a (complex) Hilbert space $ \HHH $ is the Hilbert space
	$$ \FF(\HHH) = \CC\Omega \oplus \bigoplus_{n=1}^{\infty}\HHH^{\otimes n}, $$
	where $ \Omega $ is a distinguished vector of norm one and is called \textit{the vacuum vector}. The vector state $\varphi_{\Omega}:= \langle \Omega, \cdot\Omega\rangle $ on $B(\FF(\HHH))$ is called \textit{the vacuum state}.
	
	If $ \HHH,\KK $ are two Hilbert spaces and $ T \in B(\HHH,\KK)$ is a contraction, then we define the operator $ \FF(T) \in B(\FF(\HHH),\FF(\KK))$ as
	$$ \FF(T)\xi_1\otimes \cdots \otimes \xi_n = (T\xi_1)\otimes\cdots \otimes (T\xi_n),\quad \forall \xi_i\in \HHH.$$ If $T,S$ are two contractions then we obviously have $ \FF(ST)=\FF(S)\circ \FF(T) $.
	
	For $ \xi\in \HHH $, the (left) \textit{creation operator} $ l(\xi) $ is the bounded operator defined as
	$$ l(\xi)\Omega = \xi,\quad l(\xi)\eta_1\otimes \cdots \otimes \eta_n = \xi\otimes \eta_1\otimes \cdots \otimes \eta_n,\quad \forall n\geq 1, \eta_i\in \HHH.  $$
	The adjoint of $l(\xi)$ is called the (left) annihilation operator of $\xi$, which satisfies
	$$ l^*(\xi)\Omega=0,\quad l^*(\xi)\eta_1\otimes \cdots \otimes \eta_n = \langle \xi,\eta_1 \rangle \eta_2\otimes \cdots \otimes \eta_n,\quad \forall n\geq 1, \eta_i\in \HHH. $$
	Note that while $ l(\xi) $ is linear on $\xi$, $ l^*(\xi) $ is actually conjugate linear on $\xi$.
		
	For any (possibly unbouned) linear operator $T$ on $ \HHH $, we define the gauge (preservation) operator $ \Lambda(T) $ on $\FF(\HHH)$ as
	$$ \Lambda(T)\Omega = 0,\quad \Lambda(T)\eta_1\otimes \cdots \otimes \eta_n = (T\eta_1)\otimes \eta_2\otimes \cdots \otimes \eta_n,\quad \forall n\geq 1, \eta_i\in D(T). $$	
	When $ T $ is densely defined and closed, $ \Lambda(T) $ is also densely defined and closable and moreover $ \Lambda(T^*)\subseteq \Lambda(T)^* $. For simplicity, we will again denote the closure of $ \Lambda(T) $ by $ \Lambda(T) $. If $ T $ is bounded, then $ \|\Lambda(T)\|\leq \|T\| $.
	
	The free cumulants of the creation, annihilation and gauge operators are given by the following formula (Proposition 13.5 \cite{NS06}).
	\begin{thm}\label{cumulants of operators on Fock spaces}
		Let $\HHH$ be a Hilbert space and consider the non-commutative probability space $ (B(\FF(\HHH)), \varphi_\Omega) $. Then for all $ \xi,\eta\in \HHH $ and $ T_1,\cdots,T_{n-2}\in B(\HHH) $,
		$$ R_n( l^*(\xi),\Lambda(T_1),\cdots,\Lambda(T_{n-2}),l(\eta) )=\langle \xi,T_1T_2\cdots T_{n-2}\eta \rangle, $$
		while all other free cumulants with arguments from the set $ l(\HHH)\cup l^*(\HHH)\cup \Lambda(B(\HHH))$ are $0$.
	\end{thm}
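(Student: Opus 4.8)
The plan is to reduce the statement to a moment computation followed by a formal Möbius inversion over $\mathcal{NC}(n)$. Call a partition $\pi\in\mathcal{NC}(n)$ \emph{adapted} to a tuple $(a_1,\dots,a_n)$ of creation, annihilation and gauge operators if every block $V=\{j_1<j_2<\cdots<j_k\}$ of $\pi$ satisfies: $a_{j_1}$ is an annihilation operator $l^*(\xi)$, $a_{j_k}$ is a creation operator $l(\eta)$, and $a_{j_2},\dots,a_{j_{k-1}}$ are gauge operators $\Lambda(T_{j_2}),\dots,\Lambda(T_{j_{k-1}})$; to such a block assign the weight $\beta(V):=\langle\xi,T_{j_2}T_{j_3}\cdots T_{j_{k-1}}\eta\rangle$, with the convention that an empty product of the $T$'s is $\mathrm{id}_\HHH$. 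The crux of the proof is the moment formula
$$ \varphi_\Omega(a_1a_2\cdots a_n)=\sum_{\substack{\pi\in\mathcal{NC}(n)\\ \pi\text{ adapted to }(a_1,\dots,a_n)}}\ \prod_{V\in\pi}\beta(V). $$

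To establish the moment formula I would apply the word $a_1\cdots a_n$ to $\Omega$ and read it from the right, interpreting the action on the (always pure) tensor states as operations on a stack of tensor factors: a creation $l(\xi)$ pushes $\xi$, an annihilation $l^*(\xi)$ pops the top factor $\zeta$ and multiplies the running scalar by $\langle\xi,\zeta\rangle$, and a gauge operator $\Lambda(T)$ replaces the top factor $\zeta$ by $T\zeta$, while an annihilation or gauge operator applied to the empty stack kills the word. The vacuum expectation is nonzero only when the stack is emptied exactly at the end, and one then argues by induction on $n$: the contribution vanishes unless $a_n$ is a creation $l(\eta_n)$ (correspondingly $n$ is maximal in its block and must be a creation for any adapted $\pi$); following the factor pushed by $a_n$ until the annihilation $a_j$ that pops it identifies the block $V_0\ni n$, whose middle indices are precisely those of the gauge operators that acted on this factor while it was on top, with $\beta(V_0)$ the scalar they accumulate; and the operators lying between $j$ and $n$ that are not in $V_0$ manipulate only factors pushed above the distinguished one, so by noncrossingness they organize into self-contained balanced sub-words filling the gaps of $V_0$. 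This produces a recursion expressing $\varphi_\Omega(a_1\cdots a_n)$ as a sum, over the admissible choices of the block $V_0$ containing $n$, of $\beta(V_0)$ times a product of vacuum expectations of strictly shorter sub-words, which by the inductive hypothesis matches the right-hand side. The delicate point — distinguishing, among the operators inside the interval spanned by $V_0$, those that act on the distinguished factor (the members of $V_0$) from those that act on factors nested above it (members of sub-blocks) — is the main obstacle, and it is exactly where the stack model makes the argument transparent.

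Granting the moment formula, the cumulant formula is immediate. Define $\widetilde R_n(a_1,\dots,a_n)$ to be $\langle\xi,T_1\cdots T_{n-2}\eta\rangle$ when $(a_1,\dots,a_n)=(l^*(\xi),\Lambda(T_1),\dots,\Lambda(T_{n-2}),l(\eta))$ and $0$ for every other tuple of creation, annihilation and gauge operators. Then for any such tuple and any $\pi\in\mathcal{NC}(n)$ one has $\prod_{V\in\pi}\widetilde R_{|V|}(a_i:i\in V)=\prod_{V\in\pi}\beta(V)$ if $\pi$ is adapted and $0$ otherwise, because a single block $V$ contributes $\beta(V)$ precisely when it has the annihilation–gauge–creation shape. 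Hence $\sum_{\pi\in\mathcal{NC}(n)}\widetilde R_\pi(a_1,\dots,a_n)$ equals the right-hand side of the moment formula, i.e.\ $\varphi_\Omega(a_1\cdots a_n)$, for all tuples of creation, annihilation and gauge operators; since the free cumulants $R_n$ are the unique multilinear functionals satisfying the moment–cumulant relations (a triangular system one resolves by induction on $n$, restricting throughout to such tuples), $R_n$ must agree with $\widetilde R_n$ on all of them. This is precisely the assertion of the theorem.
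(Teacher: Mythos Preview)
The paper does not prove this statement itself; it is quoted from Nica--Speicher \cite{NS06}, Proposition~13.5. Your argument is correct and is essentially the standard one found there: compute the mixed moments via the stack interpretation on $\FF(\HHH)$, recognize the answer as the sum over adapted noncrossing partitions, and invoke uniqueness in the moment--cumulant relation.

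Two small expository remarks. First, for any fixed tuple $(a_1,\dots,a_n)$ there is in fact \emph{at most one} adapted noncrossing partition: the creation/annihilation positions determine a unique Dyck-path matching, and each gauge index is forced by noncrossingness into the innermost such pair enclosing it. Thus your ``sum over admissible choices of $V_0$'' always has at most one nonzero summand; making this explicit removes the apparent tension between the single-term recursion for $\varphi_\Omega$ coming from the stack computation and the sum-over-$V_0$ recursion on the combinatorial side. Second, the indices $1,\dots,j_1-1$ to the left of the annihilation that empties the stack form one additional (external) gap of $V_0$ and contribute their own sub-word factor alongside the interior gaps $(j_s,j_{s+1})$.
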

	
\subsection{Pseudo left Hilbert algebras and weights}
We first recall the basic properties of left Hilbert algebras.
\begin{defn}
	A \textit{left Hilbert algebra} is an (associative) involution algebra $\frakA$ over $\CC$ with the (antilinear) involution operator $S:\frakA\to \frakA$ and an (sesquilinear) inner product $ \langle \cdot,\cdot \rangle $ such that
	\begin{enumerate}[1)]
		\item For each $\xi\in \frakA$, the left multiplication $ \pi_l(\xi): \frakA\ni \eta\mapsto \xi \eta\in \frakA$ is a bounded operator.
		\item For all $\xi,\eta,\zeta\in \frakA$, $\langle \xi \eta,\zeta \rangle = \langle \eta,(S\xi)\zeta \rangle$.
		\item The involution $S$ is preclosed.
		\item The subalgebra $\frakA^2=\text{span}\{ \xi \eta:\xi ,\eta\in \frakA \}$ is dense in the Hilbert space $ \overline{\frakA} $.
	\end{enumerate}
\end{defn}

The main purpose of condition 4) is to guarantee that the representation $ \pi_l $ of $ \frakA $ is nondegenerate. We are in fact interested in a slightly more general version of left Hilbert algebras for which we do not assume 4).

\begin{defn}\label{defn pseudo left Hilbert algebra}
	A \textit{pseudo left Hilbert algebra} is an involution algebra $\frakA$ over $\CC$ with the (antilinear) involution operator $S:\frakA\to \frakA$ and a sesquilinear inner product $\langle \cdot,\cdot\rangle$ such that
	\begin{enumerate}[1)]
		\item For each $\xi\in \frakA$, the left multiplication $ \pi_l(\xi): \frakA\ni \eta\mapsto \xi \eta$ is a bounded operator.
		\item For all $\xi,\eta,\zeta\in \frakA$, $\langle \xi \eta,\zeta \rangle = \langle \eta,(S\xi)\zeta \rangle$.
		\item The involution $S$ is preclosed.
	\end{enumerate}
\end{defn}

Let $ \overline{\frakA} $ be the completion of $\frakA$ (hence $ \overline{\frakA} $ is a Hilbert space) and denote the closure of $S$ on $ \overline{\frakA} $ by $S$ again. We also denote $D(S)\subseteq \overline{\frakA}$ be the domain of $S$. For each $ \xi\in \overline{\frakA} $, denote also the unbounded right (left) mulplication operator $\pi_r(\xi):\eta\mapsto \eta\xi$, (resp. $ \pi_l(\xi):\eta \mapsto \xi\eta $).
\begin{defn}
	Let $\frakA$ be a pseudo left Hilbert algebra. A vector $ \xi\in \overline{\frakA} $ is called \textit{left (right) bounded} if $ \pi_l(\xi) $ ($\pi_r(\xi)$) is bounded. We denote $ \mathfrak{B} $ ($ \mathfrak{B}' $) the set of all left (right) bounded vectors. We denote also $ \frakA' = D(S^*)\cap  \mathfrak{B}' $ and $ \frakA'' = D(S)\cap \mathfrak{B} $. We say $\frakA$ is full if $ \frakA = \frakA'' $.
\end{defn}

We will see that actually a full pseudo left Hilbert algebra is simply a direct sum of the actual left Hilbert algebra $ (\frakA^2)'' $ and the subspace $ D(S)\cap (\frakA^2)^{\perp} $ with the trivial multiplication (i.e. $\xi\eta = 0$). Since the part $ D(S)\cap (\frakA^2)^{\perp} $ has no information about multiplication, it is uniquely determined by the modular structure, or equivalently, the standard (real) subspace $\KK$ of the fixed points of $S$ in $ D(S)\cap (\frakA^2)^{\perp} $. Therefore, a full pseudo left Hilbert algebra is nothing but a full left Hilbert algebra together with a Hilbert space equipped with a standard subspace. One advantage of considering a pseudo left Hilbert algebra is that we can consider the free Araki-Woods algebras as a special case of free Poisson algebras (which will be defined later.) Another advantage is that pseudo left Hilbert algebra in fact appears naturally when studying the L\'{e}vy-It\^{o}'s decomposition of infinitely divisible variables.

One of the crucial results about left Hilbert algebras is that $ \frakA' := D(S^*)\cap \mathfrak{B}' $ is a right Hilbert algebra (i.e. an involutive algebra with involution $F=S^*$ such that the right multiplication is bounded and satisfies the right version of conditions 2)-4)). It can be shown that $\frakA' = \frakA'''$ and $ \frakA''=\frakA'''' $. Thus one can consider $ \frakA'' $ as a form of closure of $ \frakA $ (in fact, $ \frakA $ is dense in $ \frakA'' $ in vector norm as well as in the strong operator topology). We will see later that exactly the same kind of properties also holds for pseudo left Hilbert algebras.

\subsubsection{n.s.f. weights and left Hilbert algebras}
Let us also briefly recall some basic facts about (nondegenerate) left Hilbert algebras and weights on von Neumann algebras. We will mainly follow \cite{Tak03}, and we refer to \cite{Tak03} and \cite{stratila2020modular} for the details.

\begin{defn}
	A \textit{weight} on a von Neumann algebra $M$ is a map $ \varphi:M_+\to [0,\infty] $ such that $ \varphi(\lambda x+y)=\lambda \varphi(x)+\varphi(y) $ for all $ x,y\in M_+ $ and $\lambda\geq 0$. We say that $ \varphi $ is faithful if $ \varphi(x)\neq 0 $ for all positive $x\neq 0$; semifinite if the span of the set of elements with finite weight $ \{ x\in M_+: \varphi(x)<\infty \} $ generate $ M $; normal if for every bounded increasing net $ (x_i)_i $ in $ M_+ $, $ \varphi(\sup x_i)=\sup \varphi(x_i) $. In short, we say that $ \varphi $ is a n.s.f. weight if $\varphi$ is normal, semifinite, and faithful.
\end{defn}

Each left Hilbert algebra is naturally associated with a von Neumann algebra $ \mathcal{R}_l(\frakA) := \pi_{l}(\frakA)'' \subseteq B(\overline{\frakA})$ with a n.s.f. weight $ \varphi $ determined by $ \varphi( \pi_{l}(\xi)^*\pi_{l}(\xi) ) = \|\xi\|^2 $. And the commutant of the von Neumann algebra $ \mathcal{R}_{l}(\frakA) $ is exactly $ \mathcal{R}_r(\frakA'):=\pi_{r}(\frakA')'' $ and in particular we always have $ \mathcal{R}_{l}(\frakA) = \mathcal{R}_{l}(\frakA'') $. In particular, for the study of von Neumann algebra, we can always assume that $ \frakA $ is full.

Conversely, each von Neumann algebra $M$ with a n.s.f. weight $\varphi$ arises from a full left Hilbert algebra $\frakA_\varphi $: Denote
$$ \nn_{\varphi} = \{ x\in M: \varphi(x^*x)<\infty \},\quad \mm_\varphi = \nn_{\varphi}^*\nn_{\varphi}. $$
We also have $ \mm_\varphi = \mbox{span}\{\mm_\varphi^+\} = \mbox{span}\{ x^*x: x\in \nn_{\varphi} \} $. Since $ \nn_{\varphi} $ is a left ideal of $M $, we have $ \mm_\varphi \subseteq \nn_\varphi $. Also, $ \mm_\varphi $ is operator norm dense in $ \nn_{\varphi} $ as every $ x\in \nn_\varphi $ can be approximated by $ (xu_i)_{i} $ where $ (u_i)_i $ is a right approximate identity of $ \nn_{\varphi} $.

Let $ L^2(M,\varphi) $ be the Hilbert space associated with the positive sesquilinear form $ (x,y)\mapsto \varphi( x^*y ) $ on $ \nn_{\varphi} $, and $ \eta_{\varphi}:\nn_{\varphi}\to L^2(M,\varphi) $ be the canonical embedding map. (Let $\pi_\varphi$ be the natural left representation of $M$ on $L^2(M,\varphi)$, then the triple $(\pi_\varphi,L^2(M,\varphi),\eta_{\varphi})$ is also called a semi-cyclic representation of $ M $.) The left Hilbert algebra associated with $ (M,\varphi) $ is $ \frakA_\varphi = \eta_{\varphi}( \nn_{\varphi}\cap \nn_{\varphi}^* )\subseteq L^2(M,\varphi) $ with multiplication $ \eta_{\varphi}(x)\eta_{\varphi}(y):=\eta_{\varphi}(xy) $ and involution $ S_\varphi: \eta_{\varphi}(x)\mapsto \eta_{\varphi}(x^*) $.

For a full left Hilbert algebra $\frakA$, let $S = J\Delta^{1/2}$ be the polar decomposition of $S$. Since $S^2=1$ and $S$ is antilinear, $J$ is antiunitary with $ J^*=J^{-1} $, and the right polar decomposition of $S$ is $S=\Delta^{-1/2}J$. The Tomita-Takesaki theory then says that $$ J\frakA = \frakA',\quad \Delta^{it}\frakA=\frakA,\quad \Delta^{it}\frakA'=\frakA' $$ and $$ J\pi_{l}(\xi)J = \pi_{r}(J\xi),\quad \pi_{l}(\Delta^{it}\xi) = \Delta^{it}\pi_{l}(\xi)\Delta^{-it}, \quad \forall u\in \frakA.$$
In particular, we have $ J\mathcal{R}_{l}(\frakA)J = \mathcal{R}_{r}(\frakA') $ and $ \Delta^{it}\mathcal{R}_{l}(\frakA)\Delta^{-it}=\mathcal{R}_{l}(\frakA) $. The antiunitary operator $ J $ is called the modular conjugation of $\frakA$. And when $\frakA = \frakA_\varphi$ for a n.s.f. weight $\varphi$, the one parameter automorphism group $ \{\sigma_{t}\}_{t\in \RR}\subseteq \text{Aut}(M)$ defined as $\sigma^{\varphi}_t ( x):= \Delta_\varphi^{it}x\Delta_\varphi^{-it} $ is called the modular automorphism group of $ M $.

For each pair $ (M,\varphi) $, $ M' = \mathcal{R}_{r}(\frakA'_\varphi) $ has a natural weight $\varphi'$ induced by the right Hilbert algebra $ \frakA'_\varphi $ such that $ \varphi'(\pi_{r}(x)^*\pi_{r}(y) ) = \langle x,y \rangle$ for all $x,y\in \frakA'_\varphi$. In fact, one can easily check that $\varphi'(J_\varphi x J_\varphi) = \overline{\varphi(x)} $ for all $x\in \mm_{\varphi}$.

\subsubsection{decomposition of full pseudo left Hilbert algebras}
We now turn to show that a full pseudo left Hilbert algebra decomposes into the direct sum of a part with standard subspace and a part of an actual left Hilbert algebra . Recall first that $S$ is defined to be the closure of its restriction on $\frakA$. In particular, $ \frakA $ is a core of $ S $, i.e. $ \frakA $ is dense in $ D(S) $ in the graph norm $ \|\xi\|_S:= \|\xi\|+\|S\xi\| $.

\begin{lem}\label{perp of A2 has trivial multiplication}
	Let $ \frakA $ be a pseudo left Hilbert algebra,
	then $ \frakA^2 $ is a left Hilbert algebra. For all $ \xi\in \frakA$, $\eta\in (\frakA^2)^{\perp} $, $ \xi \eta = 0 $. If furthermore $ \eta\in \frakA $, then $ \eta\xi=0 $.
\end{lem}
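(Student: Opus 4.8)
The plan is to establish the three assertions in order, using conditions 1)--3) of a pseudo left Hilbert algebra together with the fact that $\frakA$ is a core for the closed involution $S$.

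First, to see that $\frakA^2$ is a left Hilbert algebra, I would check the four axioms directly. Conditions 1) and 2) are inherited from $\frakA$ since $\frakA^2$ is a subalgebra, and the involution on $\frakA^2$ is the restriction of $S$ (note $S(\xi\eta)=(S\eta)(S\xi)$, so $\frakA^2$ is $S$-invariant as an algebra, using that $\frakA$ is an involutive algebra). Condition 3), preclosedness of $S$ restricted to $\frakA^2$, is automatic because $S|_{\frakA}$ is preclosed and $\frakA^2\subseteq\frakA$. The only nontrivial axiom is 4), nondegeneracy: $(\frakA^2)^2=\mathrm{span}\{\xi_1\xi_2\xi_3\xi_4\}$ must be dense in $\overline{\frakA^2}$. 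This is where the second assertion of the lemma does the work: once we know $\xi\eta=0$ for $\xi\in\frakA$, $\eta\in(\frakA^2)^\perp$, it follows that $\pi_l(\xi)$ kills $(\frakA^2)^\perp$ for every $\xi\in\frakA$, hence $\pi_l(\xi)$ kills $(\overline{\frakA^2})^\perp\cap\overline{\frakA}$ by continuity; so the closure of $\frakA^2$ is exactly the subspace on which the representation $\pi_l(\frakA^2)$ is nondegenerate, and the cube $\frakA^2\cdot\frakA^2$ is dense in it. I would phrase this carefully: $\overline{\frakA^2}$ with the restricted data is a (nondegenerate) left Hilbert algebra. So I would actually prove the second assertion first and then return to close out axiom 4).

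For the second assertion --- $\xi\eta=0$ for $\xi\in\frakA$, $\eta\in(\frakA^2)^\perp$ --- the idea is to show $\langle\xi\eta,\zeta\rangle=0$ for all $\zeta$ in a dense set. For $\zeta\in\frakA$ (which is dense in $\overline{\frakA}$), condition 2) gives $\langle\xi\eta,\zeta\rangle=\langle\eta,(S\xi)\zeta\rangle$. Now $S\xi\in\frakA$ (the involution maps $\frakA$ to itself), so $(S\xi)\zeta\in\frakA^2$, and therefore $\langle\eta,(S\xi)\zeta\rangle=0$ since $\eta\perp\frakA^2$. As this holds for all $\zeta$ in the dense set $\frakA$, we get $\xi\eta=0$.

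For the third assertion --- if moreover $\eta\in\frakA$ then $\eta\xi=0$ --- the symmetric computation is to show $\langle\eta\xi,\zeta\rangle=0$ for $\zeta\in\frakA^2$, say $\zeta=\zeta_1\zeta_2$ with $\zeta_i\in\frakA$; but $\frakA^2$ need not be dense, so instead I would compute on all of $\frakA$. Using condition 2) with the roles arranged so that $\eta$ (which lies in $\frakA$) appears as the left multiplier: $\langle\eta\xi,\zeta\rangle=\langle\xi,(S\eta)\zeta\rangle$. Here $(S\eta)\zeta\in\frakA^2$ need not help directly since $\xi$ need not be perpendicular to $\frakA^2$. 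Instead I would use that $\xi\in\frakA$ and apply condition 2) in the other direction, or better: note $(S\eta)\zeta\in\frakA^2$, and pair with the already-established fact. Actually the clean route is: $\langle\eta\xi,\zeta\rangle=\langle\xi,(S\eta)\zeta\rangle$, and since $(S\eta)\zeta\in\frakA^2$ while we want to exploit $\eta\in(\frakA^2)^\perp$ --- take $S$ of $\eta$ to move it. Let me instead argue: for $\zeta\in\frakA$, $\langle\zeta,\eta\xi\rangle=\langle(S\zeta)^{?}\dots\rangle$; cleaner is $\langle\eta\xi,\zeta\rangle=\overline{\langle\zeta,\eta\xi\rangle}$ and apply 2) to $\langle\zeta\cdot,\cdot\rangle$ is not available. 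The honest approach: write $\langle\eta\xi,\zeta\rangle=\langle\xi,S(\eta)\zeta\rangle$; since $S\eta\in\frakA$ and $\zeta\in\frakA$, $S(\eta)\zeta\in\frakA^2$; now apply condition 2) once more, $\langle\xi,S(\eta)\zeta\rangle=\langle\xi, S(\eta)\zeta\rangle$ --- to finish, use that $\xi\in\frakA$ is left-bounded and $\pi_l(S\eta)$ applied \dots. The cleanest is: since $\eta\in(\frakA^2)^\perp$ and $S\eta\in\frakA$, for any $a,b\in\frakA$ we have $0=\langle\eta,ab\rangle=\langle (Sa)\eta^{?}\dots$; applying 2): $\langle\eta,ab\rangle=\langle (Sa)^{-1}\dots\rangle$ --- rather, $\langle a^*(\eta),b\rangle$ type manipulation shows $(Sa)\eta\perp\frakA$ is false. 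I will instead deduce it from $S$: since $\xi,\eta\in\frakA$, $\eta\xi = S(S(\eta\xi)) = S((S\xi)(S\eta))$, and $(S\xi)(S\eta)$ has $S\eta\in\frakA$ as its right factor with $S\eta\perp$? No --- $S\eta$ need not be in $(\frakA^2)^\perp$.

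\textbf{The main obstacle} is exactly this third assertion: the first proof (via condition 2) handles the left action cleanly, but the right action $\eta\xi$ requires either that $(\frakA^2)^\perp$ is $S$-invariant or a separate pairing argument. The fix is to observe that $\frakA^2$ is $S$-invariant (since $S(\frakA^2)=\frakA^2$), hence $(\frakA^2)^\perp$ is $S^*$-invariant, and combine with the right-bounded structure: for $\eta\in\frakA\cap(\frakA^2)^\perp$ and $\xi\in\frakA$, compute $\langle\eta\xi,\zeta\rangle=\langle\xi,(S\eta)\zeta\rangle$ for $\zeta\in\frakA$, then note $(S\eta)\zeta\in\frakA^2$ and write $\langle\xi,(S\eta)\zeta\rangle=\overline{\langle(S\eta)\zeta,\xi\rangle}$, and apply condition 2) to $\langle(S\eta)\zeta,\xi\rangle=\langle\zeta,(S^2\eta)\xi\rangle=\langle\zeta,\eta\xi\rangle$ --- which is circular, confirming that the genuine content is: $\eta\xi\perp\frakA^2$ (immediate from the left-action result applied with roles swapped, since $\frakA^2\cdot\eta\xi$ makes $\eta\xi$ land appropriately), but we need $\eta\xi\perp\overline{\frakA}$. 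The resolution: $\eta\in(\frakA^2)^\perp$ and $\frakA$ is dense, and $\eta\xi\in\frakA^2\subseteq\overline{\frakA^2}$, so writing $\eta\xi$'s norm via condition 2): $\|\eta\xi\|^2=\langle\eta\xi,\eta\xi\rangle=\langle\xi,(S\eta)(\eta\xi)\rangle$, and $(S\eta)(\eta\xi)\in\frakA^3\subseteq\frakA^2$; then $\langle\xi,(S\eta)(\eta\xi)\rangle=\langle\xi,\pi_l(S\eta)(\eta\xi)\rangle$. I would close the argument by showing $\pi_l(S\eta)=0$: for any $a,b\in\frakA$, $\langle(S\eta)a,b\rangle=\langle a,(S^2\eta)b\rangle=\langle a,\eta b\rangle=0$ since $\eta\in(\frakA^2)^\perp\Rightarrow\eta b$ need not vanish --- but $\langle a,\eta b\rangle=\overline{\langle\eta b,a\rangle}=\overline{\langle b,(S\eta)a\rangle}$, again circular. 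The actual clean statement must be that $\pi_l(S\eta)$ vanishes on $\frakA$: $(S\eta)a$ for $a\in\frakA$ --- by the \emph{first} part of the lemma (which we proved), with the multiplier $S\eta\in\frakA$ and $a$... no, $a\notin(\frakA^2)^\perp$ in general.

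Given the genuine subtlety, in the writeup I would present the first two assertions with the short condition-2) argument as above, and for the third prove instead that for $\eta\in\frakA\cap(\frakA^2)^\perp$, $S\eta\in\frakA$ also lies in $(\frakA^2)^\perp$ (using $S$-invariance of $\frakA^2$ and antiunitarity-type self-adjointness of the relevant projection on the core), then apply the already-proven left-multiplication identity with $S\eta$ in place of $\eta$ to get $(S\eta)\zeta=0$ for all $\zeta\in\frakA$, and finally $\eta\xi=S((S\xi)(S\eta))=S(\pi_l(S\xi)(S\eta))$; since $S\eta\in(\frakA^2)^\perp\cap\frakA$ and $S\xi\in\frakA$, the left-multiplication result gives $(S\xi)(S\eta)=0$, hence $\eta\xi=0$. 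The one technical point to verify carefully is the $S$-invariance of $(\frakA^2)^\perp$, which follows because $\frakA^2$ is $S$-invariant as a subspace of the core and $S$ restricted to the finite-dimensional-in-spirit algebraic structure respects the orthogonal decomposition --- more precisely I would note $P_{\overline{\frakA^2}}$ commutes with $S$ on $\frakA$ because $\frakA^2$ and (by the first part, applied symmetrically) its orthogonal complement inside $\frakA$ are both $S$-stable algebraic pieces, giving the claim. Everything else is a routine pairing computation.
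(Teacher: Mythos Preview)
Your argument for the second assertion ($\xi\eta=0$ for $\xi\in\frakA$, $\eta\in(\frakA^2)^\perp$) is correct and matches the paper. The real problem is the third assertion.

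Your final plan for $\eta\xi=0$ rests on showing $S\eta\in(\frakA^2)^\perp$ whenever $\eta\in\frakA\cap(\frakA^2)^\perp$. This is \emph{not} automatic: $S$ is only a preclosed antilinear involution, not an isometry (think $S=J\Delta^{1/2}$ with $\Delta\neq 1$), so $S$-invariance of the subspace $\frakA^2$ does not pass to the orthogonal complement. Your justification (``$P_{\overline{\frakA^2}}$ commutes with $S$ on $\frakA$ because \ldots its orthogonal complement inside $\frakA$ [is] $S$-stable'') is circular --- you are assuming what you want to prove. In fact the $S$-invariance of $(\frakA^2)^\perp\cap D(S)$ is established only \emph{later} in the paper, using the present lemma as input.

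The fix is much simpler than what you attempted, and you were one line away from it. You already had $\langle\zeta,\eta\xi\rangle=\langle(S\eta)\zeta,\xi\rangle$ for $\zeta\in\frakA$. Now use the involution identity on the product: $(S\eta)\zeta=S\bigl((S\zeta)\eta\bigr)$. Here $S\zeta\in\frakA$ and $\eta\in(\frakA^2)^\perp$, so by the second assertion (already proved) $(S\zeta)\eta=0$, hence $(S\eta)\zeta=0$ and $\langle\zeta,\eta\xi\rangle=0$ for all $\zeta\in\frakA$. No $S$-invariance of the complement is needed; the key is to keep $\eta$ (not $S\eta$) as the right factor so the second assertion applies directly.

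A smaller point: your treatment of axiom 4) for $\frakA^2$ is too breezy. Knowing that $\pi_l(\frakA)$ kills $(\frakA^2)^\perp$ tells you the representation of $\frakA$ on $\overline{\frakA^2}$ is nondegenerate, but you need $(\frakA^2)^2=\frakA^4$ dense in $\overline{\frakA^2}$, which is a statement about $\pi_l(\frakA^2)$. The paper handles this by a short descending argument: if $\xi\in\overline{\frakA^2}\cap(\frakA^4)^\perp$ then successively $\frakA^3\xi=0$, $\frakA^2\xi=0$, $\frakA\xi=0$, forcing $\xi\perp\frakA^2$ and hence $\xi=0$. Your nondegeneracy sketch can be completed along the same lines, but as written it asserts the conclusion rather than proving it.
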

\begin{proof}
	a) To show that $ \frakA^2 $ is a Hilbert algebra, we only need to show that $ (\frakA^2)^2=\frakA^4 $ is dense in the closure of $ \frakA^2 $. Let $ \xi\in (\frakA^4)^\perp\cap \overline{\frakA^2} $, then for all $\xi_1,\xi_2,\xi_3,\xi_4\in \frakA$, we have
	$$ \langle \xi_4,\xi_1\xi_2\xi_3 \xi \rangle = \langle (S\xi_3)(S\xi_2)(S\xi_1)\xi_4,\xi \rangle=0,$$
	which implies $ \frakA^3 \xi =\{0\} $. Therefore, for all $ \xi_1,\xi_2\in \frakA $, we have
	$$ \langle \xi_1\xi_2\xi, \xi_1\xi_2 \xi\rangle = \langle \xi_2\xi,(S\xi_1)\xi_1\xi_2 \xi \rangle = 0,$$
	hence $ \frakA^2 \xi = 0 $. Repeat the same argument, we get $ \frakA \xi = 0 $ and therefore $ \xi\in (\frakA^2)^\perp $. But by assumption $ \xi\in \overline{\frakA^2}$, so we get $ \xi=0 $.
	
	b) For $ \eta\in (\frakA^2)^{\perp} $, we have by definition $ 0=\langle fg,\eta \rangle = \langle g,(Sf)\eta \rangle $ for all $  f,g\in \frakA  $, therefore we have $ (Sf)\eta = 0 $ for all $ f\in \frakA $, thus $ \xi \eta = 0 $ for all $ \xi \in \frakA $. To see that $ \eta\xi $ is also $0$ when $ \eta\in \frakA\cap (\frakA^2)^{\perp} $, we note that for all $ \zeta\in \frakA $, since $ (S\zeta)\eta =0 $, we have
	$$ \langle \zeta,\eta\xi \rangle = \langle (S\eta)\zeta,\xi \rangle = \langle S( (S\zeta)\eta ),\xi \rangle =0. $$
\end{proof}

\begin{rmk}
	From the previous lemma, it is already obvious that the right and left multiplication by a vector in $ (\frakA^2)^\perp $ should always be $0$, and only the $ \frakA^2 $ part has non-degenerate multiplication. However, to actually prove that $\frakA$ has the desired decomposition (when $\frakA$ is full), one major difficulty is to show that $ D(S) $ can be decomposed into $ D(S)\cap \overline{\frakA^2} $ and $ D(S)\cap (\frakA^2)^\perp $. To overcome this difficulty, we will follow \cite{Tak03} to first show that $ \frakA' $ (defined below) has the desired decomposition which then implies the same decomposition for $ \frakA''$ as $ \frakA'' = (\frakA')' $.
\end{rmk}

Let $ \mathcal{R}_l(\frakA)$ again be the von Neumann generated by $ \pi_l(\frakA) \subseteq B( \overline{\frakA})$ (note that this is a degenerate representation), then by the previous Lemma, $ \overline{\frakA^2} $ is an invariant subspace of $  \mathcal{R}_l(\frakA) $ and $ \mathcal{R}_l(\frakA) $ acts on $ ({\frakA}^2)^{\perp} $ as $0$. In particular, let $ p $ be the orthogonal projection from $\overline{\frakA}$ onto $ \overline{\frakA^2} $, then $ p $ is the unit of $ \mathcal{R}_l(\frakA) $.

Recall that for a right bounded vector $\xi\in \mathfrak{B}' $, $ \pi_r(\xi) $ is defined to be the multiplication on the right. In particular, we have $ \pi_r(\mathfrak{B}')\subseteq \mathcal{R}_l(\frakA)'\subseteq B(\overline{\frakA}) $. One can also easily check that $ \mathfrak{B}' $ is invariant under $ \mathcal{R}_l(\frakA)' $ and $a\pi_r(\xi) = \pi_r(a\xi)$ for all $ a\in \mathcal{R}_l(\frakA)' $ and $ \xi\in \mathfrak{B}' $.

We can now follow exactly the same arguments in [Lemma VI.1.8 to Lemma VI.1.13 \cite{Tak03}] which we summarize below.

\begin{lem}{\cite{Tak03}}\label{Lemmas of Takesaki}
	\begin{enumerate}[(1)]
		\item $ \pi_r(\mathfrak{B}')^* \mathfrak{B}' \subseteq \frakA'$, and $ S^*(\pi_r(\xi))^*\eta = \pi_r(\eta)^*\xi $ for all $\xi,\eta\in \mathfrak{B}'$.
		\item $\frakA'$ is a pseudo right Hilbert algebra.
		\item For each $ \eta\in D(S^*) $, $ \pi_{r}(\eta) $ and $ \pi_r(S^*\eta) $ are closable operators affiliated with $ \mathcal{R}_l(\frakA)' $, and $ \pi_r(\eta)\subseteq (\pi_r(S^*\eta))^* $, $ \pi_r(S^*\eta) \subseteq (\pi_r(\eta))^*$.
		\item Let $\eta\in D(S^*)$, and $ \pi_r(\eta)=uh=ku $ be the left and right polar decomposition. For each continuous function $ f\in C_c(0,\infty) $ supported on a compact subset of $ (0,\infty) $, we have $ f(h),f(k)\in \pi_r(\frakA') $, $ f(h)S^*\eta, f(k)\eta \in (\frakA')^2 \subseteq \mathfrak{B}'$, and
		$$ \pi_r(f(h)S^*\eta) = hf(h)u^*,\quad \pi_r(f(k)\eta) = kf(k)u,\quad S^*f(k)\eta = f(h)^*S^*\eta. $$
	\end{enumerate}
\end{lem}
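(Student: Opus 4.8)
The plan is to transplant the chain of lemmas \cite{Tak03} running from VI.1.8 to VI.1.13 to the present generality. The point I would verify is that, with one small exception noted below, none of Takesaki's arguments for those lemmas actually invokes condition~4) in the definition of a left Hilbert algebra: every density statement appearing there is either not needed for (1)--(4) or is replaced by its evident pseudo analogue --- indeed (2) itself only claims that $\frakA'$ is a \emph{pseudo} right Hilbert algebra. The lone structural input proper to the degenerate setting is Lemma~\ref{perp of A2 has trivial multiplication}: it says that left and right multiplication by a vector of $(\frakA^2)^\perp=(1-p)\overline{\frakA}$ vanish, so on that complementary subspace every operator below is zero and all four assertions are vacuous, while on $\overline{\frakA^2}$ one is dealing with an honest left Hilbert algebra. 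Hence it suffices to rerun the usual proofs, which I outline.

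First I would record the elementary module calculus that uses only 1)--2): for $\xi\in\mathfrak{B}'$ one has $\pi_r(\xi)\in\mathcal{R}_l(\frakA)'$; $\mathfrak{B}'$ is invariant under $\mathcal{R}_l(\frakA)'$ with $\pi_r(a\xi)=a\pi_r(\xi)$; and for $f\in\frakA$ the operator $\pi_l(Sf)$ is bounded with $\pi_l(Sf)^*=\pi_l(f)$ (condition~2) together with $S^2=\mathrm{id}$ on $\frakA$). For (1), given $\xi,\eta\in\mathfrak{B}'$ the vector $\pi_r(\xi)^*\eta$ again lies in $\mathfrak{B}'$, and since $\frakA$ is a core of $S$ one computes, for $f\in\frakA$,
\[
\langle Sf,\,\pi_r(\xi)^*\eta\rangle=\langle\pi_l(Sf)\xi,\,\eta\rangle=\langle\xi,\,\pi_l(f)\eta\rangle=\langle\xi,\,\pi_r(\eta)f\rangle=\langle\pi_r(\eta)^*\xi,\,f\rangle,
\]
so $\pi_r(\xi)^*\eta\in D(S^*)$ with $S^*(\pi_r(\xi)^*\eta)=\pi_r(\eta)^*\xi$; this is the asserted identity, and in particular $\pi_r(\xi)^*\eta\in\frakA'=D(S^*)\cap\mathfrak{B}'$. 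Statement (2) is then bookkeeping: one uses (1) to see that the product $\xi\!\cdot\!\eta:=\pi_r(\eta)\xi$ keeps $\frakA'$ inside $D(S^*)$, that $F:=S^*|_{\frakA'}$ is an algebra involution satisfying the right-handed form of 2), and that $F$ is preclosed because $S$ is; no nondegeneracy enters, so one lands at a \emph{pseudo} right Hilbert algebra.

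For (3), fixing $\eta\in D(S^*)$, the operator $\pi_r(\eta)$ with domain the left-bounded vectors $\mathfrak{B}\supseteq\frakA$ commutes with every $\pi_l(\zeta)$ ($\zeta$ left-bounded), from $\pi_l(\zeta')\pi_r(\eta)\zeta=\pi_l(\zeta')\pi_l(\zeta)\eta=\pi_r(\eta)\pi_l(\zeta')\zeta$; hence once closability is known it is affiliated with $\mathcal{R}_l(\frakA)'$, and likewise for $\pi_r(S^*\eta)$. Closability together with the mutual inclusions $\pi_r(\eta)\subseteq\pi_r(S^*\eta)^*$ and $\pi_r(S^*\eta)\subseteq\pi_r(\eta)^*$ come from the form identity $\langle\pi_r(\eta)\zeta,\zeta'\rangle=\langle\zeta,\pi_r(S^*\eta)\zeta'\rangle$ for $\zeta,\zeta'\in\frakA$, which is once more the computation in \cite{Tak03} (using $\pi_l(S\zeta)^*=\pi_l(\zeta)$, $S(\zeta'\zeta)=(S\zeta)(S\zeta')$, and the definition of the antilinear adjoint $S^*$). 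Finally, for (4) I would pass to the polar decompositions $\pi_r(\eta)=uh=ku$ with $u\in\mathcal{R}_l(\frakA)'$ and $h,k$ positive self-adjoint affiliated with $\mathcal{R}_l(\frakA)'$, excise the kernels by $f\in C_c(0,\infty)$, and then verify directly on $\frakA$ that $f(k)\eta$ and $f(h)S^*\eta$ are genuine right-bounded vectors lying in $(\frakA')^2$, with $\pi_r(f(k)\eta)=kf(k)u$, $\pi_r(f(h)S^*\eta)=hf(h)u^*$, and $S^*f(k)\eta=f(h)^*S^*\eta$ --- using the relation $\overline{\pi_r(S^*\eta)}=\pi_r(\eta)^*$ obtained along the way. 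I expect this last step to be the main obstacle: it is the one place where one must simultaneously juggle the two unbounded operators $\pi_r(\eta)$, $\pi_r(S^*\eta)$, their adjoints and polar parts, and show that the bounded functional calculus of the affiliated operators re-enters the algebra as right multiplications; but since Takesaki's argument here again appeals only to 1)--3) --- and the degenerate directions contribute nothing, as there $\pi_r(\eta)=h=k=u=0$ by Lemma~\ref{perp of A2 has trivial multiplication} --- it carries over unchanged.
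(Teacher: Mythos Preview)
Your proposal is correct and matches the paper's approach: the paper does not give an independent proof either but simply states that the arguments of \cite[Lemmas VI.1.8--VI.1.13]{Tak03} go through verbatim in the pseudo setting, and your outline of those arguments is accurate.

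One comment on your framing, though. You begin by invoking Lemma~\ref{perp of A2 has trivial multiplication} to split $\overline{\frakA}$ into $\overline{\frakA^2}$ and $(\frakA^2)^\perp$ and say that on the latter piece the assertions are vacuous while on the former one is back in the nondegenerate case. This reduction is premature: for parts (3) and (4) the vector $\eta$ is an arbitrary element of $D(S^*)$, and at this point in the development one does \emph{not} yet know that $D(S^*)$ decomposes as $(D(S^*)\cap\overline{\frakA^2})\oplus(D(S^*)\cap(\frakA^2)^\perp)$ --- that decomposition is precisely what the present lemma is being set up to establish (it is the content of the lemma immediately following). So the splitting argument is circular as a logical reduction. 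Fortunately your subsequent direct outline of Takesaki's proofs does not actually rely on that splitting, and it is that direct route --- running the standard computations using only axioms 1)--3) --- that constitutes the proof, both in your write-up and in the paper.
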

A slight modification the proof of Lemma VI.1.13 \cite{Tak03} yield the following lemma.
\begin{lem}
	Let $ \eta\in D(S^*) $, and be $p$ the orthogonal projection onto $ \overline{\frakA^2} $, then $ p\eta\in D(S^*) $ and
	$$ S^*p\eta = pS^*\eta. $$
	In particular, $D(S^*)=(D(S^*)\cap \overline{\frakA^2} )\oplus (D(S^*)\cap {(\frakA^2)}^{\perp} )$. And for $ \eta \in \overline{\frakA} $, $ \eta \in D(S^*) $ if and only if $ p\eta,(1-p)\eta \in D(S^*) $.
\end{lem}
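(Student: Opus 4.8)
The goal is to show that for $\eta\in D(S^*)$, the vector $p\eta$ is again in $D(S^*)$ with $S^*p\eta = pS^*\eta$, where $p$ is the projection onto $\overline{\frakA^2}$. Since $S = \overline{S|_{\frakA}}$, its adjoint $S^*=F$ is the closure of the involution $\eta\mapsto S^*\eta$ on $\frakA'$, and $\frakA'$ is a pseudo right Hilbert algebra by Lemma \ref{Lemmas of Takesaki}(2); so it suffices to establish the claim first for a core of $S^*$. The natural plan is to follow the modification of [Lemma VI.1.13, \cite{Tak03}] indicated in the text: take $\eta\in D(S^*)$, form the left polar decomposition $\pi_r(\eta)=uh$ guaranteed by Lemma \ref{Lemmas of Takesaki}(3), approximate using functions $f\in C_c(0,\infty)$ with $f_n(h)\to 1$ strongly on $\overline{\mathrm{ran}\,h}$, and use Lemma \ref{Lemmas of Takesaki}(4) to write $f_n(k)\eta\in(\frakA')^2$ together with $S^*f_n(k)\eta=f_n(h)^*S^*\eta$. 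Since $p$ is the unit of $\mathcal{R}_l(\frakA)$ and $\pi_r(\mathfrak B')\subseteq\mathcal{R}_l(\frakA)'$, the projection $p$ commutes with $u$, $h$, $k$ and all the spectral functions $f_n(h), f_n(k)$; hence $p$ commutes with the approximants in a way compatible with both $\pi_r$ and $S^*$.

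Concretely, I would argue as follows. First, observe $pf_n(k)\eta = f_n(k)p\eta$: indeed $f_n(k)\in\pi_r(\frakA')\subseteq\mathcal{R}_l(\frakA)'$ commutes with $p\in\mathcal{R}_l(\frakA)$. As $n\to\infty$, $f_n(k)\eta\to\eta$ (because $k$ and $h$ have the same support projection, $\eta\perp\ker k = \ker\pi_r(\eta)$ up to the kernel part which is handled separately, and $f_n(k)\to$ the support projection strongly), so $f_n(k)p\eta = pf_n(k)\eta\to p\eta$. Meanwhile $S^*f_n(k)\eta = f_n(h)^*S^*\eta\to pS^*\eta$ or $\to S^*\eta$ depending on the support; more precisely $f_n(h)^*S^*\eta\to qS^*\eta$ where $q$ is the support projection of $h$, and since $q\leq p$ (as $\mathrm{ran}\,\pi_r(\eta)\subseteq\overline{\frakA^2}$ when $\eta\in\mathfrak B'$, extending by affiliation to general $\eta\in D(S^*)$) one checks $f_n(h)^*S^*\eta\to pS^*\eta$. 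Applying closedness of $S^*$ to the sequence $f_n(k)p\eta\to p\eta$ with $S^*(f_n(k)p\eta)\to pS^*\eta$ yields $p\eta\in D(S^*)$ and $S^*p\eta=pS^*\eta$. The complementary statement $(1-p)\eta\in D(S^*)$ with $S^*(1-p)\eta = (1-p)S^*\eta$ follows by subtraction, giving the direct-sum decomposition of $D(S^*)$; and the final "if and only if" is then immediate, since one direction is the decomposition just proved and the other is linearity of $S^*$ on its domain.

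The main obstacle is the bookkeeping around support projections: one must verify that the range of $\pi_r(\eta)$ for $\eta\in D(S^*)$ (not merely for right-bounded $\eta$) lies in $\overline{\frakA^2}$, equivalently that the support projection $q$ of $h$ satisfies $q\le p$, and simultaneously that the convergence $f_n(k)\eta\to\eta$ holds on the nose rather than up to a kernel component. Both points hinge on the fact that $p$ is the unit of $\mathcal{R}_l(\frakA)$ together with $\pi_r(\eta)$ being affiliated with $\mathcal{R}_l(\frakA)'$ (Lemma \ref{Lemmas of Takesaki}(3)): affiliation forces $p$ to commute with the polar parts, and then $\pi_r(\eta) = p\,\pi_r(\eta) = \pi_r(\eta)\,p$ shows both $\mathrm{ran}\,\pi_r(\eta)\subseteq p\overline{\frakA}$ and that $(1-p)\eta\in\ker\pi_r(\eta)$, so $(1-p)\eta$ contributes nothing to the polar decomposition. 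Once these identifications are in place the argument is a routine spectral approximation; the only genuinely delicate step is making sure the modification of Takesaki's lemma goes through verbatim in the degenerate (pseudo) setting, i.e. that none of the cited results in Lemma \ref{Lemmas of Takesaki} secretly used nondegeneracy of $\pi_l$.
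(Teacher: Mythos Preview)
Your approach is exactly the paper's --- use Lemma~\ref{Lemmas of Takesaki}(4) to produce the sequence $f_n(k)\eta\in(\frakA')^2$ with $S^*f_n(k)\eta=f_n(h)^*S^*\eta$, identify the two limits, and apply closedness of $S^*$. But there is a genuine gap at the identification step.

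You write ``$f_n(k)\eta\to\eta$'' and later ``$f_n(k)p\eta\to p\eta$''. What you actually get is $f_n(k)\eta\to s_l(\pi_r(\eta))\,\eta$, and the whole content of the lemma is the equality $s_l(\pi_r(\eta))\,\eta=p\eta$. Your obstacle paragraph establishes only one half: from $\pi_r(\eta)\xi=\pi_l(\xi)\eta=p\,\pi_l(\xi)\eta$ you correctly deduce $\mathrm{ran}\,\pi_r(\eta)\subseteq p\overline{\frakA}$, hence $s_l\le p$, hence $(1-p)\eta\in\ker k$. But $s_l\le p$ only gives $s_l\eta=s_lp\eta$; it does \emph{not} give $s_lp\eta=p\eta$. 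That reverse inclusion --- that $p\eta$ actually lies in the closed range of $\pi_r(\eta)$ --- is the nontrivial step, and nothing in commutation or affiliation alone forces it.

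The paper supplies precisely this missing ingredient: since $\overline{\frakA^2}$ carries a nondegenerate action of $\pi_l(\frakA^2)$, one can choose a net $\xi_i\in\frakA^2$ with $\pi_l(\xi_i)\to p$ strongly, and then
\[
p\eta=\lim_i\pi_l(\xi_i)\eta=\lim_i\pi_r(\eta)\xi_i\in\overline{\mathrm{ran}\,\pi_r(\eta)}=s_l(\pi_r(\eta))\overline{\frakA},
\]
so $s_lp\eta=p\eta$; combined with $s_l\le p$ this yields $s_l\eta=p\eta$. The analogous argument applied to $S^*\eta$ (using $\pi_r(S^*\eta)\subseteq\pi_r(\eta)^*$) gives $s_r(\pi_r(\eta))S^*\eta=pS^*\eta$. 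Without this approximate-identity step your argument stalls: you would only conclude $s_l\eta\in D(S^*)$ with $S^*s_l\eta=s_rS^*\eta$, which is not the statement. (Incidentally, your parenthetical ``$k$ and $h$ have the same support projection, $\eta\perp\ker k=\ker\pi_r(\eta)$'' is also inaccurate: the support of $k$ is $s_l=uu^*$ and that of $h$ is $s_r=u^*u$, and $\ker k=(\mathrm{ran}\,\pi_r(\eta))^\perp$, not $\ker\pi_r(\eta)$.)
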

\begin{proof}
	We keep the notations in the last lemma. Let $ (f_n)_{n\geq 1} $ be an increasing sequence in $C_c(0,\infty)$ of positive functions such that $ f_n $ converges pointwisely to $1$ on $ (0,\infty) $, then we have strong operator limit $$ \lim_{n\to\infty} f_n(h) = s_r(\pi_r(\eta)),\quad \lim_{n\to\infty} f_n(k) = s_l(\pi_r(\eta)),$$
	where $ s_l(\pi_r(\eta)) $ and $ s_r(\pi_r(\eta)) $ are the left and right support of $ \pi_r(\eta) $. We claim that $ s_l(\pi_r(\eta))\eta = p\eta $. Indeed, since $ \pi_l(\frakA^2) $ acts trivially on $ (\frakA^2)^{\perp} $ and $ \overline{(\frakA^2)} $ is a non-degenerate representation of $ \pi_{l}(\frakA^2) $, we can choose a net $ (\xi_i) $ in $ \frakA^2 $ such that $ \{\pi_{l}(\xi_i)\} $ converges strongly to $p$. We then have
	$$ p\eta = \lim_{i} \pi_l(\xi_i)\eta = \lim_{i} \pi_r(\eta)\xi_i \in s_l(\pi_r(\eta))\overline{\frakA}, $$
	and in particular, $ s_l(\pi_r(\eta))p\eta =p\eta $. Since $ p $ is the unit of $ \mathcal{R}_l(\frakA) $, we obtain $ s_l(\pi_r(\eta))\eta =p\eta $. Similarly, we also have $ s_r(\pi_r(\eta))S^*\eta = pS^*\eta $. As $S^*$ is closed, the limits $ \lim_{n\to \infty} f_n(k)\eta= p\eta $ and $ \lim_{n\to \infty}f_n(h)\eta = pS^*\eta $ then imply that $ p\eta\in D(S^*) $ and $ S^*p\eta = pS^*\eta $.
\end{proof}

\begin{prop}\label{decomposition of pseudo hilbert algebra}
	For any pseudo left Hilbert algebra $\frakA$, $\frakA'=(\frakA^2)'\oplus ((\frakA^2)^\perp \cap D(S^*))$ with the multiplication $ (\xi_1+\eta_1)(\xi_2+\eta_2)=\xi_1\xi_2 $ for $ \xi_i\in (\frakA^2)' $ and $ \eta_i\in  (\frakA^2)^\perp \cap D(S^*)$.
	
	Similarly, $ \frakA'' = (\frakA^2)'' \oplus ((\frakA^2)^\perp \cap D(S))$ with the multiplication $ (\xi_1+\eta_1)(\xi_2+\eta_2)=\xi_1\xi_2 $ for $ \xi_i\in (\frakA^2)'' $ and $ \eta_i\in  (\frakA^2)^\perp \cap D(S).$
\end{prop}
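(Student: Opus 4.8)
The plan is to establish the decomposition of $\frakA'$ first and to deduce that of $\frakA''$ from the identity $\frakA''=(\frakA')'$, following the pattern indicated in the remark above. Write $p\in B(\overline{\frakA})$ for the orthogonal projection onto $\overline{\frakA^2}$; by Lemma~\ref{perp of A2 has trivial multiplication} and the discussion after it, $p$ is the unit of $\mathcal{R}_l(\frakA)$, so $p\in\mathcal{R}_l(\frakA)\cap\mathcal{R}_l(\frakA)'$ and $\pi_l(\mu)(1-p)=0$ for every left bounded $\mu$. The lemma immediately preceding this proposition gives that $S^*$ commutes with $p$; since $S=(S^*)^*$, a routine adjoint computation then shows $S$ commutes with $p$ as well. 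Hence $D(S)$ and $D(S^*)$ split orthogonally along $p$, and $S$, $S^*$ act block-diagonally with respect to $\overline{\frakA^2}\oplus(\frakA^2)^\perp$.

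For the first assertion I would first check that $\mathfrak{B}'$ splits along $p$ too. If $\eta'\in(\frakA^2)^\perp$ then $\pi_r(\eta')\mu=\pi_l(\mu)\eta'=\pi_l(\mu)(1-p)\eta'=0$ for left bounded $\mu$, so $(\frakA^2)^\perp\subseteq\mathfrak{B}'$; and if $\eta\in\mathfrak{B}'$ then $\pi_r(p\eta)\mu=\pi_l(\mu)p\eta=\pi_l(\mu)\eta=\pi_r(\eta)\mu$, so $p\eta\in\mathfrak{B}'$ and $(1-p)\eta\in\mathfrak{B}'$; thus $\mathfrak{B}'=(\mathfrak{B}'\cap\overline{\frakA^2})\oplus(\frakA^2)^\perp$. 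Intersecting with the block splitting of $D(S^*)$ gives
\[
\frakA'=D(S^*)\cap\mathfrak{B}'=\bigl(D(S^*)\cap\mathfrak{B}'\cap\overline{\frakA^2}\bigr)\oplus\bigl(D(S^*)\cap(\frakA^2)^\perp\bigr),
\]
whose second summand is the Hilbert-space part claimed. It remains to identify the first summand with $(\frakA^2)'=D(S^*_{\frakA^2})\cap\mathfrak{B}'_{\frakA^2}$, formed for the genuine (nondegenerate, by Lemma~\ref{perp of A2 has trivial multiplication}) left Hilbert algebra $\frakA^2$ on $\overline{\frakA^2}$. One inclusion is routine: $S_{\frakA^2}\subseteq S|_{\overline{\frakA^2}}$ gives $D(S^*)\cap\overline{\frakA^2}\subseteq D(S^*_{\frakA^2})$, and since $\pi_l(w)|_{\overline{\frakA^2}}\in\mathcal{R}_l(\frakA^2)$ for $w\in\frakA^2$, a vector of $\overline{\frakA^2}$ right bounded for $\frakA$ is right bounded for $\frakA^2$. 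The other inclusion needs the two matching statements (a) $\frakA^2$ is a core for $S|_{\overline{\frakA^2}}$, so that $D(S^*_{\frakA^2})=D(S^*)\cap\overline{\frakA^2}\subseteq D(S^*)$, and (b) $(\frakA^2)'\subseteq\mathfrak{B}'$. With these in hand the multiplication formula is immediate from Lemma~\ref{perp of A2 has trivial multiplication}.

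For the second assertion, recall $\frakA'$ is a pseudo right Hilbert algebra with $\frakA''=(\frakA')'$ (Lemma~\ref{Lemmas of Takesaki}(2)). By the first assertion $\frakA'=(\frakA^2)'\oplus D$, $D:=(\frakA^2)^\perp\cap D(S^*)$, an orthogonal direct sum whose first summand is the genuine right Hilbert algebra dual to $\frakA^2$ — its square dense in $\overline{\frakA^2}$ since $\frakA^2$ is nondegenerate (Lemma~\ref{perp of A2 has trivial multiplication}) — and whose second summand carries the zero multiplication and the closed involution $S^*|_{(\frakA^2)^\perp}$. Applying the $\,'$ construction, which distributes over such an orthogonal sum by the same reasoning (now applied to the manifestly split $\frakA'$), gives $\frakA''=\bigl((\frakA^2)'\bigr)'\oplus D'$. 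Here $\bigl((\frakA^2)'\bigr)'=(\frakA^2)''$ is the fundamental theorem for genuine left Hilbert algebras, while the $\,'$ of a Hilbert space with zero multiplication and closed involution $T$ is the same space with involution $T^*$; taking $T=S^*|_{(\frakA^2)^\perp}$, whose adjoint is the block $S|_{(\frakA^2)^\perp}$ of $S$, yields $D'=(\frakA^2)^\perp\cap D(S)$. This gives $\frakA''=(\frakA^2)''\oplus\bigl((\frakA^2)^\perp\cap D(S)\bigr)$ with the stated multiplication.

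The crux is statement (a): the preceding lemma hands us the block splitting of $D(S^*)$, but not the identification $D(S^*_{\frakA^2})=D(S^*)\cap\overline{\frakA^2}$, and a priori $S^*_{\frakA^2}$ could have strictly larger domain, which would break $(\frakA^2)'\subseteq\frakA'$. I would prove that $\frakA^2$ is a core for $S|_{\overline{\frakA^2}}$ using a bounded approximate identity $(e_\lambda)$ of the genuine left Hilbert algebra $\frakA^2$, with $Se_\lambda=e_\lambda$ and $\pi_l(e_\lambda)\to p$, $\pi_r(e_\lambda)\to p$ strongly (available, e.g., from functional calculus in the modular operator of $\frakA^2$): given $\xi\in D(S)\cap\overline{\frakA^2}$, choose $\mu_n\in\frakA$ with $\mu_n\to\xi$ and $S\mu_n\to S\xi$ (possible since $\frakA$ is a core for $S$); then $e_\lambda\mu_n\in\frakA^3\subseteq\frakA^2$ and $S(e_\lambda\mu_n)=(S\mu_n)(Se_\lambda)=(S\mu_n)e_\lambda\to p\,S\mu_n$ as $\lambda$ grows, so $p\mu_n$ lies in the graph closure of $\frakA^2$; since $p\mu_n\to\xi$ and $p\,S\mu_n\to S\xi$ in graph norm (as $S$ commutes with $p$), so does $\xi$. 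Statement (b) follows from similar approximate-identity estimates together with the identity $\pi_l(\mu)^*=\pi_l(S\mu)$ on $\overline{\frakA}$, which lets one pass from a general $\mu\in\frakA$ to the product $(S\mu)\mu\in\frakA^2$. The delicate technical point throughout is controlling these approximate identities under the a priori only densely defined right multiplications, exactly as in Takesaki's treatment of the analogous nondegenerate statements.
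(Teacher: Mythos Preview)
Your proof is correct and follows essentially the same route as the paper: split $\frakA'$ along the projection $p$ onto $\overline{\frakA^2}$ using the preceding lemma, identify the $\overline{\frakA^2}$-component with $(\frakA^2)'$, and then obtain the $\frakA''$ statement by passing to the dual pseudo right Hilbert algebra $\frakA'$. The paper's proof simply asserts the equality $\overline{\frakA^2}\cap D(S^*)\cap\mathfrak{B}'=(\frakA^2)'$ without further comment, whereas you correctly isolate the nontrivial inclusions (your (a) and (b)) and supply an approximate-identity argument for them; this extra care is justified and your argument is sound.
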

\begin{proof}
	By Lemma \ref{perp of A2 has trivial multiplication}, for every $ \eta\in (\frakA^2)^{\perp} $, $\pi_r(\eta)=0$. Therefore, $ (\frakA^2)^\perp \subseteq \mathfrak{B}' $. Let $ w=\xi+\eta\in \frakA' $ with $ \xi\in \overline{\frakA^2} $ and $\eta\in (\frakA^2)^{\perp} $, by the previous lemma, $ \xi,\eta\in D(S^*) $. Therefore, $ \eta \in D(S^*)\cap \mathfrak{B}'=\frakA' $, which implies also $ \xi = w-\eta \in \frakA'\subseteq \mathfrak{B}' $. In particular, $ \xi\in \overline{\frakA^2}\cap D(S^*)\cap \mathfrak{B}' = (\frakA^2)' $. The multiplication rule then follows from the fact that $ \pi_r(\eta) = \pi_r(S^*\eta) = 0 $ for all $ \eta\in (\frakA^2)^\perp \cap D(S^*) $. Following the same arguments, one has also that $\eta\in \overline{\frakA}$ is in $D(S) $ if and only $ p\eta,(1-p)\eta\in D(S) $, and that $ \frakA'' = (\frakA^2)'' \oplus ( (\frakA^2)^\perp \cap D(S) ) $.
\end{proof}

Finally, we have the following modification of Theorem VI.1.26 \cite{Tak03} which shows the density of $\frakA$ in $\frakA''$.

\begin{prop}\label{approximate A'' by A}
	Let $ \frakA $ be a pseudo left Hilbert algebra. If $ \xi\in \frakA'' $, then for every $\varepsilon>0$, there exists a sequence $ (\xi_n ) $ in $\frakA$ such that
	$$ \lim_{n\to\infty} \|\xi-\xi_n\|=0, \quad \lim_{n\to\infty} \|S\xi-S\xi_n\|=0,\quad \text{and }\|\pi_l(\xi_n)\|\leq \max\{ \|\pi_l(\xi)\|,\varepsilon \}, $$
	and $ \pi_l(\xi_n) $ converges to $\pi_l(\xi)$ in the strong operator topology.
\end{prop}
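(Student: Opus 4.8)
The plan is to combine the structure theorem, Proposition~\ref{decomposition of pseudo hilbert algebra}, with the classical approximation theorem for genuine left Hilbert algebras (Theorem VI.1.26 of \cite{Tak03}), and then to splice the two resulting sequences together after a small rescaling.

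First I would decompose, using Proposition~\ref{decomposition of pseudo hilbert algebra}, $\xi=\xi_1+\xi_2$ with $\xi_1\in(\frakA^2)''$ and $\xi_2\in(\frakA^2)^{\perp}\cap D(S)$. By Lemma~\ref{perp of A2 has trivial multiplication} the left representation annihilates $(\frakA^2)^{\perp}$, so $\pi_l(\xi)=\pi_l(\xi_1)$ and $\|\pi_l(\xi)\|=\|\pi_l(\xi_1)\|$; it therefore suffices to approximate $\xi_1$ and $\xi_2$ separately. Since $\frakA^2$ is a genuine left Hilbert algebra (Lemma~\ref{perp of A2 has trivial multiplication}), applying Theorem VI.1.26 of \cite{Tak03} to $\frakA^2$ and $\xi_1\in(\frakA^2)''$ (with parameter $\varepsilon$) produces $\alpha_n\in\frakA^2\subseteq\frakA$ with $\alpha_n\to\xi_1$, $S\alpha_n\to S\xi_1$ in norm, $\|\pi_l(\alpha_n)\|\le\max\{\|\pi_l(\xi_1)\|,\varepsilon\}$, and $\pi_l(\alpha_n)\to\pi_l(\xi_1)$ in the strong operator topology; here one uses that the left representation of $\frakA^2$ is the restriction to $\overline{\frakA^2}$ of that of $\frakA$ (extended by $0$ on $(\frakA^2)^{\perp}$), so operator norms and strong limits are unaffected by the ambient space.

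The substance of the argument is the ``degenerate'' part: I would produce $\beta_n\in\frakA$ with $\beta_n\to\xi_2$, $S\beta_n\to S\xi_2$ in norm and, crucially, $\|\pi_l(\beta_n)\|\to 0$. The starting point is that $\frakA$ is a core for $S$, so there are $w_n\in\frakA$ with $w_n\to\xi_2$, $Sw_n\to S\xi_2$; and if $p$ denotes the projection onto $\overline{\frakA^2}$ (the unit of $\mathcal R_l(\frakA)$), then, by the lemma preceding Proposition~\ref{decomposition of pseudo hilbert algebra} and its $S$-analogue, $p$ commutes with $S$ on $D(S)$, so from $p\xi_2=0$ one gets $pw_n\to 0$ and $S(pw_n)=pSw_n\to 0$; thus $(1-p)w_n\to\xi_2$ in the graph norm, while $\pi_l(w_n)=\pi_l(pw_n)$ because $\pi_l$ kills $(1-p)w_n$. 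The obstruction is that $(1-p)w_n\notin\frakA$ and that $\|\pi_l(w_n)\|=\|\pi_l(pw_n)\|$ need not be small. To remedy this I would invoke the functional-calculus truncation underlying Lemma~\ref{Lemmas of Takesaki}(4): writing $\pi_l(w_n)=u_nh_n=k_nu_n$ for the polar decompositions and choosing $f\in C_c(0,\infty)$ with $0\le f\le 1$, the element $f(k_n)w_n$ lies in $(\frakA'')^2\subseteq\overline{\frakA^2}$ with a $\pi_l$-norm controlled by $\|k_nf(k_n)\|$; letting $f\nearrow 1$ on $(0,\infty)$ isolates the $\pi_l$-carrying portion of $w_n$, which lives in $\overline{\frakA^2}$ and which, since $pw_n\to 0$, one subtracts off by a Theorem VI.1.26 \cite{Tak03}-approximant $\gamma_n\in\frakA^2$, obtaining $\beta_n:=w_n-\gamma_n\in\frakA$ whose graph-norm limit is still $\xi_2$ and whose $\pi_l$-norm is forced to $0$. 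I expect this step---showing that the truncation genuinely drives $\|\pi_l(\beta_n)\|$ to $0$ while keeping the approximants inside $\frakA$---to be the main obstacle; it is precisely where Takesaki's argument has to be adapted to the pseudo setting, and it relies on combining Lemma~\ref{Lemmas of Takesaki}(4) with the compatibility of $p$ and the modular operators.

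Finally I would recombine by rescaling. With $c:=\max\{\|\pi_l(\xi)\|,\varepsilon\}=\max\{\|\pi_l(\xi_1)\|,\varepsilon\}>0$, put $t_n:=1-\|\pi_l(\beta_n)\|/c\in[0,1]$ (valid once $\|\pi_l(\beta_n)\|\le c$), so that $t_n\to 1$, and set $\xi_n:=t_n\alpha_n+\beta_n\in\frakA$. Then $\xi_n\to\xi$ and $S\xi_n\to S\xi$ in norm, $\pi_l(\xi_n)=t_n\pi_l(\alpha_n)+\pi_l(\beta_n)\to\pi_l(\xi_1)=\pi_l(\xi)$ in the strong operator topology, and
$$\|\pi_l(\xi_n)\|\le t_n\|\pi_l(\alpha_n)\|+\|\pi_l(\beta_n)\|\le t_nc+\|\pi_l(\beta_n)\|=c=\max\{\|\pi_l(\xi)\|,\varepsilon\},$$
which is the asserted bound. (If $\xi_1=0$ one simply takes $\xi_n:=\beta_n$.)
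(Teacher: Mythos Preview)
Your route is considerably more elaborate than the paper's, and the place you flag as the main obstacle is a genuine gap that your sketch does not close. The paper simply observes that Takesaki's proof of Theorem VI.1.26 nowhere uses axiom 4) (density of $\frakA^2$); it relies only on the core property of $\frakA$ in $D(S)$, the commutant structure from Lemma~\ref{Lemmas of Takesaki}, and the functional calculus of (4). Hence the same construction runs verbatim in the pseudo setting. The one adjustment is that Takesaki's bound $\|\pi_l(\xi_n)\|\le\|\pi_l(\xi)\|$ is obtained by a normalization that presupposes $\|\pi_l(\xi)\|>0$; in the pseudo case $\pi_l(\xi)$ may vanish, and then one simply runs the construction with cutoff parameter $\varepsilon$ instead, yielding $\|\pi_l(\xi_n)\|\le\varepsilon$. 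No decomposition of $\xi$ is needed.

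In your scheme the $\xi_2$ step does not go through as written. You pick $w_n\in\frakA$ with $w_n\to\xi_2$ in graph norm; then indeed $pw_n\to 0$ in graph norm and $\pi_l(pw_n)=\pi_l(w_n)$. But graph-norm convergence $pw_n\to 0$ gives \emph{no} control on $\|\pi_l(w_n)\|$: the map $\zeta\mapsto\|\pi_l(\zeta)\|$ is not graph-norm continuous, even on a genuine left Hilbert algebra. Applying Takesaki's theorem to $pw_n\in(\frakA^2)''$ produces $\gamma_n\in\frakA^2$ with $\pi_l(\gamma_n)\to\pi_l(w_n)$ only \emph{strongly}, so $\|\pi_l(w_n-\gamma_n)\|$ can remain as large as $2\|\pi_l(w_n)\|$, which is itself unbounded in $n$. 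Your functional-calculus truncation $f(k_n)w_n$ lands in $(\frakA'')^2$, not in $\frakA$, and turning it into an element of $\frakA$ with a prescribed $\pi_l$-bound is exactly the content of Takesaki's VI.1.26 argument---run in the pseudo algebra $\frakA$, since $\frakA$ need not split along $p$ (cf.\ the remark following this proposition). In short, completing your degenerate step amounts to proving the proposition in the special case $\pi_l(\xi)=0$ by the very method the paper applies to general $\xi$; the preliminary decomposition via Proposition~\ref{decomposition of pseudo hilbert algebra} therefore buys nothing.
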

\begin{proof}
	The same arguments in Theorem VI.1.26 \cite{Tak03} still work for pseudo left Hilbert algebra, except that we no longer require that $ \|\pi_l(\xi)\|\neq 0 $ (as for pseudo left Hilbert algebra it is possible that $ \pi_l(\xi)=0 $ for nonzero $\xi$). In particular, when $ \pi_l(\xi) =0$, one can only guarantee that $ \|\pi_l(\xi_n)\| $ is sufficiently small (i.e. less than $\varepsilon$).
\end{proof}

\begin{rmk}
	While $\frakA''$ has the decomposition $ \frakA'' = (\frakA^2)'' \oplus ( (\frakA^2)^\perp \cap D(S) ) $, in general there is no guarantee that $ \frakA $ also has similar decomposition. In fact, we do not even know whether the subspace $ \frakA\cap (\frakA^2)^\perp $ is nontrivial. Nevertheless, for our study of von Neumann algebras, the above density result shows that we can always assume that $\frakA$ is full (see Proposition \ref{density results}).
\end{rmk}

\subsection{Completely positive maps and bimodules}
We recall two equivalent description of bimodules of von Neumann algebras.

Let $M$ and $N$ be two von Neumann algebras. If $X$ is a self-dual right Hilbert $N$-module, we denote $\mathcal{L}_N(X)$ the algebra of all $N$-linear bounded operators on $X$.
\begin{defn}
	A $M$-$N$ $W^*$-bimodule is a pair $(X,\pi)$ with $X$ a self-dual right	$N$-module such that the linear span of $\{ \langle \xi,\eta \rangle_N,\xi,\eta\in X \} $ is $\sigma$-weakly dense in $N$, and $\pi:M\mapsto \mathcal{L}_N(X)$ a unital normal injective homomorphism.
\end{defn}

\begin{defn}
	A correspondence from $M$ to $N$ is a unital $*$-representation	$(\pi, \HHH)$ 
	of the algebraic tensor	product	$M\odot	N^{op}$	on a Hilbert space	$\HHH$ such that the restriction to both $M=M\odot 1$ and $N^{op}= 1\odot N^{op}$	is normal. We shall write $x\xi y =\pi(	x)\pi(y^{op})\xi$ for $x\in M, y\in N$ and $\xi\in \HHH$, where $y^{op}=Jy^*J\in N'\subseteq B(L^2(N))$ and we identify $N'$	with $N^{op}$.	
\end{defn}

These two notions are equivalent in the following ways: Given a $M$-$N$ $W^*$-bimodule $X$, let $\HHH(X)$ be the Hilbert space obtained by taking the completion of $ X\odot L^2(N) $ (quotient the null space) with respect to the
inner product
$$ \langle a_1\otimes \xi_1,a_2\otimes \xi_2 \rangle:= \langle \xi_1,\langle a_1,a_2 \rangle_N\xi_2 \rangle,\quad \forall a_i\in X, \xi_i\in L^2(N), $$
then $ \HHH(X) $ is a correspondence from $M$ to $N$ with $N^{op}$ acting on the second component $L^2(N)$. Conversely, if $\HHH$ is a correspondence from $M$ to $N$, then the space $ X(\HHH):= \mathcal{L}_N(L^2(N),\HHH) $ of all $N$-linear bounded operators from $ L^2(N) $ to $\HHH$ is a $M$-$N$ $W^*$-bimodule. We refer to \cite{anantharaman1990relative} for more details about the relationship between these two notions.

While the $W^*$-bimodule perspective is not directly relative to our Fock space construction, it is convenient to have it when discussing the second quantization construction in later sections.

If $\HHH$ is a correspondence from $M$ to $N$, let $\overline{\HHH}$ be the conjugate Hilbert space of $\HHH$, then $ \overline{\HHH} $ is a natural correspondence from $ N $ to $M$ with action $ y\hat{\xi}x := \overline{ x^* \xi y^*} $ for $x\in M$, $u\in N$, and $ \hat{\xi}\in \overline{\HHH} $. Therefore the $M$-$N$-bimodule $ X(\HHH) $ is also associated with a $N$-$M$-bimodule $X(\overline{\HHH})$. In general, there is no direct way of describing $ X(\overline{\HHH}) $ in terms of $ X(\HHH) $. However, if the bimodule or corresondence) is given by a normal completely positive map $T:M\to N$, then $X(\overline{\HHH})$ can infact be described by a dual map $ T^\star:N\to M $ in the sense of Accardi and Cecchini \cite{AC82} (which was then generalized by \cite{petz1984dual}).

If $T:M\to N$ is a normal completely positive map from $M$ to $N$, then $T$ is naturally associated with a $M$-$N$-bimodule $X_T$ obtained taking the completion of $M\odot N$ (quotient the null space) with respect to the $N$-valued inner product $ \langle m_1\otimes n_1,m_2\otimes n_2 \rangle_N:= n_1^*T(m_1^*m_2)n_2\in N $. Similarly, $T$ is naturally associated with the
correspondence $\HHH_T$ obtained by taking the competion of $ M\odot L^2(N) $ (quotient the null space) with respect to the (scalar-valued) inner product $ \langle m_1\otimes \xi_1,m_2\otimes \xi_2 \rangle_T:= \langle \xi_1,T(m_1^*m_2)\xi_2 \rangle $. For $m\in M,n\in N,\xi\in L^2(N)$, we will denote by $ m\otimes_T n\in X_T $ the image of $m\otimes n$ under the quotient and similarly by $ m\otimes_T \xi\in \HHH_{T} $ the image of $ m\otimes \xi $ under the quotient. We can easily check that $X_T\simeq X(\HHH_{T}):= \mathcal{L}_N(L^2(N),\HHH_{T})$ with the action
$$ (m\otimes_T n)\xi := m\otimes_T(n\xi )\in \HHH_{T},\quad \forall m\otimes_{T} n\in X_T,\xi\in L^2(N). $$

\subsubsection{dual of normal positive maps}
Suppose that $\varphi$ and $ \psi $ are two n.s.f. weights on $M$ and $N$. It is shown in \cite{petz1984dual} (as a generalization of \cite{AC82}) that if $T:M\to N$ is a normal positive linear map, and there exists a constant $C\geq 0$ such that $ \psi\circ T \leq C\varphi $, then we can construct a unique dual normal positive map $ T^\ast: N\to M $ from $N$ to $M$. To describe $ T^\ast $, we first recall a duality between $ M $ and $ \mm_{\varphi} $ for a fixed n.s.f. weight $\varphi$ on $ M $.

Let $ \varphi $ be a n.s.f. weight on $M$ and $ \varphi' $ the weight $ \varphi'(J_\varphi x J_\varphi) = \overline{\varphi(x)} $ on $M'$. It is proved by Haagerup \cite{HAAGERUP1975302} that for each $ y'\in \mm_{\varphi'}= J \mm_{\varphi} J \subseteq M'$, there is a unique normal linear functional $ \theta(x')\in M_* $ such that if $ y' = \pi_r(\xi_1)^*\pi_r(\xi_2) $ with $ \xi_1,\xi_2\in \mathfrak{B}'\subseteq L^2(M) $, then
$$ \theta( y')(x) = \langle \xi_1, x\xi_2 \rangle,\forall y\in M. $$
In fact, $ \theta:\mm_{\varphi'}\to M_* $ is linear completely positive and has dense image. Moreover, for self-adjoint $ y'\in \mm_{\varphi'} $, one has $ \|\theta(y')\| = \inf\{ \varphi'(x'_1)+\varphi'(x_2): x'_i\in (\mm_{\varphi'})_+, y'=x'_1-x'_2 \} $. For more details about $ \theta $, we refer to \cite{HAAGERUP1975302}, see also Lemma VII.1.8. \cite{Tak03} and Lemma 1.8. \cite{stratila2020modular}.

If we compose $ \theta $ with the isomorphism $ M\ni y\mapsto y^{op} = J y^* J\in M $, then we obtain a bilinear form
$$ [x,y]_{\varphi} := \theta(y^{op})(x),\quad \forall x\in M, \forall y\in \mm_{\varphi}. $$
Since for each $y\in \mm_{\varphi}$, $ \theta(y^{op}) \in M_*$ is in the predual, we call the norm on $ \mm_{\varphi}$, $ \| y\|_{L^1}:= \| \theta(y^{op}) \| $ the $ L^1 $-norm of $ y $. We will denote $ L^1(\varphi) $ the completion of $ \mm_{\varphi} $ with respect to $ \|\cdot\|_{L^1} $.

\begin{defn}{(\cite{petz1984dual})}
	Let $\varphi$ be a n.s.f. weight on $ M $. The \textit{biweight} $ [\cdot,\cdot ]_\varphi: M\times L^1(\varphi)\to \CC $ is a bilinear form on $ M $ and $ L^1(\varphi) $ such that
	$$ [x,y]_\varphi := \theta(y^{op})(x)= \langle J\xi_2, x J \xi_1 \rangle,\quad \forall x\in M, \forall y= \pi_l(\xi_1)^*\pi_l(\xi_2)\in \mm_{\varphi}, \xi_i\in \mathfrak{B}.$$
\end{defn}

Notice that for all $ y\in \mm_{\varphi} $, in fact $[1,y]_\varphi = \varphi(y)$. We also observe that $ [\cdot,\cdot]_\varphi $ is symmetric in the sense that if both $ x,y $ are in $ \mm_{\varphi} $, then $ [x,y]_\varphi = [y,x]_\varphi $. Indeed, suppose that $ x= \pi_l(\xi_1)^*\pi_l(\xi_2) $ and $ y= \pi_l(\eta_1)^*\pi_l(\eta_2)$ with $ \xi_i,\eta_i\in \mathfrak{B}\subseteq L^2(M) $, then
\begin{align*}
	[x,y]_\varphi &= \langle J \eta_2,xJ\eta_1 \rangle = \langle \pi_l(\xi_1)J\eta_2,\pi_l(\xi_2)J \eta_1 \rangle = \langle \pi_r(J\eta_2)\xi_1,\pi_r(J\eta_1)\xi_2 \rangle\\
	&=\langle J\pi_l(\eta_2)J\xi_1,J\pi_r(\eta_1)J\xi_2\rangle = \langle \pi_r(\eta_1)J\xi_2,\pi_l(\eta_2)J\xi_1 \rangle\\
	&= \langle J\xi_2,yJ\xi_1 \rangle = [y,x]_\varphi.
\end{align*}
Therefore, $[x,y]_\varphi$ is well-defined if one of $ x $ and $y$ is in $\mm_{\varphi}$. Also, for positive elements, we can also consider the following extension of $  [\cdot,\cdot]_\varphi $: for $\forall x\in M_+,y\in N_+$
$$ [x,y]_\varphi := \begin{cases}
	 \langle J\eta, x J \eta \rangle,&\text{if } x\in M^+, y= \pi_l(\eta)^*\pi_l(\eta)\in \mm_{\varphi}^+,\\
	  \langle J\xi,yJ\xi \rangle,&\text{if } y\in M^+, x= \pi_l(\xi)^*\pi_l(\xi)\in \mm_{\varphi}^+,\\
	 +\infty, &\text{otherwise}.
\end{cases} $$

\begin{prop}{(\cite{petz1984dual})}\label{petz lemma}
	\begin{enumerate}[(1)]
		\item If $ (x_n)$ is an increasing net of positive elements, then $ \lim_n [x_n,y]_\varphi=[ \lim_n x_n,y ]_\varphi $ for all $y\in M$.
		\item The biweight $ [\cdot,\cdot]_\varphi $ is a duality under which $ M $ is the conjugate space of $ L^1(\varphi) $.
		\item If $\omega:\mm_{\varphi}\to \CC$ is a positive linear map such that $ \omega \leq C \varphi $ on $ \mm_{\varphi}^+ $, then there exists a unique $ m\in M_+ $ such that $ \|m\|\leq C^{1/2} $ and $ [m,x]_\varphi = \omega(x) $ for all $ x\in \mm_{\varphi} $.
	\end{enumerate}
\end{prop}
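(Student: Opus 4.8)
All three parts rest on Haagerup's map $\theta$ and the identity $\|y\|_{L^1}=\|\theta(y^{op})\|_{M_*}$; I would prove them in the order (2), (1), (3).

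\textit{Part (2)} is almost formal. By construction $L^1(\varphi)$ is the completion of $\mm_\varphi$ in the seminorm $\|y\|_{L^1}=\|\theta(y^{op})\|_{M_*}$, so $y\mapsto\theta(y^{op})$ is, tautologically, norm-preserving from $\mm_\varphi$ into $M_*$; since its range is dense (Haagerup), it extends to an isometric isomorphism $\iota\colon L^1(\varphi)\xrightarrow{\ \sim\ }M_*$ (after quotienting by the kernel, which changes nothing). Taking Banach-space adjoints and using $(M_*)^*=M$ gives an isometric isomorphism $M\xrightarrow{\ \sim\ }L^1(\varphi)^*$, and chasing a vector $x\in M$ through it produces the functional $y\mapsto\iota(y)(x)=\theta(y^{op})(x)=[x,y]_\varphi$. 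Thus $[\cdot,\cdot]_\varphi$ is precisely this duality.

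\textit{Part (1).} For fixed $y\in\mm_\varphi$, $[\cdot,y]_\varphi=\theta(y^{op})$ is a normal functional on $M$, hence $\sigma$-weakly continuous, and a bounded increasing net converges $\sigma$-weakly to its supremum, so $\lim_n[x_n,y]_\varphi=[\sup_n x_n,y]_\varphi$. For a general $y\in M_+$ one uses the $[0,\infty]$-valued extension: choose $y_k\nearrow y$ with $y_k=\pi_l(\eta_k)^*\pi_l(\eta_k)\in\mm_\varphi^+$ (semifiniteness), so that $[x,y_k]_\varphi=\langle J\eta_k,\pi_\varphi(x)J\eta_k\rangle$ is normal in $x$ and, by symmetry of the biweight on $\mm_\varphi$ together with normality of $\varphi$, $[x,\cdot]_\varphi$ is normal in $y$; interchanging the two increasing suprema yields the claim.

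\textit{Part (3)} is a Radon--Nikodym statement that I would settle by a GNS/commutant argument. By Cauchy--Schwarz for the positive functional $\omega$ and $\omega\le C\varphi$ on $\mm_\varphi^+$, the form $(\eta_\varphi(a),\eta_\varphi(b))\mapsto\omega(a^*b)$ is well defined (faithfulness of $\varphi$) and bounded positive on the dense subspace $\eta_\varphi(\nn_\varphi)\subseteq L^2(M,\varphi)$, hence implemented by an operator $T$ with $0\le T\le C\cdot 1$. From $\omega((ca)^*b)=\omega(a^*c^*b)$ one gets $T\in\pi_\varphi(M)'=J\pi_\varphi(M)J$, so $T=J\pi_\varphi(m)J$ for a unique $m\in M$, with $m\ge0$ (as $JTJ\ge0$ and $\pi_\varphi$ is faithful) and $\|m\|$ controlled by the operator norm of $T$. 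To see that $m$ represents $\omega$: for $x\in\mm_\varphi^+$ write $x=\pi_l(\eta)^*\pi_l(\eta)$ with $\eta=\eta_\varphi(x^{1/2})$; the definition of the biweight and $\pi_\varphi(m)=JTJ$ give $[m,x]_\varphi=\langle J\eta,\pi_\varphi(m)J\eta\rangle=\langle J\eta,JT\eta\rangle=\overline{\langle\eta,T\eta\rangle}=\omega(x)$, and this extends to $\mm_\varphi=\mathrm{span}\,\mm_\varphi^+$ by linearity. Uniqueness: $[m,\cdot]_\varphi=0$ on $\mm_\varphi$ forces $m$ to annihilate the dense set $\theta(\mm_{\varphi'})\subseteq M_*$, hence $m=0$.

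The genuinely nontrivial input is not in the above bookkeeping but in Haagerup's theorem that $\theta$ is linear, completely positive, and has \emph{dense} range in $M_*$ --- this is what makes Part (2) a Banach-space duality and what powers the uniqueness in Part (3). I would take that as given (it is recalled above) and, for the remaining routine verifications, refer to \cite{petz1984dual}.
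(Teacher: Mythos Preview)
The paper does not give its own proof of this proposition: it is stated with the citation \cite{petz1984dual} and no argument is supplied. Your sketch is correct and is essentially the standard route one finds in Petz's paper --- Haagerup's density of $\theta(\mm_{\varphi'})$ in $M_*$ gives the duality (2) and the uniqueness in (3), normality of $\theta(y^{op})\in M_*$ gives (1), and the commutant/Radon--Nikodym argument for (3) is the expected one. One small point: your bound on $\|m\|$ via $\|T\|$ yields $\|m\|\le C$, not the $C^{1/2}$ written in the statement (which appears to be a slip in the paper; only existence and uniqueness are used downstream).
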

The following is the main theorem of \cite{petz1984dual} with a slight modification.

\begin{thm}{(\cite{petz1984dual})}\label{duality of positive maps}
	Let $ \varphi,\psi $ be two n.s.f. weights on von Neumann algebras $ M $ and $ N $, and $ T:M\to N $ be a positive map such that $ T(\mm_{\varphi}^+)\subseteq \mm_\psi^+ $ and $\psi\circ T\leq C\varphi$ for some constant $C>0$. 
	\begin{enumerate}[(1)]
		\item There exists a unique normal positive map $ T^\star: N\to M $ such that for $ m\in \mm_\varphi $ and $n\in N$,
		$$ [ T^\star(n), m]_\varphi = [ n,T(m) ]_\psi,$$
		$ T^\star( \mm_{\psi} )\subseteq \mm_{\varphi} $, and $ \varphi\circ T^\star \leq \| T(1) \|\psi $.
		\item $ T|_{\mm_\varphi}=T^{\star \star}|_{\mm_\varphi} $. In particular, if $T$ is normal or $ \varphi(1)<\infty $, then $ T=T^{\star \star} $.
	\end{enumerate}
	
	Furthermore,
	\begin{enumerate}[(1)]
		\setcounter{enumi}{2}
		\item if $T$ is completely positive, then $ T^\star $ is also completely positive. Let $ T_2: L^2(M,\varphi)\to L^2(N,\psi) $ be the extension of the bounded linear map $ \eta_{\varphi}(m)\mapsto \eta_{\psi}(T(m)) $, then $ T^\star_2 = J_\varphi (T_2)^* J_\psi$.
		\item if $ T $ is subunital (i.e. $T(1)\leq 1$), then $ T^\star $ is weight decreasing.
		\item if $ T $ is weight decreasing, then $ T^\star $ is subunital.
		\item if $T$ is normal, then $T$ is unital (weight preserving) if and only if $ T^\star $ is weight preserving (resp. unital).
	\end{enumerate}
\end{thm}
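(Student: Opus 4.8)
The plan is to follow Petz \cite{petz1984dual} and build everything on the biweight duality of Proposition~\ref{petz lemma}. \textbf{Step 1: construction of $T^\star$ (part (1), except the sharp bound).} For $n\in N_+$ set $\omega_n(m):=[n,T(m)]_\psi$ for $m\in\mm_\varphi$; this is linear and finite because $T(\mm_\varphi)\subseteq\mm_\psi$, and if $m\in\mm_\varphi^+$ then $T(m)=\pi_l(\mu)^*\pi_l(\mu)\in\mm_\psi^+$, so $\omega_n(m)=\langle J\mu,nJ\mu\rangle\ge0$ and $\omega_n(m)\le\|n\|\psi(T(m))\le C\|n\|\varphi(m)$. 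By Proposition~\ref{petz lemma}(3) there is a unique $T^\star(n)\in M_+$ with $\|T^\star(n)\|\le(C\|n\|)^{1/2}$ and $[T^\star(n),m]_\varphi=\omega_n(m)$ for all $m\in\mm_\varphi$. Additivity of $n\mapsto\omega_n$ together with the uniqueness in Proposition~\ref{petz lemma}(3) makes $T^\star$ additive on $N_+$, hence it extends to a positive linear map $T^\star:N\to M$ with $[T^\star(n),m]_\varphi=[n,T(m)]_\psi$ for all $n\in N$, $m\in\mm_\varphi$; since $\mm_\varphi$ is dense in $L^1(\varphi)$, this relation determines $T^\star$ uniquely among normal positive maps (Proposition~\ref{petz lemma}(2)). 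Normality of $T^\star$ comes from Proposition~\ref{petz lemma}(1): for $n_i\uparrow n$ bounded in $N_+$, the increasing net $T^\star(n_i)$ has supremum $x\le T^\star(n)$, and $[x,m]_\varphi=\lim_i[n_i,T(m)]_\psi=[n,T(m)]_\psi=[T^\star(n),m]_\varphi$ for $m\in\mm_\varphi^+$, so $x=T^\star(n)$.

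\textbf{Step 2: the sharp bound $\varphi\circ T^\star\le\|T(1)\|\psi$ (the main obstacle).} Fix $n\in\mm_\psi^+$, write $n=\pi_l(\nu)^*\pi_l(\nu)$, and pick $e_i\uparrow 1$ with $e_i\in\mm_\varphi^+$. Using that the positive biweight is symmetric and applying Proposition~\ref{petz lemma}(1),
\[
\varphi(T^\star(n))=[1,T^\star(n)]_\varphi=\lim_i[e_i,T^\star(n)]_\varphi=\lim_i[T^\star(n),e_i]_\varphi=\lim_i[n,T(e_i)]_\psi=\lim_i\langle J\nu,T(e_i)J\nu\rangle ,
\]
and since $T(e_i)\uparrow z$ for some $z\le T(1)$ this limit equals $\langle J\nu,zJ\nu\rangle\le\|T(1)\|\,\|\nu\|^2=\|T(1)\|\psi(n)$. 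For an arbitrary $n\in N_+$ one replaces $n$ by the increasing net $n^{1/2}e_in^{1/2}$, which lies in $\mm_\psi^+$ since it has finite $\psi$-weight, and then uses normality of $\psi$, of $\varphi$, and of $T^\star$. In particular $T^\star(\mm_\psi)\subseteq\mm_\varphi$, completing part (1). The delicate point is managing the two nested monotone limits so as to land on the optimal constant $\|T(1)\|$ rather than a cruder one; once this is in place, everything else is bookkeeping with the biweight.

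\textbf{Step 3: parts (2), (4), (5), (6).} Since $T^\star$ is normal, positive, maps $\mm_\psi$ into $\mm_\varphi$, and satisfies $\varphi\circ T^\star\le\|T(1)\|\psi$, Step 1 applies to it, producing $T^{\star\star}:=(T^\star)^\star$. For $m\in\mm_\varphi$ and $n\in\mm_\psi$, symmetry of the biweight gives
\[
[T(m),n]_\psi=[n,T(m)]_\psi=[T^\star(n),m]_\varphi=[m,T^\star(n)]_\varphi=[T^{\star\star}(m),n]_\psi ,
\]
and since $\mm_\psi$ separates $N$ we get $T|_{\mm_\varphi}=T^{\star\star}|_{\mm_\varphi}$; if $\varphi(1)<\infty$ then $\mm_\varphi=M$, and if $T$ is normal then $T$ and $T^{\star\star}$ are normal and agree on the $\sigma$-weakly dense set $\mm_\varphi$, so $T=T^{\star\star}$. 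Parts (4) and (5) are immediate: $T(1)\le1$ forces $\|T(1)\|\le1$, hence $\varphi\circ T^\star\le\psi$; and $\psi\circ T\le\varphi$ lets us take $C=1$, so $\|T^\star(1)\|\le1$, i.e.\ $T^\star(1)\le1$. For (6), assume $T$ normal (so $T=T^{\star\star}$): if $T(1)=1$ then for $n\in\mm_\psi$ one has $\varphi(T^\star(n))=[1,T^\star(n)]_\varphi=[T^{\star\star}(1),n]_\psi=[1,n]_\psi=\psi(n)$, whence $\varphi\circ T^\star=\psi$ by normality; if $\psi\circ T=\varphi$ then $[T^\star(1)-1,m]_\varphi=\varphi(m)-\varphi(m)=0$ for all $m\in\mm_\varphi^+$ (hence all $m\in\mm_\varphi$), so $T^\star(1)=1$; the two converses follow by applying these implications to $T^\star$ and using $T=T^{\star\star}$.

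\textbf{Step 4: part (3).} For complete positivity, compute directly from the pairing on $M_k(M)$ (with weight $\varphi\otimes\mathrm{Tr}_k$) that $(T\otimes\mathrm{id}_{M_k})^\star=T^\star\otimes\mathrm{id}_{M_k}$ for every $k$; since $T$ completely positive makes $T\otimes\mathrm{id}_{M_k}$ positive, Step 1 shows $T^\star\otimes\mathrm{id}_{M_k}$ is positive for all $k$, i.e.\ $T^\star$ is completely positive. For the Hilbert-space description, the Kadison--Schwarz inequality $T(m)^*T(m)\le\|T(1)\|T(m^*m)$ (valid since $T$ is $2$-positive) yields $\psi(T(m)^*T(m))\le C\|T(1)\|\varphi(m^*m)$, so $T_2:\eta_\varphi(m)\mapsto\eta_\psi(T(m))$ extends to a bounded operator $L^2(M,\varphi)\to L^2(N,\psi)$; inserting the $L^2$-form $[x,a^*b]_\varphi=\langle J_\varphi\eta_\varphi(b),xJ_\varphi\eta_\varphi(a)\rangle$ of the biweight into the identity of Step 1 identifies $\eta_\varphi(T^\star(n))=J_\varphi(T_2)^*J_\psi\,\eta_\psi(n)$ for $n\in\mm_\psi$, that is $T^\star_2=J_\varphi(T_2)^*J_\psi$.
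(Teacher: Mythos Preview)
Your proof is correct and follows essentially the same Petz biweight-duality route as the paper: construct $T^\star(n)$ via Proposition~\ref{petz lemma}(3), verify normality by monotone limits, derive the sharp bound $\varphi\circ T^\star\le\|T(1)\|\psi$ by approximating $1$ from inside $\mm_\varphi^+$, and deduce (2)--(6) from symmetry of the biweight. Two small remarks: in Step~2 your second use of ``$e_i$'' (in $n^{1/2}e_in^{1/2}$) must be a net in $\mm_\psi^+\subseteq N$, not the earlier one in $\mm_\varphi^+\subseteq M$---the expression does not type-check otherwise; and your argument for (5) via the norm bound of Proposition~\ref{petz lemma}(3) with $C=1$ is in fact cleaner than the paper's explicit net computation (note, incidentally, that the bound in that proposition should read $\|m\|\le C$, not $C^{1/2}$, though this is immaterial when $C=1$). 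Your sketch of the $T_2^\star$ formula in Step~4 is correct in spirit but considerably more compressed than the paper's explicit inner-product verification on $(\frakA_\varphi')^2\times\frakA_\psi^2$.
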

Note that in (3), when $ T $ is completely positive, then by the Kadison-Schwarz inequality, $ T(x)^*T(x) \leq \|T(1)\|T(x^*x) $ for each $ x\in M $. For $x\in \nn_\varphi$, $ \langle \eta_\psi(T(x)), \eta_\psi(T(x)) \rangle = \psi( T(x)^*T(x) )\leq \|T(1)\|\psi(T(x^*x)) \leq C\|T(1)\| \varphi(x^*x)=C\|T(1)\| \langle \eta_\varphi(x),\eta_\varphi(x)\rangle$. Therefore, the map $ \eta_\varphi(x)\mapsto \eta_\psi(T(x)) $ indeed extends to a bounded linear map $ T_2: L^2(M,\varphi)\to L^2(N,\psi) $.
\begin{proof}
	\begin{enumerate}[(1)]
		\item For any $n\in N_+$, consider the positive linear map $ \mm_{\varphi}\ni x\mapsto [n,T(x)]_\psi $. Since for positive $x$, $ [n,T(x)]_\psi\leq \|n\|\|T(x)\|_{L^1(\psi)}=\|n\|\psi(T(x))\leq C\|n\|\varphi(x) $, by Proposition \ref{petz lemma}, there exists a $T^\star(n)\in M_+$ such that $ [n,T(x)]_\psi = [T^\star(n),x]_\varphi $ for all $x\in m_\varphi$. Clearly $ T^\star(n) $ depends linearly on $n$, and thus we can extends $ T^\star $ a positive linear map on $ N_+ $. To see that $T^\star$ is normal, we may follow \cite{AC82}: Let $ (n_i)\subseteq N_+ $ be an increasing net with $ \lim_i n_i = n $. For all $ m\in \mm_{\varphi} $,
		\begin{align*}
			[\lim_{i} T^\star(n_i),m ]_\varphi= \lim_{i} [T^\star(n_i),m]_\varphi=\lim_{i} [n_i,T(m)]_\psi= [n,T(m)]_\psi= [T^\star(n),m]_\varphi
		\end{align*}
		Since $ \mm_{\varphi} $ is dense in $L^1(\varphi)$, we obtain $\lim_{i} T^\star(n_i)= T^\star(n)$. Finally, let $ (e_i)  $ be an increasing net in $\mm_{\varphi}^+$ converging to $1$, then for $ n\in N_+ $, $ \varphi\circ T^{\star}(n) = [ 1,T^\star(n) ]_\varphi = [\lim e_i,T^\star(n)]_\varphi= \lim [T(e_i),n ]_\psi \leq \|T(1)\|\varphi(n) $.
		\item Since the biweight is symmetric, for all $ m\in \mm_{\varphi} $, $ n\in \mm_{\psi} $, we have $ [n,T^{\star\star}(m)]_\psi=[T^\star(n),m]_\varphi=[n,T(m)]_\psi $. In particular, $ T^{\star\star} = T $ on $ \mm_{\varphi} $.
		\item The completely positivity follows from direct computation, see Proposition 7 \cite{petz1984dual}. Let $\xi_1,\xi_2\in \frakA_\varphi$, $ \xi = J_\varphi ( (S_\varphi \xi_1)  \xi_2)\in J_\varphi \frakA_\varphi^2=(\frakA'_\varphi)^2 $ and $ \eta = (S_\psi\eta_1)\eta_2\in \frakA_\psi^2 $. We have
		\begin{align*}
			&\langle \xi,T^{\star}_2(\eta) \rangle = \langle  \pi_r( J_\varphi S_\varphi \xi_1 )J_\varphi\xi_2, \eta_\varphi(T^{\star}(\pi_l(\eta)) ) \rangle= \langle  \pi_r( S_\varphi^* J_\varphi \xi_1 )J_\varphi\xi_2, \eta_\varphi(T^{\star}(\pi_l(\eta)) ) \rangle = \langle J_\varphi \xi_1, T^{\star}(\pi_l(n))J_\varphi \xi_2\rangle\\
			=&[T^{\star}(\pi_l(\eta)),\pi_l( \xi_2 )^*\pi_l(\xi_1) ]_\varphi = [\pi_l(\eta),T( \pi_l( \xi_2 )^*\pi_l(\xi_1) ) ]_\psi = \langle J_\psi \eta_2,  T( \pi_l( \xi_2 )^*\pi_l(\xi_1) )J_\psi \eta_1 \rangle\\
			=& \langle J_\psi \eta_2, \pi_r(J_\psi \eta_1) T_2( (S_\varphi \xi_2 )\xi_1 ) \rangle = \langle J_\psi( (S_\psi \eta_1) \eta_2),T_2J_\varphi \xi \rangle = \langle J_\psi T_2 J_\varphi \xi,\eta  \rangle.
		\end{align*}
		Since $ (\frakA'_\varphi)^2 $ and $ \frakA_\psi^2 $ is dense in $ L^2 $, we have $ T^{\star}_2 = (J_\psi T_2 J_\varphi)^*  $.
		\item This directly follows from (1).
		\item Let $(e_i)$ be a increasing net in $ \mm_\psi^+ $ strongly converging to $1$. We have $ \|T^\star(1)\|\leq \liminf_{i }\|T^\star(e_i)\| = \liminf_{i} \sup_{n\in \mm_{\varphi}^+, \varphi(n)\leq 1}[ T^\star(e_i),n ]_\varphi = \liminf_{i} \sup_{n\in \mm_{\varphi}^+, \varphi(n)\leq 1}[ e_i,T(n) ]_\psi\leq \liminf_{i} \sup_{n\in \mm_{\varphi}^+, \varphi(n)\leq 1}\|e_i\|\psi(T(x))\leq 1. $
		\item If $ T $ is unital, let $ (e_i)  $ again be an increasing net in $\mm_{\varphi}^+$ converging to $1$, then for $ n\in N_+ $, $\varphi\circ T^{\star}(n) = [ 1,T^\star(n) ]_\varphi = [\lim e_i,T^\star(n)]_\varphi= \lim [T(e_i),n ]_\psi = [1,n]_\psi = \psi(n)$. If $T$ is weight preserving, then for all $n\in N_+$, $ [1,n]_\psi = \psi(n)= \varphi \circ T^{\star}(n)= [1,T^{\star}(n)]_\varphi = [T(1),n]_\psi $. And therefore, we must have $ T(1)=1 $. The other two implications then follows from the duality $ T^{\star\star} =T$.
	\end{enumerate}
\end{proof}

We now show the isometry $ \HHH_{T^{\star}} \simeq \overline{\HHH}_{T} $ for a normal completely positive $ T $.

\begin{prop}\label{H of T star is conjugate H T}
	Let $ T:M\to N $ be a normal completely positive map such that $ \psi\circ T\leq C\varphi $ for some $C\geq 0$. The linear map
	$$ \ell_T: n\otimes_{T^{\star}} J_\varphi \eta_{\varphi}(m)\mapsto \overline{ m\otimes_T J_\psi \eta_\psi(n) } $$
	for $n\in \nn_{\psi}$ and $m\in \nn_{\varphi}$ extends to an $N$-$M$-linear isometry from $ \HHH_{T^{\star}}  $ to $ \overline{\HHH}_{T} $.
\end{prop}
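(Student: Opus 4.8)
The plan is: (i) to prove a single inner‑product identity that simultaneously shows $\ell_T$ is well defined on a dense domain and that it is isometric there; (ii) to deduce density of both the domain and the range; (iii) to check $N$--$M$-bimodularity on generators; and (iv) to extend by continuity, obtaining the asserted $N$--$M$-linear isometry (in fact a unitary, since its range will turn out to be dense).

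For (i), the key computation is that for $n_1,n_2\in\nn_\psi$ and $m_1,m_2\in\nn_\varphi$,
\[
\langle J_\varphi\eta_\varphi(m_1),\,T^\star(n_1^*n_2)\,J_\varphi\eta_\varphi(m_2)\rangle
=\overline{\langle J_\psi\eta_\psi(n_1),\,T(m_1^*m_2)\,J_\psi\eta_\psi(n_2)\rangle},
\]
whose left‑hand side is $\langle n_1\otimes_{T^\star}J_\varphi\eta_\varphi(m_1),\,n_2\otimes_{T^\star}J_\varphi\eta_\varphi(m_2)\rangle_{\HHH_{T^\star}}$ and whose right‑hand side is, by the definition of the conjugate bimodule, $\langle\overline{m_1\otimes_T J_\psi\eta_\psi(n_1)},\,\overline{m_2\otimes_T J_\psi\eta_\psi(n_2)}\rangle_{\overline{\HHH}_T}$. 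To prove it I would first note that $\psi\circ T\le C\varphi$ forces $T(\mm_\varphi^+)\subseteq\mm_\psi^+$ (a positive element of finite $\psi$-weight lies in $\mm_\psi^+$), so Theorem \ref{duality of positive maps} applies and provides a normal positive---hence $*$-preserving---dual map $T^\star$ with $[T^\star(n),m]_\varphi=[n,T(m)]_\psi$ for $m\in\mm_\varphi$, $n\in N$. Then I would run the following chain: by the biweight formula (with $y=\pi_l(\eta_\varphi(m_2))^*\pi_l(\eta_\varphi(m_1))$ representing $m_2^*m_1\in\mm_\varphi$, the vectors $\eta_\varphi(m_i)$ being left bounded) the left‑hand side equals $[T^\star(n_1^*n_2),m_2^*m_1]_\varphi$; the duality relation turns this into $[n_1^*n_2,T(m_2^*m_1)]_\psi$; as both entries now lie in $\mm_\psi$, symmetry of the biweight rewrites it as $[T(m_2^*m_1),n_1^*n_2]_\psi$; a second use of the biweight formula, with $\pi_l(\eta_\psi(n_1))^*\pi_l(\eta_\psi(n_2))=n_1^*n_2$, gives $\langle J_\psi\eta_\psi(n_2),T(m_2^*m_1)J_\psi\eta_\psi(n_1)\rangle$; and finally $T(m_2^*m_1)=T(m_1^*m_2)^*$ together with conjugate symmetry of the inner product yields the stated right‑hand side. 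I expect this chain---keeping straight which slot of each biweight lies in the defining domain, the left‑boundedness of the relevant vectors, and the various $J$'s and adjoints---to be the main obstacle of the proof.

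The remaining steps are routine. The identity shows that if a finite combination $\sum_i c_i\,n_i\otimes_{T^\star}J_\varphi\eta_\varphi(m_i)$ vanishes in $\HHH_{T^\star}$ then its prescribed image has zero norm in $\overline{\HHH}_T$, so $\ell_T$ is a well‑defined linear isometry on the linear span $\mathcal D$ of the vectors $n\otimes_{T^\star}J_\varphi\eta_\varphi(m)$, $n\in\nn_\psi$, $m\in\nn_\varphi$. For density of $\mathcal D$ in $\HHH_{T^\star}$ I would use: $\eta_\varphi(\nn_\varphi)$ (and hence $J_\varphi\eta_\varphi(\nn_\varphi)$) is dense in $L^2(M,\varphi)$; the map $\zeta\mapsto n\otimes_{T^\star}\zeta$ is bounded; and, to replace $\nn_\psi$ by $N$ in the first leg, a bounded self‑adjoint approximate identity $(e_i)\subseteq\nn_\psi$ satisfies $(ne_i-n)^*(ne_i-n)\to 0$ $\sigma$-weakly, so normality of $T^\star$ yields $n e_i\otimes_{T^\star}\zeta\to n\otimes_{T^\star}\zeta$; the mirror‑image argument makes the vectors $\overline{m\otimes_T J_\psi\eta_\psi(n)}$ total in $\overline{\HHH}_T$. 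For $N$--$M$-bimodularity it suffices (by continuity) to check generators: unwinding the actions $y\cdot(n\otimes_{T^\star}\zeta)=(yn)\otimes_{T^\star}\zeta$, $(n\otimes_{T^\star}\zeta)\cdot x=n\otimes_{T^\star}(J_\varphi x^*J_\varphi\zeta)$ on $\HHH_{T^\star}$ and the action $y\,\overline{v}\,x=\overline{x^*vy^*}$ on $\overline{\HHH}_T$, and using $x^*\eta_\varphi(m)=\eta_\varphi(x^*m)$ and $y\,\eta_\psi(n)=\eta_\psi(yn)$ (valid since $\nn_\varphi,\nn_\psi$ are left ideals), both $\ell_T(y\,\xi\,x)$ and $y\,\ell_T(\xi)\,x$ collapse to $\overline{(x^*m)\otimes_T J_\psi\eta_\psi(yn)}$ when $\xi=n\otimes_{T^\star}J_\varphi\eta_\varphi(m)$. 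Extending $\ell_T$ by continuity then completes the proof.
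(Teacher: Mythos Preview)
Your proposal is correct and follows essentially the same route as the paper: the paper's proof is precisely your step (i)---the biweight computation $\langle J_\varphi\eta_\varphi(m_1),T^\star(n_1^*n_2)J_\varphi\eta_\varphi(m_2)\rangle=[T^\star(n_1^*n_2),m_2^*m_1]_\varphi=[n_1^*n_2,T(m_2^*m_1)]_\psi$ followed by the identification with the conjugate inner product---plus a one-line check of $N$-linearity, with density and $M$-linearity only asserted. Your version is more explicit (you spell out the symmetry of the biweight, the density arguments via approximate identities, and the bimodule actions), but the argument is the same.
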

\begin{proof}
	Let $n_i\in \nn_{\psi}$ and $m_i\in \nn_{\varphi}$, we have 
	\begin{align*}
		&\langle n_1\otimes_{T^{\star} } J_\varphi \eta_{\varphi}(m_1),n_2\otimes_{T^{\star} } J_\varphi \eta_{\varphi}(m_2)\rangle = \langle J_\varphi \eta_{\varphi}(m_1),T^{\star}(n_1^*n_2 )J_\varphi \eta_{\varphi}(m_2) \rangle\\
		=&[T^{\star}(n_1^*n_2),m_2^*m_1 ]_\varphi = [n_1^*n_2,T(m_2^*m_1)]_\psi=\langle m_2\otimes_T J_\psi \eta_{\psi}(n_2), m_1\otimes_T J_\psi\eta_{\psi}(n_1) \rangle\\
		=& \langle \overline{ m_1\otimes_T \eta_{\psi}(n_1) },\overline{m_2\otimes_T \eta_{\psi}(n_2)} \rangle.
	\end{align*} 
	By the density of the linear combinations of such elements in $ \HHH_{T^{\star}} $ and $ \overline{\HHH}_{T} $, $ \ell_T  $ extends to an isometry from $ \HHH_{T^{\star}}  $ to $ \overline{\HHH}_{T} $. To see that $ \ell_T $ is $N$-linear, we observe that for $ n_0\in N $,
	$$ \ell_T( n_0n\otimes_{T^{\star}}J_\varphi \eta_{\varphi}(m)  ) = \overline{ m\otimes_{T} J_\psi n_0 \eta_{\psi}(n) }= \overline{ (m\otimes_{T} J_\psi \eta_{\psi}(n))n_0^* }= n_0\overline{ m\otimes_{T} J_\psi \eta_{\psi}(n) }. $$
	The same computation shows that $ \ell_T $ is $M$-linear.
\end{proof}

In particular, we also have $ X(\overline{\HHH}_T)\simeq X(\HHH_{T^{\star}}) \simeq X_{T^{\star}} $.

\section{Constructions of free Poisson algebras and Wick products}
	In this section, we construct the free Poisson algebra for a pseudo left Hilbert algebra $ \frakA $, or equivalently a pair $ (M,\varphi) $. When $\frakA$ is full, we always assume $ \frakA $ is a left Hilbert algebra with $(M,\varphi)$ the associated von Neumann algebra.
	
	\subsection{Construction}
	
	Consider the full Fock space $ \FF(\overline{\frakA})$. Recall that while the creation operator $ l(\xi) $ is linear on $ \xi $, the annihilation operator $ l^*(\xi)$ is antilinear on $ \xi $. For each $\xi\in \frakA$, with the help of the involution $S$, it is then therefore natural to define the (modified) creation and annihilation operator for $\xi$ as $ a^+(\xi):= l(\xi) $ and $ a^-(\xi):= (a^+(S\xi))^*=l^*(S\xi)$. Also, we define the preservation operator for $\xi$ as $a^0(\xi)=\Lambda(\pi_{l}(\xi)): f_1\otimes \cdots \otimes f_n\mapsto (\xi f_1)\otimes \cdots \otimes f_n  $. Define linear map $ X: \frakA\to B(\FF( \overline{\frakA} )) $, $$ X(\xi) = l(\xi)+ l^*(S\xi)+\Lambda(\pi_{l}(\xi))= a^+(\xi)+a^-(\xi)+a^0(\xi). $$ We have $ \|X(\xi)\|\leq \|\xi\|+\|S\xi\|+\|\pi_{l}(\xi)\| $ and $ X(\xi)^* = X(S\xi) $. If $\frakA = \frakA_\varphi$ for a n.s.f. weight, then for $ x\in \nn_\varphi\cap \nn_\varphi^* \subseteq M$, we will also write $ a^+(x):=a^+(\eta_\varphi(x)) $, $ X(x):= X(\eta_\varphi(x)) $  and etc..
	
	\begin{defn}\label{def free Poisson}
		Let $ \frakA $ be a pseudo left Hilbert algebra. The (left) \textit{free Poisson von Neumann algebra} of $\frakA$ is the von Neumann algebra $\Gamma(\frakA)$ generated by the operators $ X(\xi)= l(\xi)+ l^*(S\xi)+\Lambda(\pi_{l}(\xi)) = a^+(\xi)+a^-(\xi)+a^0(\xi)\in B(\FF(\overline{\frakA})) $ for $ \xi\in \frakA $, i.e.
		$ \Gamma(\frakA) := X(\frakA)''.$
		\newline
		If $ M $ is a von Neumann algebra with weight $\varphi$, then we also denote $ \Gamma(M,\varphi) := \Gamma(\frakA_\varphi) $. In this case, we call the linear map $ X: \nn_{\varphi}^*\cap \nn_{\varphi}\to \Gamma(M,\varphi) $ the \textit{centered free Poisson random weight} with intensity weight $ \varphi $.
	\end{defn}
	
	\begin{rmk}
		Since every element $\xi\in \frakA$ is a linear combination of two self-adjoint elements in $ \frakA $ (i.e. invariant under the involution $S$) and $X:\frakA\to \Gamma(\frakA)$ is linear, $ \Gamma(\frakA) $ is also generated by the self-adjoint elements $ \{X(\xi): S\xi=\xi \}$.
	\end{rmk}

	\begin{prop}\label{density results}
		For any pseudo left Hilbert algebra $\frakA$, $ \Gamma(\frakA) = \Gamma(\frakA'') $.
	\end{prop}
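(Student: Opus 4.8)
The plan is to deduce this directly from the density statement in Proposition~\ref{approximate A'' by A}. First I would observe that since $\frakA$ is norm-dense in $\frakA''$, the two Hilbert-space completions agree, $\overline{\frakA}=\overline{\frakA''}$, so that $\Gamma(\frakA)$ and $\Gamma(\frakA'')$ are both von Neumann subalgebras of the same $B(\FF(\overline{\frakA}))$; this also uses the already established fact that $\frakA''$ is itself a pseudo left Hilbert algebra acting on $\overline{\frakA}$ (e.g.\ via Proposition~\ref{decomposition of pseudo hilbert algebra}), so that $\Gamma(\frakA'')$ makes sense. One inclusion is immediate: every $\xi\in\frakA$ lies in $D(S)$ and is left bounded by axiom 1, hence $\frakA\subseteq\frakA''$, so $X(\frakA)\subseteq X(\frakA'')$ and therefore $\Gamma(\frakA)\subseteq\Gamma(\frakA'')$. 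The work is the reverse inclusion, and for that it suffices to prove $X(\xi)\in\Gamma(\frakA)$ for every $\xi\in\frakA''$, since taking bicommutants then gives $\Gamma(\frakA'')\subseteq\Gamma(\frakA)$.

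To show $X(\xi)\in\Gamma(\frakA)$, I would fix $\xi\in\frakA''$ and apply Proposition~\ref{approximate A'' by A} (say with $\varepsilon=1$) to obtain a sequence $(\xi_n)$ in $\frakA$ with $\|\xi-\xi_n\|\to0$, $\|S\xi-S\xi_n\|\to0$, $\|\pi_{l}(\xi_n)\|\le C:=\max\{\|\pi_{l}(\xi)\|,1\}$, and $\pi_{l}(\xi_n)\to\pi_{l}(\xi)$ in the strong operator topology. Writing $X(\xi_n)-X(\xi)=l(\xi_n-\xi)+l^*(S\xi_n-S\xi)+\bigl(\Lambda(\pi_{l}(\xi_n))-\Lambda(\pi_{l}(\xi))\bigr)$, the first two terms tend to $0$ in operator norm because $\|l(\zeta)\|=\|l^*(\zeta)\|=\|\zeta\|$ on $\FF(\overline{\frakA})$. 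So it remains to check that $\Lambda(\pi_{l}(\xi_n))\to\Lambda(\pi_{l}(\xi))$ in the strong operator topology; granted that, $X(\xi_n)\to X(\xi)$ strongly, and since each $X(\xi_n)\in\Gamma(\frakA)$, the net $X(\xi_n)$ is uniformly bounded and the limit $X(\xi)$ is bounded ($\|X(\xi)\|\le\|\xi\|+\|S\xi\|+\|\pi_{l}(\xi)\|$), so $X(\xi)\in\Gamma(\frakA)$.

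For the strong convergence of the gauge parts I would use $\|\Lambda(T)\|\le\|T\|$, so that $\|\Lambda(\pi_{l}(\xi_n))-\Lambda(\pi_{l}(\xi))\|\le 2C$ uniformly in $n$; by a standard $3\varepsilon$-argument it is then enough to test convergence of $\Lambda(\pi_{l}(\xi_n))\eta$ on a family of vectors $\eta$ with dense span, for which I take $\Omega$ together with the elementary tensors $f_1\otimes\cdots\otimes f_m$ with $f_i\in\overline{\frakA}$. On $\Omega$ both operators vanish; on an elementary tensor, $\Lambda(\pi_{l}(\xi_n))(f_1\otimes\cdots\otimes f_m)=(\pi_{l}(\xi_n)f_1)\otimes f_2\otimes\cdots\otimes f_m$ converges in norm to $(\pi_{l}(\xi)f_1)\otimes f_2\otimes\cdots\otimes f_m$ because $\pi_{l}(\xi_n)f_1\to\pi_{l}(\xi)f_1$. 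This is the only computation involved, and it is routine.

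I do not expect a genuine obstacle here: essentially all the difficulty has been absorbed into Proposition~\ref{approximate A'' by A}, whose proof is in turn the adaptation of Takesaki's approximation theorem. The only points requiring care are the bookkeeping that $\Gamma(\frakA'')$ is indeed defined on the same Fock space $\FF(\overline{\frakA})$, and the elementary but essential observation that $T\mapsto\Lambda(T)$ is continuous on norm-bounded sets from the strong operator topology of $B(\overline{\frakA})$ to that of $B(\FF(\overline{\frakA}))$; both follow immediately once stated.
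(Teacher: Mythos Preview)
Your proposal is correct and follows essentially the same route as the paper: both invoke Proposition~\ref{approximate A'' by A} to approximate $\xi\in\frakA''$ by $\xi_n\in\frakA$, handle the creation/annihilation pieces by norm convergence, and use a $3\varepsilon$-argument with uniform boundedness to control the gauge term on elementary tensors. The only cosmetic difference is that you establish strong operator convergence of $\Lambda(\pi_l(\xi_n))\to\Lambda(\pi_l(\xi))$ while the paper verifies weak operator convergence; your version is if anything slightly cleaner.
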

	\begin{proof}
		For any $ \xi\in \frakA'' $ with $ \|\pi_l(\xi)\|\leq 1 $, by Proposition \ref{approximate A'' by A}, there exists a sequence $ (\xi_n) $ in $\frakA$ such that $ \lim_{n\to \infty} \|\xi-\xi_n\|+\|S\xi-S\xi_n\|=0 $, $ \| \pi_{l}(\xi_n) \|\leq 1 $ and $ \pi_{l}(\xi_n) $ converges to $ \pi_{l}(\xi) $ in the $*$-strong operator topology. Therefore, we have $ \lim_{n\to \infty}\|a^+(\xi_n)+a^-(\xi_n)-a^+(\xi)-a^-(\xi)\|\leq \lim_{n\to \infty}\|\xi-\xi_n\|+\|S\xi-S\xi_n\| = 0 $, i.e. $ a^+(\xi_n)+a^-(\xi_n) $ uniformly converges to $ a^+(\xi)+a^-(\xi) $. For any $ \xi,\zeta \in \FF( \overline{\frakA} ) $ and $ \varepsilon>0 $, let $ \xi_0 = \alpha \Omega+\sum_{i=1}^{l}f^i_{1}\otimes \cdots \otimes f^i_{s_{i}} $ and $ \zeta_0 = \beta\Omega + \sum_{j=1}^{m}g^j_{1}\otimes \cdots \otimes g^j_{t_{j}} $ be vectors in $\FF_{\text{alg}}(\overline{\frakA})$ such that $ \|\xi-\xi_0\|\leq \varepsilon/ \|\zeta\| $, $\|\zeta - \zeta_0\|\leq \varepsilon/ \|\xi\| $, and $ \|\xi_0\|\leq \|\xi\| $, $ \|\zeta_0\|\leq \|\zeta\| $. Then we have
		\begin{align*}
			\langle \zeta,(a^0(\xi_n)-a^0(\xi))\xi \rangle &\leq \langle \zeta,(a^0(\xi_n)-a^0(\xi))(\xi-\xi_0)  \rangle+ \langle \zeta-\zeta_0,(a^0(\xi_n)-a^0(\xi))\xi_0  \rangle + \langle \zeta_0,(a^0(\xi_n)-a^0(\xi))\xi_0 \rangle \\
			&\leq 4\varepsilon + \sum_{i=1}^{l}\sum_{j=1}^{m} \delta_{ s_i,t_j }\langle g^j_1,(\pi_{l}(\xi_n-\xi) )f^i_1 \rangle \prod_{k=2}^{s_i}\langle g^j_{k},f^i_{k} \rangle.
		\end{align*}
		Since $ \pi_{l}(\xi_n) $ converges to $ \pi_l(\xi) $ in the $*$-strong operator topology, it also converges in the weak operator topology, and therefore we can choose a $N>0$ such that $\forall n>N $ the last term of the above inequality is less than $ \varepsilon $. This implies that $ a^0(\xi_n) $ converges to $ a^0(\xi) $ in the weak operator topology and therefore $ X(\xi_n) $ converges to $ X(\xi) $ in the weak operator topology.
	\end{proof}
	
	Note that for a n.s.f. weight $\varphi$, since $\frakA^2_\varphi$ is also a left Hilbert algebra with $ (\frakA_\varphi^2)''=\frakA_\varphi'' $, we have $ \Gamma(\frakA_\varphi^2)=\Gamma(\frakA_\varphi) $. For $ x\in \pi_{l}(\frakA^2)=(\nn_{\varphi}\cap \nn_{\varphi}^*)^2\subseteq \mm_\varphi $, $ \varphi(x) $ is well-defined. Thus if we define $ Y(x) := a^+(x)+a^-(x)+a^0(x)+ \varphi(x)= X(x)+\varphi(x), \quad x\in \mm_\varphi$ then $ \Gamma(M,\varphi)=\Gamma(\frakA_\varphi) $ is also generated by those $ Y(x) $'s with $ x\in (\nn_{\varphi}\cap \nn_{\varphi}^*)^2 $. In particular, $  \Gamma(M,\varphi) $ is generated by $ \{ Y(x):x\in \mm_{\varphi} \} $.
	
	\begin{defn}
		For a n.s.f. weight $ \varphi $ on $M$, the linear map $Y:\mm_{\varphi}\to \Gamma(M,\varphi)$,
		$$ Y(x) := a^+(x)+a^-(x)+a^0(x)+ \varphi(x)= X(x)+\varphi(x), \quad x\in \mm_\varphi, $$
		is called the \textit{free Poisson random weight} with intensity weight $ \varphi $.
	\end{defn}
	
	The proof for the following lemma is standard and is omitted.
	\begin{lem}
		For all $ \xi,\eta\in \frakA $, $ a^0(\xi)a^+(\eta)=a^+(\xi \eta) $, $ a^-(\xi)a^+(\eta)=\langle S \xi,\eta \rangle 1 = \varphi(\pi_l(\xi \eta))1 $ and $ a^-(\xi)a^0(\eta)= a^-(\xi \eta)$.
	\end{lem}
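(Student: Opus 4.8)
The plan is to verify the three operator identities directly on the total set of vectors $\{\Omega\}\cup\{f_1\otimes\cdots\otimes f_n: n\geq 1,\ f_i\in\overline{\frakA}\}$, which spans a dense subspace of $\FF(\overline{\frakA})$, using only the explicit action of $l(\cdot)$, $l^*(\cdot)$ and $\Lambda(\cdot)$ recalled above, axiom 2) of (pseudo) left Hilbert algebras $\langle\zeta_1\zeta_2,\zeta_3\rangle=\langle\zeta_2,(S\zeta_1)\zeta_3\rangle$, and the fact that $S$ is an algebra anti-involution, $S(\xi\eta)=(S\eta)(S\xi)$. All three operators appearing on the right-hand sides are bounded, so once equality holds on this dense set it holds everywhere.

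First I would dispose of $a^0(\xi)a^+(\eta)=a^+(\xi\eta)$: evaluating on $\Omega$ gives $a^0(\xi)\eta=\xi\eta$ on both sides, and on $f_1\otimes\cdots\otimes f_n$ both sides produce $(\xi\eta)\otimes f_1\otimes\cdots\otimes f_n$. Next, $a^-(\xi)a^+(\eta)=l^*(S\xi)\,l(\eta)$ sends $\Omega$ to $l^*(S\xi)\eta=\langle S\xi,\eta\rangle\Omega$ and $f_1\otimes\cdots\otimes f_n$ to $l^*(S\xi)\big(\eta\otimes f_1\otimes\cdots\otimes f_n\big)=\langle S\xi,\eta\rangle\,f_1\otimes\cdots\otimes f_n$, so $a^-(\xi)a^+(\eta)=\langle S\xi,\eta\rangle 1$. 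Finally, $a^-(\xi)a^0(\eta)=l^*(S\xi)\,\Lambda(\pi_l(\eta))$ kills $\Omega$ and sends $f_1\otimes\cdots\otimes f_n$ to $l^*(S\xi)\big((\eta f_1)\otimes f_2\otimes\cdots\otimes f_n\big)=\langle S\xi,\eta f_1\rangle\,f_2\otimes\cdots\otimes f_n$; here I would rewrite $\langle S\xi,\eta f_1\rangle=\overline{\langle\eta f_1,S\xi\rangle}=\overline{\langle f_1,(S\eta)(S\xi)\rangle}=\langle(S\eta)(S\xi),f_1\rangle=\langle S(\xi\eta),f_1\rangle$ using axiom 2) and the anti-involution property, which is precisely the scalar in $a^-(\xi\eta)(f_1\otimes\cdots\otimes f_n)=l^*(S(\xi\eta))(f_1\otimes\cdots\otimes f_n)$. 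This gives $a^-(\xi)a^0(\eta)=a^-(\xi\eta)$.

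The only step that requires a bit of care is identifying $\langle S\xi,\eta\rangle$ with $\varphi(\pi_l(\xi\eta))$ in the middle identity. For this I would observe that axiom 2) gives $\langle (S\xi)\zeta,\zeta'\rangle=\langle\zeta,\xi\zeta'\rangle$ for all $\zeta,\zeta'\in\frakA$, hence by density of $\frakA$ in $\overline{\frakA}$ the bounded operator $\pi_l(S\xi)^*$ coincides with $\pi_l(\xi)$ on $\overline{\frakA}$; therefore $\pi_l(\xi\eta)=\pi_l(\xi)\pi_l(\eta)=\pi_l(S\xi)^*\pi_l(\eta)$, and applying the defining relation $\varphi(\pi_l(\zeta_1)^*\pi_l(\zeta_2))=\langle\zeta_1,\zeta_2\rangle$ of the n.s.f. weight associated with $\frakA$ (with $\zeta_1=S\xi$, $\zeta_2=\eta$) yields $\varphi(\pi_l(\xi\eta))=\langle S\xi,\eta\rangle$. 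Beyond this bookkeeping there is no genuine obstacle; the lemma is a short computation on elementary tensors, which is why it is merely recorded here.
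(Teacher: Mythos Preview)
Your proof is correct and complete. The paper itself omits the proof entirely, stating only that it ``is standard and is omitted,'' so your direct verification on the spanning set $\{\Omega\}\cup\{f_1\otimes\cdots\otimes f_n\}$ is exactly the routine computation one is expected to supply; your handling of the identification $\langle S\xi,\eta\rangle=\varphi(\pi_l(\xi\eta))$ via $\pi_l(S\xi)^*=\pi_l(\xi)$ and the defining property of the associated weight is also the right bookkeeping.
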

	
	Similarly, for the right pseudo Hilbert algebra $ \frakA' $, we can also define the linear map $ X_r:\frakA' \to B(\FF(\overline{\frakA})) $. For $ \eta\in \frakA' $, we define the right creation and annihilation operator $ a^+_r(\eta)\zeta_1\otimes \cdots\otimes \zeta_n=\zeta_1\otimes \cdots \otimes \zeta_n \otimes \eta $, $ a^-_r(\eta) = a^+_r(S^*_\varphi \eta)^* $, and the right preservation operator $ a^0_r(\eta)\zeta_1\otimes \cdots \otimes \zeta_n = \zeta_1\otimes \cdots \otimes (\zeta_n\eta) $. We also set $ X_r(\eta) = a^+_r(\eta)+a^-_r(\eta)+a^0_r(\eta) $. We will see later that those operators generate the commutant $ \Gamma(\frakA)' $.
	
	For $\xi_1,\cdots, \xi_n\in \frakA$, let $ \Psi(\xi_1\otimes\cdots \otimes\xi_n) \in \Gamma(\frakA)$ be the unique element such that $ \Psi(\xi_1\otimes\cdots \otimes \xi_n)\Omega = \xi_1\otimes \cdots \otimes \xi_n $. Such an element always exists since it can be constructed inductively by the rule:
	$$  \Psi(\xi_1\otimes\cdots\otimes\xi_n) = X(\xi_1)\Psi(\xi_2\otimes\cdots\otimes\xi_n)-\langle S \xi_1,\xi_2\rangle \Psi(\xi_3,\cdots,\xi_n)-\Psi(\xi_1\xi_2\otimes\xi_3\otimes\xi_n) . $$
	We call $ \Psi(\xi_1\otimes\cdots \otimes\xi_n) $ the Wick product of the tensor $ \xi_1\otimes\cdots \otimes\xi_n $. In particular, the vacuum vector $ \Omega $ is cyclic for $ \Gamma(\frakA) $. Similarly, $ \Omega $ is also cyclic for $ X_r(\frakA')'' $.
	
	\begin{lem}
		The vacuum vector $ \Omega $ is cyclic and separating for $ \Gamma(\frakA) $.
	\end{lem}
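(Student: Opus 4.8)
The plan is to reduce the separating property to a cyclicity statement for the commutant. Cyclicity of $\Omega$ for $\Gamma(\frakA)$ has already been observed through the Wick products $\Psi(\xi_1\otimes\cdots\otimes\xi_n)$, and, symmetrically, $\Omega$ is cyclic for $X_r(\frakA')''$. Since a vector that is cyclic for a von Neumann subalgebra of the commutant is automatically separating for the algebra, it suffices to prove the inclusion $X_r(\frakA')\subseteq \Gamma(\frakA)'=X(\frakA)'$, i.e.\ $[X(\xi),X_r(\eta)]=0$ for all $\xi\in\frakA$ and $\eta\in\frakA'$. Both $X(\xi)$ and $X_r(\eta)$ are bounded, so this identity may be checked on the algebraic full Fock space.

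To establish the commutation relation I would expand $[X(\xi),X_r(\eta)]$ into the nine commutators $[a^{\varepsilon}(\xi),a^{\varepsilon'}_r(\eta)]$, $\varepsilon,\varepsilon'\in\{+,-,0\}$. Since $a^{\varepsilon}(\xi)$ acts only on the leftmost tensor leg while $a^{\varepsilon'}_r(\eta)$ acts only on the rightmost one, each of the nine commutators vanishes termwise on tensors of length $\geq 2$, so the whole computation localizes on $\CC\Omega\oplus\overline{\frakA}$. There the three ``matching'' commutators $[a^+(\xi),a^+_r(\eta)]$, $[a^-(\xi),a^-_r(\eta)]$, $[a^0(\xi),a^0_r(\eta)]$ vanish identically, while the remaining six group into three cancelling pairs: $[a^+(\xi),a^-_r(\eta)]$ and $[a^-(\xi),a^+_r(\eta)]$ are supported on $\CC\Omega$ with values $-\langle S^*\eta,\xi\rangle\Omega$ and $\langle S\xi,\eta\rangle\Omega$, which cancel because $\langle S\xi,\eta\rangle=\langle S^*\eta,\xi\rangle$ by the definition of the adjoint of the antilinear operator $S$; $[a^+(\xi),a^0_r(\eta)]$ and $[a^0(\xi),a^+_r(\eta)]$ send $\Omega$ to $-\pi_r(\eta)\xi$ and $\pi_l(\xi)\eta$ and vanish elsewhere, cancelling by the compatibility $\pi_l(\xi)\eta=\pi_r(\eta)\xi$ of left and right multiplication for a left-bounded $\xi$ and a right-bounded $\eta$; and $[a^-(\xi),a^0_r(\eta)]$, $[a^0(\xi),a^-_r(\eta)]$ kill $\Omega$ and send a one-particle vector $\zeta$ to $\langle S\xi,\zeta\eta\rangle\Omega$ and $-\langle S^*\eta,\xi\zeta\rangle\Omega$, cancelling because $\pi_r(\eta)^*=\pi_r(S^*\eta)$ and $\pi_l(\xi)^*=\pi_l(S\xi)$ (as bounded operators on $\overline{\frakA}$) force both scalars to equal $\langle (S\xi)(S^*\eta),\zeta\rangle$, again by the compatibility of left and right multiplication. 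Adding everything up yields $[X(\xi),X_r(\eta)]=0$.

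Once $X_r(\frakA')\subseteq\Gamma(\frakA)'$ is in hand, passing to generated von Neumann algebras gives $X_r(\frakA')''\subseteq\Gamma(\frakA)'$, so $\Omega$ is cyclic for $\Gamma(\frakA)'$ and therefore separating for $\Gamma(\frakA)$; equivalently, the vacuum state $\varphi_\Omega=\langle\Omega,\cdot\,\Omega\rangle$ is faithful (and it is normal, being a vector state). The substantive work is the termwise commutator bookkeeping together with the three cancellation identities above, and the one on $\overline{\frakA}$ is the most delicate: it genuinely exploits the interplay of $S$, $S^*$, $\pi_l$ and $\pi_r$, that is, all of the (pseudo) left Hilbert algebra axioms together with the commutation of left and right multiplication, and in the pseudo setting one must keep in mind that $\pi_l(\xi)$ may annihilate a nonzero vector, so these identities are to be read in the (possibly degenerate) representation of $\frakA$ on $\overline{\frakA}$.
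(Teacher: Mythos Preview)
Your proof is correct and follows essentially the same route as the paper: both establish $[X(\xi),X_r(\eta)]=0$ by direct computation on $\CC\Omega\oplus\overline{\frakA}$ (noting that for tensors of length $\geq 2$ the left and right actions are independent), invoking the same key identities $\langle S\xi,\eta\rangle=\langle S^*\eta,\xi\rangle$, $\pi_l(\xi)\eta=\pi_r(\eta)\xi$, and $\langle S\xi,\zeta\eta\rangle=\langle S^*\eta,\xi\zeta\rangle$. The only difference is bookkeeping---you decompose into the nine commutators $[a^\varepsilon(\xi),a^{\varepsilon'}_r(\eta)]$ and pair them off, whereas the paper computes $X(\xi)X_r(\eta)\zeta$ and $X_r(\eta)X(\xi)\zeta$ as a whole and compares.
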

	\begin{proof}
		Since $\Omega$ is cyclic for both $ \Gamma(\frakA) $ and $ X_r(\frakA')'' $, it suffices to show that $ X(\xi) $ commutes with $ X_r(\eta)$ for all $ \xi\in \frakA $ and $ \eta\in \frakA' $. If $ \eta \in \overline{\frakA}^{\otimes n} $ for $ n\geq 2 $, then it is clear that the actions of $ X(\xi) $ and $ X_r(\eta) $ on $\eta$ are independent with each other and hence $ X(\xi)X_r(\eta)\eta = X_r(\eta)X(\xi)\eta  $. If $ \zeta\in \overline{\frakA} $, then \begin{align*}
			X(\xi)X_r(\eta)\zeta &= X(\xi)( \zeta\otimes \eta + \langle S^*_\varphi \eta,\zeta \rangle\Omega + \zeta\eta )\\
			&= \xi\otimes \zeta\otimes \eta + \langle S \xi ,\zeta \rangle \eta + \xi \zeta\otimes \eta + \langle S^* \eta, \zeta \rangle \xi + \xi\otimes \zeta\eta + \langle  S \xi,\zeta\eta \rangle\Omega + \xi \zeta \eta.  
		\end{align*}
		Similarly,
		\begin{align*}
			X_r(\eta)X(\xi)\zeta &= X_r(\eta)( \xi\otimes \zeta + \langle S \xi,\zeta \rangle\Omega + \xi \zeta )\\
			&= \xi\otimes \zeta\otimes \eta + \langle S^*_\varphi \eta ,\zeta \rangle \xi + \xi\otimes \zeta\eta + \langle S \xi, \zeta \rangle \eta + \xi \zeta\otimes \eta + \langle  S^*_\varphi \eta,\xi \zeta \rangle\Omega + \xi \zeta \eta.  
		\end{align*}
		Since $ \langle  S^*_\varphi \eta,\xi \zeta \rangle = \langle (S \xi) S^*_\varphi \eta,\zeta \rangle = \langle  S \xi,\zeta \eta \rangle $, we have $ X_r(\eta)X(\xi)\zeta = X(\xi)X_r(\eta)\zeta $. And finally,
		$$ [X(\xi),X_r(\eta)]\Omega = X(\xi)\eta - X_r(\eta)\xi = \xi\otimes \eta +\langle S \xi,\eta\rangle + \xi \eta - \xi\otimes \eta -\langle S^*_\varphi \eta,\xi\rangle - \xi \eta=0. $$
		This implies that $ X_r(\eta) $ and $ X(\xi) $ commutes and therefore $ \Omega $ is also separating for $ \Gamma(\frakA) $.
	\end{proof}
	
	To study the modular structure of $\Gamma(\frakA)$, it is convenient to have the following Wick formula.
	\begin{prop}\label{comb}
		$$ \Psi(\xi_1\otimes\cdots\otimes \xi_n)=\sum_{ s=1 }^{n+1}\left(\prod_{i\leq s-1}a^+(\xi_i)\right)\left(\prod_{j\geq s} a^-(\xi_{j})\right)+ \sum_{ s=1}^{n}\left(\prod_{i\leq s-1}a^+(\xi_i)\right)a^0(\xi_s) \left(\prod_{j\geq s+1} a^-(\xi_{j})\right),$$
		where we use the convention $ \prod_{i\leq 0}a^+(\xi_i) = 1 $, etc.
	\end{prop}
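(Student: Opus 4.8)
The plan is to prove the identity by induction on $n$, checking that the right-hand side, which I denote $W(\xi_1,\dots,\xi_n)$, satisfies the recursion that defines the Wick product $\Psi$. For the empty sequence the empty products give $W=1$, which is $\Psi$ of the empty tensor (the convention used implicitly in the recursion), and for $n=1$ the two sums reduce to $W(\xi_1)=a^-(\xi_1)+a^0(\xi_1)+a^+(\xi_1)=X(\xi_1)=\Psi(\xi_1)$. For $n\ge 2$, assuming the formula for all tensors of length $<n$, it suffices to prove
$$ X(\xi_1)\,W(\xi_2,\dots,\xi_n)-\langle S\xi_1,\xi_2\rangle\,W(\xi_3,\dots,\xi_n)-W(\xi_1\xi_2,\xi_3,\dots,\xi_n)=W(\xi_1,\dots,\xi_n), $$
since the inductive hypothesis then turns the left-hand side into $X(\xi_1)\Psi(\xi_2\otimes\cdots\otimes\xi_n)-\langle S\xi_1,\xi_2\rangle\Psi(\xi_3\otimes\cdots\otimes\xi_n)-\Psi(\xi_1\xi_2\otimes\xi_3\otimes\cdots\otimes\xi_n)$, which equals $\Psi(\xi_1\otimes\cdots\otimes\xi_n)$ by the defining recursion. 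Going through the recursion, rather than verifying $W(\xi_1,\dots,\xi_n)\Omega=\xi_1\otimes\cdots\otimes\xi_n$ directly, also takes care of the fact that the individual operators $a^{\pm}(\xi),a^0(\xi)$ need not lie in $\Gamma(\frakA)$: membership of $W$ in $\Gamma(\frakA)$ is inherited from $\Psi$ through the induction.

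The engine will be the computation of $X(\xi_1)W(\xi_2,\dots,\xi_n)=\bigl(a^+(\xi_1)+a^-(\xi_1)+a^0(\xi_1)\bigr)W(\xi_2,\dots,\xi_n)$, which I would split into three pieces using the product rules $a^0(\xi)a^+(\eta)=a^+(\xi\eta)$, $a^-(\xi)a^+(\eta)=\langle S\xi,\eta\rangle 1$, $a^-(\xi)a^0(\eta)=a^-(\xi\eta)$ from the preceding lemma, together with the immediate $a^0(\xi)a^0(\eta)=a^0(\xi\eta)$. It is convenient to introduce three ``boundary words''
$$ P=a^-(\xi_1)a^-(\xi_2)\cdots a^-(\xi_n),\quad Q=a^0(\xi_1)a^-(\xi_2)\cdots a^-(\xi_n),\quad R=a^-(\xi_1\xi_2)a^-(\xi_3)\cdots a^-(\xi_n). $$
First, $a^+(\xi_1)$ simply prepends $\xi_1$ to every word, so $a^+(\xi_1)W(\xi_2,\dots,\xi_n)=W(\xi_1,\dots,\xi_n)-P-Q$, because $P$ and $Q$ are exactly the two words of $W(\xi_1,\dots,\xi_n)$ (the pure-annihilation word, and the word carrying the gauge in the first slot) that do not begin with $a^+(\xi_1)$. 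Second, applying $a^-(\xi_1)$ on the left: on each word of $W(\xi_2,\dots,\xi_n)$ starting with $a^+(\xi_2)$ it produces $\langle S\xi_1,\xi_2\rangle$ times the corresponding word on $\xi_3,\dots,\xi_n$, and these sum to $\langle S\xi_1,\xi_2\rangle W(\xi_3,\dots,\xi_n)$; on the single word starting with $a^0(\xi_2)$ it fuses via $a^-(\xi_1)a^0(\xi_2)=a^-(\xi_1\xi_2)$ to give $R$; on the pure-annihilation word it gives $P$; hence $a^-(\xi_1)W(\xi_2,\dots,\xi_n)=P+\langle S\xi_1,\xi_2\rangle W(\xi_3,\dots,\xi_n)+R$. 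Third, applying $a^0(\xi_1)$ on the left: on each word starting with $a^+(\xi_2)$ or $a^0(\xi_2)$ it fuses the first two letters, and matching the resulting words against those of $W(\xi_1\xi_2,\xi_3,\dots,\xi_n)$ shows that $a^0(\xi_1)W(\xi_2,\dots,\xi_n)$ produces every word of $W(\xi_1\xi_2,\xi_3,\dots,\xi_n)$ except its pure-annihilation word $R$, plus the one extra word $Q$ coming from the pure-annihilation word of $W(\xi_2,\dots,\xi_n)$, on which $a^0(\xi_1)$ admits no simplification; hence $a^0(\xi_1)W(\xi_2,\dots,\xi_n)=W(\xi_1\xi_2,\xi_3,\dots,\xi_n)+Q-R$.

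Adding the three pieces, the contributions $-P-Q$, $P+R$ and $Q-R$ cancel in pairs, leaving
$$ X(\xi_1)W(\xi_2,\dots,\xi_n)=W(\xi_1,\dots,\xi_n)+\langle S\xi_1,\xi_2\rangle W(\xi_3,\dots,\xi_n)+W(\xi_1\xi_2,\xi_3,\dots,\xi_n), $$
which is exactly the identity needed to close the induction. The hard part will be the bookkeeping in the third step — keeping track of which word of $W(\xi_2,\dots,\xi_n)$ corresponds to which word of $W(\xi_1\xi_2,\xi_3,\dots,\xi_n)$ under left multiplication by $a^0(\xi_1)$, and correctly isolating the one unsimplifiable boundary word $Q$ — but this is forced by the combinatorics of the two defining sums, and no ingredient beyond the quoted product rules is needed; the edge cases (short tensors, empty products) should be purely notational.
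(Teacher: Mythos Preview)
Your proposal is correct and follows essentially the same route as the paper: both arguments proceed by induction on $n$, verifying that the right-hand side satisfies the defining recursion of $\Psi$ via the product rules $a^0(\xi)a^+(\eta)=a^+(\xi\eta)$, $a^-(\xi)a^+(\eta)=\langle S\xi,\eta\rangle$, $a^-(\xi)a^0(\eta)=a^-(\xi\eta)$, $a^0(\xi)a^0(\eta)=a^0(\xi\eta)$. The only cosmetic difference is that the paper first observes both sides agree on $\Omega$ and then invokes separatingness of $\Omega$ to reduce to showing the right-hand side lies in $\Gamma(\frakA)$, whereas you establish equality with $\Psi$ directly through the recursion; either way the inductive verification of the recurrence is the actual content, and your bookkeeping with the boundary words $P,Q,R$ makes that verification explicit.
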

	\begin{proof}
		We observe that if we apply both sides to $ \Omega $, then we get the same vector $ \xi_1\otimes \cdots \otimes \xi_n \in \FF(\frakA) $. Since $\Omega$ is separating for $ \Gamma(\frakA) $, we only need to show that the right hand side is in indeed in $ \Gamma(\frakA) $. Note that also when $n=1$, $ \Psi(\xi_1) = X(\xi_1)=a^+(\xi_1)+a^-(\xi_1) $ and we have the recurrence relation $$  \Psi(\xi_1\otimes \cdots \otimes \xi_n) = X(\xi_1)\Psi(\xi_2\otimes \cdots\otimes\xi_n) - \langle S\xi_1,\xi_2 \rangle\Psi(\xi_3 \otimes \cdots\otimes\xi_n) - \Psi(\xi_1\xi_2\otimes \xi_3\otimes \cdots\otimes\xi_n).$$ The assertion then follows from induction and the relations $ a^+(\xi\eta) = a^0(\xi)a^+(\eta) $, $ a^0(\xi\eta) = a^0(\xi)a^0(\eta) $ and $ a^-(\xi\eta) = a^-(\xi)a^0(S\eta) $ for all $\xi,\eta\in \frakA$.
	\end{proof}
An important fact about this formula is that it does not involve inner product or multiplication, which allows us to choose morphisms that are not $*$-homomorphisms when constructing the second quantization in Section 5.

 In particular, we have $ \Psi(\xi_1,\cdots,\xi_n)^*=\Psi(S \xi_n,\cdots,S \xi_1) $ for $ \xi_i \in \frakA$.
\begin{lem}
	Let $ \frakA_\Omega = \Gamma(\frakA)\Omega $ be the left Hilbert algebra for the vacuum state $ \varphi_{\Omega} $, and $ S_\Omega = J_\Omega \Delta_\Omega^{1/2} $ the corresponding involution operator. Then $ \frakA^{\otimes n}\subseteq \frakA_{\Omega} $, $ D(S)^{\otimes n}\subseteq D(S_{\Omega}) $. Moreover, the following formulas hold:
	\begin{enumerate}[1)]
		\item $ S_\Omega(\xi_1\otimes \cdots \otimes \xi_n)= (S \xi_n)\otimes \cdots \otimes (S \xi_1) $ for all $ \xi_i\in D(S) $.
		\item $\Delta_\Omega \supseteq \FF(\Delta)$, i.e. $ \Delta_\Omega( \xi_1\otimes \cdots \otimes \xi_n ) = (\Delta \xi_1)\otimes \cdots \otimes (\Delta \xi_n) $ for all $ \xi_i\in D(\Delta) $.
		\item $ J_\Omega(\xi_1\otimes \cdots \otimes \xi_n)= (J \xi_n)\otimes \cdots \otimes (J \xi_1) $ for all $ \xi_i\in \overline{\frakA} $.
	\end{enumerate}
\end{lem}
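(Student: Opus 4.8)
The plan is to attack this on three fronts: (i) put the algebraic tensors $\frakA^{\otimes n}$ inside $\frakA_\Omega$ and read the action of $S_\Omega$ on them off the behaviour of the Wick products under $\ast$; (ii) upgrade the formula from $\frakA$ to $D(S)$ by a closedness argument; and (iii) identify the full modular data $(J_\Omega,\Delta_\Omega)$ with the second quantization of the one-particle data $(J,\Delta)$ via uniqueness of the polar decomposition. For step (i): by its recurrence, $\Psi(\xi_1\otimes\cdots\otimes\xi_n)$ is a bounded polynomial in the generators $X(\xi_i)$, hence lies in $\Gamma(\frakA)$, so $\xi_1\otimes\cdots\otimes\xi_n=\Psi(\xi_1\otimes\cdots\otimes\xi_n)\Omega\in\Gamma(\frakA)\Omega=\frakA_\Omega$; this gives $\frakA^{\otimes n}\subseteq\frakA_\Omega$. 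Since $S_\Omega(a\Omega)=a^\ast\Omega$ and $\Psi(\xi_1,\dots,\xi_n)^\ast=\Psi(S\xi_n,\dots,S\xi_1)$, applying $S_\Omega$ to $\Psi(\xi_1\otimes\cdots\otimes\xi_n)\Omega$ yields $S_\Omega(\xi_1\otimes\cdots\otimes\xi_n)=(S\xi_n)\otimes\cdots\otimes(S\xi_1)$ whenever $\xi_i\in\frakA$. For step (ii), using that $\frakA$ is a core of $S$, choose $\frakA\ni\xi_i^{(k)}\to\xi_i$ with $S\xi_i^{(k)}\to S\xi_i$ and $\sup_k\|\xi_i^{(k)}\|<\infty$; then $\xi_1^{(k)}\otimes\cdots\otimes\xi_n^{(k)}\to\xi_1\otimes\cdots\otimes\xi_n$ and $(S\xi_n^{(k)})\otimes\cdots\otimes(S\xi_1^{(k)})\to(S\xi_n)\otimes\cdots\otimes(S\xi_1)$, so closedness of $S_\Omega$ gives $D(S)^{\otimes n}\subseteq D(S_\Omega)$ together with formula 1).

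For step (iii), introduce the unitary involution $R$ on $\FF(\overline{\frakA})$ reversing tensors, $\xi_1\otimes\cdots\otimes\xi_n\mapsto\xi_n\otimes\cdots\otimes\xi_1$, the antiunitary second quantization $\FF(J)=\bigoplus_n J^{\otimes n}$, and the positive self-adjoint second quantizations $\FF(\Delta^{1/2})=\bigoplus_n(\Delta^{1/2})^{\otimes n}$ and $\FF(\Delta)=\bigoplus_n\Delta^{\otimes n}$. One checks directly that $\tilde J:=\FF(J)R=R\,\FF(J)$ is an antiunitary involution fixing $\Omega$, and (by spectral calculus on each $\Delta^{\otimes n}$) that $\FF(\Delta^{1/2})^2=\FF(\Delta)$. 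Since $S=J\Delta^{1/2}$ with $D(\Delta^{1/2})=D(S)$, evaluating on algebraic tensors shows $\tilde J\,\FF(\Delta^{1/2})(\xi_1\otimes\cdots\otimes\xi_n)=(S\xi_n)\otimes\cdots\otimes(S\xi_1)$ for $\xi_i\in D(S)$; in particular the closed operator $\tilde J\,\FF(\Delta^{1/2})$ agrees with $S_\Omega$ on the algebraic Fock space $\FF_{\mathrm{alg}}(\frakA):=\bigoplus^{\mathrm{alg}}_n\frakA^{\otimes n}$. Granting that $\FF_{\mathrm{alg}}(\frakA)$ is a common core for $S_\Omega$ and for $\FF(\Delta^{1/2})$, we get $S_\Omega=\overline{S_\Omega|_{\FF_{\mathrm{alg}}(\frakA)}}=\tilde J\,\FF(\Delta^{1/2})$, and uniqueness of the polar decomposition of $S_\Omega$ forces $J_\Omega=\tilde J$ and $\Delta_\Omega^{1/2}=\FF(\Delta^{1/2})$, hence $\Delta_\Omega=\FF(\Delta)$. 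Formulas 2) and 3) are then just the definitions of $\FF(\Delta)$ and of $\tilde J=\FF(J)R$, and 3) holds for all $\xi_i\in\overline{\frakA}$ because $\tilde J$ is bounded.

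The real content is the two core statements. That $\FF_{\mathrm{alg}}(\frakA)$ is a core for $S_\Omega$ I would deduce from Tomita's general principle: if $\mathcal W\subseteq\Gamma(\frakA)$ is a $\sigma$-weakly dense $\ast$-subalgebra, then $\mathcal W\Omega$ is a core for $S_\Omega$ — indeed, given $a\in\Gamma(\frakA)$, Kaplansky density produces $a_i\in\mathcal W$ with $\|a_i\|\leq\|a\|$ and $a_i\to a$ $\ast$-strongly, so $a_i\Omega\to a\Omega$ and $a_i^\ast\Omega\to a^\ast\Omega$, i.e.\ $a_i\Omega\to a\Omega$ in the graph norm of $S_\Omega$. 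Here one takes $\mathcal W$ to be the $\ast$-algebra generated by $\{X(\xi):\xi\in\frakA\}$, which is $\sigma$-weakly dense in $\Gamma(\frakA)$ by definition of $\Gamma(\frakA)$ and satisfies $\mathcal W\Omega=\FF_{\mathrm{alg}}(\frakA)$ because $\mathcal W$ is exactly the linear span of the Wick products $\Psi(\xi_1\otimes\cdots\otimes\xi_n)$ (each such product lies in $\mathcal W$ by the recurrence, and each product of generators is a finite linear combination of Wick products). That $\FF_{\mathrm{alg}}(\frakA)$ is a core for $\FF(\Delta^{1/2})$ I would get from the standard fact that cores are inherited by second quantization: since $\frakA$ is a core for $\Delta^{1/2}$ (graph norms of $S$ and $\Delta^{1/2}$ coincide on $D(S)=D(\Delta^{1/2})$), the algebraic tensor power $\frakA^{\otimes n}$ is a core for $(\Delta^{1/2})^{\otimes n}$, and the algebraic direct sum of these is a core for $\bigoplus_n(\Delta^{1/2})^{\otimes n}$. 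Throughout one may assume $\frakA$ is full, replacing $\frakA$ by $\frakA''$ if necessary: this changes neither $\Gamma(\frakA)$ (Proposition \ref{density results}) nor the one-particle data $S,\Delta,J$, and leaves all asserted identities untouched.
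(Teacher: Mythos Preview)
Your argument is correct, but it proceeds differently from the paper for parts 2) and 3). The paper exploits the commutant: having shown $X_r(\frakA')\subseteq\Gamma(\frakA)'$, it runs the right-sided analogue of your step (i) to obtain $S_\Omega^\ast(g_1\otimes\cdots\otimes g_n)=(S^\ast g_n)\otimes\cdots\otimes(S^\ast g_1)$ for $g_i\in D(S^\ast)$, and then reads off $\Delta_\Omega\supseteq\FF(\Delta)$ from $\Delta_\Omega=\overline{S_\Omega^\ast S_\Omega}$ and $J_\Omega$ from $J_\Omega\supset S_\Omega\Delta_\Omega^{-1/2}$. Your route instead bypasses the commutant entirely: you build the candidate $\tilde J\,\FF(\Delta^{1/2})$, match it to $S_\Omega$ on the algebraic Fock space, and invoke Kaplansky plus the inheritance of cores under tensor powers to conclude that this common subspace is a core for both, forcing equality and hence the polar data. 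The paper's path is shorter because the right Hilbert algebra structure is already in hand; your path is more self-contained, avoids the right Wick calculus, and in fact yields the sharper statement $\Delta_\Omega=\FF(\Delta)$ rather than merely $\Delta_\Omega\supseteq\FF(\Delta)$. One small clean-up: to get $\mathcal W\Omega=\FF_{\mathrm{alg}}(\frakA)$ exactly you should take $\mathcal W$ to be the \emph{unital} $\ast$-algebra generated by $X(\frakA)$, or else note that $\FF_{\mathrm{alg}}(\frakA)\supseteq\mathcal W\Omega$ already suffices since any superset of a core contained in the domain is again a core.
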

\begin{proof}
	1) For all $\xi_i\in \frakA$, $S_\Omega(\xi_1\otimes \cdots \otimes \xi_n)= S_\Omega\Psi(\xi_1,\cdots,\xi_n)\Omega = \Psi(\xi_1,\cdots,\xi_n)^* \Omega =\Psi(S \xi_n,\cdots,S \xi_1)\Omega = (S \xi_n)\otimes \cdots \otimes (S \xi_1)$, hence the formula follows from the fact that $ \frakA$ is a core of $S$.
	
	2) Similarly, for all $ g_i\in D(S^*) $, since $ X_r(g_i)\in \Gamma(\frakA)' $, using the right version of the argument in 1), we obtain that $ g_1\otimes \cdots \otimes g_n\in D(S^*) $, and $ S^*_\Omega(g_1\otimes \cdots \otimes g_n)= (S^* g_n)\otimes \cdots \otimes (S^* g_1)$. The statement now follows from the fact that $ \Delta_\Omega$ is the closure of $S^*_\Omega S_\Omega $.
	
	3) The formula for $ J_\Omega $ follows directly from $ J_\Omega \supset S_\Omega \Delta_\Omega^{-1/2} $.
\end{proof}

In particular, we have $ X_r(\eta) = J_\Omega X(J \eta )J_\Omega $ for all $ \eta\in \frakA' $ and $ \Gamma(\frakA)' = X_r(\frakA')'' $.
\begin{cor}
	If $\varphi$ is a tracial n.s.f. weight, then $ \varphi_\Omega $ is a tracial state on $ \Gamma(M,\varphi) $.\qed
\end{cor}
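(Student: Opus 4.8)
The plan is to reduce the statement to a computation of the modular operator of the vacuum state, exploiting the inclusion $\Delta_\Omega\supseteq\FF(\Delta)$ established in the preceding lemma, and then to invoke the standard characterization of tracial states via triviality of the modular automorphism group. First I would record that traciality of $\varphi$ makes the involution $S_\varphi$ of $\frakA_\varphi$ isometric: for $x\in\nn_\varphi\cap\nn_\varphi^*$ one has $\|\eta_\varphi(x^*)\|^2=\varphi(xx^*)=\varphi(x^*x)=\|\eta_\varphi(x)\|^2$. Since $S_\varphi^2=1$ on the core $\frakA_\varphi$ and a closed isometric operator has closed range, the closure of $S_\varphi$ is an antiunitary involution of $L^2(M,\varphi)=\overline{\frakA_\varphi}$; equivalently $S_\varphi^*S_\varphi=1$, so by uniqueness of the polar decomposition $S_\varphi=J_\varphi\Delta_\varphi^{1/2}$ we obtain $\Delta_\varphi=1$ (and $J_\varphi=S_\varphi$).

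Next I would feed $\Delta_\varphi=1$ into the previous lemma, which then gives $\Delta_\Omega\supseteq\FF(\Delta_\varphi)=\FF(1)=\mathrm{id}_{\FF(\overline{\frakA_\varphi})}$. Since $\Delta_\Omega$ is a self-adjoint operator extending the bounded everywhere-defined self-adjoint operator $\mathrm{id}$, it must equal $\mathrm{id}$. Hence the modular automorphism group $\sigma^{\varphi_\Omega}_t=\mathrm{Ad}(\Delta_\Omega^{it})$ of the faithful normal vacuum state $\varphi_\Omega$ on $\Gamma(M,\varphi)$ is trivial, and a faithful normal state with trivial modular automorphism group on a von Neumann algebra is a trace; therefore $\varphi_\Omega$ is tracial on $\Gamma(M,\varphi)=\Gamma(\frakA_\varphi)$.

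There is essentially no obstacle, since the real content already sits in the preceding lemma. The only points deserving a word of care are the implication ``a self-adjoint extension of $\mathrm{id}$ equals $\mathrm{id}$'' and the final appeal to the standard link between traces and trivial modular groups; should one wish to avoid the latter, one can instead check cyclicity of $\varphi_\Omega$ directly from the Wick formula of Proposition~\ref{comb} together with $S_\Omega=J_\Omega$ and the identity $J_\Omega(\xi_1\otimes\cdots\otimes\xi_n)=(J\xi_n)\otimes\cdots\otimes(J\xi_1)$, but the modular route is shorter.
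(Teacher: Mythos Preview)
Your proof is correct and is exactly the argument the paper has in mind: the corollary is marked \qed\ because it follows immediately from the preceding lemma via $\Delta_\varphi=1\Rightarrow\Delta_\Omega\supseteq\FF(1)=\mathrm{id}\Rightarrow\Delta_\Omega=\mathrm{id}$, hence trivial modular group and tracial vacuum state. You have simply spelled out these steps carefully.
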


\begin{rmk}
	Note that if $\varphi$ is a normal semifinite but non-faithful weight on $M$, then we can still define $ X(x) := a^+(x)+a^-(x)+a^0(x) = l(\eta_{\varphi}(x))+l^*(\eta_{\varphi}(x^*))+\Lambda(\pi_{l}(x)) $ for $x\in \nn_{\varphi}\cap \nn_{\varphi}^*$ and set $\Gamma(M,\varphi) = X( \nn_{\varphi}\cap \nn_{\varphi}^* )''$. In this case, however, $ \Omega $ is not separating for $ \Gamma(M,\varphi) $ in general. If we assume $\varphi$ is tracial, then considering the non-faithful situations will not give us any new algebras, since we can simply restrict $ \varphi $ to the subalgebra $ Mz $ with $ z\in \mathcal{Z}(M) $ the support of $ \varphi $. In this paper, we will mainly focus on the cases when $ \varphi $ is faithful.
\end{rmk}

\subsection{Free cumulants of $ X(u) $'s}
Applying Theorem \ref{cumulants of operators on Fock spaces}, we immediately get:
\begin{prop}\label{cumulants}
	Let $ \frakA $ be a pseudo left Hilbert algebra, and $ (\xi_i)_{i\in I} $ be a family of elements in $ \frakA $. The joint free cumulants of $X(\xi_i)_{i\in I}$ are
	$$ R_1(X(\xi_i))=0,\quad R_n(X(\xi_{i_1}),\cdots,X(\xi_{i_n})) = \langle S_\varphi \xi_{i_1},\xi_{i_2}\cdots \xi_{i_{n-1}}\xi_{i_n} \rangle,\quad \forall n\geq 2. $$
	In particular, if $\frakA=\frakA_\varphi$, and $ (x_i)_{i\in I}$ is a family of elements in $\nn_{\varphi}\cap \nn_{\varphi}^*$, then the joint free cumulants of $ {X(x_i)}_{i\in I} $ are
	$$  R_1(X(x_{i}))=0,\quad R_n(X(x_{i_1}),\cdots,X(x_{i_n})) = \varphi( x_{i_1} x_{i_2}\cdots x_{i_n} ),\quad \forall n\geq 2.$$
Similarly, if $ x_i \in \mm_{\varphi} \subseteq \nn_{\varphi}\cap \nn_{\varphi}^* $, let again $ Y(x_i) = X(x_i)+\varphi(x_i) $ then the free cumulants of $ Y(x_i)=Y(\eta_{\varphi}(x_i)) $ are
$$ R_n(Y(x_{i_1}),\cdots,Y(x_{i_n})) = \varphi(x_{i_1}x_{i_{2}}\cdots x_{i_n}),\quad \forall n\geq 1.$$ \qed
\end{prop}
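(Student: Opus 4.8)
The statement is an immediate consequence of Theorem~\ref{cumulants of operators on Fock spaces} together with the multilinearity of free cumulants, so the plan is short. Recall that $X(\xi)=a^+(\xi)+a^-(\xi)+a^0(\xi)=l(\xi)+l^*(S\xi)+\Lambda(\pi_l(\xi))$, and that by condition~1) in the definition of a pseudo left Hilbert algebra every $\pi_l(\xi_{i_k})$ belongs to $B(\overline{\frakA})$, so the hypotheses of Theorem~\ref{cumulants of operators on Fock spaces} are met with $\HHH=\overline{\frakA}$. Since $R_n$ is an $n$-linear functional on $B(\FF(\overline{\frakA}))$ (it depends only on the mixed moments, hence only on $\varphi_\Omega$), I would first expand
$$R_n\big(X(\xi_{i_1}),\dots,X(\xi_{i_n})\big)=\sum_{\epsilon_1,\dots,\epsilon_n\in\{+,0,-\}}R_n\big(a^{\epsilon_1}(\xi_{i_1}),\dots,a^{\epsilon_n}(\xi_{i_n})\big)$$
into a sum of $3^n$ free cumulants whose arguments lie in $l(\overline{\frakA})\cup l^*(\overline{\frakA})\cup\Lambda(B(\overline{\frakA}))$.

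By Theorem~\ref{cumulants of operators on Fock spaces} such a term vanishes unless $\epsilon_1=-$, $\epsilon_n=+$ and $\epsilon_k=0$ for $1<k<n$; hence for $n\geq2$ exactly one term survives, and it equals
$$R_n\big(l^*(S\xi_{i_1}),\Lambda(\pi_l(\xi_{i_2})),\dots,\Lambda(\pi_l(\xi_{i_{n-1}})),l(\xi_{i_n})\big)=\big\langle S\xi_{i_1},\,\pi_l(\xi_{i_2})\cdots\pi_l(\xi_{i_{n-1}})\xi_{i_n}\big\rangle=\big\langle S\xi_{i_1},\,\xi_{i_2}\cdots\xi_{i_n}\big\rangle,$$
the last equality being associativity of the product of $\frakA$. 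For $n=1$ no term survives, since $R_1$ of any creation, annihilation or gauge operator equals its vacuum expectation, which is $0$ in each of the three cases; thus $R_1(X(\xi_i))=0$. This gives the first two formulas. The specialization to $\frakA=\frakA_\varphi$ is then routine: put $\xi_{i_k}=\eta_\varphi(x_{i_k})$, use that $\nn_\varphi\cap\nn_\varphi^*$ is closed under multiplication so that $\xi_{i_2}\cdots\xi_{i_n}=\eta_\varphi(x_{i_2}\cdots x_{i_n})$, and apply $\langle\eta_\varphi(a),\eta_\varphi(b)\rangle=\varphi(a^*b)$ with $S_\varphi\eta_\varphi(x_{i_1})=\eta_\varphi(x_{i_1}^*)$ to obtain $\varphi(x_{i_1}x_{i_2}\cdots x_{i_n})$.

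For the last formula I would use that $Y(x_i)=X(x_i)+\varphi(x_i)1$ with $x_i\in\mm_\varphi$ (so $\varphi(x_i)$ is defined, and $\mm_\varphi$, being closed under multiplication, contains the product $x_{i_1}\cdots x_{i_n}$). Free cumulants of order $\geq2$ vanish as soon as one entry is a scalar multiple of $1$ — a standard consequence of the moment--cumulant relations — so they are unchanged under translating arguments by scalars, whence $R_n(Y(x_{i_1}),\dots,Y(x_{i_n}))=R_n(X(x_{i_1}),\dots,X(x_{i_n}))=\varphi(x_{i_1}\cdots x_{i_n})$ for $n\geq2$; and for $n=1$, $R_1(Y(x_{i_1}))=\varphi_\Omega(Y(x_{i_1}))=\varphi_\Omega(X(x_{i_1}))+\varphi(x_{i_1})=\varphi(x_{i_1})$, which is the claimed value. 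There is no genuine obstacle here; the only points deserving a line of care are the boundedness of the $\pi_l(\xi_{i_k})$ needed to invoke Theorem~\ref{cumulants of operators on Fock spaces}, the collapse of the iterated left multiplications into a single product of $\frakA$, and the closure of $\nn_\varphi\cap\nn_\varphi^*$ and $\mm_\varphi$ under multiplication which makes the right-hand sides meaningful.
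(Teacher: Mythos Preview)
Your proof is correct and follows exactly the approach the paper intends: the paper simply writes ``Applying Theorem~\ref{cumulants of operators on Fock spaces}, we immediately get'' and marks the proposition with \qed, and your argument is precisely the unpacking of that application via multilinearity of $R_n$. The extra care you take with the $n=1$ case, the specialization to $\frakA_\varphi$, and the translation argument for $Y(x_i)$ is all sound and matches what the paper leaves implicit.
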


\begin{cor}\label{direct sum}
	If $ \frakA_1 $ and $ \frakA_2 $ are two pseudo left Hilbert algebras, then $ \Gamma( \frakA_1\oplus \frakA_2 ) = \Gamma(\frakA_1)\ast \Gamma(\frakA_2) $ with respect to the vaccum state.
\end{cor}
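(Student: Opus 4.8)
\emph{Proof proposal.} The plan is to combine the vanishing-of-mixed-cumulants characterization of freeness from Section 2.1 with the explicit cumulant formula of Proposition \ref{cumulants}. Write $\frakA=\frakA_1\oplus\frakA_2$ for the coordinatewise direct sum (this is again a pseudo left Hilbert algebra, with $\overline{\frakA}=\overline{\frakA_1}\oplus\overline{\frakA_2}$), and regard $\frakA_i$ as a subalgebra of $\frakA$ via $\xi\mapsto(\xi,0)$, resp. $\eta\mapsto(0,\eta)$. Since $X\colon\frakA\to\Gamma(\frakA)$ is linear and $X(\xi,\eta)=X(\xi,0)+X(0,\eta)$, the algebra $\Gamma(\frakA)=X(\frakA)''$ is generated by $F_1\cup F_2$, where $F_i:=\{X(\zeta):\zeta\in\frakA_i\}$; each $F_i$ is already $*$-closed because $X(\zeta)^*=X(S\zeta)$ and $S\frakA_i=\frakA_i$.

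First I would identify the von Neumann subalgebra $W^*(F_i)\subseteq\Gamma(\frakA)$ with the standalone $\Gamma(\frakA_i)$ in a vacuum-state-preserving way. By Proposition \ref{cumulants}, for $\zeta_1,\dots,\zeta_n\in\frakA_i$ one has $R_1(X(\zeta_1))=0$ and $R_n(X(\zeta_1),\dots,X(\zeta_n))=\langle S\zeta_1,\zeta_2\cdots\zeta_n\rangle$ for $n\ge 2$, an expression involving only the inner product, product and involution internal to $\frakA_i$; hence it is unchanged whether computed in $(B(\FF(\overline{\frakA})),\varphi_\Omega)$ or in $(B(\FF(\overline{\frakA_i})),\varphi_\Omega)$. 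Thus the joint $*$-distributions of $F_i\subseteq\Gamma(\frakA)$ and of $X(\frakA_i)\subseteq\Gamma(\frakA_i)$ with respect to the two vacuum states coincide, and since in both cases $\Omega$ is cyclic and separating (so the vacuum state is faithful and normal), the map $X(\zeta)\mapsto X(\zeta)$ extends to a normal $*$-isomorphism $W^*(F_i)\cong\Gamma(\frakA_i)$ carrying one vacuum state to the other.

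Next I would show $W^*(F_1)$ and $W^*(F_2)$ are free with respect to $\varphi_\Omega$. By the cumulant criterion it suffices to check that every mixed free cumulant $R_n(X(\zeta_1),\dots,X(\zeta_n))$, with each $\zeta_j\in\frakA_1\cup\frakA_2$ inside $\frakA$ and with at least one $\zeta_j$ in $\frakA_1$ and at least one in $\frakA_2$, vanishes; for $n=1$ there is nothing to show. For $n\ge 2$, Proposition \ref{cumulants} gives $R_n=\langle S\zeta_1,\zeta_2\cdots\zeta_n\rangle$. Assume $\zeta_1\in\frakA_1$ (the other case is symmetric); mixedness forces some $\zeta_k$ with $2\le k\le n$ to lie in $\frakA_2$, so $\zeta_k=(0,\zeta_k)$, and since multiplication in $\frakA$ is coordinatewise the product $\zeta_2\cdots\zeta_n$ has vanishing $\overline{\frakA_1}$-component, i.e.\ lies in $\{0\}\oplus\overline{\frakA_2}$; as $S\zeta_1\in\overline{\frakA_1}\oplus\{0\}$, the inner product is $0$. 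This establishes freeness.

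Finally, $\Gamma(\frakA)$ is generated by the free subalgebras $W^*(F_1)\cong\Gamma(\frakA_1)$ and $W^*(F_2)\cong\Gamma(\frakA_2)$, on each of which the faithful normal state $\varphi_\Omega$ restricts to the respective vacuum state; by the characterization of the free product of von Neumann algebras equipped with faithful normal states, this yields $\Gamma(\frakA_1\oplus\frakA_2)=\Gamma(\frakA_1)\ast\Gamma(\frakA_2)$ with respect to the vacuum states. I do not expect a genuine obstacle: the only points needing care are the bookkeeping of which direct-summand component vanishes in $\zeta_2\cdots\zeta_n$, and the verification that $W^*(F_i)$ is a state-preserving copy of $\Gamma(\frakA_i)$ — handled by the faithfulness of $\varphi_\Omega$ together with the intrinsic nature of the cumulant formula.
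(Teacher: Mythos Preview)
Your proposal is correct and follows essentially the same route as the paper: both arguments invoke Proposition \ref{cumulants} to see that any mixed free cumulant $R_n(X(\zeta_1),\dots,X(\zeta_n))$ with the $\zeta_j$ drawn from $\frakA_1\cup\frakA_2$ vanishes, and then appeal to the cumulant characterization of freeness. Your write-up is simply more explicit than the paper's---you spell out why the product $\zeta_2\cdots\zeta_n$ is orthogonal to $S\zeta_1$, and you justify the state-preserving identification $W^*(F_i)\cong\Gamma(\frakA_i)$ via equality of joint distributions and faithfulness of $\varphi_\Omega$---whereas the paper leaves these as understood.
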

\begin{proof}
	For a family of elements $ (X_i)_{i=1}^{n} $ in $ X(\frakA_1)\cup X(\frakA_2) $, Proposition \ref{cumulants} implies that the free cumulant $ R_n( X_1,\cdots,X_n ) $ is $0$ whenever $ (X_i)_{i=1}^n $ contains an element in $X(\frakA_1)$ and an element in $ X(\frakA_2) $. Thus, the two families $ X(\frakA_1) $ and $ X(\frakA_2) $ are freely independent and hence the von Neumann algebras $ \Gamma(\frakA_1)= X(\frakA_1)'' $ and $\Gamma(\frakA_2)= X(\frakA_2)''$ are freely independent with respect to the vacuum state.
\end{proof}

	\begin{cor}\label{multi Levy-Khin}
	If $ \frakA $ is a pseudo left Hilbert algebra such that $ \frakA^2 $ is not dense in $ \overline{\frakA} $, then $\Gamma(\frakA)$ is the free product of the free Poisson von Neumann algebra $ \Gamma(\frakA^2) $ and the free Araki-Woods algebra $ \Gamma\left( (\frakA^2)^\perp \cap D(S) \right) $,
	$$ \Gamma(\frakA)=\Gamma(\frakA'') = \Gamma\left( (\frakA^2)'' \oplus ( (\frakA^2)^\perp \cap D(S) ) \right)= \Gamma(\frakA^2)\ast \Gamma\left( (\frakA^2)^\perp \cap D(S) \right).$$
\end{cor}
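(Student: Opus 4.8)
The plan is to deduce this from the three results already in hand, with essentially no new work. First I would invoke Proposition~\ref{density results} to replace $\frakA$ by its ``closure'' $\frakA''$, noting that $\Gamma(\frakA)=\Gamma(\frakA'')$ and that both sides act on the same Fock space $\FF(\overline{\frakA})$, since the completion of $\frakA''$ is again $\overline{\frakA}$. Next I would feed in Proposition~\ref{decomposition of pseudo hilbert algebra}, which expresses $\frakA''$ as the direct sum of pseudo left Hilbert algebras $(\frakA^2)''\oplus\bigl((\frakA^2)^\perp\cap D(S)\bigr)$: the two summands are orthogonal subspaces of $\overline{\frakA}$, the cross products vanish, and (by the lemma preceding that proposition) the involution $S$ — hence also $\Delta$ and $J$ — respects the splitting. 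This is precisely the situation covered by Corollary~\ref{direct sum}, so that corollary produces
$$\Gamma(\frakA'')=\Gamma\bigl((\frakA^2)''\bigr)\ast\Gamma\bigl((\frakA^2)^\perp\cap D(S)\bigr),$$
the free product being taken with respect to the vacuum state.

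The remaining step is to recognize the two factors. For the first, $\frakA^2$ is a genuine (non-degenerate) left Hilbert algebra by Lemma~\ref{perp of A2 has trivial multiplication}, and $(\frakA^2)''$ is its associated full left Hilbert algebra; applying Proposition~\ref{density results} a second time — now to $\frakA^2$ in place of $\frakA$ — yields $\Gamma\bigl((\frakA^2)''\bigr)=\Gamma(\frakA^2)$. For the second, the same lemma shows $\pi_l(\xi)=0$ for every $\xi$ orthogonal to $\frakA^2$, so on this summand the preservation operator $a^0(\xi)=\Lambda(\pi_l(\xi))$ vanishes and the generating operators collapse to $X(\xi)=a^+(\xi)+a^-(\xi)=l(\xi)+l^*(S\xi)$. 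These are exactly the free Araki--Woods field operators on the Fock space over the Hilbert space $(\frakA^2)^\perp$ equipped with the standard real subspace cut out by the restriction of $S$; hence $\Gamma\bigl((\frakA^2)^\perp\cap D(S)\bigr)$ is, by definition, the free Araki--Woods algebra of that modular Hilbert space. Substituting both identifications into the displayed free product decomposition gives the asserted formula.

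I do not expect any genuine obstacle here: the proof is a matter of chaining Propositions~\ref{density results} and~\ref{decomposition of pseudo hilbert algebra} with Corollary~\ref{direct sum}. The only points that need a sentence of justification are that the decomposition furnished by Proposition~\ref{decomposition of pseudo hilbert algebra} really is a direct sum of pseudo left Hilbert algebras in the precise sense required by Corollary~\ref{direct sum} (orthogonal summands, vanishing cross-multiplication, $S$ acting diagonally), and that moving between $\frakA$, $\frakA''$ and $\frakA^2$ never changes the ambient Fock space, so that all the von Neumann algebras in play honestly live inside $B(\FF(\overline{\frakA}))$. The hypothesis that $\frakA^2$ is not dense in $\overline{\frakA}$ is not used in the argument; it only serves to ensure that the free Araki--Woods factor $\Gamma\bigl((\frakA^2)^\perp\cap D(S)\bigr)$ is non-trivial, so that the decomposition is not vacuous.
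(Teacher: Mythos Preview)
Your proposal is correct and follows essentially the same route as the paper: invoke Proposition~\ref{density results} to pass to $\frakA''$, apply Proposition~\ref{decomposition of pseudo hilbert algebra} for the direct-sum splitting, use Corollary~\ref{direct sum} for the free product, and then identify the second factor as a free Araki--Woods algebra via the vanishing of $\pi_l$ on $(\frakA^2)^\perp$. The paper's proof is slightly terser but makes exactly these moves in the same order.
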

\begin{proof}
	Note that $\Gamma\left( (\frakA^2)^\perp \cap D(S) \right)$ is generated by $ X(\xi) = a^+(\xi)+a^-(\xi)+0= l(\xi)+l^*(S\xi) = l(\xi)+l^*(\xi)$ for $\xi=S\xi\in (\frakA^2)^\perp\cap D(S) $. Consider the real Hilbert space $ \KK=\{\xi\in (\frakA^2)^\perp \cap D(S): \xi=S\xi\} $, then $  \Gamma\left( (\frakA^2)^\perp \cap D(S) \right) $ is exactly the free Araki-Woods algebra $ \Gamma_{AW}(\KK \subset (\frakA^2)^\perp) $ for the embedding $ \KK \subset (\frakA^2)^\perp $ as defined in \cite{Shl97}. The statement now follows from Proposition \ref{decomposition of pseudo hilbert algebra} and Corollary \ref{direct sum}.
\end{proof}

We note that a similar decomposition also occurred in \cite{AM21} Theorem 43, where they considered a more general Fock space over a fixed $*$-probability space $ (\mathcal{B},\varphi) $ while our construction is based on left Hilbert algebras.

The next example shows that the free Poisson random weight gives us natural examples of $R$-cyclic families.
\begin{eg}
	Let $ \varphi $ be a n.s.f weight on $ M $ and $ \{p_{i}\}_{i=1}^n $ be orthogonal projections in $M$ with finite weights. For each family $\{x_{k,ij}\}_{1\leq i,j\leq n,k\in I}$ with $x_{k,ij}\in p_i Mp_j\in \mm_{\varphi} $, consider the $ \Gamma(M,\varphi) $ valued matrices
	
	$$ y_k := (Y( x_{k,ij} ))_{i,j}\in \Gamma(M,\varphi) \otimes M_{n}(\CC). $$ Then $\{y_k\}_{k\in I}$ is a R-cyclic family \cite{NSS02} with respect to the state $ \varphi_{\Omega}\otimes \text{tr}_n$. This is because the free cumulants $$ R_m(Y(x_{k_1,i_1j_1}), \cdots, Y(x_{k_m,i_mj_m})) = \varphi( x_{k_1,i_1j_1}\cdots x_{k_m,i_mj_m} )  $$ are non-zero only if $ j_s=i_{s+1} $ for $ s=1,\cdots,m-1 $.
\end{eg}

\subsection{Wick product and a Haagerup type inequality for finite weights}
 Recall that from Proposition \ref{comb}, we have the Wick product formula
\begin{align*}
	\Psi(\xi_1\otimes \cdots\otimes \xi_n)&=\sum_{ s=1 }^{n+1}\left(\prod_{i\leq s-1}a^+(\xi_i)\right)\left(\prod_{j\geq s} a^-(\xi_{j})\right)+ \sum_{ s=1}^{n}\left(\prod_{i\leq s-1}a^+(\xi_i)\right)a^0(\xi_s) \left(\prod_{j\geq s+1} a^-(\xi_{j})\right)\\
	&= \sum_{ s=1 }^{n+1}\left(\prod_{i\leq s-1}\ell(\xi_i)\right)\left(\prod_{j\geq s} \ell^*(S_\varphi \xi_{j})\right)+ \sum_{ s=1}^{n}\left(\prod_{i\leq s-1}\ell (\xi_i)\right)\Lambda(\pi_l(\xi_s)) \left(\prod_{j\geq s+1} \ell^*(S_\varphi \xi_{j})\right)
\end{align*}
for $ \xi_i \in \frakA_\varphi$. From this, one can also show the following formula proved in \cite{BP14} for the commutative cases.

\begin{prop}\label{multiplication of Wick product}
	For all $\xi_i,\eta_j\in \frakA$, \begin{align*}
		\Psi(\xi_n\otimes \cdots\otimes \xi_1)\Psi(\eta_1\otimes \cdots \otimes \eta_m)=&\sum_{k=0}^{n\wedge m} \left(\prod_{i=1}^{k}\langle S\xi_i,\eta_i \rangle\right) \Psi( \xi_n\otimes \cdots \otimes \xi_{k+1}\otimes \eta_{k+1}\otimes\cdots \otimes \eta_{m} )\\
		&+\sum_{k=0}^{n\wedge m-1}  \left(\prod_{i=1}^{k}\langle S\xi_i,\eta_i \rangle\right) \Psi( \xi_n\otimes \cdots \otimes \xi_{k+1}\eta_{k+1}\otimes\cdots \otimes \eta_{m} )
	\end{align*}
\end{prop}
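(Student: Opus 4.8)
The plan is to reduce this operator identity to its action on the vacuum vector. Both sides belong to $\Gamma(\frakA)$: the left-hand side is a product of two Wick products, each a bounded element of $\Gamma(\frakA)$ by construction, while the right-hand side is a finite linear combination of Wick products of tensors whose legs again lie in $\frakA$ (here one uses that $\frakA$ is an algebra, so each $\xi_{k+1}\eta_{k+1}\in\frakA$). Since $\Omega$ is separating for $\Gamma(\frakA)$ by the lemma above, it suffices to check that the two sides agree after being applied to $\Omega$, i.e.\ to verify the resulting identity of vectors in $\FF(\overline{\frakA})$.

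Applying the right-hand side to $\Omega$ is immediate, since $\Psi(\zeta_1\otimes\cdots\otimes\zeta_p)\Omega=\zeta_1\otimes\cdots\otimes\zeta_p$: it produces $\sum_{k=0}^{n\wedge m}\big(\prod_{i=1}^{k}\langle S\xi_i,\eta_i\rangle\big)\,\xi_n\otimes\cdots\otimes\xi_{k+1}\otimes\eta_{k+1}\otimes\cdots\otimes\eta_m$ together with the analogous sum in which the two legs $\xi_{k+1}\otimes\eta_{k+1}$ are merged into the single leg $\xi_{k+1}\eta_{k+1}$. On the left-hand side, $\Psi(\eta_1\otimes\cdots\otimes\eta_m)\Omega=\eta_1\otimes\cdots\otimes\eta_m$, so what remains is to apply $\Psi(\xi_n\otimes\cdots\otimes\xi_1)$ to this single tensor. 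For this I would expand $\Psi(\xi_n\otimes\cdots\otimes\xi_1)$ via the Wick formula of Proposition~\ref{comb}. Its summands are of two types: a monomial $a^{+}(\xi_n)\cdots a^{+}(\xi_{a+1})\,a^{-}(\xi_a)\cdots a^{-}(\xi_1)$ with no preservation operator, and a monomial $a^{+}(\xi_n)\cdots a^{+}(\xi_{p+1})\,a^{0}(\xi_p)\,a^{-}(\xi_{p-1})\cdots a^{-}(\xi_1)$ with exactly one. Acting on $\eta_1\otimes\cdots\otimes\eta_m$, the annihilation operators act first; since $a^{-}(\xi)$ deletes the first leg $\eta$ of a tensor and multiplies by $\langle S\xi,\eta\rangle$, the block $a^{-}(\xi_a)\cdots a^{-}(\xi_1)$ strips off $\eta_1,\dots,\eta_a$ and extracts $\prod_{i=1}^{a}\langle S\xi_i,\eta_i\rangle$; a preservation operator $a^{0}(\xi_p)$, if present, then replaces the next leg $\eta_p$ by $\xi_p\eta_p$; finally the creation operators prepend the surviving $\xi_j$'s. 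Collecting terms and using $\Psi(\cdots)\Omega=(\cdots)$ on the right-hand side, the monomials of the first type reproduce exactly the first sum and those of the second type the second sum, with $k=a$ and $k=p-1$ respectively.

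The point that requires care is the bookkeeping with the index ranges. A first-type monomial with $a$ annihilation operators annihilates $\Omega$ as soon as $a>m$ (one runs out of legs, and $a^{-}(\cdot)\Omega=0$), while a second-type monomial with $a$ annihilation operators preceding its $a^{0}$ is killed once $a\ge m$; together with the upper bounds on $a$ (resp.\ on $p$) coming from Proposition~\ref{comb}, these vanishing thresholds are precisely what truncate the two resulting sums at $k\le n\wedge m$ and $k\le n\wedge m-1$, matching the statement. One must also carry the decreasing-index reading of the tensor $\xi_n\otimes\cdots\otimes\xi_1$ consistently through Proposition~\ref{comb}, so that the contractions pair $\xi_i$ with $\eta_i$ and not some shifted pair, and one should separately dispose of the degenerate endpoints (empty creation or annihilation blocks, $m=0$, etc.). No combinatorial identity beyond the explicit action of $a^{+},a^{-},a^{0}$ on elementary tensors is needed; alternatively one could argue by induction on $n+m$ using the recurrence for $\Psi$ and its right-handed analogue, but the vacuum-vector computation is the shortest route.
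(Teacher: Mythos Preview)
Your proposal is correct and follows essentially the same approach as the paper: both reduce the operator identity to its action on the vacuum via the separating property of $\Omega$, then compute $\Psi(\xi_n\otimes\cdots\otimes\xi_1)(\eta_1\otimes\cdots\otimes\eta_m)$ by expanding $\Psi(\xi_n\otimes\cdots\otimes\xi_1)$ through the Wick formula of Proposition~\ref{comb}. The paper's proof is terser, but your more explicit bookkeeping of the annihilation/preservation/creation actions and the index truncations is exactly the content it leaves implicit.
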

\begin{proof}
	By the Wick product formula, we obtain
	\begin{align*}
		&\Psi(\xi_n\otimes \cdots\otimes \xi_1)\Psi(\eta_1\otimes \cdots \otimes \eta_m)\Omega =\Psi(\xi_n\otimes \cdots\otimes \xi_1)\eta_1\otimes \cdots \otimes \eta_m\\ =&\sum_{k=0}^{n\wedge m} \left(\prod_{i=1}^{k}\langle S\xi_i,\eta_i \rangle\right) \xi_n\otimes \cdots \otimes \xi_{k+1}\otimes \eta_{k+1}\otimes\cdots \otimes \eta_{m} +\sum_{k=0}^{n\wedge m-1}  \left(\prod_{i=1}^{k}\langle S\xi_i,\eta_i \rangle\right) \xi_n\otimes \cdots \otimes \xi_{k+1}\eta_{k+1}\otimes\cdots \otimes \eta_{m}.
	\end{align*} 
	The statement now follows from the separability of $ \Omega $.
\end{proof}

Similar to Wick product for free Gaussian algebras (free Araki-Woods algebras), Wick product for free Poisson algebras is also closely related to Haagerup type inequality. For the rest of this subsection, we focus on free Poisson algebras over a (actual) left Hilbert algebra $\frakA_\varphi$ associated a finite weight $ \varphi $ on $ M $.

Assume that $ \varphi(1)<\infty $, so that $ \xi := \varphi^{1/2} \in L^2(M)$ is a vector with norm $ \|\xi\|=\varphi(1)^{1/2} $. If we consider the composition of $\Psi$ and the embedding $ M^{\odot n}\ni x\mapsto x\xi^{\otimes n} $, then we obtain the linear map $I_n:M^{\odot n} \to \Gamma(M,\varphi)$ (we will see later that $I_n$ actually extends to the whole von Neumann algebra $M^{\otimes n}$). A (operator-valued) Haagerup type inequality for the free Poisson algebra can then be considered as the completely boundedness of $ I_n $ with $ \|I_n\|_{cb}\leq Cn $. To show this, we will follow the notation about operator spaces in \cite{nou2004non}.

For a complex Hilbert space $\HHH$, we will denote $ \HHH_c := B( \CC,\HHH )$ the column operator space and $ \HHH_r:= B( \overline{\HHH},\CC ) $ the row operator space (where $ \overline{\HHH} $ is the conjugate Hilbert space of $\HHH$). Recall that the column and row operator spaces have the following properties under the Haagerup tensor product: if $ \HHH,\KK $ are two Hilbert space and $ S $ another operator space, then we have
$$ \HHH_c\otimes_h S = \HHH_c \otimes_{\min} S,\quad S\otimes_h \KK_r = S\otimes_{\min} \KK_r,\quad   \HHH_c\otimes_h \KK_r = \HHH_c\otimes_{\min} \KK_r = B( \KK,\HHH ).$$
Also, if one denote $ \HHH\otimes_2 \KK $ the usual tensor product of Hilbert spaces, then one also has $$ \HHH_c\otimes_h \KK_c = \HHH_c\otimes_{\min} \KK_c = (\HHH\otimes_2 \KK)_c,\quad \HHH_r\otimes_h \KK_r = \HHH_r\otimes_{\min} \KK_r = (\HHH \otimes_2 \KK)_r. $$

It is shown in \cite{nou2004non} that, for each $ k\geq 1 $, the creation operator $ \ell_k :=\ell: (\HHH^{\otimes k})_c\to B(\FF(\HHH)) $,
$$ \ell(\xi_1\otimes\cdots \otimes \xi_k) := \ell(\xi_1)\ell(\xi_2 )\cdots \ell(e_k),\quad \forall \xi_i\in \HHH $$
is a complete contraction $ \|\ell \|_{cb}\leq 1 $. And similarly, the annilation operator $ \ell^*_k:= \ell^*: (\overline{\HHH}^{\otimes k})_r\to B(\FF_0(\HHH)) $,
$$ \ell^*( \overline{\xi_1}\otimes \cdots \otimes \overline{\xi_k} ):= \ell^*( \xi_1 )\ell^*(\xi_2) \cdots \ell^*(\xi_k), $$
is a complete contraction $ \|\ell^*\|_{cb}\leq 1 $.

Let $ \mathcal{M}: B(\FF_0(\HHH))\otimes_{h} B(\FF_0(\HHH)) $ be the multiplication operator $ \mathcal{M}(A\otimes B):= AB $, then we have $ \|M\|_{cb}\leq 1 $ (See \cite{pisier2003introduction} Corollary 5.4).

We also need to consider another completely bounded map: the embedding of $ M $ into $ L^2(M,\varphi) $. For this, we note that the linear map $$ R: M\to L^2(M)_c = B( \CC,L^2(M) ),\quad 
x\mapsto x\xi $$ can be considered as the restriction of $ M\subseteq B(L^2(M)) $ to the subspace $ \CC\xi \subseteq L^2(M) $ (multiplied by the constant $ \|\xi\|$). But since restriction is always a complete contraction, we have $ \|R\|_{cb}\leq \|\xi\| $.
Similarly, we can consider the corestriction of $ M\subseteq B(L^2(M)) $ to $ \CC\xi $,
$$ cR: M\to \overline{L^2(M)}_r = B(L^2(M),\CC), \quad x\mapsto \overline{x^*\xi} $$
(note that for any $\eta\in \HHH$, we have $ \langle x^*\xi, \eta\rangle = \langle \xi,x\eta \rangle  $, so when considered as an operator in $ B(L^2(M),\CC) $, $ \overline{x^*\xi} $ is indeed the corestriction of $ x $ to $ \CC\xi $, multiplied again by $ \|\xi\| $.) In particular, we also have $ \|cR\|_{cb}\leq \|\xi\| $.

We can finally rewrite the Wick formula in terms of those complete bounded maps:
let $ x_1,\cdots ,x_n\in M $, then
\begin{align*}
	\varPsi( x_1\xi \otimes \cdots \otimes  x_n\xi ) =& \sum_{s=0}^{n}\mathcal{M}(\ell_s\otimes \ell^*_{n-s})( R^{\otimes s} \otimes cR^{\otimes (n-s)} )( x_1\otimes \cdots \otimes x_n )\\
	&+ \sum_{s=1}^{n}\mathcal{M}(\text{id}\otimes M)( \ell_{s-1}\otimes \text{id} \otimes \ell^*_{n-s} )( R^{\otimes s-1} \otimes \Lambda \otimes cR^{\otimes (n-s)} )( x_1\otimes \cdots \otimes x_n ).
\end{align*}

The following corollary then follows immediately from the formula above.
\begin{cor}
	Suppose that $ \varphi(1)< \infty $ and $ \xi:= \varphi^{1/2}\in L^2(M) $. Consider the linear map $ I_n:  M^{ \odot n }\to \Gamma(M,\varphi) $ defined on the algebraic tensor power of $ M $,
	$$ I_n( x_1\otimes \cdots \otimes x_n ):= 	\varPsi( x_1\xi \otimes \cdots \otimes  x_n\xi ), $$
	then $ I_n $ can be extended to a normal completely bounded map from the von Neumann algebra $ M^{\otimes n} $ to $ \Gamma(M,\varphi)$:
	$$ I_n:  M^{\otimes n} \to \Gamma(M,\varphi)\subseteq  B(\FF_0(L^2(M))), $$
	with $ \|I_n\|_{cb} \leq (n+1)\varphi(1)^{n/2}+n\varphi(1)^{(n-1)/2} $.
\end{cor}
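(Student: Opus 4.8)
The plan is to read the completely bounded norm of $I_n$ directly off the operator-space Wick formula displayed just above the statement: on the algebraic tensor product $M^{\odot n}$ this formula presents $I_n$ as a sum of $2n+1$ terms, each a composition of completely bounded maps whose $\|\cdot\|_{cb}$ is already recorded (or easily recorded) above. Write $\HHH:=L^2(M,\varphi)$, so that $\xi=\varphi^{1/2}$ has $\|\xi\|=\varphi(1)^{1/2}$. The building blocks and their norms are: $R\colon M\to\HHH_c$ and $cR\colon M\to\overline{\HHH}_r$ with $\|R\|_{cb},\|cR\|_{cb}\le\varphi(1)^{1/2}$; the creation maps $\ell_k\colon(\HHH^{\otimes k})_c\to B(\FF(\HHH))$ and annihilation maps $\ell^*_k\colon(\overline{\HHH}^{\otimes k})_r\to B(\FF(\HHH))$ of $\|\cdot\|_{cb}\le 1$; the multiplication $\mathcal{M}\colon B(\FF(\HHH))\otimes_h B(\FF(\HHH))\to B(\FF(\HHH))$ of $\|\cdot\|_{cb}\le 1$, together with its threefold iterate $\mathcal{M}\circ(\mathrm{id}\otimes\mathcal{M})$ (again $\le 1$); and the gauge map $x\mapsto\Lambda(\pi_l(x))$. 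The only one of these not already discussed is the last, and I would check it is normal and completely contractive: $\pi_l\colon M\to B(\HHH)$ is a normal faithful $*$-homomorphism, hence completely isometric, and $\Lambda(T)$ acts on $\HHH^{\otimes m}$ as $T\otimes\mathrm{id}_{\HHH^{\otimes(m-1)}}$, so $\Lambda$ is the direct sum over $m\ge 1$ (and the zero map on $\CC\Omega$) of the normal $*$-homomorphisms $T\mapsto T\otimes\mathrm{id}$, and a direct sum of completely contractive maps is completely contractive.

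Next I would assemble the summands. Feeding $R$ into the first $s$ legs and $cR$ into the last $n-s$ legs (and, for the gauge terms, $\Lambda\pi_l$ into the middle leg) of the minimal/von Neumann tensor product $M^{\otimes n}$ is legitimate because slotting a completely bounded map into one leg of a minimal tensor product is a completely bounded operation of the same norm; collapsing the consecutive column factors by $\HHH_c\otimes_{\min}\KK_c=(\HHH\otimes_2\KK)_c$ and the consecutive row factors by $\overline{\HHH}_r\otimes_{\min}\overline{\KK}_r=(\overline{\HHH}\otimes_2\overline{\KK})_r$, this yields a completely bounded map of norm $\le\varphi(1)^{n/2}$ from $M^{\otimes n}$ into $(\HHH^{\otimes s})_c\otimes_{\min}(\overline{\HHH}^{\otimes(n-s)})_r$, resp.\ of norm $\le\varphi(1)^{(n-1)/2}$ into $(\HHH^{\otimes(s-1)})_c\otimes_{\min}B(\FF(\HHH))\otimes_{\min}(\overline{\HHH}^{\otimes(n-s)})_r$. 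By the identities $\HHH_c\otimes_h S=\HHH_c\otimes_{\min}S$ and $S\otimes_h\KK_r=S\otimes_{\min}\KK_r$ these targets coincide with the corresponding Haagerup tensor products, on which $\ell_s\otimes\ell^*_{n-s}$, resp.\ $\ell_{s-1}\otimes\mathrm{id}\otimes\ell^*_{n-s}$, is completely bounded of norm $\le 1$ by multiplicativity of $\|\cdot\|_{cb}$ under the Haagerup tensor product; composing with $\mathcal{M}$, resp.\ $\mathcal{M}\circ(\mathrm{id}\otimes\mathcal{M})$, shows that each of the $n+1$ terms of the first sum is completely bounded of norm $\le\varphi(1)^{n/2}$ and each of the $n$ terms of the second sum of norm $\le\varphi(1)^{(n-1)/2}$. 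Adding these bounds gives $\|I_n\|_{cb}\le(n+1)\varphi(1)^{n/2}+n\varphi(1)^{(n-1)/2}$ on $M^{\odot n}$; this bounds the completely bounded extension to the minimal C$^*$-tensor product, which is $\sigma$-weakly dense in $M^{\otimes n}$, and since every building block is normal (and $M^{\odot n}$ already maps into the weak-$*$ closed algebra $\Gamma(M,\varphi)$) the extension is a normal completely bounded map $M^{\otimes n}\to\Gamma(M,\varphi)\subseteq B(\FF(\HHH))$.

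I expect the only genuinely delicate point to be the tensor-product bookkeeping: making each composition legal requires moving, via the column/row identities recalled just above the statement, between the Haagerup tensor products forced by the $\|\cdot\|_{cb}$-multiplicativity of $\ell_k,\ell^*_k,\mathcal{M}$ and the minimal/von Neumann tensor product $M^{\otimes n}$ on which $I_n$ must be defined — and, for the gauge terms, checking that the middle leg, which lands in $B(\FF(\HHH))$ rather than in a row or column space, is absorbed correctly by $\HHH_c\otimes_h S=\HHH_c\otimes_{\min}S$ and $S\otimes_h\KK_r=S\otimes_{\min}\KK_r$. Verifying the complete contractivity of $x\mapsto\Lambda(\pi_l(x))$ and the normality of the extension are then routine.
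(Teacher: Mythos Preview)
Your argument for the completely bounded estimate is essentially the paper's: decompose the Wick formula into its $2n+1$ summands, feed $R$, $cR$, $\Lambda\pi_l$ into the minimal tensor product, pass to the Haagerup tensor product via the column/row identities, and apply $\ell_k$, $\ell^*_k$, $\mathcal{M}$. The paper phrases part (a) in exactly this way, so on this point you match.

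Where you and the paper diverge is in the passage from $M^{\otimes_{\min} n}$ to the von Neumann tensor product $M^{\otimes n}$. You assert that the extension is normal because ``every building block is normal''. This is morally right but too quick as written: the intermediate targets $(\HHH^{\otimes s})_c$, $(\overline{\HHH}^{\otimes(n-s)})_r$, and their Haagerup tensor products are not von Neumann algebras, so you would need to say precisely in what sense the intermediate maps are ``normal'' and why normality survives composition with $\mathcal{M}$ on the Haagerup tensor product. (It can be done---each summand is, after unpacking, of the form $y\mapsto V^*\,\pi(y)\,W$ for a normal representation $\pi$ of $M^{\otimes n}$ and fixed bounded $V,W$---but you have not said this.) The paper bypasses the whole issue with a cleaner device: $I_n$ is an $L^2$-isometry, because $I_n(x)\Omega = x\,\xi^{\otimes n}$ and the Fock inner product on the $n$-th level is exactly the $L^2(M^{\otimes n},\varphi^{\otimes n})$ inner product. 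Since on the unit ball of a von Neumann algebra with faithful normal state the strong operator topology coincides with the $L^2$-topology, $I_n$ is SOT-continuous on bounded sets, hence extends normally to $M^{\otimes n}$; the cb bound then transfers to $M^{\otimes n}$ by Kaplansky density. This $L^2$-isometry observation is the one genuinely new ingredient in the paper's proof that your proposal does not supply.
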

\begin{proof}
	a) We first show that $ I_n $ is completely bounded on the minimal tensor product $ M^{\otimes_{\min} n} $.
	For this, we simply note that  $ R^{\otimes s-1} \otimes \Lambda \otimes cR^{\otimes (n-s)} $ (similarlly for $R^{\otimes s} \otimes cR^{\otimes (n-s)}$) can be considered as the cb map on the minimal tensor product $ M^{\otimes_{\min} n} \to ( L^2(M)^{\otimes (s-1)})_c \otimes_{\min} B(\FF_0(\HHH)) \otimes_{\min} (L^2(M)^{\otimes (n-s)})_r$. Identifying $ ( L^2(M)^{\otimes (s-1)})_c \otimes_{\min} B(\FF_0(\HHH)) \otimes_{\min} (L^2(M)^{\otimes (n-s)})_r $ with the Haagerup tensor product $ ( L^2(M)^{\otimes (s-1)})_c \otimes_{h} B(\FF_0(\HHH)) \otimes_{h} (L^2(M)^{\otimes (n-s)})_r $, we then consider $\ell_{s-1}\otimes \text{id} \otimes \ell^*_{n-s}$ as the cb map defined on the Haagerup tensor product and finally apply the boundedness of $ M $ on the Haagerup tensor product. In particular, the extension $I_n:M^{\otimes_{\min} n}\to \Gamma(M,\varphi)$ is well defined as $$ I_n:=  \sum_{s=0}^{n}\mathcal{M}(\ell_s\otimes \ell^*_{n-s})( R^{\otimes s} \otimes cR^{\otimes (n-s)} )+ \sum_{s=1}^{n}\mathcal{M}(\text{id}\otimes M)( \ell_{s-1}\otimes \text{id} \otimes \ell^*_{n-s} )( R^{\otimes s-1} \otimes \Lambda \otimes cR^{\otimes (n-s)} ).$$
	
	b) We now show that $I_n$ can be extended to a normal completely bounded map on $ M^{\otimes n} $. Indeed, for a given faithful normal positive linear functional $ \psi $ on a von Neumann algebra $ N $, the strong operator topology on the unit ball $N_1$ coincides with the $L^2$-topology (under the identification $ N\ni x\mapsto x\psi^{1/2}\in L^2(N) $). Since $I_n: M^{\odot n}\to \Gamma(M,\varphi) $ preserves the $L^2$-norm by the definition of Fock space, $ I_n $ is continuous on $ (M^{\odot n})_1 $ under the strong operator topology. As $ (M^{\odot n})_1 $ is strong operator dense in $ (M^{\otimes n})_1 $, we can extend $I_n$ to $ M^{\otimes n} $. Finally, we note that $ I_n $ is normal since a linear map between von Neumann algebras is normal if and only if it is strong operator continuous on the unit ball. To see that $I_n$ is completely bounded on $M^{\otimes n}$, one simply evoke the Kaplansky density theorem: for any $ X=(x_{ij})\in M^{\otimes n}\otimes M_d(\CC) $ with $ \|X\|\leq 1 $, we pick a net $ X^{(s)}= (x^{(s)}_{ij})\in M^{\odot n}\otimes M_d(\CC) $ converging strongly to $X$ with $ \|X^{(s)}\|\leq 1 $. We now have $ (I_n(x_{ij}))\in \Gamma(M,\varphi)\otimes M_d(\CC) $ is the strong limit of $ (I_n(x^{(s)}_{ij})) $. In particular, $ \|  (I_n(x_{ij}))\| \leq \sup_{s} \| (I_n(x^{(s)}_{ij})) \| \leq (n+1)\varphi(1)^{n/2}+n\varphi(1)^{(n-1)/2} $.
\end{proof}

Since $I_n$ is normal, we can also extends the formula in Proposition \ref{multiplication of Wick product} to $ M^{\otimes n} $.

\begin{cor}
	Assume that $\varphi(1)< \infty$. Then for all $ x\in M^{\otimes n} $ and $y\in M^{\otimes m}$, we have
	$$ I_n(x)I_m(y)= \sum_{k=0}^{n\wedge m} I_{m+n-2k}(c_{k}(x,y))+ \sum_{k=0}^{n\wedge m-1}I_{m+n-2k-1}(m_k(x,y)), $$
	where $ c_k: M^{\otimes n}\times M^{\otimes m}\to M^{\otimes (m+n-2k)} $ is the contraction operator
	$$ c_k(x_n\otimes \cdots \otimes x_1,y_1\otimes \cdots \otimes y_m):= \left(\prod_{i=1}^k \varphi( x_{i}y_{i} )\right) x_n\otimes \cdots \otimes x_{k+1} \otimes y_{k+1}\otimes \cdots \otimes y_m, $$
	and $ m_k: M^{\otimes n}\times M^{\otimes m}\to M^{\otimes (m+n-2k-1)} $ is the operator
	$$ m_k(x_n\otimes \cdots \otimes x_1,y_1\otimes \cdots \otimes y_m):= \left(\prod_{i=1}^k \varphi( x_{i}y_{i} )\right) x_1\otimes \cdots \otimes x_{k+1}y_{k+1}\otimes \cdots \otimes y_m.$$
	In particular, $ \text{span}\{ I_k(x):x\in M^{\otimes k},k\geq 0 \} $ is a dense $*$-subalgebra of $ \Gamma(M,\varphi) $.\qed
\end{cor}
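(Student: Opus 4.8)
The plan is to prove the identity first on elementary tensors, where it is exactly Proposition~\ref{multiplication of Wick product} rewritten in the present notation, and then to upgrade it to all of $M^{\otimes n}\times M^{\otimes m}$ by the same normality/Kaplansky argument already used for $I_n$ in the previous corollary.

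First I would record that, since $\varphi(1)<\infty$, one has $\nn_\varphi=M$, so for $x_i,y_j\in M$ the vectors $x_i\xi=\eta_\varphi(x_i)$ and $y_j\xi=\eta_\varphi(y_j)$ lie in $\frakA_\varphi$ and $I_n(x_n\otimes\cdots\otimes x_1)=\Psi(x_n\xi\otimes\cdots\otimes x_1\xi)$. Applying Proposition~\ref{multiplication of Wick product} with $\xi_i=x_i\xi$, $\eta_j=y_j\xi$, and using $\langle S_\varphi\eta_\varphi(x_i),\eta_\varphi(y_i)\rangle=\varphi(x_iy_i)$ together with $\eta_\varphi(x_{k+1})\eta_\varphi(y_{k+1})=\eta_\varphi(x_{k+1}y_{k+1})$, the right-hand side of that proposition becomes precisely $\sum_{k=0}^{n\wedge m}I_{m+n-2k}(c_k(x,y))+\sum_{k=0}^{n\wedge m-1}I_{m+n-2k-1}(m_k(x,y))$ for elementary $x,y$. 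Thus the formula holds on $M^{\odot n}\times M^{\odot m}$.

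Next I would extend by normality in each variable separately. The maps $c_k$ and $m_k$ are assembled from the pairing $M\times M\to\CC$, $(a,b)\mapsto\varphi(ab)$ (which for one fixed argument is a normal functional of norm $\le\varphi(1)\,\|\cdot\|$), from the multiplication $M\times M\to M$, and from the identity on the untouched tensor legs; tensoring such normal completely bounded maps and postcomposing with the normal maps $I_\ell$ of the previous corollary produces, for each fixed value of one variable, a normal (completely bounded) map of the other variable into $\Gamma(M,\varphi)$. The left-hand side $x\mapsto I_n(x)I_m(y)$ is likewise normal in $x$ (since $I_n$ is normal and left multiplication by the fixed bounded operator $I_m(y)$ is $\sigma$-weakly continuous), and symmetrically in $y$. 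Hence, fixing $y$ an elementary tensor and invoking Kaplansky density (exactly as in the proof of the previous corollary, now applied to a bilinear rather than a linear map), the identity passes from $M^{\odot n}$ to all $x\in M^{\otimes n}$; fixing such an $x$ and repeating the argument in the second variable gives it for all $y\in M^{\otimes m}$.

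For the final assertion, the product formula just proved shows that $A:=\mathrm{span}\{I_k(x):x\in M^{\otimes k},\,k\ge 0\}$ is closed under multiplication. From $\Psi(\zeta_1\otimes\cdots\otimes\zeta_k)^*=\Psi(S\zeta_k\otimes\cdots\otimes S\zeta_1)$ and $S(x_i\xi)=\eta_\varphi(x_i^*)=x_i^*\xi$ one gets $I_k(x_1\otimes\cdots\otimes x_k)^*=I_k(x_k^*\otimes\cdots\otimes x_1^*)$, which extends by conjugate-linearity and normality to show $A=A^*$; also $A$ contains the identity since $I_0(\CC)=\CC1$. Finally $A$ contains every $I_1(x)=X(\eta_\varphi(x))$ with $x\in M$, and these generate $\Gamma(M,\varphi)$ as a von Neumann algebra, so $A''=\Gamma(M,\varphi)$ and $A$ is $\sigma$-weakly dense. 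The only point requiring real care is the bookkeeping in the extension step: checking that $c_k$ and $m_k$ genuinely extend to separately normal maps on the von Neumann tensor powers with the legs contracted in the correct order, and that the identity survives the two successive strong-operator limits. Everything else is formal, given Proposition~\ref{multiplication of Wick product}, the previous corollary, and standardness of $\Omega$.
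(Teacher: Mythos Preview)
Your proposal is correct and follows essentially the same route as the paper: reduce to elementary tensors via Proposition~\ref{multiplication of Wick product}, then extend by normality of $I_n$ and of the operators $c_k$, $m_k$. The only difference is cosmetic: the paper verifies normality of $c_k$ and $m_k$ by writing each explicitly as a composition of a flip on the first $k$ (or $k{+}1$) factors of $y$, an inclusion into a common tensor power, the multiplication map $\mathcal{M}$ on that tensor power, and $\varphi^{\otimes k}$ on the middle legs, whereas you describe these ingredients more informally. Your treatment of the final $*$-subalgebra statement is also in line with the paper's (which leaves it implicit).
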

\begin{proof}
	For $x\in M^{\odot n},y\in M^{\otimes m}$ the statement follows directly from Proposition \ref{multiplication of Wick product}. By density of $ M^{\odot n} $ in $ M^{\otimes n} $ and since $I_n$ is normal, it suffices to show that the $c_k$ and $m_k$ operators are normal (i.d. continuous w.r.t. the $\sigma$-weak topology). For $ 0\leq k\leq n\wedge m -1 $, $ m_{k}: M^{\otimes n}\times M^{\otimes m} \to M^{n+m-2k-1}$ can be written as the following composition map:
	\begin{align*}
		&M^{\otimes n}\times M^{\otimes m} \xrightarrow{ \text{id}\times (F_{k+1}\otimes \text{id}^{m-k-1}) } M^{\otimes n}\times M^{\otimes m} \simeq (M^{\otimes n} \otimes \CC^{\otimes m-k-1})\times (\CC^{\otimes n-k-1}\otimes M^{\otimes m})\\ &\xhookrightarrow{} M^{\otimes m+n-k-1}\times M^{\otimes m+n-k-1} \xrightarrow{\mathcal{M}} M^{\otimes m+n-k-1} \xrightarrow{ \text{id}^{ n-k}\otimes \varphi^{\otimes k}\otimes \text{id}^{m-k-1} }M^{\otimes m+n-2k-1},
	\end{align*}
	where $ F_{k+1}\otimes \text{id}^{m-k-1} $ is the map reversing the order of the first $ (k+1) $ components in $M^{\otimes n}$.
	Obviously, $ m_k $ is normal and $ \|m_k(x, y) \|\leq \|x\|\|y\| $.
	Similarly, we can also decompose $c_k: M^{\otimes n}\times M^{\otimes m} \to M^{n+m-2k}$ as
	\begin{align*}
		&M^{\otimes n}\times M^{\otimes m} \xrightarrow{ \text{id}\times (F_{k}\otimes \text{id}^{m-k}) } M^{\otimes n}\times M^{\otimes m} \simeq (M^{\otimes n} \otimes \CC^{\otimes m-k})\times (\CC^{\otimes n-k}\otimes M^{\otimes m})\\ &\xhookrightarrow{} M^{\otimes m+n-k}\times M^{\otimes m+n-k} \xrightarrow{\mathcal{M}} M^{\otimes m+n-k} \xrightarrow{ \text{id}^{ n-k}\otimes \varphi^{\otimes k}\otimes \text{id}^{m-k} }M^{\otimes m+n-2k}.
	\end{align*} 
	
	Therefore, all of the $c_k$'s, $m_k$'s, and $ I_k $'s are normal, and we can extend the product formula of Wick polynomials to $M^{\otimes n}$.
\end{proof}

\section{Isomorphism class of free Poisson algebras for finite weights}

Recall that a free Poisson distribution $ \pi_\lambda $ of rate $ \lambda>0 $ is a distribution with free cumulants $ R_n = \lambda $ for all $ n\geq 1$. The density of $ \pi_\lambda $ is
$$ d\pi_\lambda(x) = \frac{\sqrt{4\lambda - (x-(\lambda+1))^2}}{4\pi x}\chi_{[(\sqrt{\lambda}-1)^2, (\sqrt{\lambda}+1)^2 ]}(x)dx+\max{(1-\lambda,0)}\delta_0.$$
The support of $ \pi_\lambda $ is $ [(\sqrt{\lambda}-1)^2, (\sqrt{\lambda}+1)^2 ] $.

The \textit{free compound Poisson distribution} $ \pi_{\nu,\lambda} $ of rate $\lambda$ with respect to another distribution $ \nu $ is defined using the free convolution
$$ \pi_{\nu,\lambda}:= \lim_{n\to \infty}\left( (1-\frac{\lambda}{n})\delta_0+\frac{\lambda}{n}\nu) \right)^{ \boxplus n}. $$ When $ \nu $ has finite moments, the free cumulants of $ \pi_{\nu,\lambda} $ is exactly $\lambda$ times the moments of $ \nu $. Also, if $ \nu $ is a positive distribution (i.e. supported in $\RR_+$), then $ \pi_{\nu,\lambda} $ is also positive. Corollary \ref{cumulants} implies that if $ \varphi $ is a state, and the distribution of $ u\in M $ is $ \nu $, then the distribution of $ Y(u) $ is compound Poisson distribution $ \pi_{\nu,1} $ of rate $1$, while the distribution of $ X(u) $ is the centered compound Poisson distribution. Similarly, free compound Poisson distribution with rate $ \lambda\neq 1 $ can be obtained by considering the free Poisson algebra of the positive linear functional $ \lambda \varphi $ on $ M $.

While $ Y(u) $ is a realization of compound free Poisson distribution, there is another well-known realization for $\lambda=1$: let $ s $ be a self-adjoint variable freely independent from $u$ such that $s^2$ has distribution $ \pi_1 $, then $sus$ also has the compound free Poisson distribution $\pi_{\nu,1}$. From this, one easily can show that when $\varphi$ is a state, the two families $ \{Y(u)\}_{u\in M} $ and $ \{sus\}_{u} $ has the same joint distribution, and therefore we have the isomorphism $\Gamma(M,\varphi) \simeq L(\ZZ)\ast (M,\varphi)$. In this section, we generalize this observation to the cases when $ \varphi $ is a positive linear functional and determine the isomorphism class of $ \Gamma(M,\varphi ) $ by considering free Poisson distributions $ \pi_\lambda $ with $ \lambda\neq 1 $.

If $ \varphi $ is a faithful normal state on $ M $ and $ \alpha >0 $, we consider the rescaled positive linear functional $ \alpha\varphi $ on $M$ (every finite weight can be written in this form) and denote $ Y_\alpha:= Y_{(M,\alpha\varphi)}:M\to \Gamma(M,\alpha \varphi)  $.

\begin{lem}\label{support of Y}
	If $\varphi$ is a faithful normal state and  $ 0<\alpha<1 $, then the support $ z_\alpha = s(Y_\alpha(1))\in \Gamma(M,\alpha \varphi) $ is a central projection in $ \Gamma(M,\alpha \varphi) $ with $ \varphi_\Omega(z_\alpha)=\alpha $ and $ \Gamma(M,\alpha \varphi) = \Gamma(M,\alpha \varphi)z_\alpha \oplus \CC (1-z_\alpha) $.
\end{lem}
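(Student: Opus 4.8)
The plan is to exhibit an explicit vector $\Omega_0$ that spans $\ker Y_\alpha(1)$ and is annihilated by \emph{every} generator $Y_\alpha(x)$; centrality of $z_\alpha$, the value $\varphi_\Omega(z_\alpha)=\alpha$, and the direct-sum decomposition will then all follow formally. Throughout, write $\xi:=\eta_{\alpha\varphi}(1)\in L^2(M,\alpha\varphi)$, so $\|\xi\|^2=(\alpha\varphi)(1)=\alpha<1$, and recall that $\mm_{\alpha\varphi}=M$ since $\alpha\varphi$ is finite, so $Y_\alpha(x)$ is defined for all $x\in M$.

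First I would record the factorization $Y_\alpha(1)=B^*B$, where $B:=a^+(1)+a^0(1)=\ell(\xi)+\Lambda(\pi_l(1))$. Indeed $a^0(1)=\Lambda(\pi_l(1))=\Lambda(\mathrm{id})$ is a self-adjoint idempotent, and the elementary product relations for $a^\pm,a^0$ give $a^-(1)a^+(1)=\langle S\xi,\xi\rangle=\|\xi\|^2=\alpha$, $a^0(1)a^+(1)=a^+(1)$ and $a^-(1)a^0(1)=a^-(1)$, whence $B^*B=\alpha+a^+(1)+a^-(1)+a^0(1)=Y_\alpha(1)$. Since $\ker(B^*B)=\ker B$ for a bounded operator, one solves $B\eta=0$: writing $\eta=\sum_{n\geq0}\eta^{(n)}$ with $\eta^{(n)}\in L^2(M,\alpha\varphi)^{\otimes n}$, the degree-$n$ component of $B\eta$ equals $\xi\otimes\eta^{(n-1)}+\eta^{(n)}$ for every $n\geq1$, so $B\eta=0$ forces $\eta^{(n)}=-\xi\otimes\eta^{(n-1)}$ for all $n\geq1$. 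Hence $\ker Y_\alpha(1)=\CC\,\Omega_0$ with $\Omega_0:=\sum_{n\geq0}(-1)^n\xi^{\otimes n}$, which lies in $\FF(L^2(M,\alpha\varphi))$ because $\sum_n\|\xi\|^{2n}=\sum_n\alpha^n=(1-\alpha)^{-1}<\infty$, and $\|\Omega_0\|^2=(1-\alpha)^{-1}$. In particular $1-z_\alpha$ is precisely the rank-one orthogonal projection $P$ onto $\CC\Omega_0$.

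The heart of the argument is the identity $Y_\alpha(x)\Omega_0=0$ for every $x\in M$. Setting $\zeta:=\eta_{\alpha\varphi}(x)$ and using $\pi_l(x)\xi=\eta_{\alpha\varphi}(x)=\zeta$ together with $\langle S\zeta,\xi\rangle=\langle\eta_{\alpha\varphi}(x^*),\eta_{\alpha\varphi}(1)\rangle=\alpha\varphi(x)$, a direct computation gives $a^+(x)\Omega_0=\sum_{n\geq0}(-1)^n\zeta\otimes\xi^{\otimes n}$ and $a^0(x)\Omega_0=\sum_{n\geq1}(-1)^n\zeta\otimes\xi^{\otimes(n-1)}=-\sum_{n\geq0}(-1)^n\zeta\otimes\xi^{\otimes n}$, so these cancel, while $a^-(x)\Omega_0=\sum_{n\geq1}(-1)^n\alpha\varphi(x)\,\xi^{\otimes(n-1)}=-\alpha\varphi(x)\Omega_0$, which cancels the scalar term $\alpha\varphi(x)\Omega_0$. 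Thus $\CC\Omega_0$ is invariant under every $Y_\alpha(x)$; since $Y_\alpha(x)^*=Y_\alpha(x^*)$, both $\CC\Omega_0$ and $(\CC\Omega_0)^\perp$ are invariant under the unital $*$-algebra $\mathcal A_0$ generated by $\{Y_\alpha(x):x\in M\}$, so the projection $P$ lies in $\mathcal A_0'$ and hence commutes with $\Gamma(M,\alpha\varphi)=\mathcal A_0''$. Since also $1-z_\alpha=P\in\Gamma(M,\alpha\varphi)$, we get $1-z_\alpha\in\mathcal Z(\Gamma(M,\alpha\varphi))$, i.e. $z_\alpha$ is central.

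Finally, $\varphi_\Omega(1-z_\alpha)=\langle\Omega,P\Omega\rangle=|\langle\Omega_0,\Omega\rangle|^2\|\Omega_0\|^{-2}=1-\alpha$, so $\varphi_\Omega(z_\alpha)=\alpha$ (equivalently, by Proposition~\ref{cumulants} the $\varphi_\Omega$-distribution of $Y_\alpha(1)$ is the free Poisson law $\pi_\alpha$, whose atom at $0$ has mass $1-\alpha$). As $z_\alpha$ is central, $\Gamma(M,\alpha\varphi)=\Gamma(M,\alpha\varphi)z_\alpha\oplus\Gamma(M,\alpha\varphi)(1-z_\alpha)$; and for any $w\in\Gamma(M,\alpha\varphi)$ we have $w\Omega_0=Pw\Omega_0\in\CC\Omega_0$, so $w(1-z_\alpha)=\langle\Omega_0,w\Omega_0\rangle\|\Omega_0\|^{-2}(1-z_\alpha)\in\CC(1-z_\alpha)$, giving $\Gamma(M,\alpha\varphi)(1-z_\alpha)=\CC(1-z_\alpha)$ --- the asserted decomposition. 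The only genuinely creative step is spotting the factorization $Y_\alpha(1)=B^*B$ (equivalently, the vector $\Omega_0$, which exists only because $\alpha<1$); once it is in hand, the Fock-space identities, the cancellations in $Y_\alpha(x)\Omega_0=0$, and the bicommutant passage from $\mathcal A_0$ to its weak closure are all routine.
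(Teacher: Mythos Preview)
Your proof is correct and takes a genuinely different route from the paper's. The paper argues abstractly via positivity: for $x\in M_+$ with $C=\|x\|$, one has $Y_\alpha(x)+Y_\alpha(C-x)=CY_\alpha(1)$ with both summands positive (they have compound free Poisson distributions supported in $\RR_+$), hence $0\le Y_\alpha(x)\le CY_\alpha(1)$ and $s(Y_\alpha(x))\le z_\alpha$. Since the generators $Y_\alpha(x)$ are thus all supported under $z_\alpha$, the projection $1-z_\alpha$ lies in $\{Y_\alpha(M)\}'\cap\Gamma(M,\alpha\varphi)$ and the corner algebra is trivial. No explicit description of $\ker Y_\alpha(1)$ is needed.

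Your approach is more constructive: the factorization $Y_\alpha(1)=B^*B$ with $B=a^+(1)+a^0(1)$ lets you solve for $\ker Y_\alpha(1)=\CC\Omega_0$ with $\Omega_0=\sum_{n\ge0}(-\xi)^{\otimes n}$, and the direct verification $Y_\alpha(x)\Omega_0=0$ then replaces the positivity step. This buys you an explicit ``twisted vacuum'' vector spanning the $\CC$-summand, which could be useful later (e.g.\ for identifying the state on that summand or for the concrete isomorphism in Theorem~\ref{rescaling}). The paper's argument, in turn, is shorter and would transplant to any setting where the random weight preserves positivity, without needing Fock-space combinatorics.
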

\begin{proof}
	Since $ Y_\alpha(M)\cup \{1\} $ generate $\Gamma(M,\alpha \varphi)$, we need to show that for all $ x\in M_+ $, $ s(Y_\alpha(x))\leq z_\alpha $. Let $ C = \|x\| $ and $ y = C -x $. Since $ Y_\alpha(x)+Y_\alpha(y)=Y_\alpha(C)= CX_\alpha(1) $ and $ Y_\alpha(x)\sim \pi_{\nu_{x},1} $ and $ Y_\alpha(y)\sim \pi_{\nu_{y},1} $ are positive elements, we have $ Y_\alpha(x)\leq C Y_\alpha(1) $ and hence $ s(Y_\alpha(x))\geq s(Y_\alpha(1))=z_\alpha $.
\end{proof}

If $ (A,\varphi) $ and $ (B,\psi) $ are two non-commutative probability space and $ 0\leq s,t\leq 1 $ such that $ s+t=1 $, we will use the notation
$$ \overset{p}{\underset{t}{A}}\oplus \overset{q}{\underset{s}{B}}$$
to denote the direct sum algebra $ A\oplus B $ equipped with state $ \varphi_0( a+b ) = s\varphi(a)+t\psi(b)  $ for all $ a\in A $ and $b\in B$. Here $ p $ and $q$ are the identities $1_A$ and $ 1_B $ respectively.

\begin{thm}\label{rescaling}
	Let $ (M,\varphi) $ be a von Neumann algebra with a faithful normal state. For $0<\alpha \leq 1$, let $ s$ be a generating positive element of $ L(\ZZ) $ such that $ s^2 $ has the free Poisson distribution $ \pi_{1/\alpha} $, then the linear map
	$$ i: Y_\alpha(u)z_\alpha \mapsto  \alpha sus,\quad \forall u\in M $$
	extends to a $*$-isomorphism $ i: \Gamma(M,\alpha\varphi)z_\alpha \to L(\ZZ)\ast(M,\varphi) $. In particular, we have
	$$ \Gamma(M,\alpha \varphi) \simeq \left(\underset{\alpha}{L(\ZZ)\ast (M,\varphi)}\right)\oplus \underset{1-\alpha}{\CC}.$$
\end{thm}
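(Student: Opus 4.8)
The plan is to verify that the linear map $i$ is a well-defined, state-preserving $*$-isomorphism by comparing joint moments on both sides, and then deduce the displayed direct-sum decomposition from Lemma \ref{support of Y}. First I would set up the target algebra: inside $L(\ZZ)\ast(M,\varphi)$, pick the generating positive element $s$ of $L(\ZZ)$ whose square $s^2$ has distribution $\pi_{1/\alpha}$ (so $R_n(s^2,\ldots,s^2)=1/\alpha$ for all $n\geq 1$), with $s$ free from $M$. The key computation is to show that the family $\{\alpha sus\}_{u\in M}$ in $(L(\ZZ)\ast(M,\varphi),\varphi_\Omega)$ has exactly the same joint free cumulants as the family $\{Y_\alpha(u)z_\alpha\}_{u\in M}$ in $(\Gamma(M,\alpha\varphi)z_\alpha, \tfrac{1}{\alpha}\varphi_\Omega)$ — note the rescaled state on the corner, since $\varphi_\Omega(z_\alpha)=\alpha$. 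By Proposition \ref{cumulants}, the latter cumulants are $R_n(Y_\alpha(u_1),\ldots,Y_\alpha(u_n)) = \alpha\varphi(u_1\cdots u_n)$ (the rate-$\alpha$ rescaling of $\varphi$), and one should check these are unchanged under compression by the central projection $z_\alpha$ up to the state rescaling.

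For the $\{\alpha sus\}$ side I would invoke Theorem \ref{thm moments of product} (Nica's formula for cumulants of products of free variables). Writing $\alpha s u_k s = \alpha \cdot s \cdot u_k \cdot s$, one computes $R_n(\alpha su_1s,\ldots,\alpha su_ns)$ by treating it as cumulants of an alternating product with the $s$'s on one side and $u$'s on the other; the standard trick is to absorb the two $s$-factors per slot into a single alternating sequence and use that $s$ is free from $M$ together with the Kreweras-complement bookkeeping. Because $s^2$ has all free cumulants equal to $1/\alpha$, the sum over noncrossing partitions collapses: each block of the $M$-partition $\pi$ contributes $R_{|V|}(u_i:i\in V)$ from the trivial one-block case or products thereof, and each block of $K(\pi)$ contributes a factor $1/\alpha$; since $|\pi|+|K(\pi)| = n+1$, the powers of $\alpha$ from the $n$ prefactors $\alpha$ and the $|K(\pi)|$ factors $1/\alpha$ combine to leave exactly $\alpha\cdot M_n(u_1,\ldots,u_n)$-type expressions, which by the moment–cumulant relation in $(M,\varphi)$ reassemble into $\alpha\,\varphi(u_1\cdots u_n)$ — matching Proposition \ref{cumulants}. (One can instead, and perhaps more cleanly, observe directly that $sus$ for a single $u$ has the compound free Poisson law $\pi_{\nu_u,1/\alpha}$ scaled appropriately, and bootstrap the joint statement via the $R$-cyclic/product structure; either route works.) Once the joint cumulants agree, the unique trace/state-preserving isomorphism of von Neumann algebras generated by families with identical joint distributions gives the $*$-isomorphism $i:\Gamma(M,\alpha\varphi)z_\alpha \xrightarrow{\sim} L(\ZZ)\ast(M,\varphi)$; one must also check that the generators on both sides actually generate the respective algebras, which for the target follows since $s$ and $M$ generate $L(\ZZ)\ast(M,\varphi)$ and $sMs$ together with $s^2=\alpha^{-1}su\cdot us$-type elements (taking $u=1$) recovers $s$, while on the source $Y_\alpha(M)\cup\{1\}$ generates $\Gamma(M,\alpha\varphi)$ and compression by $z_\alpha$ is surjective onto the corner.

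Finally, the displayed isomorphism follows by assembling pieces: by Lemma \ref{support of Y}, for $0<\alpha<1$ we have $\Gamma(M,\alpha\varphi) = \Gamma(M,\alpha\varphi)z_\alpha \oplus \CC(1-z_\alpha)$ with $\varphi_\Omega(z_\alpha)=\alpha$, and the first summand is $L(\ZZ)\ast(M,\varphi)$ by the isomorphism just constructed, carrying the normalized vacuum state to the free-product state; weighting the two summands by $\alpha$ and $1-\alpha$ gives precisely $\bigl(\underset{\alpha}{L(\ZZ)\ast(M,\varphi)}\bigr)\oplus\underset{1-\alpha}{\CC}$. For $\alpha=1$ one has $z_1=1$ (since $\pi_1$ has no atom at $0$, $Y_1(1)=Y(1)$ has full support), so $\Gamma(M,\varphi)\simeq L(\ZZ)\ast(M,\varphi)$ and the formula degenerates correctly. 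I expect the main obstacle to be the cumulant bookkeeping in the middle step — carefully tracking the powers of $\alpha$ through the Kreweras complement and confirming that compression by the central projection $z_\alpha$ rescales the state in exactly the way that makes the two cumulant families match; the $\alpha=1$ endpoint and the generation statements are routine once that is in hand.
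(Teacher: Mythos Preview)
Your overall strategy is right and matches the paper's: compare joint distributions of the generating families $\{Y_\alpha(u)z_\alpha\}$ and $\{\alpha s u s\}$ using the product formula of Theorem \ref{thm moments of product} together with the Kreweras identity $|\pi|+|K(\pi)|=n+1$, then invoke Lemma \ref{support of Y} for the direct-sum statement. The generation argument you sketch (recover $s$ from $s^2$ and then $u$ from $s^{-1}(sus)s^{-1}$) is also what the paper does.

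The gap is in your cumulant bookkeeping, and it is a genuine error rather than a detail. You assert that both families have joint free cumulants equal to $\alpha\varphi(u_1\cdots u_n)$; neither does. On the $\alpha sus$ side, applying the product formula gives
\[
R_n(\alpha s u_1 s,\ldots,\alpha s u_n s)=\alpha^n\sum_{\pi\in\mathcal{NC}(n)}R_{\pi,\varphi}(u_1,\ldots,u_n)(1/\alpha)^{|K(\pi)|}=\sum_\pi R_{\pi,\varphi}(u_1,\ldots,u_n)\,\alpha^{|\pi|-1},
\]
which does \emph{not} collapse to $\alpha M_n(u_1,\ldots,u_n)$ because the power of $\alpha$ depends on $\pi$. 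On the corner side, the cumulants of $Y_\alpha(u_i)$ with respect to the \emph{uncompressed} vacuum state are indeed $\alpha\varphi(u_1\cdots u_n)$, but compressing by a central projection and renormalizing the state does not simply rescale free cumulants: moments scale by $1/\alpha$, and the moment--cumulant relation is nonlinear, so already $R_2'=\varphi(u_1u_2)+(\alpha-1)\varphi(u_1)\varphi(u_2)\neq \alpha\varphi(u_1u_2)$ in general. So the two quantities you claim to match are both misidentified.

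The paper avoids this entirely by comparing \emph{moments} rather than cumulants. Using that $s$ lies in the centralizer of the free product state, one has $M_n(\alpha su_1s,\ldots,\alpha su_ns)=\alpha^n M_n(u_1s^2,\ldots,u_ns^2)$, and the \emph{moment} version of Theorem \ref{thm moments of product} gives $\sum_\pi M_{\pi,\varphi}(u_1,\ldots,u_n)\alpha^{|\pi|-1}$ directly. On the other side, the moments of $Y_\alpha(u_i)z_\alpha$ in the corner with the rescaled state $\tfrac{1}{\alpha}\varphi_\Omega$ are $\tfrac{1}{\alpha}\sum_\pi R_{\pi,\alpha\varphi}(Y_\alpha(u_1),\ldots)=\tfrac{1}{\alpha}\sum_\pi \alpha^{|\pi|}M_{\pi,\varphi}(u_1,\ldots,u_n)$, which is the same expression. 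This is the computation you should carry out; once you switch from cumulants to moments the argument goes through cleanly.
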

\begin{proof}
	For $ u_i\in M_{s.a.} $, the joint moments of $ (Y(u_i)) $ in $ \Gamma(M,\alpha\varphi) $ (with respect to the vacuum state $\langle \Omega, \cdot\Omega\rangle$) are
	\begin{align*}
		&M_{n,\Gamma(M,\alpha\varphi)}( Y(u_1),\cdots,Y(u_k) )=\sum_{\pi\in \mathcal{NC}(n)}R_{\pi,\Gamma(M,\alpha\varphi) }( Y(u_1),\cdots,Y(u_n) )\\ =& \sum_{\pi\in\mathcal{NC}(n)}M_{\pi,\alpha \varphi}( u_1,\cdots, u_n ) = \sum_{\pi\in\mathcal{NC}(n)}\alpha^{|\pi|}M_{\pi,\varphi}( u_1,\cdots, u_n )
	\end{align*}
	Now consider the state $ \frac{1}{\alpha}\langle \Omega, \cdot\Omega\rangle $ on $ \Gamma(M,\alpha \varphi)z_\alpha $. The joint moments of $ (Y(u_i)z_\alpha) $ with respect to this state are
	\begin{align*}
		M_{n,\Gamma(M,\alpha\varphi)z_\alpha}( Y(u_1)z_\alpha,\cdots,Y(u_n)z_\alpha )&= \frac{1}{\alpha}\langle \Omega,Y(u_1)\cdots Y(u_n)\Omega \rangle\\
		&= \frac{1}{\alpha}M_{n,\Gamma(M,\alpha\varphi)}( Y(u_1),\cdots,Y(u_n) )\\
		&=\sum_{\pi\in \mathcal{NC}(n)}\alpha^{|\pi|-1}M_{\pi,\varphi}( u_1,\cdots, u_n ).
	\end{align*}
	On the other hand, since $ s $ is in the centralizer of $ L(\ZZ)\ast (M,\varphi) $, applying Theorem \ref{thm moments of product}, we get the joint moments of $ \alpha su_is $ in $ L(\ZZ)\ast (M,\varphi) $
	\begin{align*}
		M_{n,L(\ZZ)\ast (M,\varphi)}(\alpha su_1s,\cdots,\alpha su_ns) =& \alpha^n M_{n,L(\ZZ)\ast (M,\varphi)}(u_1s^2,\cdots,u_ns^2) \\=& \alpha^n \sum_{\pi \in \mathcal{NC}(n)}M_{\pi,\varphi}(u_1,\cdots,u_n)R_{K(\pi)}(s^2,\cdots,s^2)\\
		=&\alpha^n \sum_{\pi \in \mathcal{NC}(n)}M_{\pi,\varphi}(u_1,\cdots,u_n)\frac{1}{\alpha^{|K(\pi)|}}\\
		=&\sum_{\pi \in \mathcal{NC}(n)}M_{\pi,\varphi}(u_1,\cdots,u_n)\alpha^{n-(n+1-|\pi|)}\\
		=&\sum_{\pi \in \mathcal{NC}(n)}M_{\pi,\varphi}(u_1,\cdots,u_n)\alpha^{|\pi|-1}.
	\end{align*}
This shows that the self-adjoint families $ (X(u_i)z_\alpha) $ and $ (\alpha su_is) $ have the same distribution and hence $ i: X(u)z_\alpha \mapsto \alpha sus $ extends to a $*$-isomorphism $ \Gamma(M,\alpha\varphi)z_\alpha \to L(\ZZ)\ast(M,\varphi) $. (Note that $ i( \Gamma(M,\alpha\varphi)z_\alpha )$ is exactly $L(\ZZ)\ast(M,\varphi) $. Indeed, we have $ s1s=s^2$ and since $ s $ is atomless, $ s^{-1} $ is affiliated with $ i(\Gamma(M,\alpha\varphi)z_\alpha)\cap L(\ZZ) $, and hence $ u = s^{-1}sus s^{-1} \in i(\Gamma(M,\alpha\varphi)z_\alpha)$.) 
\end{proof}

Similarly, when $ \alpha >1 $ we have another type of isomorphism for $ \Gamma(M,\alpha \varphi) $.
\begin{thm}\label{rescaling2}
		Let $ (M,\varphi) $ be a von Neumann algebra with a faithful normal state. For $\alpha > 1$, let $ s$ be a generating positive element of $ \overset{p}{\underset{1/\alpha}{L(\ZZ)}}\oplus\overset{ 1-p}{\underset{1-1/\alpha}{\CC}} $ with support $ p$ such that $ s^2 $ has the free Poisson distribution $ \pi_{1/\alpha} $, then the linear map
	$$ i: Y_\alpha(u) \mapsto  \alpha psus p,\quad\forall u\in M $$
	extends to a $*$-isomorphism $ i: \Gamma(M,\alpha\varphi) \to p\left((M,\varphi)\ast ( \overset{p}{\underset{1/\alpha}{L(\ZZ)}}\oplus\overset{ 1-p}{\underset{1-1/\alpha}{\CC}} )\right)p = L(\ZZ)\ast p \left((M,\varphi)\ast ( \overset{p}{\underset{1/\alpha}{\CC}}\oplus\overset{ 1-p}{\underset{1-1/\alpha}{\CC}} )\right) p $. In particular, we have
	$$ \Gamma(M,\alpha \varphi) \simeq L(\ZZ)\ast p \left((M,\varphi)\ast ( \overset{p}{\underset{1/\alpha}{\CC}}\oplus\overset{ 1-p}{\underset{1-1/\alpha}{\CC}} )\right) p.$$
\end{thm}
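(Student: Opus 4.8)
The plan is to follow the proof of Theorem~\ref{rescaling} essentially verbatim; the only genuinely new features for $\alpha>1$ are that the free Poisson law $\pi_{1/\alpha}$ now carries an atom $(1-\tfrac1\alpha)\delta_0$ at $0$, so that $s$ is invertible only on the corner $pBp$, and that one needs a standard compression identity for free products to rewrite the resulting algebra in the stated form. Write $B:=\overset{p}{\underset{1/\alpha}{L(\ZZ)}}\oplus\overset{1-p}{\underset{1-1/\alpha}{\CC}}$, so $p=1_{L(\ZZ)}$ has trace $1/\alpha$ in $B$; put $\tau:=\varphi\ast\tau_B$ on $(M,\varphi)\ast B$ and let $\tau_p:=\alpha\,\tau(p\,\cdot\,p)$ be the (faithful) compression state on $p\big((M,\varphi)\ast B\big)p$. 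Since $B$ is abelian, $s$ lies in the centralizer of $\tau$, so $\tau(sa)=\tau(as)$ for all $a$, and $s=ps=sp$ gives $sus=s(pup)s$ and $\alpha\,psusp=\alpha\,sus$.

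First I would match joint moments. For $u_1,\dots,u_n\in M$, moving the leading $s$ around via the centralizer property yields
\[
M_{n,\tau_p}(\alpha su_1s,\dots,\alpha su_ns)=\alpha^{n+1}\,\tau(u_1s^2u_2s^2\cdots u_ns^2)=\alpha^{n+1}M_{n,\tau}(u_1s^2,\dots,u_ns^2).
\]
Applying the product-moment formula (Theorem~\ref{thm moments of product}) to the free pair $M,B$, together with the fact that $s^2\sim\pi_{1/\alpha}$ has every free cumulant in $(B,\tau_B)$ equal to $1/\alpha$ (the atom at $0$ included), so that $R_{K(\pi)}(s^2,\dots,s^2)=(1/\alpha)^{\,n+1-|\pi|}$, this becomes $\sum_{\pi\in\mathcal{NC}(n)}\alpha^{|\pi|}M_{\pi,\varphi}(u_1,\dots,u_n)$. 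On the other hand Proposition~\ref{cumulants} gives $R_m(Y_\alpha(u_1),\dots,Y_\alpha(u_m))=\alpha\varphi(u_1\cdots u_m)$, whence $M_{n,\varphi_\Omega}(Y_\alpha(u_1),\dots,Y_\alpha(u_n))=\sum_{\pi}\alpha^{|\pi|}M_{\pi,\varphi}(u_1,\dots,u_n)$ as well. As $\varphi_\Omega$ is faithful (standardness) and $\tau_p$ is faithful, and as $u\mapsto\alpha sus$ is $\ast$-compatible with $Y_\alpha(u)^*=Y_\alpha(u^*)$, the assignment $Y_\alpha(u)\mapsto\alpha sus$ extends to a state-preserving $\ast$-isomorphism $i$ of $\Gamma(M,\alpha\varphi)$ (which is generated by the $Y_\alpha(u)$, $u\in M$, since $\alpha\varphi$ is finite) onto $W^*\!\big(\{\alpha sus:u\in M\}\big)$, sending $1$ to $p$.

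Next I would identify this image with the whole corner. Since $i(Y_\alpha(1))=\alpha s^2$, the algebra $W^*(\{\alpha sus\})$ contains $s=(\alpha^{-1}\!\cdot\alpha s^2)^{1/2}$, hence $p$ and $D:=pBp=W^*(s^2)$, a diffuse abelian algebra isomorphic to $L(\ZZ)$. Here $\alpha>1$ enters: the diffuse part of $\pi_{1/\alpha}$ is supported in $[(\alpha^{-1/2}-1)^2,(\alpha^{-1/2}+1)^2]\subset(0,\infty)$, so $s$ is genuinely invertible in $pBp$, whence $s^{-1}(\alpha sus)s^{-1}=\alpha\,pup$ recovers $pMp$; conversely $sus=s(pup)s$, so $W^*(\{\alpha sus\})=W^*(pMp,D)$. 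Finally $W^*(pMp,D)=p\big((M,\varphi)\ast B\big)p$: any reduced word $pm_1b_1\cdots m_kp$ (with $m_i\in M$, $b_i\in B=D\oplus\CC(1-p)$) lands in $W^*(pMp,D)$ after splitting at each $D$-letter (using $d=pdp$) and rewriting each resulting block $pm_j(1-p)m_{j+1}(1-p)\cdots m_{j'}p$ via $(1-p)=1-p$ with $1\in M$ — e.g. $pm(1-p)m'p=p(mm')p-(pmp)(pm'p)$ — by induction on length. This gives the isomorphism $i:\Gamma(M,\alpha\varphi)\to p\big((M,\varphi)\ast B\big)p$.

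For the remaining identity $p\big((M,\varphi)\ast B\big)p=L(\ZZ)\ast p\big((M,\varphi)\ast C\big)p$ with $C:=\overset{p}{\underset{1/\alpha}{\CC}}\oplus\overset{1-p}{\underset{1-1/\alpha}{\CC}}\subset B$: as $p\in C$, forming the free product in two stages gives $(M,\varphi)\ast B=\big((M,\varphi)\ast C\big)\ast_C B$ (freeness of $M$ and $B$ is equivalent to freeness of $M\ast C$ and $B$ with amalgamation over $C$), and compression by $p\in C$ distributes over the amalgamated free product, so $p\big((M,\varphi)\ast B\big)p=p\big((M,\varphi)\ast C\big)p\ \ast_{pCp}\ pBp$, which equals $L(\ZZ)\ast p\big((M,\varphi)\ast C\big)p$ since $pCp=\CC p$ and $pBp=D\cong L(\ZZ)$. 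I expect the main obstacle to be neither the moment bookkeeping (which runs parallel to Theorem~\ref{rescaling}) nor the last, standard free-product manipulation, but rather the verification that the image of $i$ is the \emph{entire} corner $p\big((M,\varphi)\ast B\big)p$: this is exactly where one must use both $\alpha>1$ (so that $s$ is invertible on $pBp$, not merely affiliated, as happened in the $\alpha\le1$ analysis) and the unitality of $M$ inside the free product.
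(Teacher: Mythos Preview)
Your proposal is correct and follows essentially the same approach as the paper: the moment computation via Theorem~\ref{thm moments of product} and the surjectivity argument using the bounded inverse $(sp)^{-1}$ (available precisely because $\alpha>1$ pushes the continuous part of $\pi_{1/\alpha}$ away from $0$) are exactly what the paper does, and your reduced-word argument for $W^*(pMp,D)=p\big((M,\varphi)\ast B\big)p$ is a more explicit version of the paper's one-line ``it is now easy to show.'' The only divergence is the last identification: the paper simply cites \cite[Theorem~1.2]{Dyk93}, whereas you sketch a direct amalgamated-free-product argument ($(M\ast B)\cong (M\ast C)\ast_C B$, then compress by $p\in C$); this is a valid alternative and in fact close in spirit to how such compression formulas are typically proved, though your one-line justification that freeness of $M,B$ over $\CC$ implies freeness of $M\ast C$ and $B$ over $C$ would benefit from a reference or a short verification.
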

\begin{proof}
	Again, we only need to show that for self adjoint $ u_i\in M $, the two families $ (Y_\alpha(u_i)) $ and $ (\alpha psu_isp) $ have the same distribution. The joint moments for $ (Y_\alpha(u_i)) $ are as before:
	$$ M_{n,\Gamma(M,\alpha \varphi)}(Y_\alpha(u_1),\cdots, Y_\alpha(u_n)) = \sum_{\pi \in \text{NC}(n)} M_{\pi,\alpha\varphi}(u_1,\cdots,u_n)=\sum_{\pi \in \text{NC}(n)}\alpha^{|\pi|} M_{\pi,\varphi}(u_1,\cdots,u_n)$$
	Let $\tau$ be the free product state on $(M,\varphi)\ast ( \overset{p}{\underset{1/\alpha}{L(\ZZ)}}\oplus\overset{ 1-p}{\underset{1-1/\alpha}{\CC}} )$ and $ \tau_p = \alpha\tau|_{p} $ be the compressed state. Again applying Theorem \ref{thm moments of product}, we get the joint moments for $ (\alpha psu_isp) $ in $ p\left((M,\varphi)\ast ( \overset{p}{\underset{1/\alpha}{L(\ZZ)}}\oplus\overset{ 1-p}{\underset{1-1/\alpha}{\CC}} )\right)p $ with respect to $\tau_p$
	\begin{align*}
		M_{n,\tau_p}( \alpha psu_1sp,\cdots, \alpha psu_n sp )	&= \alpha^n \alpha M_{n,\tau}( su_1s,\cdots,su_ns )=\alpha^{n+1}M_{n,\tau}( u_1s^2,\cdots,u_ns^2 )\\
		&=\alpha^{n+1}\sum_{\pi \in \mathcal{NC}(n)} M_{\pi,\varphi}(u_1,\cdots,u_n)R_{K(\pi),\tau}(s^2,\cdots,s^2)\\ &= \alpha^{n+1}\sum_{\pi \in \mathcal{NC}(n)} M_{\pi,\varphi}(u_1,\cdots,u_n)(\frac{1}{\alpha})^{|K(\pi)|}\\
		&=\sum_{\pi \in \mathcal{NC}(n)} M_{\pi,\varphi}(u_1,\cdots,u_n)\alpha^{|\pi|}.
	\end{align*}
	The two families have the same joint distribution, hence $ i $ extends to an injective $*$-homomorphism $ i:  \Gamma(M,\alpha\varphi) \to p\left((M,\varphi)\ast ( \overset{p}{\underset{1/\alpha}{L(\ZZ)}}\oplus\overset{ 1-p}{\underset{1-1/\alpha}{\CC}} )\right)p $. To show that $i$ is also surjective, we simply note that since $ i( X_\alpha(1) )=\alpha ps^2p=\alpha s^2 $ and $ p $ is the support of $ s $, we again have an (bounded!) inverse $(sp)^{-1}$ in $ i( \Gamma(M,\alpha \varphi) ) \cap (p( \overset{p}{\underset{1/\alpha}{L(\ZZ)}}\oplus\overset{ 1-p}{\underset{1-1/\alpha}{\CC}})p)$. Therefore for any $ u\in M $, $ pup = s^{-1}suss^{-1} \in i( \Gamma(M,\alpha \varphi) )$. It is now easy to show that $ \{pMp,s\} $ generate $p\left((M,\varphi)\ast ( \overset{p}{\underset{1/\alpha}{L(\ZZ)}}\oplus\overset{ 1-p}{\underset{1-1/\alpha}{\CC}} )\right)p$. (For example, we have $ ps^lu(1-p)vs^tp = s^l( puvp -puppvp )s^t  $.)
	
	Finally, the identification $p\left((M,\varphi)\ast ( \overset{p}{\underset{1/\alpha}{L(\ZZ)}}\oplus\overset{ 1-p}{\underset{1-1/\alpha}{\CC}} )\right)p = L(\ZZ)\ast p \left((M,\varphi)\ast ( \overset{p}{\underset{1/\alpha}{\CC}}\oplus\overset{ 1-p}{\underset{1-1/\alpha}{\CC}} )\right) p$ follows from Theorem 1.2 \cite{Dyk93}.
\end{proof}

\begin{rmk}
	Note that the two isomorphisms proved above in fact also hold for nonfaithful normal state $\varphi$ when we consider the von Neumann algebra $\Gamma(M,\alpha \varphi)$ generated by $ \ell( \eta_{\alpha\varphi}(x)) + \ell^*(\eta_{\alpha\varphi}( x^*)) + \Lambda( x ) \in B(\FF( L^2(M,\alpha\varphi) ))$ for $x\in M$.
\end{rmk}

\begin{rmk} If we consider $ X^\lambda(u):= a^+(u)+a^-(u)+\lambda a^0(u) $ ($\lambda \neq 0$) and $ \Gamma^\lambda(M,\varphi):= X^\lambda(\frakA)'' $. Then the same calculation shows that $ \Gamma^\lambda(M,\alpha\varphi) \simeq \Gamma(M,\frac{\alpha}{\lambda^2}\varphi) $ via the identification $ X^\lambda_\alpha(u)\mapsto \lambda X_\alpha(u) $. Therefore, considering the algebra generated by the rescaled field operators $ X^\lambda(u) $ is equivalent to rescale the weight $ \varphi $ on the algebraic level.
\end{rmk}

As a quick example, we may apply the above isomorphism to compute the isomorphism class of $ \Gamma(L^{\infty}( [0,\alpha] )) $ which can also be considered as the filtration of algebra of the free Poisson process with rate $\lambda>0$, $ (Y(\mbox{1}_{[0, \lambda t]}))_{t\geq 0} $ (i.e. a process $ Y_t $ with freely independent increment and $ Y_t-Y_s \sim \pi_{\lambda(t-s)}$ for each $t>s$). 
\begin{cor}\label{filtration of free poisson}
	For $ \alpha >0 $, let $ L^\infty([0,\alpha]) $ be the $ L^\infty $ space with respect to the Lebesgue measure on $ [0,\alpha] $, then
	$$ \Gamma(L^\infty([0,\alpha])) \simeq \begin{cases} \underset{\alpha}{L(\FFF(2))}\oplus \underset{1-\alpha}{\CC}, \quad 0<\alpha<1,\\
		L(\FFF(2\alpha)),\quad \alpha\geq 1. 
		
	\end{cases} $$
	In particular, the filtration von Neumann algebras $ \{\mathcal{A}_t\}_{t>0} $ for a free Poisson process of rate $ \lambda>0 $ are
	$$ \mathcal{A}_t \simeq \begin{cases} \underset{\lambda t}{L(\FFF(2))}\oplus \underset{1-\lambda t}{\CC}, \quad 0<\lambda t<1,\\
		L(\FFF(2\lambda t)),\quad \lambda t\geq 1. 
		
	\end{cases} $$
\end{cor}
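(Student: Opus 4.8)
The plan is to recognize $\Gamma(L^\infty([0,\alpha]))$ as the free Poisson algebra $\Gamma(M,\alpha\varphi_0)$, where $M=L^\infty([0,\alpha])$ carries its canonical (Lebesgue) weight $\varphi$ with $\varphi(1)=\alpha$, and $\varphi_0:=\tfrac1\alpha\varphi$ is a faithful normal tracial state, so that $\alpha\varphi_0=\varphi$ is exactly the intensity weight. As a tracial von Neumann algebra $(M,\varphi_0)$ is just $(L^\infty[0,1],\mathrm{Leb})\cong(L(\ZZ),\mathrm{tr})$, so once we apply Theorems \ref{rescaling} and \ref{rescaling2} (with $\alpha$ in the role of the scaling parameter) everything reduces to a bookkeeping computation with interpolated free group factors.

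For $0<\alpha\le1$, I would invoke Theorem \ref{rescaling} to get
$$\Gamma(L^\infty([0,\alpha]))\simeq\Bigl(\underset{\alpha}{L(\ZZ)\ast(M,\varphi_0)}\Bigr)\oplus\underset{1-\alpha}{\CC}.$$
Then, using $(M,\varphi_0)\cong(L(\ZZ),\mathrm{tr})$ together with Voiculescu's identification $L(\ZZ)\ast L(\ZZ)=L(\FFF_2)$ (the free product state corresponding to the canonical trace), I replace the first summand by $L(\FFF_2)$. This produces $\underset{\alpha}{L(\FFF(2))}\oplus\underset{1-\alpha}{\CC}$ for $\alpha<1$, and $L(\FFF(2))=L(\FFF(2\alpha))$ for $\alpha=1$ (where the $\CC$-summand is trivial).

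For $\alpha>1$, I would apply Theorem \ref{rescaling2} to obtain
$$\Gamma(L^\infty([0,\alpha]))\simeq L(\ZZ)\ast p\Bigl((M,\varphi_0)\ast\bigl(\overset{p}{\underset{1/\alpha}{\CC}}\oplus\overset{1-p}{\underset{1-1/\alpha}{\CC}}\bigr)\Bigr)p,$$
where $p$ has trace $1/\alpha$ in the free product state. Every algebra occurring here is built from hyperfinite (abelian, one of them diffuse) blocks by free products and a compression, so by Dykema's theory of interpolated free group factors — the same results of \cite{Dyk93} used in the proof of Theorem \ref{rescaling2} — each step stays inside the interpolated free group factor family and is tracked by free dimension. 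Concretely: the two–point algebra with weights $(1/\alpha,1-1/\alpha)$ has free dimension $1-\tfrac1{\alpha^2}-(1-\tfrac1\alpha)^2=\tfrac{2(\alpha-1)}{\alpha^2}$, so its free product with the diffuse $(M,\varphi_0)$ is $L(\FFF_{1+2(\alpha-1)/\alpha^2})$; compressing by the projection $p$ of trace $1/\alpha$ multiplies the quantity "$t-1$" by $\alpha^2$, yielding $L(\FFF_{2\alpha-1})$; and taking the free product with $L(\ZZ)=L(\FFF_1)$ adds $1$, giving $L(\FFF_{2\alpha})$. Combined with the $\alpha\le1$ case, this is the asserted isomorphism class of $\Gamma(L^\infty([0,\alpha]))$.

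Finally, for the filtration algebras of a rate-$\lambda$ free Poisson process, I would note that $\mathcal A_t=W^*((Y_s)_{s\le t})$ is generated by the free Poisson random weight restricted to $L^\infty([0,\lambda t])\subseteq L^\infty([0,\infty))$, so $\mathcal A_t=\Gamma(L^\infty([0,t]),\lambda\,\mathrm{Leb})=\Gamma(L^\infty([0,\lambda t]))$ after rescaling the intensity measure, and the claim follows from the first part with $\alpha=\lambda t$. The main obstacle is the $\alpha>1$ case: one must verify that each free product arising from Theorem \ref{rescaling2} is a genuine factor rather than a direct sum, so that Dykema's additivity and compression formulas apply as stated, and keep the free-dimension exponents correct under compression. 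This is harmless here, since the only non-diffuse building block is always amalgamated against the diffuse algebra $(M,\varphi_0)$, which forces the resulting free products to be factors.
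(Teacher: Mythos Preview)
Your proof is correct but, for $\alpha>1$, takes a genuinely different route from the paper. The paper does not invoke Theorem~\ref{rescaling2} at all; instead it uses Corollary~\ref{direct sum} to split $L^\infty([0,\alpha])$ into unit intervals plus a remainder,
\[
\Gamma(L^\infty([0,\alpha]))\;\simeq\;\underbrace{\Gamma(L^\infty([0,1]))\ast\cdots\ast\Gamma(L^\infty([n-1,n]))}_{\simeq\,L(\FFF(2n))}\;\ast\;\Gamma(L^\infty([n,\alpha])),
\]
where $n<\alpha\le n+1$, and then applies Dykema's free-product formula (Proposition~1.7 of \cite{Dyk93}) to $L(\FFF(2n))\ast\bigl(\underset{\alpha-n}{L(\FFF(2))}\oplus\underset{n+1-\alpha}{\CC}\bigr)$. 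Your approach goes straight through the compression formula in Theorem~\ref{rescaling2} and tracks free dimension through a two-atom algebra, a free product with a diffuse factor, a compression, and a final free product with $L(\ZZ)$; the arithmetic you wrote out is correct. The paper's route avoids ever computing the free dimension of the two-atom algebra or invoking the compression formula for interpolated free group factors, at the price of an extra induction on $\lfloor\alpha\rfloor$; your route is a single direct computation but leans more heavily on Dykema's machinery (factoriality of the intermediate free product and the compression formula). Both arguments rest on the same results from \cite{Dyk93}, just organized differently. For the filtration statement, the paper makes the density step explicit via Proposition~\ref{density results} (the span of the indicators $\chi_{[0,\lambda s]}$ is a dense Hilbert subalgebra of $L^\infty([0,\lambda t])$), which is the content of your last paragraph.
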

\begin{proof}
	The cases for $ 0<\alpha \leq 1 $ follows directly from Theorem \ref{rescaling}. Assume now $ \alpha > 1 $ and $ n < \alpha \leq n+1$ for some integer $ n\geq 1 $. Then by Corollary \ref{direct sum},
	\begin{align*}
		\Gamma(L^\infty([0,\alpha])) &= \Gamma(L^\infty([0,1]))\ast \cdots \ast \Gamma(L^\infty([n-1,n]))\ast \Gamma(L^\infty([n,\alpha]))\\
		&\simeq L(\FFF(2n))\ast ( \underset{\alpha-n}{L(\FFF(2))}\oplus \underset{n+1-\alpha}{\CC} ).
	\end{align*}
	Finally, by Proposition 1.7 \cite{Dyk93}, $$L(\FFF(2n))\ast ( \underset{\alpha-n}{L(\FFF(2))}\oplus \underset{n+1-\alpha}{\CC} )\simeq L(\FFF( 2n+2(\alpha -n)^2  +2(\alpha -n)(n+1-\alpha) )=L(\FFF(2\alpha)).$$
	
	For the second statement, we note that $ t\mapsto Y(\mbox{1}_{[0,\lambda t]}) $ is a free Poisson process of rate $\lambda$. Since for each fixed $t>0$, $ \frakA_t := \mbox{span}\{ \mbox{1}_{[0, \lambda s]}, 0<s\leq t \} $ is a dense Hilbert subalgebra of $ L^2([0,\lambda t])\cap L^\infty([0,\lambda t])=L^\infty([0,\lambda t]) $, by Proposition \ref{density results}, $ \mathcal{A}_t = X(\frakA_t)'' = \Gamma(\frakA_t) = \Gamma( L^\infty[0,\lambda t] ) $.
\end{proof}

If $M$ is finite dimensional, then it is known that $ L(\ZZ) \ast (M,\psi) $ is a free Araki-Woods algebra (with almost periodic vacuum state) for any faithful state $ \psi $, see for example Theorem 3.13 \cite{hartglass2021free}. As a consequence, $ \Gamma(M,\varphi) $ is always a free Araki-Woods algebra (possibly direct sum a copy of $\CC$) when $ M$ is nontrivial and finite dimensional.
\begin{cor}
	If $M $ is finite dimensional and $ M\neq \CC $, then $ \Gamma(M,\varphi) $ is a either an interpolated free group factor or a free Araki-Woods algebra. Furthermore, if $\varphi$ is nontracial, then we have
	$$ \Gamma(M,\varphi) \simeq \begin{cases}
		\underset{\varphi(1)}{T}\oplus \underset{1-\varphi(1)}{\CC},\quad &\text{if } \varphi(1)<1;\\
		T,\quad &\text{if } \varphi(1)\geq 1,
	\end{cases}  $$
	where $ T $ is the free Araki-Woods algebra with almost periodic vacuum state whose $\text{Sd}$-invariant is the subgroup of $\RR_+$ generated by the spectrum of $ \Delta_{\varphi} $.
\end{cor}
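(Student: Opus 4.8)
The plan is to reduce the statement to the explicit isomorphisms of Theorems~\ref{rescaling} and~\ref{rescaling2} and then to invoke known structure theory for free products of finite-dimensional algebras. Write $\alpha:=\varphi(1)\in(0,\infty)$ and $\psi:=\alpha^{-1}\varphi$, so that $\psi$ is a faithful normal state on the finite-dimensional algebra $M$ and $\Gamma(M,\varphi)=\Gamma(M,\alpha\psi)$. By Theorem~\ref{rescaling} (if $\alpha\le1$) or Theorem~\ref{rescaling2} (if $\alpha>1$), $\Gamma(M,\varphi)$ is, up to a direct summand $\CC$ that occurs precisely when $\alpha<1$ (Lemma~\ref{support of Y}), isomorphic to
$$ L(\ZZ)\ast(M,\psi)\quad(\alpha\le1),\qquad L(\ZZ)\ast p\left((M,\psi)\ast(\overset{p}{\underset{1/\alpha}{\CC}}\oplus\overset{1-p}{\underset{1-1/\alpha}{\CC}})\right)p\quad(\alpha>1). $$
So in every case $\Gamma(M,\varphi)$ is assembled from finite-dimensional algebras via free products, a single free product with the diffuse abelian algebra $L(\ZZ)$, and (for $\alpha>1$) a single compression.

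First I would settle the tracial case. If $\psi$ is tracial then so is the vacuum state $\varphi_\Omega$ on $\Gamma(M,\varphi)$ (the tracial corollary proved above), and every state on the abelian algebra $\CC\oplus\CC$ is a trace, so all the free products above are tracial. Dykema's analysis of free products of finite-dimensional tracial algebras and of compressions of interpolated free group factors (\cite{Dyk93} and its refinements) then identifies $L(\ZZ)\ast(M,\psi)$ and its $p$-compressed analogue with interpolated free group factors, the atoms produced by the $\CC\oplus\CC$ factor being absorbed by the diffuse $L(\ZZ)$; hence $\Gamma(M,\varphi)$ is an interpolated free group factor, carrying an extra $\CC$ summand exactly when $\alpha<1$.

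Now suppose $\psi$ is nontracial. When $\alpha\le1$, Theorem~3.13 of \cite{hartglass2021free} states that $L(\ZZ)\ast(M,\psi)$ is a free Araki--Woods algebra $T$ with almost periodic vacuum state, which is exactly $\Gamma(M,\varphi)z_\alpha$. When $\alpha>1$ I would first use the nontracial structure theory of free products of finite-dimensional algebras (\cite{Dyk93}, \cite{Shl97}, \cite{hartglass2021free}) to write $(M,\psi)\ast(\CC\oplus\CC)$ as a free Araki--Woods factor plus possible atoms; as $p$ is not central, its support relative to the factor part is the whole factor, so the corner by $p$ is again a free Araki--Woods factor, and then free-producting with the diffuse $L(\ZZ)$ and using free-product absorption for free Araki--Woods algebras (\cite{Shl97}) leaves a free Araki--Woods factor $T$. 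To pin down $T$, recall the modular computation obtained earlier: $\Delta_\Omega\supseteq\FF(\Delta_\varphi)$. Since $M$ is finite-dimensional, $\spec(\Delta_\varphi)$ is finite and, by $J_\varphi\Delta_\varphi J_\varphi=\Delta_\varphi^{-1}$, closed under $\lambda\mapsto\lambda^{-1}$; hence $\FF(\Delta_\varphi)$ has pure point spectrum equal to the multiplicative subgroup $\Lambda\subseteq\RR_+$ generated by $\spec(\Delta_\varphi)$, and so does $\Delta_\Omega$, making $\varphi_\Omega$ almost periodic. The $L(\ZZ)$- and $\CC\oplus\CC$-pieces are abelian and so contribute only the eigenvalue $1$, so $\Lambda$ is unchanged under all of the operations above; by the classification of free Araki--Woods factors with almost periodic states (Shlyakhtenko) and Connes' description of $\text{Sd}$ for factors with an almost periodic state, $\text{Sd}(T)=\Lambda$. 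Together with Lemma~\ref{support of Y} this is the asserted formula.

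The main obstacle I anticipate is the case $\alpha>1$: one must carefully track the atoms appearing in $(M,\psi)\ast(\CC\oplus\CC)$, confirm that compressing by $p$ and then taking the free product with $L(\ZZ)$ yields a single free Araki--Woods factor rather than a factor with an extra atom, and verify that the modular eigenvalue group $\Lambda$ is neither enlarged nor shrunk along the way. The remaining steps are either bookkeeping around Theorems~\ref{rescaling} and~\ref{rescaling2} or direct appeals to the cited structural results.
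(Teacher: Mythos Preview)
Your approach is essentially the same as the paper's: reduce via Theorems~\ref{rescaling} and~\ref{rescaling2} to free products involving $L(\ZZ)$ and finite-dimensional algebras, then invoke the structure theory of such free products (Dykema in the tracial case, \cite{hartglass2021free} Theorem~3.13 in the nontracial case). The paper in fact gives no proof beyond the one-sentence remark preceding the corollary, so your write-up is considerably more detailed than what appears there; in particular, the paper does not spell out the $\alpha>1$ case or the identification of the $\text{Sd}$-invariant, both of which you handle correctly via the modular computation $\Delta_\Omega\supseteq\FF(\Delta_\varphi)$ and the state-preserving nature of the isomorphisms in Theorems~\ref{rescaling} and~\ref{rescaling2}.
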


\section{Second quantization of normal subunital weight decreasing completely positive maps}
In this section, we construct the second quantization for free Poisson algebras for normal subunital and weight decreasing completely positive maps between two von Neumann algebras.

\begin{defn}
	Let $ \varphi $ be a n.s.f. weight on $ M $, $ \psi $ be a n.s.f. weight on $ N $, and $ T:M\to N $ be a normal subunital and weight decreasing completely positive map. We call a unital completely positive map $ \Gamma(T): \Gamma(M,\varphi)\to \Gamma(N,\psi) $ the \textit{second quantization} of $ T $ if it preserves the vacuum state and
	$$ (\Gamma(T)X)\Omega = \FF(T_2)(X\Omega),\quad \forall X\in \Gamma(M,\varphi), $$
	where $T_2:L^2(M,\varphi)\to L^2(N,\psi)$ is again the contraction determined by $ T_2(\eta_{\varphi}(m) ) = \eta_{\psi}(T(m)) $ for all $m\in \nn_\varphi$.
\end{defn}
In particular, since $ \Omega $ is separating, $\Gamma(T)$ is uniquely determined by the above relation (if it exists). Also, $ \Gamma(T) $ preserves the Wick product in the sense that
$$ \Gamma(T)\varPhi(\xi_1\otimes\cdots \xi_n ) = \varPhi(T_2\xi_1\otimes\cdots T_2\xi_n ),\quad \forall n\geq 0,\xi_i\in \frakA_\varphi. $$

The following proposition says that second quantization preserves the dual of completely positive maps.
\begin{prop}
	If $ \Gamma(T): \Gamma(M,\varphi)\to \Gamma(N,\psi) $ is the second quantization of $ T:M\to N $, then $ \Gamma(T)^\star: \Gamma(N,\psi)\to \Gamma(M,\varphi) $ is the second quantization of $ T^\star:N\to M $.
\end{prop}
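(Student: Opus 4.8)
The plan is to verify that $\Gamma(T)^\star$ satisfies the three defining properties of the second quantization of $T^\star$: unitality and complete positivity, preservation of the vacuum state, and the intertwining relation $(\Gamma(T)^\star Y)\Omega=\FF((T^\star)_2)(Y\Omega)$ for all $Y\in\Gamma(N,\psi)$. Since $\Omega$ is separating, these three properties determine such a map uniquely, so it suffices to check that $\Gamma(T)^\star$ has them.

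\textbf{Step 1 (applying the duality theorem to $\Gamma(T)$).} By standardness, $\varphi_\Omega$ and $\psi_\Omega$ are faithful normal states, so $\mm_{\varphi_\Omega}=\Gamma(M,\varphi)$ and $\mm_{\psi_\Omega}=\Gamma(N,\psi)$, and every positive element has finite weight. The map $\Gamma(T)$ is normal, completely positive, sends positives to positives, and satisfies $\psi_\Omega\circ\Gamma(T)=\varphi_\Omega$ (this is precisely vacuum-state preservation). Hence Theorem \ref{duality of positive maps} applies to $\Gamma(T)$ with the pair of weights $(\varphi_\Omega,\psi_\Omega)$ and constant $C=1$, producing the normal completely positive dual $\Gamma(T)^\star\colon\Gamma(N,\psi)\to\Gamma(M,\varphi)$. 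By part (3) it is completely positive; by part (6), unitality of $\Gamma(T)$ makes $\Gamma(T)^\star$ weight preserving, hence (the weight being the finite state $\psi_\Omega$) vacuum-state preserving; and since $\Gamma(T)$ preserves the finite weight $\varphi_\Omega$, part (6) makes $\Gamma(T)^\star$ unital. So the first two defining properties hold automatically.

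\textbf{Step 2 (identifying the $L^2$-implementations).} Under the standard identification $L^2(\Gamma(M,\varphi),\varphi_\Omega)=\FF(L^2(M,\varphi))$ with $\eta_{\varphi_\Omega}(X)=X\Omega$, the map $(\Gamma(T))_2$ of Theorem \ref{duality of positive maps}(3) is the bounded extension of $X\Omega\mapsto\Gamma(T)(X)\Omega=\FF(T_2)(X\Omega)$; since $\Gamma(M,\varphi)\Omega$ is dense, $(\Gamma(T))_2=\FF(T_2)$. Applying Theorem \ref{duality of positive maps}(3) to $\Gamma(T)$ itself then gives $(\Gamma(T)^\star)_2=J_{\varphi_\Omega}\,(\Gamma(T))_2^{*}\,J_{\psi_\Omega}=J_{\varphi_\Omega}\,\FF(T_2)^{*}\,J_{\psi_\Omega}$, where $J_{\varphi_\Omega},J_{\psi_\Omega}$ are the modular conjugations of $\Gamma(M,\varphi),\Gamma(N,\psi)$ for the vacuum states, computed in the lemma above by $J_\Omega(\xi_1\otimes\cdots\otimes\xi_n)=(J\xi_n)\otimes\cdots\otimes(J\xi_1)$. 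Using $\FF(T_2)^{*}=\FF(T_2^{*})$ (the adjoint computed level by level on the Fock space), we apply the three operators to a pure tensor $\eta_1\otimes\cdots\otimes\eta_n\in\FF(L^2(N,\psi))$: first reverse the order and apply $J_\psi$ entrywise, then $T_2^{*}$ entrywise, then reverse again and apply $J_\varphi$ entrywise, obtaining
$$(\Gamma(T)^\star)_2(\eta_1\otimes\cdots\otimes\eta_n)=(J_\varphi T_2^{*}J_\psi\,\eta_1)\otimes\cdots\otimes(J_\varphi T_2^{*}J_\psi\,\eta_n).$$
By Theorem \ref{duality of positive maps}(3) again, $(T^\star)_2=J_\varphi(T_2)^{*}J_\psi$, so the right-hand side equals $\FF((T^\star)_2)(\eta_1\otimes\cdots\otimes\eta_n)$, and therefore $(\Gamma(T)^\star)_2=\FF((T^\star)_2)$. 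Unravelling the identification, $(\Gamma(T)^\star Y)\Omega=\FF((T^\star)_2)(Y\Omega)$ for all $Y\in\Gamma(N,\psi)$, the third defining property. Combined with Step 1, $\Gamma(T)^\star$ is the second quantization of $T^\star$.

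\textbf{Main obstacle.} There is no serious obstacle; the content is bookkeeping. The one point that needs care is matching the order reversal in the formula for $J_\Omega$ against the composition $J_{\varphi_\Omega}\FF(T_2)^{*}J_{\psi_\Omega}$: the two reversals cancel so that the result is the entrywise operator $\FF(J_\varphi T_2^{*}J_\psi)=\FF((T^\star)_2)$, not something with the tensor legs permuted. One must also confirm the routine compatibilities — that Theorem \ref{duality of positive maps} really applies to the finite weights $(\varphi_\Omega,\psi_\Omega)$ with $C=1$, that $(\Gamma(T))_2=\FF(T_2)$, and that $\FF$ respects adjoints — but none of these is difficult.
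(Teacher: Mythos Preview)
Your proposal is correct and follows essentially the same approach as the paper: apply Theorem \ref{duality of positive maps}(3) twice---once to $\Gamma(T)$ at the Fock-space level and once to $T$ at the base level---and combine $(\Gamma(T))_2=\FF(T_2)$, $\FF(T_2)^*=\FF(T_2^*)$, and the explicit formula for $J_\Omega$ to conclude $(\Gamma(T)^\star)_2=\FF((T^\star)_2)$. You are in fact more careful than the paper on two points: you explicitly verify the unitality, complete positivity, and state-preservation required by the definition of second quantization (the paper silently assumes these), and you correctly track the order-reversal in $J_\Omega$ and note that the two reversals cancel, whereas the paper writes $J_{\Omega,1}=\FF(J_\varphi)$, which is a slight abuse that hides exactly this cancellation.
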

\begin{proof}
	This follows from direct computation: let $ J_{\Omega,1} =\FF(J_\varphi) $ (resp. $  J_{\Omega,2}=\FF(J_\psi)$) be the Tomita conjugation for the vaccum state of $ \Gamma(M,\varphi) $ (resp. $\Gamma(N,\psi)$), then for all $ Y\in \Gamma(N,\psi) $,
	\begin{align*}
		(\Gamma(T)^\star Y)\Omega =& (\Gamma(T)^{\star})_2 (Y\Omega)=J_{\Omega,1}\left( \Gamma(T)\right)_2^*J_{\Omega,2} Y\Omega = J_{\Omega,1}\FF(T_2)^* J_{\Omega,2}Y\Omega \\ =&  J_{\Omega,1} \FF(T_2^* )J_{\Omega,2}Y\Omega  = J_{\Omega,1}\FF(J_{\varphi} (T^{\star})_2 J_{\psi})J_{\Omega,2}Y\Omega = \FF( (T^{\star}_2) )(Y\Omega ),
	\end{align*}
	where we used the fact that $ \left( \Gamma(T)^{\star}\right)_2 = J_{\Omega,1}\left( \Gamma(T)\right)_2^*J_{\Omega,2} = J_{\Omega,1}\FF(T_2)^*J_{\Omega,2} $ and $ (T^{\star})_2 = J_\varphi T_2^* J_\psi$ by Theorem \ref{duality of positive maps} (3).
\end{proof}

We now consider correspondence $\HHH_T $ from $M$ to $N$ and its associated $M$-$N$ bimodule $X_T\simeq \mathcal{L}_N( L^2(N),\HHH_T ) $ as defined in Section 2.4. We are interested in the specific element $ i_N:= 1_M\otimes_{T} 1_N \in X_T $. Since $ \langle 1_M\otimes_{T} 1_N,1_N\otimes_{T} 1_M\rangle_N = T(1_M)\leq 1_N $, $ i_N \in X_T \simeq \mathcal{L}_N( L^2(N),\HHH_T )$ is a $ N $-linear contraction from $ L^2(N) $ to $ \HHH_T  $.

Similarly, as $T$ is weight decreasing, $ T^{\star} $ is subunital. Therefore, if we consider the $ N $-$M$ bimodule $ X_{T^{\star}}\simeq \mathcal{L}_M(L^2(M),\HHH_{T^\star}) $, then $ i_M:= 1_N\otimes_{T^{\star}}1_M \in X_{T^{\star}}\simeq \mathcal{L}_M(L^2(M),\HHH_{T^\star}) $ is also a $M$-linear contraction from $ L^2(M) $ to $ \HHH_{T^{\star}} $. By Proposition \ref{H of T star is conjugate H T}, we have $ \HHH_{T^{\star}}\simeq \overline{\HHH}_T $. If we let $J_T: \HHH_T\to \overline{\HHH}_T $ be the conjugation map, then $ J_T i_M J_\varphi: L^2(M)\to \HHH_{T} $ is also a $M$-linear contraction. We denote $j_M := J_T i_M J_\varphi$ in short.

\begin{lem}\label{lem for second quantization}
	For $\xi\in L^2(M,\varphi)$, $ i_N^*( j_M(\xi) ) = T_2\xi $. And for $ m\in M $, $ i_N^* m \,i_N =  T(m) \in N$.
\end{lem}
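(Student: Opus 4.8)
The plan is to compute both inner products directly from the definitions of the correspondences $\HHH_T$ and $\HHH_{T^\star}$ and the conjugation isometry $\ell_T$ of Proposition \ref{H of T star is conjugate H T}. Recall that $i_N = 1_M\otimes_T 1_N\in X_T \simeq \mathcal{L}_N(L^2(N),\HHH_T)$ acts by $i_N(\xi) = 1_M\otimes_T \xi$ for $\xi\in L^2(N)$, and $i_M = 1_N\otimes_{T^\star} 1_M$ acts by $i_M(\eta) = 1_N\otimes_{T^\star}\eta$ for $\eta\in L^2(M)$; then $j_M = J_T i_M J_\varphi$. For the first identity, I would fix $m\in\nn_\varphi$ and compute $\langle i_N \eta_\psi(n), j_M \eta_\varphi(m)\rangle$ for arbitrary $n\in\nn_\psi$, since such vectors are dense. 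Unwinding, $j_M\eta_\varphi(m) = J_T(1_N\otimes_{T^\star} J_\varphi\eta_\varphi(m))$, and $\ell_T$ identifies $1_N\otimes_{T^\star}J_\varphi\eta_\varphi(m)$ with $\overline{m\otimes_T J_\psi\eta_\psi(1_N)}$; but $J_\psi\eta_\psi(1_N)$ only makes sense if $1_N\in\nn_\psi$, so I would instead run the computation with an approximate identity, or directly observe that $\langle i_N\eta_\psi(n), j_M\eta_\varphi(m)\rangle = \langle \overline{1_M\otimes_T J_\psi\eta_\psi(n)}, \overline{m\otimes_T J_\psi\eta_\psi(1)}\rangle$ via $\ell_T$, which equals $\langle m\otimes_T J_\psi\eta_\psi(1), 1_M\otimes_T J_\psi\eta_\psi(n)\rangle_{\HHH_T} = \langle J_\psi\eta_\psi(1), T(m^*\cdot 1)J_\psi\eta_\psi(n)\rangle = \langle J_\psi\eta_\psi(n^*), T(m^*)J_\psi\eta_\psi(1)\rangle$. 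Using $J_\psi\eta_\psi(1) \sim$ the "identity" and the bimodule relations, this should collapse to $\langle\eta_\psi(n), \eta_\psi(T(m))\rangle = \langle\eta_\psi(n), T_2\eta_\varphi(m)\rangle$, giving $i_N^* j_M\eta_\varphi(m) = T_2\eta_\varphi(m)$; density in $L^2(M,\varphi)$ finishes the first claim.

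For the second identity, $i_N^* m\, i_N$ is an element of $\mathcal{L}_N(L^2(N))\simeq N$, so it suffices to evaluate its matrix coefficients: for $n_1,n_2\in\nn_\psi$,
\[
\langle i_N\eta_\psi(n_1),\, m\, i_N\eta_\psi(n_2)\rangle = \langle 1_M\otimes_T \eta_\psi(n_1),\, (m\otimes 1)\cdot(1_M\otimes_T \eta_\psi(n_2))\rangle = \langle 1_M\otimes_T \eta_\psi(n_1),\, m\otimes_T \eta_\psi(n_2)\rangle,
\]
which by the defining inner product of $\HHH_T$ equals $\langle\eta_\psi(n_1), T(1_M^* m)\eta_\psi(n_2)\rangle = \langle\eta_\psi(n_1), T(m)\eta_\psi(n_2)\rangle$. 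Hence $i_N^* m\, i_N$ is the operator of left multiplication by $T(m)\in N$ on $L^2(N)$, i.e. $i_N^* m\, i_N = T(m)$.

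The routine part is the bookkeeping with the quotient maps $\otimes_T$ and $\otimes_{T^\star}$ and the conjugation $\ell_T$; the only genuine subtlety is that $1_N$ (resp. $1_M$) need not lie in $\nn_\psi$ (resp. $\nn_\varphi$), so the symbols $\eta_\psi(1_N)$ and $J_\psi\eta_\psi(1_N)$ must be handled with care. I expect the main obstacle to be phrasing the first computation so that it stays inside well-defined vectors — this is resolved by testing against a net $\eta_\psi(e_i)$ for an approximate identity $(e_i)$ of $\nn_\psi$ and passing to the limit using normality of $T$ and continuity of the inner products, or equivalently by noting that $i_M$ and $i_N$ are bounded operators defined on all of $L^2$ regardless of whether the units are square-integrable, so all four composite maps $i_N^* j_M$ and $i_N^*(m\cdot)i_N$ are a priori bounded and the identities need only be checked on the dense subspaces $\eta_\varphi(\nn_\varphi)$ and $\eta_\psi(\nn_\psi)$.
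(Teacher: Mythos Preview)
Your approach is essentially the paper's: both identities are checked by pairing against dense vectors $\eta_\psi(n)$, and the only real subtlety---that $1_N\notin\nn_\psi$ in general---is resolved exactly as you propose, via an approximate identity $(e_i)\subset\nn_\psi\cap\nn_\psi^*$. The paper's execution of part (1) is slightly cleaner in that it introduces the net at the outset, writing $j_M(\xi)=\lim_i \pi_l(\xi)\otimes_T J_\psi\eta_\psi(e_i)$ directly and then computing $\langle 1_M\otimes_T\eta,\,\pi_l(\xi)\otimes_T J_\psi\eta_\psi(e_i)\rangle=\langle\eta,\,J_\psi e_i J_\psi\, T_2\xi\rangle\to\langle\eta,T_2\xi\rangle$, which avoids the garbled intermediate expression $\langle \overline{1_M\otimes_T J_\psi\eta_\psi(n)},\,\overline{m\otimes_T J_\psi\eta_\psi(1)}\rangle$ in your sketch (the left entry should be $1_M\otimes_T\eta_\psi(n)$ with no $J_\psi$, and the pairing lives in $\HHH_T$ rather than $\overline{\HHH_T}$).
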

\begin{proof}
	1) Since $ T_2 $ and $ i^*_N\circ j_M $ are both contractions, we may assume that $ \xi \in \frakA_\varphi $. Let $ e_i\in \nn_{\varphi}\cap \nn_{\varphi}^* $ be a net in $ N $ increasing to $1_N$, then $ i_MJ_\varphi(\xi) = 1_N\otimes_{T^{\star}} J_\varphi\xi   = \lim_{i} e_i\otimes_{T^{\star}}J_\varphi\xi \in \HHH_{T^{\star}}$. Under the isomorphism $ \HHH_{T^{\star}}\simeq \overline{\HHH_T} $ in Proposition \ref{H of T star is conjugate H T}, this becomes $ \lim_i \overline{ \pi_l(\xi)\otimes_T J_\varphi \eta_{\psi}(e_i) } $. Therefore $ j_M(\xi) = \lim_i \pi_l(\xi)\otimes_T J_\varphi \eta_{\psi}(e_i)\in \HHH_{T} $. For $\eta\in L^2(N,\psi)$, 
	\begin{align*}
		&\langle \eta,i_N^*(j_M(\xi)) \rangle= \langle i_N(\eta) ,j_M(\xi) \rangle_{\HHH_{T}} = \langle 1_M\otimes_T \eta,\lim_i \pi_l(\xi)\otimes_T J_\psi \eta_{\psi}(e_i) \rangle_{\HHH_{T}}\\
		=&\lim_i \langle \eta, T(\pi_l(\xi))J_\varphi \eta_\psi(e_i) \rangle = \lim_i \langle \eta, J_\psi e_i J_\psi T_2 \xi \rangle = \langle \eta, T_2\xi \rangle.
	\end{align*}
	\newline
	2) For all $ \eta\in L^2(N,\psi) $,
	\begin{align*}
		i_N^* m \, i_N(\eta) &= i_N^* m(1_M\otimes_T \eta ) = i_N^*( m\otimes_{T} \eta ) = T(m)\eta.
	\end{align*}
\end{proof}

\begin{thm}\label{main}
	Let $(M,\varphi)$, $(N,\psi)$ be two von Neumann algebras with n.s.f. weights. For any normal subunital weight decreasing completely positive map $ T:M\to N $, the second quantization $ \Gamma(T):\Gamma(M,\varphi)\to \Gamma(N,\psi) $ exists.
\end{thm}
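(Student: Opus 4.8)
\emph{Strategy.} The plan is to realise $\Gamma(T)$ as a Stinespring compression of a $*$-representation of $\Gamma(M,\varphi)$ on a full Fock space built from the correspondence $\HHH_T$. The Stinespring data for $T$ is already packaged in Lemma \ref{lem for second quantization}: writing $L\colon M\to B(\HHH_T)$ for the left action of $M$ on $\HHH_T$, one has $i_N^*L_m\,i_N=T(m)$ and $i_N^*j_M=T_2$. The first step is to dilate the contraction $j_M\colon L^2(M,\varphi)\to\HHH_T$ to a left‑$M$‑module \emph{isometry}. One checks from $j_M=J_Ti_MJ_\varphi$ and from $i_M$ being a right‑$M$‑module map that $j_M$ is a left‑$M$‑module map, and that $j_M^*j_M=J_\varphi i_M^*i_M J_\varphi=J_\varphi\pi_l(T^\star(1_N))J_\varphi\in J_\varphi MJ_\varphi=M'$ is a positive contraction ($T^\star$ being subunital since $T$ is weight decreasing, by Theorem \ref{duality of positive maps}(5)). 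Hence, setting $\widetilde\HHH:=\HHH_T\oplus L^2(M,\varphi)$ with the diagonal left $M$‑action $\rho:=L\oplus\pi_l$, and $d:=(1-j_M^*j_M)^{1/2}\in M'$, the map $\widetilde\jmath:=(j_M,d)\colon L^2(M,\varphi)\to\widetilde\HHH$ is a left‑$M$‑module isometry, so $\widetilde\jmath^*\rho(m)\widetilde\jmath=\pi_l(m)$ for all $m\in M$.

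\emph{The representation $\pi$.} On $\FF(\widetilde\HHH)$ I then form the field operators $\widetilde X(\xi):=l(\widetilde\jmath\,\xi)+l^*(\widetilde\jmath\,S_\varphi\xi)+\Lambda(\rho(\pi_l(\xi)))$ for $\xi\in\frakA_\varphi$, which are bounded and satisfy $\widetilde X(\xi)^*=\widetilde X(S_\varphi\xi)$. Using that $\widetilde\jmath$ is a module isometry, Theorem \ref{cumulants of operators on Fock spaces} computes the joint free cumulants with respect to the vacuum of $\FF(\widetilde\HHH)$: for $n\ge 2$, $R_n(\widetilde X(\xi_{i_1}),\dots,\widetilde X(\xi_{i_n}))=\langle\widetilde\jmath\,S_\varphi\xi_{i_1},\rho(\pi_l(\xi_{i_2}\cdots\xi_{i_{n-1}}))\widetilde\jmath\,\xi_{i_n}\rangle=\langle S_\varphi\xi_{i_1},\xi_{i_2}\cdots\xi_{i_n}\rangle$, and $R_1(\widetilde X(\xi))=0$, which by Proposition \ref{cumulants} is exactly the joint $*$‑cumulant of the original $X(\xi_i)$ (the family being closed under $S_\varphi$, matching moments is the same as matching $*$‑moments). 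Therefore $X(\xi)\mapsto\widetilde X(\xi)$ extends to a vacuum‑preserving $*$‑isomorphism of the generated $C^*$‑algebras, hence (the vacuum states being faithful normal) to a normal $*$‑isomorphism $\pi\colon\Gamma(M,\varphi)\xrightarrow{\ \sim\ }\widetilde X(\frakA_\varphi)''\subseteq B(\FF(\widetilde\HHH))$, and on Wick products $\pi(Y)\Omega=\FF(\widetilde\jmath)(Y\Omega)$.

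\emph{The compression.} Let $\widetilde\imath:=(i_N,0)\colon L^2(N,\psi)\to\widetilde\HHH$, a contraction, and put $\Gamma(T)(Y):=\FF(\widetilde\imath)^*\,\pi(Y)\,\FF(\widetilde\imath)$ for $Y\in\Gamma(M,\varphi)$. As the compression of a $*$‑representation by an operator of norm $\le 1$, this is completely positive; it preserves the vacuum state since $\FF(\widetilde\imath)\Omega=\Omega$ and $\pi$ is vacuum preserving; and on Wick products $\Gamma(T)(Y)\Omega=\FF(\widetilde\imath)^*\FF(\widetilde\jmath)(Y\Omega)=\FF(\widetilde\imath^*\widetilde\jmath)(Y\Omega)=\FF(i_N^*j_M)(Y\Omega)=\FF(T_2)(Y\Omega)$ by Lemma \ref{lem for second quantization}, which already forces $\Gamma(T)\Psi_M(\xi_1\otimes\cdots\otimes\xi_n)=\Psi_N(T_2\xi_1\otimes\cdots\otimes T_2\xi_n)$ and, $\Omega$ being separating, makes $\Gamma(T)$ unique. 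There remains to see that $\Gamma(T)$ lands in $\Gamma(N,\psi)$, not merely in $B(\FF(L^2(N,\psi)))$: on $\FF(\widetilde\HHH)$ one introduces the right field operators $\widetilde X_r(\eta')$, $\eta'\in\frakA'_\psi$, built from the right $N$‑action on $\HHH_T$ (zero on the $L^2(M,\varphi)$ summand) and from $\widetilde\imath$; since $i_N\in X_T=\mathcal L_N(L^2(N),\HHH_T)$ is right‑$N$‑linear, $\FF(\widetilde\imath)$ intertwines $X_{N,r}(\eta')$ with $\widetilde X_r(\eta')$, while $\pi(\Gamma(M,\varphi))$ commutes with every $\widetilde X_r(\eta')$ — checked on the algebraic Fock space exactly as in the proof that $\Omega$ is cyclic and separating for a free Poisson algebra, now using the $M$‑$N$‑bimodule compatibility of $\rho$ with the right action and the one‑particle identities $i_N^*j_M=T_2$, $j_M^*i_N=T_2^*$. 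Hence $\Gamma(T)(Y)$ commutes with all $X_{N,r}(\eta')$, so $\Gamma(T)(Y)\in\bigl(X_{N,r}(\frakA'_\psi)''\bigr)'=\Gamma(N,\psi)$; and since $\Gamma(T)(1)\Omega=\FF(T_2)\Omega=\Omega$ with $\Omega$ separating for $\Gamma(N,\psi)$, we get $\Gamma(T)(1)=1$.

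\emph{Where the work is.} The main obstacle is the first two steps: promoting the mere contraction $j_M$ to a genuine left‑$M$‑module isometry $\widetilde\jmath$ on an enlarged space — which is precisely what the relation $\widetilde\jmath^*\rho(m)\widetilde\jmath=\pi_l(m)$ buys — and then using the free‑cumulant formula to confirm that the transferred operators $\widetilde X(\xi)$ generate a faithful copy of $\Gamma(M,\varphi)$ with the vacuum state intact, so that the compression $\FF(\widetilde\imath)^*\pi(\cdot)\FF(\widetilde\imath)$ inherits all the required properties. The bimodule bookkeeping in the last step (commutation of $\pi(\Gamma(M,\varphi))$ with the transported right field operators, and the ensuing range statement) is routine, but is exactly the point at which complete positivity, subunitality and weight‑decrease of $T$ enter, via $i_N^*L_m\,i_N=T(m)$, $i_N^*j_M=T_2$ and $j_M^*j_M\le 1$.
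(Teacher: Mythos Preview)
Your construction of the dilated representation $\pi$ via the left-$M$-module isometry $\widetilde\jmath=(j_M,d)$ is essentially the paper's construction (the paper writes $k_M=(\sqrt{1-j_M^*j_M},\,j_M,\,0)$ into a slightly bigger space), and the free-cumulant argument for why $\pi$ is a state-preserving $*$-isomorphism is the same. The identity $\Gamma(T)(Y)\Omega=\FF(T_2)(Y\Omega)$ also follows exactly as you say, from $\widetilde\imath^*\widetilde\jmath=i_N^*j_M=T_2$.

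The gap is in your range argument. Your map $\widetilde\imath=(i_N,0)$ is only a contraction, with $\widetilde\imath^*\widetilde\imath=i_N^*i_N=T(1)$, which is strictly below $1$ when $T$ is genuinely subunital. Consequently the intertwining $\FF(\widetilde\imath)\,X_{N,r}(\eta')=\widetilde X_r(\eta')\,\FF(\widetilde\imath)$ that you invoke fails at the right \emph{annihilation} part: on a one-particle vector $\zeta\in L^2(N)$ one gets
\[
\FF(\widetilde\imath)\,a_r^-(\eta')\zeta=\langle S_\psi^*\eta',\zeta\rangle\,\Omega,
\qquad
\bigl(a_r^+(\widetilde\imath\,S_\psi^*\eta')\bigr)^*\FF(\widetilde\imath)\zeta=\langle S_\psi^*\eta',\,T(1)\zeta\rangle\,\Omega,
\]
and these disagree unless $T(1)=1$. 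Without this intertwining (and its adjoint), you cannot pass from the commutation of $\pi(Y)$ with $\widetilde X_r(\eta')$ to the commutation of $\FF(\widetilde\imath)^*\pi(Y)\FF(\widetilde\imath)$ with $X_{N,r}(\eta')$, so the claim $\Gamma(T)(Y)\in\Gamma(N,\psi)$ is unproved. The same defect blocks the direct route as well: for a mere contraction one has $\FF(\widetilde\imath)^*\Lambda(A)\FF(\widetilde\imath)\neq\Lambda(\widetilde\imath^*A\widetilde\imath)$ on tensors of length $\ge 2$.

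The paper repairs exactly this by enlarging the space to $\widetilde\HHH_T=L^2(M)\oplus\HHH_T\oplus L^2(N)$ and using, on the $N$-side, the \emph{coisometry} $p_N(\zeta,\xi,\eta)=i_N^*\xi+\sqrt{1-i_N^*i_N}\,\eta$, so that $p_Np_N^*=1$. Then $\FF(p_N)\Lambda(\pi_l(\xi))\FF(p_N)^*=\Lambda(p_N\pi_l(\xi)p_N^*)=\Lambda(T(\pi_l(\xi)))$, and a direct application of the Wick formula (Proposition~\ref{comb}) together with $p_Nk_M=T_2$ yields $\FF(p_N)I(\Psi(\xi_1,\dots,\xi_l))\FF(p_N)^*=\Psi(T_2\xi_1,\dots,T_2\xi_l)\in\Gamma(N,\psi)$ in one stroke, bypassing any commutant argument. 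Your proof becomes correct once you add the $L^2(N)$ summand with its defect operator and replace the commutation step by this Wick computation.
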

\begin{proof}

	Consider the (degenerate) correspondence $ \tilde{\HHH}_T := L^2(M)\oplus \HHH_{T}\oplus L^2(N) $ from $ M $ to $N$ with $ M $ acting on $ L^2(N) $ as $0$ and $ N^{op} $ acting on $ L^2(M) $ as $0$ (rigorously speaking, since the action of $M$ and $N^{op}$ is degenerate, $ \tilde{\HHH}_T$ is not an actual correspondence). For a $\xi\in \frakA_\varphi$, we denote again $ \pi_T(\xi):= \pi_T(\pi_l(\xi)) $ the action of $\pi_l(\xi)\in M  $ on $ \tilde{\HHH}_T $. Let $ \FF(\tilde{\HHH}_T ) $ be the full Fock space of $ \tilde{\HHH}_T  $, we then have the preservation operator $ \Lambda( \pi_{T}(\xi) ) \in B( \FF( \tilde{\HHH}_{T} ) )$.

	Let $ k_M: L^2(M)\to \tilde{\HHH}_T $ be the $ M $-linear map $$ k_M( \xi ) := (\sqrt{1-j_M^* j_M}(\xi), j_M(\xi),0 ),\quad \forall \xi\in L^2(M), $$
	then $ k_M $ is an isometry as $$ k_M^*k_M = ( \sqrt{1-j_M^* j_M},j_M^*,0)\begin{pmatrix}
		\sqrt{1-j_M^* j_M}\\
		j_M\\
		0
	\end{pmatrix} =1. $$

	Similarly, let $ p_N : \tilde{\HHH_{T}}\to L^2(N) $ be the $N^{op}$-linear map $$ p_N( \zeta,\xi,\eta ) = i_N^*(\xi)+\sqrt{ 1-i_N^* i_N }(\eta),\quad \forall \zeta\in L^2(M),\xi\in \HHH_{T}, \eta\in L^2(N), $$
	and one can similarly check $ p_N $ is a projection, i.e. $ p_Np_N^*=1 $.
	
	As $k_M: L^2(M)\to \tilde{\HHH}_T $ is isometric and $M$-linear, by Theorem \ref{cumulants of operators on Fock spaces}, the family $ (X(\xi))_{\xi\in \frakA_\varphi} $ and $ ( \ell(k_M(\xi))+ \ell^*(k_M(S_\varphi \xi)) +\Lambda( \pi_{l}(\xi) ) )_{\xi\in \frakA_\varphi} \subseteq B( \FF( \tilde{\HHH}_{T} ) )$ have the same joint free cumulants and hence have the same joint distribution with respect to the vacuum state. Therefore, the map
	$$ I: X(\xi)\mapsto  \ell(k_M(\xi))+ \ell^*(k_M(S_\varphi \xi)) +\Lambda( \pi_{l}(\xi) ),\quad \forall \xi\in \frakA_\varphi $$
	extends to an injective $*$-homomorphism $ I: \Gamma(M,\varphi)\to B( \FF( \tilde{\HHH}_{T} ) ) $ preserving the vacuum state.

	We now claim that the second quantization is \begin{align*}
		\Gamma(T):& \Gamma(M,\varphi) \to B(\FF(L^2(N,\varphi)))\\
		&X \mapsto \FF(p_N)I(X)\FF(p_N)^*.
	\end{align*}
	First, we need to show that $ \Gamma(T)X $ is indeed in $ \Gamma(N,\psi) $ for all $ X\in \Gamma(M,\varphi) $. By Propositon \ref{comb}, we have for all $ \xi_1,\cdots,\xi_l\in \frakA_\varphi $,
	\begin{align*}
		&\FF(p_N)I(\Psi(\xi_1,\cdots ,\xi_l))\FF(p_N)^*\\ =&\FF(p_N) I\left(\sum_{ s=1 }^{l+1}a^+(\xi_1)\cdots a^+(\xi_{s-1})a^-(\xi_{s})\cdots a^-(\xi_l)+ \sum_{ s=1}^{l}a^+(\xi_1)\cdots a^+(\xi_{s-1})a^0(\xi_s)a^-(\xi_{s+1})\cdots a^-(\xi_l)\right) \FF(p_N)^*\\ =&\FF(p_N) \Big(\sum_{ s=1 }^{l+1}l(k_M(\xi_1))\cdots l(k_M(\xi_{s-1}))l^*( k_M(S_\varphi\xi_{s}))\cdots l^*(k_M(S_\varphi \xi_l))\\ & + \sum_{ s=1}^{l}l(k_M(\xi_1))\cdots l(k_M(\xi_{s-1}))\Lambda(\pi_{l}(\xi_s))l^*(k_M(S_\varphi \xi_{s+1}))\cdots l^*(k_M(S_\varphi \xi_l))\Big) \FF(p_N)^*
	\end{align*}
	Recall that for any contraction $ C$ of Hilbert spaces, we have $ \FF(C)l(h)=l(Ch)\FF(C) $ and $ l^*(h)\FF(C)^* = \FF(C)^*l^*(Ch) $. Therefore, using the equation $ p_N(k_M(\xi)) = i^*_N( j_M(\xi) ) = T_2\xi $ for all $\xi\in L^2(M,\varphi)$ (Lemma \ref{lem for second quantization}) and that $T_2S_\varphi = S_\psi T_2$ on $ \frakA_\varphi $ (as $ T $ is positive), we get \begin{align*}
		\FF(p_N)I(\Psi(\xi_1,\cdots ,\xi_l))\FF(p_N)^* =& \sum_{ s=1 }^{l+1}l(T_2 \xi_1)\cdots l(T_2 \xi_{s-1})l^*(S_\psi T_2 \xi_{s})\cdots l^*(S_\psi T_2  \xi_l)\\ &+ \sum_{ s=1}^{l}l(T_2 \xi_1)\cdots l(T_2 \xi_{s-1})\FF(p_N)\Lambda(\pi_{l}(\xi_s))\FF(p_N)^*l^*(S_\psi T_2 \xi_{s+1})\cdots l^*(S_\psi T_2  \xi_l)
	\end{align*} 
	Notice also that since $ p_Np_N^* = 1 $ and $ p_N\pi_l(\xi)p_N^*= i_N^*\pi_l(\xi)i_N = T(\pi_l(\xi))$ for all $ \xi\in \frakA_\varphi$, we have
	\begin{align*}
		\FF(p_N)\Lambda(\pi_{l}(\xi_s))\FF(p_N)^* = \Lambda(p_N \pi_{l}(\xi_s) p_N^*) = \Lambda( T(\pi_l(\xi_s)) ) = a^0(T_2\xi_s).
	\end{align*}
	 We can now continue the calculation:
	\begin{align*}
			&\FF(p_N)I(\Psi(\xi_1,\cdots ,\xi_l))\FF(p_N)^*\\
			=&\sum_{ s=1 }^{l+1}a^+(T_2\xi_1)\cdots a^+(T_2\xi_{s-1})a^-(T_2\xi_{s})\cdots a^-(T_2\xi_l)+ \sum_{ s=1}^{l}a^+(T_2\xi_1)\cdots a^+(T_2\xi_{s-1})a^0(T_2\xi_s)a^-(T_2\xi_{s+1})\cdots a^-(T_2\xi_l)\\
			=&\Psi(T_2\xi_1,\cdots ,T_2\xi_l)\subseteq \Gamma(N,\psi).
	\end{align*}
	Finally, for $X= \Psi(\xi_1,\cdots ,\xi_l)\in \Gamma(M,\varphi) $, $ (\Gamma(T)X)\Omega=(\Gamma(T)\Psi(\xi_1,\cdots ,\xi_l))\Omega = \Psi(T_2\xi_1,\cdots ,T_2\xi_l)\Omega = T_2\xi_1\otimes \cdots \otimes T_2\xi_l = \FF(T_2)( X \Omega) $, which also implies that $ \Gamma(T) $ is state preserving. In particular, $\Gamma(T)$ is indeed the second quantization of $ T $.
\end{proof}

\begin{eg}
	If $ M\subseteq N $, $ \varphi = \psi|_{M} $ is semifinite, and $ T: M\to N $ is the inclusion map, then the second quantization $ \Gamma(T) $ is also an inclusion. (Indeed, the two families $ (X(\frakA_\varphi)) $ and $ (\Gamma(T)(X(\frakA_\varphi) )) $ has the same joint distribution.) On the other hand, if $ T: M\to N$ is a generalized conditional expectation (in the sense of \cite{AC82}\cite{petz1984dual}), i.e. $ T^\star $ is a weight preserving inclusion, then $ \Gamma(T) $ is also a generalized conditional expectation, since $ \Gamma(T)^\star = \Gamma( T^{\star} ) $ is a state preserving inclusion. Note that, however, $ \Gamma(T) $ may not be a homomorphism even when $ T $ is a homomorphism (but not weight preserving).
\end{eg}

\begin{eg}
	For each $ \lambda\in [0,1] $, we can consider the second quantization of the constant multiplication $ \Gamma(\lambda): \Gamma(M,\varphi)\to \Gamma(M,\varphi) $. This gives us the Ornstein–Uhlenbeck semigroup $ P_t:= \Gamma(e^{-t}):\Gamma(M,\varphi)\to \Gamma(M,\varphi) $ on the free Poisson algebra.
\end{eg}

\section{Factoriality and type classification}
First, let us recall the factoriality and type results for free product algebras, see \cite{UEDA20112647}\cite{MR1426835}\cite{MR1273959}\cite{MR1224611} for more details about general nontracial free product von Neumann algebras.
\begin{thm}[Thm. 3.4 \cite{UEDA20112647}]\label{factoriality free product}
	Let $ M_1 $, $ M_2 $ be two nontrivial von Neumann algebras with faithful normal state $ \varphi_1 $ and $\varphi_2$, and $ (M,\varphi) = (M_1,\varphi_1)\ast(M_2,\varphi_2)$. If either $ M_1 $ or $M_2$ is diffuse, then $ M $ is a factor of type $\text{II}_1$ or $ \text{III}_\lambda $ ($\lambda\neq 0$), whose $T$-invariant is $ T(M) = \{ t\in \RR|\sigma_{t}^{\varphi_1} =\text{id} = \sigma_{t}^{\varphi_2}\} $.
\end{thm}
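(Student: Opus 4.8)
The plan is to follow the standard route for free product factors: establish the modular structure, deduce factoriality, compute the $T$-invariant, and finally exclude the remaining type III$_0$ possibility. Write $(M,\varphi)=(M_1,\varphi_1)\ast(M_2,\varphi_2)$, let $E_i\colon M\to M_i$ be the $\varphi$-preserving conditional expectations, and assume without loss of generality that $M_1$ is diffuse and $M_2\neq\CC$. The first input is the classical fact (Dykema, Barnett; see \cite{MR1273959}\cite{MR1224611}) that the free product state is modular: $\sigma^\varphi_t(M_i)=M_i$ and $\sigma^\varphi_t|_{M_i}=\sigma^{\varphi_i}_t$ for all $t$. Two elementary freeness computations are used repeatedly: (i) if $M_1$ is diffuse then $M_1'\cap M\subseteq M_1$ — proved by fixing a Haar unitary $u\in M_1$ (which exists, since a diffuse von Neumann algebra with a faithful normal state contains a diffuse abelian subalgebra carrying a uniform spectral distribution), conjugating $x\in M_1'\cap M$ by $u^n$, expanding $x$ into reduced words over $(M_1\ominus\CC,\,M_2\ominus\CC)$, and using $u^n\to0$ $\sigma$-weakly to kill every word that starts or ends with an $(M_2\ominus\CC)$-letter; and (ii) $M_1\cap M_2'=\CC1$ whenever $M_2\neq\CC$ — proved by taking a nontrivial projection $p\in M_2$ and computing $\|a^\circ p-pa^\circ\|_\varphi^2=2\varphi_2(p)(1-\varphi_2(p))\|a^\circ\|_\varphi^2$ for $a\in M_1$, where $a^\circ=a-\varphi_1(a)$, so that $a^\circ=0$.

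Factoriality is then immediate: $\mathcal Z(M)$ is fixed by $\sigma^\varphi$, hence lies in $M^\varphi$, but more directly $\mathcal Z(M)\subseteq M_1'\cap M\subseteq M_1$ by (i), and since $\mathcal Z(M)$ also commutes with $M_2$ we obtain $\mathcal Z(M)\subseteq M_1\cap M_2'=\CC$ by (ii).

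For the $T$-invariant, the inclusion $\{t:\sigma^{\varphi_1}_t=\mathrm{id}=\sigma^{\varphi_2}_t\}\subseteq T(M)$ is trivial, since then $\sigma^\varphi_t=\mathrm{id}$ on the generating set $M_1\cup M_2$. Conversely, suppose $\sigma^\varphi_t=\mathrm{Ad}(u)$ for a unitary $u\in M$. Then $u$ normalises both $M_1$ and $M_2$, because $\sigma^\varphi_t$ maps each $M_i$ onto itself. A free-normaliser argument (using that $M_1$ is diffuse, together with (i) and (ii)) forces $u\in M_1$; and then $u\in M_1$ normalising $M_2\neq\CC$ must already be a scalar, since the induced automorphism $\mathrm{Ad}(u)|_{M_2}$ acts unitarily on $L^2(M_2)\ominus\CC$ while a reduced-word computation gives $\langle uvu^*,w\rangle_\varphi=|\varphi_1(u)|^2\langle v,w\rangle_{\varphi_2}$ for all centred $v,w\in M_2$, which is incompatible with unitarity unless $|\varphi_1(u)|=1$, i.e.\ $u\in\CC$. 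Hence $\sigma^\varphi_t=\mathrm{id}$, so $\sigma^{\varphi_i}_t=\mathrm{id}$ for $i=1,2$. This identifies $T(M)$ with $\{t:\sigma^{\varphi_1}_t=\mathrm{id}=\sigma^{\varphi_2}_t\}=\{t:\lambda^{it}=1\ \forall\lambda\in\mathrm{spec}(\Delta_{\varphi_1})\cup\mathrm{spec}(\Delta_{\varphi_2})\}$, a closed subgroup of $\RR$, hence equal to $\{0\}$, to $a\ZZ$ for some $a>0$, or to $\RR$.

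Finally, the type. Being a factor with a faithful normal state, $M$ has type I$_n$, II$_1$, or III$_\lambda$; diffuseness of $M_1$ excludes type I. If $T(M)=\RR$ then each $\varphi_i$ is tracial, so $\varphi=\varphi_1\ast\varphi_2$ is a faithful normal tracial state and $M$ is II$_1$. If $T(M)=a\ZZ$ with $a>0$, then $M$ cannot be semifinite, so it is type III, and Connes' classification gives $M\simeq\mathrm{III}_\lambda$ with $\lambda=e^{-2\pi/a}\in(0,1)$. The remaining case $T(M)=\{0\}$ — a priori type III$_0$ or III$_1$ — is the main obstacle, and this is precisely where the diffuseness of $M_1$ is indispensable. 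The plan is to compute Connes' invariant $S(M)$ and show it equals $[0,\infty)$, so that $M\simeq\mathrm{III}_1$. For this one exploits that $S(M)$ may be computed using only faithful normal states with factorial centraliser: since $M_1$ is diffuse one can produce faithful normal states $\psi_1$ on $M_1$ with $M_1^{\psi_1}$ diffuse and with $\mathrm{spec}(\Delta_{\psi_1})$ prescribed, so that $\psi:=\psi_1\ast\psi_2$ has a II$_1$ factor as centraliser (by the tracial version of the factoriality argument above) and modular spectrum as large as one likes; feeding the hypothesis $T(M)=\{0\}$ into the free-product computation of $\mathrm{spec}(\Delta_\psi)$ then yields $S(M)=[0,\infty)$. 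I expect the two delicate points to be the construction of states with diffuse centraliser and controlled modular spectrum on an arbitrary diffuse $M_1$, and the precise determination of the modular spectrum of a free product state; the rest is bookkeeping with reduced words and the freeness moment formulas.
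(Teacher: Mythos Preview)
The paper does not prove this statement: Theorem~\ref{factoriality free product} is quoted verbatim as Theorem~3.4 of Ueda \cite{UEDA20112647} and used as a black box in Section~6, so there is no ``paper's own proof'' to compare against. Your write-up is therefore an attempted reconstruction of Ueda's argument, not a comparison target for the present paper.

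That said, a few comments on the sketch itself. The factoriality step via $M_1'\cap M\subseteq M_1$ and $M_1\cap M_2'=\CC$ is the standard Barnett--Dykema route and is fine, though in the non-tracial setting the reduced-word/averaging argument for (i) needs the Haar unitary to be taken in a diffuse abelian subalgebra on which $\varphi_1$ restricts to a trace, which is always possible but deserves a word. The $T$-invariant step has a real gap: you assert that a unitary $u\in M$ normalising the diffuse $M_1$ must lie in $M_1$, but this ``free normaliser'' fact is not available off the shelf in the state setting and is in fact one of the nontrivial inputs in Ueda's paper; your sketch does not indicate how to prove it. Likewise, the type~$\mathrm{III}_0$ exclusion is where most of the work in \cite{UEDA20112647} actually lies, and your proposal---manufacturing states $\psi_1$ on an \emph{arbitrary} diffuse $M_1$ with prescribed modular spectrum and diffuse centraliser---is not something one can do in general (think of $M_1$ a type~$\mathrm{III}_1$ factor with weakly mixing state). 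Ueda's actual route goes through the continuous core and a careful analysis of $(M_\varphi)'\cap M$, rather than through ad hoc state constructions. So the outline is in the right spirit for the factoriality and easy $T$-inclusion, but the hard direction of the $T$-invariant and the $\mathrm{III}_0$ exclusion are not adequately addressed.
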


\begin{thm}[Thm. 3.7 \cite{UEDA20112647}]\label{free product extreme ergodic}
	Let $ M_1 $, $ M_2 $ be two nontrivial von Neumann algebras with faithful normal state $ \varphi_1 $ and $\varphi_2$, and $ (M,\varphi) = (M_1,\varphi_1)\ast(M_2,\varphi_2)$. If either $ (M_1)_{\varphi_1} $ or $(M_2)_{\varphi_2}$ is diffuse and the other $(M_i)_{\varphi_i}\neq \CC$, then $ (M_\varphi)'\cap M^\omega $, where $ M^\omega $ is a free ultrapower of $ M $.
\end{thm}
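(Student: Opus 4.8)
\emph{Proof proposal.} I read the (truncated) conclusion as $(M_\varphi)'\cap M^\omega=\CC 1$; by Theorem \ref{factoriality free product} $M$ is already a factor of type $\text{II}_1$ or $\text{III}_\lambda$ ($\lambda\neq 0$), so the content here is that it is moreover a \emph{full} factor. Since the centralizers $(M_1)_{\varphi_1}$ and $(M_2)_{\varphi_2}$ both lie in $M_\varphi$, the von Neumann subalgebra they generate is, by freeness, the free product $P:=(M_1)_{\varphi_1}\ast(M_2)_{\varphi_2}$ with respect to the faithful traces $\varphi_i|_{(M_i)_{\varphi_i}}$, and $P\subseteq M_\varphi$. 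As $P$ is pointwise fixed by $\sigma^\varphi$, Takesaki's theorem gives a normal $\varphi$-preserving conditional expectation $E_P\colon M\to P$. In particular $(M_\varphi)'\cap M^\omega\subseteq P'\cap M^\omega$, so it suffices to prove $P'\cap M^\omega=\CC 1$.

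The plan rests on two facts. (i) $P$ is a full $\text{II}_1$ factor: $(M_1)_{\varphi_1}$ is diffuse and $(M_2)_{\varphi_2}\neq\CC$, and a free product of a diffuse tracial von Neumann algebra with any nontrivial one is a $\text{II}_1$ factor without property $\Gamma$; hence $P'\cap P^\omega=\CC 1$. (ii) The inclusion $P\subseteq M$ is \emph{mixing}: writing $L^2(M)$ as the orthogonal sum of reduced words for the free product $M_1\ast M_2$, conjugating a reduced word $\xi\in L^2(M)\ominus L^2(P)$ by a high power $u^k$ of a Haar unitary $u\in(M_1)_{\varphi_1}\subseteq P$ yields, after re-reduction, words that are either longer than $\xi$ or have a shifted alternation pattern, hence asymptotically orthogonal to any fixed vector; a density argument promotes this to mixing of the $P$-$P$-bimodule $L^2(M)\ominus L^2(P)$. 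Granting (i)--(ii), the standard consequence of mixing --- a diffuse subalgebra of a mixing subalgebra has its relative commutant, even in the ultrapower, captured by the subalgebra --- gives $P'\cap M^\omega\subseteq P^\omega$, whence $P'\cap M^\omega\subseteq P'\cap P^\omega=\CC 1$.

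The main obstacle is (ii). Conjugation by $u$ does not preserve the word-length grading of $L^2(M)$: the ``merge'' scalars $\varphi_1(u\xi_1)$, $\varphi_1(\xi_k u^*)$ that occur when a word starts or ends with an $M_1$-letter can shorten it, so one must track them and use orthogonality of distinct reduced words to pass from a single-word estimate to a bimodule-level mixing statement. A second difficulty is that $\varphi$ need not be tracial, so the argument must be carried out in the Ocneanu ultrapower with the two-sided norm $\|y\|^{\#}_\varphi=(\varphi(y^*y)+\varphi(yy^*))^{1/2}$ and relative to $M_\varphi$, which is precisely where Ueda's free-product machinery is needed. An alternative route, avoiding $P$ altogether, is to prove directly a spectral-gap inequality $\|y-\varphi(y)1\|^{\#}_\varphi\leq c\bigl(\|[u,y]\|^{\#}_\varphi+\|[w,y]\|^{\#}_\varphi\bigr)$ for a Haar unitary $u\in(M_1)_{\varphi_1}$ and a centered contraction $w\in(M_2)_{\varphi_2}$ (both in $M_\varphi$), exploiting that every nontrivial reduced word has both endpoints outside $M_1$ or both endpoints outside $M_2$, so that $\mathrm{ad}\,u$ or $\mathrm{ad}\,w$ moves it to an orthogonal vector; but this meets the same word-length bookkeeping.
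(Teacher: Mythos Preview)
This theorem is not proved in the paper at all: it is quoted verbatim from Ueda's paper \cite{UEDA20112647} (Theorem 3.7 there) and used as a black box in Section~6. There is therefore no ``paper's own proof'' to compare your attempt against; the author simply cites the result and moves on. (Note also that the statement as printed is garbled --- the conclusion ``$(M_\varphi)'\cap M^\omega$'' is missing ``$=\CC$'', and ``free ultrapower'' should be the Ocneanu ultrapower; your reading of both is correct.)

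That said, your sketch is a reasonable outline of how results of this type are actually proved, and is close in spirit to Ueda's original argument. The reduction to $P=(M_1)_{\varphi_1}\ast(M_2)_{\varphi_2}\subseteq M_\varphi$ and the use of a Haar unitary in the diffuse factor is exactly the right starting point. The genuine work, as you identify, is the asymptotic orthogonality/mixing estimate for the $P$-$P$-bimodule $L^2(M)\ominus L^2(P)$ in the non-tracial setting, together with the bookkeeping for the Ocneanu ultrapower; Ueda handles this by a direct word-length computation controlling $\|[u^n,y]\|_\varphi^\#$ for $y\in M^\omega$ rather than by invoking an abstract ``mixing $\Rightarrow$ relative commutant in $P^\omega$'' principle, but the underlying free-product combinatorics is the same as what you describe. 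If you want to turn your proposal into an actual proof, the place to look is \cite{UEDA20112647}, not the present paper.
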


Using the factoriality of free product algebras, we can directly show the factoriality of $ \Gamma(M,\varphi) $ when $ \varphi(1)<\infty $. Thus the remaining interesting cases are when $ \varphi(1)=\infty $, for which we divide further into the cases: 1) $ \varphi $ is strictly semifinite; 2) $ \varphi $ is not strictly semifinite. Case 1) follows from a standard inductive limit argument, while Case 2) is proved via locating the centralizer $ \Gamma(M,\varphi)_{\varphi_\Omega} $.

The key step for proving the factoriality of $ \Gamma(M,\varphi) $ when $ \varphi $ is not strictly semifinite is to show that the centralizer of $ \Gamma(M,\varphi) $ in fact belongs to the free Poisson algebra of the strictly semifinite part of $(M,\varphi)$. We note that similar phenomenon also happens for free Araki-Woods algebras where the centralizer is always contained in the almost periodic part (see \cite[Theorem 3.4]{bikram2017generator} and \cite{Fum01}.)

We now take a slight detour and discuss the strictly semifinite part of the weight $\varphi $.
\subsection{Strictly semifinite part of $(M,\varphi)$ }
Recall that a n.s.f. weight $\varphi$ is called strictly semifinite if and only if its restriction to the centralizer $ M_\varphi $ is still semifinite. Equivalently, $\varphi$ is strictly semifinite if only if it can be written as a sum of positive normal linear functionals with mutually orthogonal supports. For a general $\varphi$, we may consider the following definition describing the strictly semifinite part of $\varphi$.

\begin{defn}
	Let $ \varphi $ be a n.s.f. weight on $M$. The \textit{strictly semifinite support} $z_{ssf}(\varphi)$ of $ \varphi $ is the largest projection in $\mathcal{Z}(M_\varphi)$ such that $ \varphi $ is semifinite on $ M_\varphi z_{ssf}(\varphi) $. We also call the subalgebra $ z_{ssf}(\varphi)Mz_{ssf}(\varphi) $ the \textit{strictly semifinite part} of $ (M,\varphi) $.
\end{defn}

Note that as $  z_{ssf}(\varphi)\in M_\varphi $, $ \varphi $ is also semifinite (in fact strictly semifinite) on $ z_{ssf}(\varphi)Mz_{ssf}(\varphi) $.

Since $ \varphi|_{M_\varphi} $ is tracial on $ M_\varphi $, $ \nn_{\varphi}\cap M_\varphi $ is an ideal of $ M_\varphi $, and one can check that $ z_{ssf}(\varphi)\in \mathcal{Z}(M_{\varphi}) $ is the unique projection such that $ M_\varphi z_{ssf}(\varphi) = ( \nn_{\varphi}\cap M_\varphi  )'' $. On the other hand, we can also describe the $ L^2 $-closure of $ \nn_{\varphi}\cap M_\varphi $ (i.e. the closure of $\eta_{\varphi}(\nn_{\varphi}\cap M_\varphi)  $ in $ L^2(M,\varphi) $) as the space of fixed vector of $\Delta_\varphi$. Similarly, one can also locate the eigenvectors of $\Delta_\varphi$ inside $ L^2(  z_{ssf}(\varphi)Mz_{ssf}(\varphi), \varphi) $. To see this, we recall the following well-known polar decomposition on standard form, see for example \cite[Exercise IX.1 2)]{Tak03}.

\begin{lem}
	Let $(M,L^2(M),J,\mathfrak{P} )$ be the standard form of $M$, and a vector $\xi \in L^2(M)$. There exists a unique decomposition $ \xi= u|\xi| $ with $|\xi|\in \mathfrak{P}$ and $u\in M$ partial isometry such that $u^*u=[M'|\xi|]$ and $uu^*= [M'\xi ]$. If $\phi$ is a n.s.f weight on $M$, and $ \xi $ is a eigenvector of $\Delta_\varphi$, then $ \Delta_\varphi^{it}|\xi| = |\xi| $ and $\sigma_{t}^\varphi(u)= \lambda^{it}u$ where $ \lambda $ is the eigenvalue for $\xi$.
\end{lem}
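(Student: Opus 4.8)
The plan is to handle the statement in two stages: the existence and uniqueness of the polar decomposition $\xi = u|\xi|$, and then the two modular identities, which will follow by combining the eigenvector hypothesis with the defining properties of the standard positive cone $\mathfrak{P}$.

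For the first stage I would recall (this is essentially \cite[Exercise IX.1 2)]{Tak03}) that in the standard form the map $\mathfrak{P}\ni\eta\mapsto\omega_\eta|_{M'}$, with $\omega_\eta(x')=\langle\eta,x'\eta\rangle$, is a bijection onto the normal positive functionals on $M'$, and that $\mathfrak{P}$ is invariant under $J$ and under each $\Delta_\varphi^{it}$. Given $\xi$, I would take $|\xi|\in\mathfrak{P}$ to be the unique representative of $\omega_\xi|_{M'}$; then $\|x'\xi\|=\|x'|\xi|\|$ for every $x'\in M'$, so $x'|\xi|\mapsto x'\xi$ extends to a partial isometry $u\in(M')'=M$ with $u^*u=[M'|\xi|]$, $uu^*=[M'\xi]$ and $u|\xi|=\xi$. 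Uniqueness is then automatic: from any decomposition $\xi=v\eta$ with $\eta\in\mathfrak{P}$ and $v^*v=[M'\eta]$ one gets $\|x'\xi\|=\|vx'\eta\|=\|x'\eta\|$, hence $\omega_\eta|_{M'}=\omega_\xi|_{M'}$, so $\eta=|\xi|$, and the support conditions then force $v=u$.

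For the modular part, I would first note that $\Delta_\varphi$ is injective and positive, so the eigenvalue $\lambda$ is a positive real and $\Delta_\varphi^{it}\xi=\lambda^{it}\xi$ for all $t$. The crucial observation is that $\omega_\xi|_{M'}$ is invariant under the commutant modular flow: for $x'\in M'$,
\[
\omega_\xi\!\left(\Delta_\varphi^{-it}x'\Delta_\varphi^{it}\right)=\langle\Delta_\varphi^{it}\xi,\,x'\Delta_\varphi^{it}\xi\rangle=\langle\lambda^{it}\xi,\,x'\lambda^{it}\xi\rangle=\omega_\xi(x').
\]
Since $\omega_{|\xi|}|_{M'}=\omega_\xi|_{M'}$, the same invariance holds for $|\xi|$, and consequently $\omega_{\Delta_\varphi^{it}|\xi|}(x')=\omega_{|\xi|}\!\left(\Delta_\varphi^{-it}x'\Delta_\varphi^{it}\right)=\omega_{|\xi|}(x')$; as $\Delta_\varphi^{it}|\xi|\in\mathfrak{P}$, uniqueness of the cone representative yields $\Delta_\varphi^{it}|\xi|=|\xi|$. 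Applying $\Delta_\varphi^{it}$ to $\xi=u|\xi|$ and using this gives $\lambda^{it}\xi=\sigma_t^\varphi(u)\,|\xi|$, i.e. $(\sigma_t^\varphi(u)-\lambda^{it}u)|\xi|=0$; since this operator lies in $M$ it commutes with $M'$ and hence annihilates $\overline{M'|\xi|}$, the range of $u^*u=[M'|\xi|]$.

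To finish I would check that $\sigma_t^\varphi(u)$ and $\lambda^{it}u$ agree not just on $\overline{M'|\xi|}$ but everywhere: the identity $\Delta_\varphi^{it}x'|\xi|=(\Delta_\varphi^{it}x'\Delta_\varphi^{-it})|\xi|\in M'|\xi|$ shows $\Delta_\varphi^{it}$ preserves $\overline{M'|\xi|}$, so $\sigma_t^\varphi(u^*u)=u^*u$ and therefore $\sigma_t^\varphi(u)$ also has initial projection $u^*u$ and vanishes on $\ker u^*u$, exactly like $\lambda^{it}u$. This support bookkeeping at the very end is the only genuinely delicate point; the rest is a routine use of the standard-form properties of $\mathfrak{P}$, and the existence half of the polar decomposition can simply be quoted.
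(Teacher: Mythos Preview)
Your proof is correct. The paper does not supply its own proof of this lemma: it simply cites it as well known, pointing to \cite[Exercise IX.1 2)]{Tak03}. Your argument is precisely the standard one intended by that reference---representing $\omega_\xi|_{M'}$ by the unique cone vector $|\xi|$, building $u$ from the equality of the $M'$-seminorms, and then exploiting the $\Delta_\varphi^{it}$-invariance of $\mathfrak{P}$ together with uniqueness to get $\Delta_\varphi^{it}|\xi|=|\xi|$ and hence $\sigma_t^\varphi(u)=\lambda^{it}u$. The support bookkeeping you flag at the end (showing $\sigma_t^\varphi(u^*u)=u^*u$ via the invariance of $\overline{M'|\xi|}$) is exactly the small care needed and is handled correctly.
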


\begin{lem}\label{lem: approximating fixed vector}
	If $ \xi \in L^2(M) $ and $ \Delta_\varphi^{it}\xi= \lambda^{it}\xi  $ for a n.s.f weight $\varphi$, then there exists an positive unbounded operator $ h $ affiliated with $ M_\varphi $ such that for each $ \varepsilon>0 $, $ h_\varepsilon := {h}/{(1+\varepsilon h)}\in \mm_\varphi$ and $ \lim_{\varepsilon \to 0_+}\eta_\varphi(h_\varepsilon^{1/2}) = |\xi| $.
	
	In particular, the space of fixed vectors of $\Delta_\varphi$ is the closure of $ \eta_{\varphi}( \nn_{\varphi}\cap M_\varphi ) \subseteq L^2(M)$.
\end{lem}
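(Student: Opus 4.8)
\emph{Proof plan.} The approach is to reduce to a positive fixed vector and then produce $h$ via the Pedersen--Takesaki Radon--Nikodym theorem. First I would use the preceding lemma: writing $\xi=u|\xi|$ with $|\xi|\in\mathfrak P$ and $u\in M$ the associated partial isometry, we have $\Delta_\varphi^{it}|\xi|=|\xi|$ and $\sigma_t^\varphi(u)=\lambda^{it}u$; since the asserted conclusion only concerns $|\xi|$, it is enough to treat $\zeta:=|\xi|\in\mathfrak P$ with $\Delta_\varphi^{it}\zeta=\zeta$. To such a $\zeta$ I attach the normal positive functional $\omega:=\langle\zeta,\pi_\varphi(\,\cdot\,)\zeta\rangle$ on $M$; from $\pi_\varphi(\sigma_t^\varphi(x))=\Delta_\varphi^{it}\pi_\varphi(x)\Delta_\varphi^{-it}$ and $\Delta_\varphi^{it}\zeta=\zeta$ one reads off $\omega\circ\sigma_t^\varphi=\omega$. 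The Pedersen--Takesaki Radon--Nikodym theorem (see \cite{Tak03,stratila2020modular}) then gives a unique positive self-adjoint operator $h$ affiliated with $M_\varphi$ with $\omega=\varphi_h$, where $\varphi_h=\sup_{\varepsilon>0}\varphi(h_\varepsilon^{1/2}\,\cdot\,h_\varepsilon^{1/2})$ and $h_\varepsilon=h(1+\varepsilon h)^{-1}$. Because $\varphi(h)=\varphi_h(1)=\|\zeta\|^2<\infty$, each $h_\varepsilon$ is bounded, positive, with $h_\varepsilon^{1/2}\in\nn_\varphi$, hence $h_\varepsilon\in\mm_\varphi$; and $h_\varepsilon,h_\varepsilon^{1/2}\in M_\varphi$ since $h$ is affiliated with $M_\varphi$. (The operator $h$ may in fact be bounded; the word ``unbounded'' in the statement is harmless.)

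Next I would identify $\lim_{\varepsilon\to 0_+}\eta_\varphi(h_\varepsilon^{1/2})$ with $\zeta$. Since $h_\varepsilon^{1/2}$ is a positive element of $\nn_\varphi\cap M_\varphi$, the vector $\eta_\varphi(h_\varepsilon^{1/2})$ is fixed by $J_\varphi$ and $\Delta_\varphi^{it}$ and lies in the natural cone $\mathfrak P$. As $\varepsilon\downarrow 0$ the $h_\varepsilon^{1/2}$ increase (operator monotonicity of the square root), so the $\eta_\varphi(h_\varepsilon^{1/2})$ form an increasing net in $\mathfrak P$ with $\|\eta_\varphi(h_\varepsilon^{1/2})\|^2=\varphi(h_\varepsilon)\le\varphi(h)$; self-duality of $\mathfrak P$ forces a norm-bounded increasing net in $\mathfrak P$ to be Cauchy, so it converges to some $\zeta'\in\mathfrak P$. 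Then $\langle\zeta',\pi_\varphi(x)\zeta'\rangle=\lim_\varepsilon\varphi(h_\varepsilon^{1/2}xh_\varepsilon^{1/2})=\varphi_h(x)=\omega(x)$, so $\zeta'$ represents $\omega$ in the cone, whence $\zeta'=\zeta$ by uniqueness of the positive-cone representative. This establishes the displayed limit.

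For the ``in particular'' statement, one inclusion is immediate: for $x\in\nn_\varphi\cap M_\varphi$ we have $\Delta_\varphi^{it}\eta_\varphi(x)=\eta_\varphi(\sigma_t^\varphi(x))=\eta_\varphi(x)$, so $\overline{\eta_\varphi(\nn_\varphi\cap M_\varphi)}$ lies in the fixed-point space of $\Delta_\varphi$. Conversely, for a fixed vector $\xi$ one has $\lambda=1$, hence $u\in M_\varphi$, and $|\xi|=\lim_\varepsilon\eta_\varphi(h_\varepsilon^{1/2})$ by the above, so $\xi=u|\xi|=\lim_\varepsilon\eta_\varphi(uh_\varepsilon^{1/2})$ with $uh_\varepsilon^{1/2}\in\nn_\varphi\cap M_\varphi$, giving $\xi\in\overline{\eta_\varphi(\nn_\varphi\cap M_\varphi)}$.

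The main obstacle I anticipate is the clean use of the Pedersen--Takesaki theorem, in particular pinning down that the canonical vector representative in $\mathfrak P$ of the perturbed weight $\varphi_h$ is precisely $\lim_\varepsilon\eta_\varphi(h_\varepsilon^{1/2})$ (equivalently, the convergence-plus-uniqueness step of the second paragraph); the reductions and the verification of $\sigma^\varphi$-invariance are routine modular-theory bookkeeping.
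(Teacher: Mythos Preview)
Your proposal is correct and follows essentially the same route as the paper: both construct $h$ from the $\sigma^\varphi$-invariance of the vector functional via the Pedersen--Takesaki/Connes Radon--Nikodym theorem, then show $\eta_\varphi(h_\varepsilon^{1/2})$ converges in the natural cone to the positive part of $\xi$. The only differences are cosmetic---you reduce to $|\xi|$ at the outset and argue convergence via monotonicity and self-duality of $\mathfrak P$, whereas the paper works with $\omega_\xi$ directly and proves the Cauchy property by a spectral/dominated-convergence computation on the distribution of $h$.
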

\begin{proof}
	Let $ \omega_{\xi}:=\langle \xi,\cdot\xi\rangle  $, then $ \omega_{\xi}\circ \sigma_t^\varphi = \langle \lambda^{it}\xi,\cdot \lambda^{it}\xi \rangle =  \omega_{\xi}$ for all $t\in \RR$. Therefore $ (D\omega_{\xi}:D\varphi)_t $ is a one parameter unitary group in $ M_\varphi $. Let $ h $ be the generator of $  (D\omega_{\xi}:D\varphi)_t = h^{it} $, then $ \omega_{\xi}(x) = \lim_{\varepsilon\to 0_+}\varphi( h^{1/2}_\varepsilon  x h^{1/2}_\varepsilon ) $ on $M$ (See VIII Corollary 3.6. and Lemma 2.8. \cite{Tak03}. See also Theorem 4.10 \cite{stratila2020modular}.) In particular, we have $ \varphi(E_{[s,\infty)}(h))\leq \frac{1+\varepsilon s}{s} \varphi(h_\varepsilon) <\infty$ for every $s>0$, where $ E_{[s,\infty)}(h) $ is the spectrum projection of $h$. And from the limit $ \lim_{\varepsilon\to 0_+}\varphi(h_\varepsilon) = \omega_{\xi}(1) = 1 $, one have $$ \int_{\RR_+} s d\varphi\circ E(h) = 1.$$
	For $ \varepsilon,\varepsilon'>0 $,
	\begin{align*}
		&\|\eta_\varphi( h_\varepsilon^{1/2} )-\eta_\varphi( h_{\varepsilon'}^{1/2} )\|_{L^2(M)}= \varphi( (h_\varepsilon^{1/2}-h_{\varepsilon'}^{1/2})^2 )^{1/2} = \left(\int_{\RR_+}  \left(  \frac{s^{1/2}}{\sqrt{1+\varepsilon s}}- \frac{s^{1/2}}{\sqrt{1+\varepsilon' s}} \right)^2 d\varphi\circ E(h)\right)^{1/2}\\
		\leq & \left(    \int_{\RR_+}  \left(  \frac{s^{1/2}}{\sqrt{1+\varepsilon s}}- s^{1/2} \right)^2 d\varphi\circ E(h)   \right)^{1/2}+\left(    \int_{\RR_+}  \left(  \frac{s^{1/2}}{\sqrt{1+\varepsilon' s}}- s^{1/2} \right)^2 d\varphi\circ E(h)   \right)^{1/2}
	\end{align*}
	which by dominant convergence theorem converges to $0$ as $ \varepsilon,\varepsilon'\to 0 $. In particular, $ \eta_\varphi( h_\varepsilon^{1/2} ) $ is Cauchy as $ \varepsilon\to 0 $ hence $ \lim \eta_\varphi( h_\varepsilon^{1/2} ) = \eta  $ for some $ \eta\in \mathfrak{P} $ (as $ \eta_\varphi( h_\varepsilon^{1/2} ) \in \mathfrak{P} $ for each $\varepsilon$). But now, one have $ \omega_{\xi} = \lim_{\varepsilon\to 0_+} \omega_{ \eta_\varphi( h_\varepsilon^{1/2} ) } = \omega_{\eta} $, we must have $ \eta = |\xi| $.
\end{proof}

\begin{lem}\label{eigenvectors are in z}
	 All the eigenvectors of $ \Delta_\varphi  $ are in $ z_{ssf}L^2(M)z_{ssf} = L^2(z_{ssf}Mz_{ssf},\varphi|_{z_{ssf}Mz_{ssf}} ) $.
\end{lem}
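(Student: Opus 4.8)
The plan is to show that any eigenvector $\xi$ of $\Delta_\varphi$, with eigenvalue $\lambda>0$, lies in the "corner" $z_{ssf}L^2(M)z_{ssf}$. Write the polar decomposition $\xi = u|\xi|$ on standard form as in the Lemma preceding this one: $|\xi|\in\mathfrak{P}$, $u\in M$ partial isometry with $u^*u=[M'|\xi|]$, $uu^*=[M'\xi]$, and moreover $\Delta_\varphi^{it}|\xi|=|\xi|$ while $\sigma^\varphi_t(u)=\lambda^{it}u$. Since $|\xi|$ is a fixed vector of $\Delta_\varphi$, Lemma \ref{lem: approximating fixed vector} says $|\xi|$ lies in the $L^2$-closure of $\eta_\varphi(\nn_\varphi\cap M_\varphi)$; in fact its proof produces a positive $h$ affiliated with $M_\varphi$ with $\eta_\varphi(h_\varepsilon^{1/2})\to|\xi|$. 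Because $M_\varphi z_{ssf} = (\nn_\varphi\cap M_\varphi)''$ (the characterization of $z_{ssf}$ recalled just above), each truncation $h_\varepsilon^{1/2}\in\nn_\varphi\cap M_\varphi$ satisfies $h_\varepsilon^{1/2} = z_{ssf}h_\varepsilon^{1/2}z_{ssf}$, and hence $|\xi| = z_{ssf}|\xi|z_{ssf}\in z_{ssf}L^2(M)z_{ssf}$, i.e. $z_{ssf}|\xi| = |\xi| = |\xi|z_{ssf}$ (using $z_{ssf}\in\mathcal Z(M_\varphi)$ so that left and right actions agree on these vectors).

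Next I would handle the partial isometry $u$. Since $\sigma^\varphi_t(u)=\lambda^{it}u$, the functional $\omega := \varphi(u^*\,\cdot\,u)$ (more precisely $\varphi(u\,\cdot\,u^*)$, taken via the appropriate balanced weight / Connes cocycle argument) is invariant under $\sigma^\varphi$, so $u^*u$ and $uu^*$ are $\sigma^\varphi$-invariant projections, i.e. $u^*u, uu^*\in M_\varphi$. Now $u^*u=[M'|\xi|]$ is the support projection of $|\xi|$ as a right $M'$-module vector; since $|\xi| = z_{ssf}|\xi|$, this support is dominated by $z_{ssf}$, so $u^*u\le z_{ssf}$. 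Then $u = u(u^*u)\le$ "$uz_{ssf}$", giving $u = uz_{ssf}$. For the left side: $uu^*=[M'\xi]$, and $\xi=u|\xi|$ so $[M'\xi] = u[M'|\xi|]u^* = u(u^*u)u^* = uu^*$, which I must show is also $\le z_{ssf}$. This follows because $uu^*$ is $\sigma^\varphi$-invariant and $uu^* = u(u^*u)u^*$ with $u^*u\le z_{ssf}$: conjugating a subprojection of the central (in $M_\varphi$) projection $z_{ssf}$ by an element $u$ that is a $\sigma^\varphi$-eigenoperator keeps us inside $M_\varphi z_{ssf}$, since $z_{ssf}$ is central in $M_\varphi$ and $uz_{ssf}u^*\le uu^*\le$ ... — here one uses that $z_{ssf}$ being the maximal projection in $\mathcal Z(M_\varphi)$ on which $\varphi$ is semifinite, together with the fact that $\varphi$ restricted to $M_\varphi$ is tracial, forces any $\sigma^\varphi$-invariant projection equivalent (in $M_\varphi$) to a subprojection of $z_{ssf}$ to itself lie under $z_{ssf}$. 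Concretely: $u^*u\le z_{ssf}$ and $u^*u\sim uu^*$ in $M_\varphi$, and $z_{ssf}$ is central in $M_\varphi$, so $uu^* = uu^*z_{ssf}+uu^*(1-z_{ssf})$ with the second summand a projection $\sim$ a subprojection of $z_{ssf}$; but a projection under $1-z_{ssf}$ in $M_\varphi$ can only be Murray–von Neumann equivalent in $M_\varphi$ to a projection under $1-z_{ssf}$ (centrality), forcing $uu^*(1-z_{ssf})=0$, i.e. $uu^*\le z_{ssf}$ and $u = z_{ssf}u$.

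Combining, $u = z_{ssf}uz_{ssf}$ and $|\xi| = z_{ssf}|\xi|z_{ssf}$, hence $\xi = u|\xi| = z_{ssf}u|\xi|z_{ssf} = z_{ssf}\xi z_{ssf}\in z_{ssf}L^2(M)z_{ssf}$, which is exactly $L^2(z_{ssf}Mz_{ssf},\varphi|_{z_{ssf}Mz_{ssf}})$. The main obstacle I anticipate is the bookkeeping around the partial isometry $u$: making rigorous that $\sigma^\varphi_t(u)=\lambda^{it}u$ implies $u^*u,uu^*\in M_\varphi$ and that both support projections sit under $z_{ssf}$ — this is where one genuinely needs the Connes cocycle / balanced weight formalism and the tracial nature of $\varphi|_{M_\varphi}$ (so that $\mathcal Z(M_\varphi)$-centrality pins the equivalence class of $uu^*$). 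The $|\xi|$ part, by contrast, is essentially immediate from Lemma \ref{lem: approximating fixed vector} and the description $M_\varphi z_{ssf}=(\nn_\varphi\cap M_\varphi)''$.
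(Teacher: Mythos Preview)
Your treatment of $|\xi|$ is essentially the paper's argument: use Lemma~\ref{lem: approximating fixed vector} to write $|\xi|=\lim_\varepsilon \eta_\varphi(h_\varepsilon^{1/2})$ with $h_\varepsilon^{1/2}\in \nn_\varphi\cap M_\varphi\subseteq M_\varphi z_{ssf}$, and conclude $|\xi|z_{ssf}=|\xi|$. From this the paper immediately gets $\xi z_{ssf}=u|\xi|z_{ssf}=u|\xi|=\xi$, exactly as you do.

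Where you diverge is in obtaining $z_{ssf}\xi=\xi$. You try to pin down the partial isometry $u$ by showing $uu^*\le z_{ssf}$, and your key step is ``$u^*u\sim uu^*$ in $M_\varphi$'' together with centrality of $z_{ssf}$. This step is not justified: when $\lambda\neq 1$ the eigenoperator $u$ is \emph{not} in $M_\varphi$, so the Murray--von Neumann equivalence $u^*u\sim uu^*$ holds only in $M$, not in $M_\varphi$, and centrality of $z_{ssf}$ in $M_\varphi$ gives you nothing. One can try to repair this via KMS-type scaling relations for $\varphi$ under conjugation by eigenoperators, but that is exactly the ``bookkeeping around $u$'' you flag as the main obstacle, and it is genuinely delicate.

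The paper sidesteps all of this with a one-line trick: apply the \emph{same} argument to $J\xi$. Since $J\Delta_\varphi J=\Delta_\varphi^{-1}$, the vector $J\xi$ is again an eigenvector of $\Delta_\varphi$ (eigenvalue $\lambda^{-1}$), so the argument already established gives $(J\xi)z_{ssf}=J\xi$. Unwinding the right action, $(J\xi)z_{ssf}=Jz_{ssf}J(J\xi)=J(z_{ssf}\xi)$, hence $z_{ssf}\xi=\xi$. No analysis of $u$ is needed at all. You should replace your second paragraph with this observation.
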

\begin{proof}
	Let $ \xi $ be an eigenvector of $\Delta_\varphi$, then $ |\xi| = \lim_{\varepsilon \to 0_+} \eta_{\varphi}(h_\varepsilon) $ for some positive $h$ affiliated with $M_\varphi$ and $ \varphi(E_{[s,\infty)}(h)) <\infty$ for every $s>0$. In particular, $E_{[s,\infty)}(h) \in \nn_{\varphi}\cap M_\varphi\subseteq M_\varphi z_{ssf}(\varphi)$ hence $E_{[s,\infty)}(h)z_{ssf}=E_{[s,\infty)}(h)$ or equivalantly $E_{[s,\infty)}(h)\leq z_{ssf}$. As the support of $h_\varepsilon$ is contained in $ \vee_{s>0}E_{[s,\infty)}(h)\leq z_{ssf} $, $h_\varepsilon=z_{ssf}h_\varepsilon=h_\varepsilon z_{ssf}$, so $ |\xi| = \lim_{\varepsilon \to 0_+} \eta_{\varphi}(h_\varepsilon) = \lim_{\varepsilon \to 0_+} \eta_{\varphi}(h_\varepsilon z_{ssf})= \lim_{\varepsilon \to 0_+} \eta_{\varphi}(h_\varepsilon) z_{ssf}= |\xi|z_{ssf}  $. This shows that $ \xi z_{ssf} = u|\xi|z_{ssf}= u|\xi|= \xi $. Applying the same argument to $J\xi$ we also obtain $ z_{ssf}\xi=\xi $.
\end{proof}

\begin{rmk}
	Note that however, this does not imply that eigenoperators of $ \sigma_{t}^\varphi $ are always inside $  z_{ssf}Mz_{ssf}$. For example, $1\in M $ has eigenvalue $1$, but is not in $ z_{ssf}Mz_{ssf}  $ unless $\varphi$ strictly semifinite.
\end{rmk}

\subsection{Proof of factoriality}
We can now locate the almost periodic part of $ (\Gamma(M,\varphi),\varphi_\Omega) $.
\begin{lem}\label{lem: location of almost periodic part}
	If $x\in \Gamma(M,\varphi)$ is an eigenoperator of $ \sigma^{\varphi_\Omega} $, i.e. there exists a $ \lambda>0 $, $ \sigma_{t}^{\varphi_\Omega}(x)=\lambda^{it}x $ for all $ t\in \RR $, then we have $ x \in \Gamma( z_{ssf}Mz_{ssf},\varphi|_{ z_{ssf}Mz_{ssf} } ) $.
\end{lem}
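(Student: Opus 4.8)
The plan is to first locate the vector $x\Omega$ inside the Fock space and then to recover $x$ from it by means of a $\varphi_\Omega$-preserving normal conditional expectation obtained from the second quantization functor. Put $p:=z_{ssf}(\varphi)$ and let $q$ denote the orthogonal projection of $L^2(M,\varphi)$ onto $pL^2(M,\varphi)p=L^2(pMp,\varphi|_{pMp})$. Since $\Delta_\Omega\Omega=\Omega$, the hypothesis $\sigma^{\varphi_\Omega}_t(x)=\lambda^{it}x$ gives $\Delta_\Omega^{it}(x\Omega)=\sigma^{\varphi_\Omega}_t(x)\Omega=\lambda^{it}(x\Omega)$, so $x\Omega$ is an eigenvector of $\Delta_\Omega$ with eigenvalue $\lambda$. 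From the computation of the modular data of $\varphi_\Omega$ in Section 3 we have $\Delta_\Omega\supseteq\FF(\Delta_\varphi)$; since both operators are self-adjoint this forces $\Delta_\Omega=\FF(\Delta_\varphi)$, so $\Delta_\Omega^{it}=\FF(\Delta_\varphi^{it})$ preserves the particle-number grading and restricts to $(\Delta_\varphi^{it})^{\otimes n}$ on $L^2(M,\varphi)^{\otimes n}$. Writing $x\Omega=\sum_{n\ge 0}v_n$ with $v_n\in L^2(M,\varphi)^{\otimes n}$, each nonzero $v_n$ is therefore an eigenvector of $(\Delta_\varphi^{it})^{\otimes n}$ of eigenvalue $\lambda^{it}$, for every $t\in\RR$.

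The key step is to show that each $v_n$ lies in $(pL^2(M,\varphi)p)^{\otimes n}$. Since $p\in M_\varphi$, the projection $q$ — being the product of the two commuting projections ``left multiplication by $p$'' and ``right multiplication by $p$'' on $L^2(M,\varphi)$ — commutes with $\Delta_\varphi^{it}$, and by Lemma \ref{eigenvectors are in z} every eigenvector of $\Delta_\varphi$ lies in $qL^2(M,\varphi)$, so $\Delta_\varphi^{it}$ has purely continuous (non-atomic) spectrum on $(1-q)L^2(M,\varphi)$. Decompose $L^2(M,\varphi)^{\otimes n}=\bigoplus_{S\subseteq\{1,\dots,n\}}\HHH_S$, where $\HHH_S$ is the tensor product with factor $qL^2(M,\varphi)$ at positions outside $S$ and $(1-q)L^2(M,\varphi)$ at positions in $S$; each $\HHH_S$ is invariant under $(\Delta_\varphi^{it})^{\otimes n}$. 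For $S\neq\emptyset$ the restriction of $(\Delta_\varphi^{it})^{\otimes n}$ to $\HHH_S$ is a tensor product of one-parameter unitary groups at least one of whose factors is non-atomic, hence has no eigenvectors: this is the standard fact that if a self-adjoint operator $B$ has empty point spectrum then so does $A\otimes 1+1\otimes B$, proved by writing $A$ as a direct integral and observing that on each fibre the operator is a scalar plus a copy of $B$. Consequently the $\HHH_S$-component of $v_n$ vanishes for all $S\neq\emptyset$, so $v_n\in\HHH_\emptyset=(pL^2(M,\varphi)p)^{\otimes n}$, and therefore $x\Omega\in\FF(pL^2(M,\varphi)p)=\FF(L^2(pMp,\varphi|_{pMp}))$.

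To pass from $x\Omega$ back to $x$, consider the normal completely positive map $\EE_0:M\to M$, $\EE_0(m)=pmp$. It is subunital since $\EE_0(1)=p\le 1$, and weight-decreasing: as $p\in M_\varphi$, the map $m\mapsto pmp+(1-p)m(1-p)$ is a $\varphi$-preserving normal conditional expectation onto $pMp\oplus(1-p)M(1-p)$, so $\varphi(pmp)\le\varphi(pmp)+\varphi((1-p)m(1-p))=\varphi(m)$ for $m\in M_+$. By Theorem \ref{main} the second quantization $\EE:=\Gamma(\EE_0):\Gamma(M,\varphi)\to\Gamma(M,\varphi)$ exists; it is normal and unital, and $(\EE y)\Omega=\FF((\EE_0)_2)(y\Omega)=\FF(q)(y\Omega)$ for all $y$, since $(\EE_0)_2=q$. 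Moreover $\EE$ preserves Wick products, $\EE\varPsi(\xi_1\otimes\cdots\otimes\xi_n)=\varPsi(q\xi_1\otimes\cdots\otimes q\xi_n)$, with $q\xi_i\in pL^2(M,\varphi)p$; using Proposition \ref{cumulants} (the multiplication, involution and inner product of $pMp$ are the restrictions of those of $M$, so the families $(X(\xi))_\xi$ indexed by the left Hilbert algebra of $(pMp,\varphi|_{pMp})$ have the same joint $*$-distribution in $\Gamma(pMp,\varphi|_{pMp})$ and in $\Gamma(M,\varphi)$) we identify $\Gamma(pMp,\varphi|_{pMp})$ with the weakly closed subalgebra of $\Gamma(M,\varphi)$ generated by these $X(\xi)$, and then $\EE$ maps the weakly dense $*$-algebra of Wick polynomials into $\Gamma(pMp,\varphi|_{pMp})$, hence $\EE(\Gamma(M,\varphi))\subseteq\Gamma(pMp,\varphi|_{pMp})$. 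Applying $\EE$ to $x$ gives $(\EE x)\Omega=\FF(q)(x\Omega)=x\Omega$ because $x\Omega\in\FF(pL^2(M,\varphi)p)$; since $\Omega$ is separating for $\Gamma(M,\varphi)$ this forces $\EE(x)=x$, and therefore $x\in\Gamma(pMp,\varphi|_{pMp})=\Gamma(z_{ssf}Mz_{ssf},\varphi|_{z_{ssf}Mz_{ssf}})$.

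The main obstacle is the spectral-theoretic input in the second paragraph — that a tensor power of the modular unitary group loses all eigenvectors as soon as one tensor factor is restricted to the non-strictly-semifinite part — which is exactly what reduces the many-particle statement to the one-particle Lemma \ref{eigenvectors are in z}. The only other delicate point is the verification that $m\mapsto pmp$ is weight-decreasing, and that is precisely where the hypothesis $z_{ssf}(\varphi)\in\mathcal{Z}(M_\varphi)$ enters; the identifications $\Delta_\Omega^{it}=\FF(\Delta_\varphi^{it})$ and $\Gamma(pMp,\varphi|_{pMp})\subseteq\Gamma(M,\varphi)$, as well as the normality of $\EE$, are then routine consequences of the results in Sections 3 and 5.
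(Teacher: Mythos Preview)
Your proof is correct and follows the same strategy as the paper's: show $x\Omega\in\FF(L^2(z_{ssf}Mz_{ssf}))$ via the tensor-power structure of $\Delta_\Omega$ together with Lemma~\ref{eigenvectors are in z}, then pull back to $x\in\Gamma(z_{ssf}Mz_{ssf},\varphi|_{z_{ssf}Mz_{ssf}})$. The paper's proof is a two-line sketch that asserts both steps without justification; you have supplied the missing arguments in full, in particular the spectral decomposition $L^2(M)^{\otimes n}=\bigoplus_S\HHH_S$ showing that a tensor factor with purely continuous spectrum kills all eigenvectors.

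The one point where you take a slightly different route is the passage from $x\Omega\in\FF(L^2(pMp))$ to $x\in\Gamma(pMp,\varphi|_{pMp})$. The paper implicitly relies on the existence of a $\varphi_\Omega$-preserving conditional expectation onto $\Gamma(pMp,\varphi|_{pMp})$ (available by Takesaki's theorem since $p\in M_\varphi$ makes this subalgebra $\sigma^{\varphi_\Omega}$-invariant), whose $L^2$-extension is the projection onto $\FF(L^2(pMp))$. You instead manufacture this conditional expectation as the second quantization $\Gamma(\EE_0)$ of $\EE_0(m)=pmp$, using Theorem~\ref{main}. This is a nice touch: it keeps the argument self-contained within the paper's own toolkit, and your verification that $\EE_0$ is weight-decreasing (via $\varphi(pmp)+\varphi((1-p)m(1-p))=\varphi(m)$ for $p\in M_\varphi$) is exactly where the hypothesis $z_{ssf}\in M_\varphi$ enters.
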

\begin{proof}
	Note that $ x\Omega\in \FF(L^2(M,\varphi))$ is an eigenvector of $ \Delta_\Omega $. Since $ \Delta_{\Omega}^{it} $ is simply the tensor power of $ \Delta_\varphi^{it} $ on each $ L^2(M)^{\otimes n} $, the eigenspace of $\Delta_\Omega|_{ L^2(M)^{\otimes n}}  $ is contained in $ L^2(z_{ssf}Mz_{ssf})^{\otimes n} $. In particular, the eigenspace of $\Delta_\Omega  $ is contained in $ \FF(L^2(z_{ssf}Mz_{ssf})) $. Therefore, we must have $ x\Omega\in \FF(L^2(z_{ssf}Mz_{ssf})) $, hence $ x\in \Gamma( z_{ssf}Mz_{ssf},\varphi|_{z_{ssf}Mz_{ssf} } ) $.
\end{proof}

\begin{thm}
	The free Poisson algebra $\Gamma(M,\varphi)$ is a factor if and only if $\varphi(1)\geq 1$ and $M\neq \CC$. Moreover, when $ \Gamma(M,\varphi) $ is a factor, we have also
	\begin{enumerate}[1)]
		\item 	$ \Gamma(M,\varphi)_{\varphi_\Omega} $ is a factor if $ \varphi(z_{ssf}(\varphi))\geq 1$ and $  M_\varphi z_{ssf}(\varphi)\neq \CC z_{ssf}(\varphi) $.
		\item $ \Gamma(M,\varphi)_{\varphi_\Omega} = \CC $ if and only if $ \varphi $ has no strictly semifinite part, i.e. $z_{ssf}(\varphi) = 0$.
	\end{enumerate}
\end{thm}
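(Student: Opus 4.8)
The plan is to reduce everything to the known facts about free products (Theorems~\ref{rescaling}, \ref{rescaling2}, \ref{factoriality free product}, \ref{free product extreme ergodic}) and to Lemma~\ref{lem: location of almost periodic part}, which confines the $\sigma^{\varphi_\Omega}$-centralizer of $\Gamma(M,\varphi)$ to the free Poisson algebra of the strictly semifinite corner $z_{ssf}Mz_{ssf}$. For the ``only if'' part of the first statement I would argue contrapositively: if $M=\CC$ then $\Gamma(M,\varphi)$ is generated by the single free Poisson element $Y(1)$, hence is abelian (with an atom when $\varphi(1)<1$) and not a factor; and if $\varphi(1)=\alpha<1$ then Theorem~\ref{rescaling} exhibits the central summand $\CC$. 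For the ``if'' part with $1\le\varphi(1)<\infty$ and $M\ne\CC$: Theorems~\ref{rescaling} and~\ref{rescaling2} identify $\Gamma(M,\varphi)$ with $L(\ZZ)\ast(\text{nontrivial})$ or a corner thereof, and since $L(\ZZ)$ is diffuse, Ueda's Theorem~\ref{factoriality free product} makes this a factor of type $\text{II}_1$ or $\text{III}_\lambda$ with $\lambda\ne 0$ (the $T$-invariant being the closed subgroup $\{t:\sigma_t^\varphi=\text{id}\}$, so never $\text{III}_0$), and corners inherit factoriality. This finite-weight case is the building block for all the rest.

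Next, for $\varphi(1)=\infty$ with $\varphi$ strictly semifinite (i.e.\ $z_{ssf}(\varphi)=1$): since $\varphi|_{M_\varphi}$ is semifinite and infinite, $M_\varphi\ne\CC$, so I would choose an increasing net of projections $q_i\in M_\varphi$ with $q_i\nearrow 1$, $1\le\varphi(q_i)<\infty$, and each $q_i$ non-minimal in $M_\varphi$ (hence $q_iMq_i\ne\CC q_i$). Each inclusion $q_iMq_i\hookrightarrow M$ is weight-preserving, subunital and weight-decreasing, so by the example following Theorem~\ref{main} its second quantization is a unital inclusion $\Gamma(q_iMq_i,\varphi|_{q_iMq_i})\hookrightarrow\Gamma(M,\varphi)$, and by the finite-weight case each $\Gamma(q_iMq_i,\varphi|_{q_iMq_i})$ is a factor. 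As $q_ixq_i\to x$ in $L^2$ for $x\in\nn_\varphi$, Proposition~\ref{density results} gives $\Gamma(M,\varphi)=\bigl(\bigcup_i\Gamma(q_iMq_i,\varphi|_{q_iMq_i})\bigr)''$, and because $q_i\in M_\varphi$ each subalgebra is $\sigma^{\varphi_\Omega}$-invariant, so there are $\varphi_\Omega$-preserving conditional expectations $E_i$ onto them by Takesaki's theorem \cite{Tak03}. Any $z\in Z(\Gamma(M,\varphi))$ lies in the centralizer, hence $E_i(z)\in Z(\Gamma(q_iMq_i,\varphi|_{q_iMq_i}))=\CC$, and $E_i(z)\Omega\to z\Omega$ with $\Omega$ separating forces $z\in\CC$. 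The same bookkeeping, now invoking Ueda's Theorem~\ref{free product extreme ergodic} (the centralizer of $L(\ZZ)$ is diffuse and the other centralizer $q_iM_\varphi q_i\ne\CC q_i$) to see each $\Gamma(q_iMq_i,\varphi|_{q_iMq_i})_{\varphi_\Omega}$ is a factor, yields that $\Gamma(M,\varphi)_{\varphi_\Omega}=\overline{\bigcup_i\Gamma(q_iMq_i,\varphi|_{q_iMq_i})_{\varphi_\Omega}}$ is a factor; this is part~(1) when $z_{ssf}(\varphi)=1$, and the general case of~(1) follows because $\Gamma(M,\varphi)_{\varphi_\Omega}=\Gamma(z_{ssf}Mz_{ssf},\varphi|_{z_{ssf}Mz_{ssf}})_{\varphi_\Omega}$ by Lemma~\ref{lem: location of almost periodic part} (the subalgebra being $\sigma^{\varphi_\Omega}$-invariant, modular automorphisms restrict).

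Finally, for $\varphi(1)=\infty$ with $z:=z_{ssf}(\varphi)\ne 1$: by Lemma~\ref{lem: location of almost periodic part} every $\sigma^{\varphi_\Omega}$-eigenoperator, in particular every central element, lies in $\Gamma(zMz,\varphi|_{zMz})$, and $\varphi|_{zMz}$ is strictly semifinite. If $z=0$ this gives $\Gamma(M,\varphi)_{\varphi_\Omega}=\CC$, so $\Gamma(M,\varphi)$ is a factor (necessarily type $\text{III}$, carrying a faithful normal state with trivial centralizer); this also proves ``$\Leftarrow$'' of~(2), while ``$\Rightarrow$'' is immediate since if $z\ne 0$ one may take $0\ne q\in M_\varphi z$ with $\varphi(q)<\infty$ and then $Y(q)\in\Gamma(M,\varphi)_{\varphi_\Omega}$ is non-scalar. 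If $z\ne 0$ and $\varphi(z)=\infty$, then $zMz\ne\CC$ and the strictly semifinite case applied to $(zMz,\varphi|_{zMz})$ shows $\Gamma(zMz,\varphi|_{zMz})$ is a factor, whence $Z(\Gamma(M,\varphi))\subseteq Z(\Gamma(zMz,\varphi|_{zMz}))=\CC$. If $z\ne 0$ and $\varphi(z)<\infty$ (so $\varphi(1-z)=\infty$), pick a projection $q'\le 1-z$ with $0<\varphi(q')<\infty$ and $\varphi(z+q')\ge 1$, set $q=z+q'$; then $\varphi(q)<\infty$ and $qMq\ne\CC q$, so $\Gamma(qMq,\varphi|_{qMq})\hookrightarrow\Gamma(M,\varphi)$ is a factor by the finite-weight case, and since $zMz\subseteq qMq$ one gets $Z(\Gamma(M,\varphi))\subseteq\Gamma(zMz,\varphi|_{zMz})\subseteq\Gamma(qMq,\varphi|_{qMq})$, hence $Z(\Gamma(M,\varphi))\subseteq Z(\Gamma(qMq,\varphi|_{qMq}))=\CC$.

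The main obstacle is exactly this last, non-strictly-semifinite regime, where $\Gamma(M,\varphi)$ has no free-product decomposition and the finite-weight isomorphism theorems do not apply; the decisive input is Lemma~\ref{lem: location of almost periodic part}, which replaces $\Gamma(M,\varphi)$ by the tractable $\Gamma(zMz,\varphi|_{zMz})$ when controlling the centre. After that, the remaining delicate points are to verify that $z_{ssf}$ can always be enlarged to a projection $q$ with $\varphi(q)<\infty$, $\varphi(q)\ge 1$ and $qMq\ne\CC q$ (using $\varphi(1-z_{ssf})=\infty$ and the semifiniteness of $\varphi|_{M_\varphi z_{ssf}}$), and that the second-quantized inclusions $\Gamma(q_iMq_i,\varphi|)\hookrightarrow\Gamma(qMq,\varphi|)\hookrightarrow\Gamma(M,\varphi)$ are mutually compatible unital inclusions restricting the modular flow, so that the conditional-expectation arguments go through.
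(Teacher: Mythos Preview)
Your argument is correct and follows the paper's overall architecture (finite weight via Theorems~\ref{rescaling}/\ref{rescaling2}; infinite strictly semifinite via an inductive limit with $\varphi_\Omega$-preserving conditional expectations; non-strictly-semifinite via Lemma~\ref{lem: location of almost periodic part}), but your treatment of the non-strictly-semifinite case differs from the paper's and is worth contrasting.

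When $z_{ssf}\notin\{0,1\}$, the paper does not split into the subcases $\varphi(z_{ssf})=\infty$ versus $\varphi(z_{ssf})<\infty$. Instead it argues in one stroke: any $x\in Z(\Gamma(M,\varphi))$ lies in $\Gamma(z_{ssf}Mz_{ssf})$ by Lemma~\ref{lem: location of almost periodic part}; since $\varphi$ is semifinite on $z_{ssf}^\perp Mz_{ssf}^\perp$ one can pick a non-constant $y\in\Gamma(z_{ssf}^\perp Mz_{ssf}^\perp)$, and because the Hilbert subalgebras $\eta_\varphi(z_{ssf}Mz_{ssf})$ and $\eta_\varphi(z_{ssf}^\perp Mz_{ssf}^\perp)$ are orthogonal with vanishing cross-products in $\frakA_\varphi$, Proposition~\ref{cumulants} makes $x$ and $y$ freely independent. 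But $x$ is central, hence commutes with $y$; freeness plus commutation with a non-constant element forces $x\in\CC$. This avoids entirely the need to manufacture a projection $q=z_{ssf}+q'$ with $1\le\varphi(q)<\infty$.

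Your route is also valid, but the step ``pick $q'\le 1-z_{ssf}$ with $0<\varphi(q')<\infty$ and $\varphi(z_{ssf}+q')\ge 1$'' deserves a word of justification: one uses that $\varphi|_{(1-z_{ssf})M(1-z_{ssf})}$ is n.s.f.\ with infinite total mass, so a maximal orthogonal family of projections of finite positive weight sums to $1-z_{ssf}$, and a suitable finite subsum gives the desired $q'$. (Your parenthetical hint cites semifiniteness of $\varphi|_{M_\varphi z_{ssf}}$, but the relevant semifiniteness is that of $\varphi$ on $(1-z_{ssf})M(1-z_{ssf})$.) Note also that $q'$ need not lie in $M_\varphi$; the inclusion $\Gamma(qMq,\varphi|_{qMq})\hookrightarrow\Gamma(M,\varphi)$ still goes through because the embedding $qMq\hookrightarrow M$ is weight-preserving, so the second-quantization example after Theorem~\ref{main} applies.

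A minor further difference: in the strictly semifinite inductive-limit step the paper uses Theorem~\ref{free product extreme ergodic} to get the stronger conclusion $((\Gamma_e)_{\varphi_\Omega})'\cap\Gamma_e=\CC$ (not merely that $\Gamma_e$ is a factor), and then deduces $(\Gamma(M,\varphi)_{\varphi_\Omega})'\cap\Gamma(M,\varphi)=\CC$ in one pass, which simultaneously yields factoriality of $\Gamma(M,\varphi)$ and of its centralizer. Your version separates these two conclusions; both work.
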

\begin{proof}
	\begin{enumerate}[a)]
		\item If $ \varphi(1)<\infty $, the statement follows immediately from Theorem \ref{rescaling}, Theorem \ref{rescaling2}, and Theorem \ref{factoriality free product}.
		\item If $\varphi(1) = +\infty$, and $ \varphi $ is strictly semifinite, then $ \varphi|_{M_\varphi} $ is a semifinite trace on $ M_\varphi $. The set $ \mathcal{P}_{fi}:= \{ e\in P(M_\varphi): \varphi(p)<\infty \}$ is an increasing net such that $ \displaystyle\lim_{e\in \mathcal{P}_{fi} } e = 1 $. We then have that $ \Gamma(M,\varphi) $ is the inductive limit of $ \Gamma_e:= \Gamma(eMe,\varphi_{eMe}) $.
		Suppose $ e $ is not minimal in $ M_\varphi $ with $ \varphi(e)=\alpha\geq 1 $. By Theoerem \ref{rescaling2}, $ \Gamma_e\simeq L(\ZZ)\ast p \left((eMe,\frac{1}{\alpha}\varphi|_{eMe})\ast ( \overset{p}{\underset{1/\alpha}{\CC}}\oplus\overset{ 1-p}{\underset{1-1/\alpha}{\CC}} )\right) p $. As the centralizer of $ p \left((eMe,\frac{1}{\alpha}\varphi|_{eMe})\ast ( \overset{p}{\underset{1/\alpha}{\CC}}\oplus\overset{ 1-p}{\underset{1-1/\alpha}{\CC}} )\right) p  $ is nontrivial, by Theoerem \ref{free product extreme ergodic}, we have $$  \left((\Gamma_e)_{\varphi_\Omega}\right)'\cap \Gamma_e=\CC. $$ Therefore we immediately have $ \Gamma(M,\varphi) = \underrightarrow{\lim} \Gamma_e $ is a factor by the standard argument: Let $ E_e: \Gamma(M,\varphi)\to \Gamma_e $ be the faithful state preserving conditional expectation, for any $ x\in (\Gamma(M,\varphi)_{\varphi_\Omega})'\cap \Gamma(M,\varphi) $, $ E_e(x)\in \left((\Gamma_e)_{\varphi_\Omega}\right)'\cap \Gamma_e=\CC $ for all $e$. Thus $ x\in \CC $ and $ (\Gamma(M,\varphi)_{\varphi_\Omega})'\cap \Gamma(M,\varphi) = \CC $.
		\item If $\varphi$ is not strictly semifinite, for any $ x\in \mathcal{Z}(\Gamma(M,\varphi)) $, by Lemma \ref{lem: location of almost periodic part}, we have $ x \in \Gamma( z_{ssf}Mz_{ssf},\varphi|_{ z_{ssf}Mz_{ssf} } )$. But as $ \varphi $ is also semifinite on $ z_{ssf}^{\perp}Mz_{ssf}^{\perp}\neq \CC $ (since $z_{ssf}\in M_\varphi$), take a non-constant $ y\in \Gamma( z_{ssf}^{\perp}Mz_{ssf}^{\perp},\varphi|_{ z_{ssf}^{\perp}Mz_{ssf}^{\perp}} ) $, we have that $ x $ and $y$ are freely independent. But as $ x\in \mathcal{Z}(\Gamma(M,\varphi)) $, $x$ and $y$ are both commuting and freely independent, which is impossible unless $ x\in \CC $. Therefore $ \mathcal{Z}(\Gamma(M,\varphi  )) = \CC $.
	\end{enumerate}

	For 1), we have $ \Gamma(M,\varphi)_{\varphi_\Omega}\subseteq  \Gamma(z_{ssf}Mz_{ssf},\varphi|_{z_{ssf}Mz_{ssf}})$ and thus $  \Gamma(M,\varphi)_{\varphi_\Omega}= \Gamma(z_{ssf}Mz_{ssf},\varphi|_{z_{ssf}Mz_{ssf}})_{\varphi_\Omega} $. If $ \varphi(z_{ssf})=\infty $, then the factoriality of $ \Gamma(z_{ssf}Mz_{ssf},\varphi|_{z_{ssf}Mz_{ssf}})_{\varphi_\Omega} $ follows from b).  If $ 1\leq \varphi(1)<\infty $, then $\Gamma(z_{ssf}Mz_{ssf},\varphi|_{z_{ssf}Mz_{ssf}})$ is again a free product algebra by Theorem \ref{rescaling2}, and $\Gamma(z_{ssf}Mz_{ssf},\varphi|_{z_{ssf}Mz_{ssf}})_{\varphi_\Omega}$ is a factor by Theorem \ref{free product extreme ergodic}.
	
	For 2), if $ z_{ssf}(\varphi)=0 $, then $ \Gamma(M,\varphi)_{\varphi_\Omega}\subseteq  \Gamma(z_{ssf}Mz_{ssf},\varphi|_{z_{ssf}Mz_{ssf}})= \Gamma(0,\varphi|_{0}) = \CC $. On the other hand, if $ z_{ssf}(\varphi)\neq 0 $, then as $ \varphi $ is semifinite on $ M_\varphi z_{ssf}(\varphi) $, one can choose a projection $ z_{ssf}\geq p\in  M_\varphi $ such that $ \varphi(p)<\infty $ and hence the free Poisson variable $ Y(p) $ belongs to $ \Gamma(M,\varphi)_{\varphi_\Omega} $.
\end{proof}

	We now discuss the type of $ \Gamma(M,\varphi) $. Recall that for a n.s.f. weight $\varphi$, the Connes spectrum of $ \sigma^\varphi $ is defined as
	$$ {\mit\Gamma}(\sigma^\varphi):= \bigcap_{p\in M^\varphi}\text{Sp}(\sigma^{\varphi_p}), $$
	where $ \text{Sp}(\sigma^\varphi) $ is the Arveson spectrum of $ \sigma^\varphi $, $p$ ranges from nonzero projections in $ M^\varphi $, and $ \varphi_p$ is the n.s.f. weight $ \varphi_p = \varphi|_{pMp} $ on $pMp$. When $ \varphi $ is a state and $ M_\varphi $ is a factor, we have $ {\mit\Gamma}(\sigma^\varphi)= \text{Sp}(\sigma^{\varphi}) $. When $ M $ is a factor and $ \varphi $ is a state, we have also that $ S(M)\cap \RR_+^{\times}= {\mit\Gamma}(\sigma^\varphi)$, where $ S(M) $ is the $S$-invariant (which describes the type of $ M $), defined as the intersection of of the spectrum of $ \Delta_\psi $ for all n.s.f. weight $ \psi $.
	\begin{cor}
		If $ \varphi(1)\geq 1 $ and $ M\neq \CC $, then $ \Gamma(M,\varphi) $ is a factor with type $ \text{II}_1 $ or $ \text{III}_\lambda $ with $\lambda\neq 0$. And the type of $ \Gamma(M,\varphi) $ can be determined as following.
		\begin{enumerate}[a)]
			\item If $ 1\leq \varphi(1)<\infty $, then the $ T $-invariant $ T(\Gamma(M,\varphi)) = \{ t\in \RR: \sigma_t^{\varphi}=\text{id} \} $.
			\item If $ \varphi(1)=\infty $ and $ \varphi $ is strictly semifinite, then the Connes spectrum $ \Gamma(\sigma_{\varphi_\Omega}) = \text{Sp}( \sigma_{\varphi_\Omega}) $ is the closure of the subgroup of $ \RR^{\times}_{+} $ generated by $ \text{Sp}( \Delta_{\varphi})$.
			\item If $ \varphi $ is not strictly semifinite, then $ \Gamma(M,\varphi)  $ has type $ \text{III}_1 $.
		\end{enumerate}
	\end{cor}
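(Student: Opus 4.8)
The plan is to treat three regimes — $1\le\varphi(1)<\infty$; $\varphi(1)=\infty$ with $\varphi$ strictly semifinite; and $\varphi$ not strictly semifinite — and in each to reduce to the results of Sections~4 and~6 together with standard modular theory; factoriality and the dichotomy ``$\mathrm{II}_1$ or $\mathrm{III}_\lambda$, $\lambda\neq0$'' will fall out of the same inputs, so the real point is to pin down the invariant. When $1\le\varphi(1)<\infty$, Theorems~\ref{rescaling} and~\ref{rescaling2} exhibit $\Gamma(M,\varphi)$, possibly after compression by a projection $p$ in the centralizer of a free product state, as a free product $L(\ZZ)\ast Q$ with $L(\ZZ)$ diffuse; Theorem~\ref{factoriality free product} then gives factoriality, the type dichotomy, and $T(\Gamma(M,\varphi))=\{t:\sigma^{\tau_{L(\ZZ)}}_t=\mathrm{id}=\sigma^{\varphi_Q}_t\}$. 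As $L(\ZZ)$ is tracial the first condition is vacuous, and since the ambient free product state has modular flow $\sigma^\varphi\ast\mathrm{id}$ with $p$ in its centralizer, compression by the free projection $p$ is faithful and intertwines modular flows, so $\sigma^{\varphi_Q}_t=\mathrm{id}$ exactly when $\sigma^\varphi_t=\mathrm{id}$. This yields $T(\Gamma(M,\varphi))=\{t:\sigma^\varphi_t=\mathrm{id}\}$.

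Suppose next $\varphi(1)=\infty$ with $\varphi$ strictly semifinite, so $z_{ssf}(\varphi)=1$ and $M_\varphi\neq\CC$ (otherwise $\varphi|_{M_\varphi}$ would be a finite weight). Then $\Gamma(M,\varphi)_{\varphi_\Omega}$ is a factor by the preceding theorem, so $\Gamma(\sigma^{\varphi_\Omega})=\mathrm{Sp}(\sigma^{\varphi_\Omega})$. I would then identify the Arveson spectrum with $\mathrm{Sp}(\Delta_{\varphi_\Omega})$, and use the lemma giving $\Delta_\Omega\supseteq\FF(\Delta_\varphi)$ to see that $\Delta_{\varphi_\Omega}$ acts on $\FF(L^2(M,\varphi))$ as $\bigoplus_{n\ge0}\Delta_\varphi^{\otimes n}$. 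Its spectrum is therefore the closed multiplicative semigroup generated by $\mathrm{Sp}(\Delta_\varphi)$, and since $\mathrm{Sp}(\Delta_\varphi)$ is stable under $\lambda\mapsto\lambda^{-1}$ this is the closed subgroup of $\RR_+^\times$ generated by $\mathrm{Sp}(\Delta_\varphi)$ — precisely the asserted value. (If $\varphi$ is tracial this subgroup is $\{1\}$ and $\Gamma(M,\varphi)$ is the $\mathrm{II}_1$ factor; otherwise it is $\lambda^\ZZ$ or $\RR_+^\times$, which also excludes type $\mathrm{III}_0$.)

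For $\varphi$ not strictly semifinite I would first record the auxiliary fact: \emph{if a n.s.f.\ weight $\chi$ on $L$ has no strictly semifinite part, then $\Gamma(L,\chi)$ is a type $\mathrm{III}_1$ factor.} Indeed $z_{ssf}(\chi)=0$ forces $\chi(1)=\infty$ and $L\neq\CC$, so $\Gamma(L,\chi)$ is a factor, while $\Gamma(L,\chi)_{\chi_\Omega}=\CC$ by the preceding theorem; being a nontrivial factor with a faithful normal state of trivial centralizer it is not semifinite (the centralizer of such a state on a semifinite factor other than $\CC$ is never trivial), hence of type $\mathrm{III}$, and $\Gamma(\sigma^{\chi_\Omega})=\mathrm{Sp}(\sigma^{\chi_\Omega})=\mathrm{Sp}(\FF(\Delta_\chi))=\overline{\langle\mathrm{Sp}(\Delta_\chi)\rangle}$. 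Now $\Delta_\chi$ has no eigenvectors (eigenvectors would lie in $L^2$ of the trivial strictly semifinite part, by Lemma~\ref{eigenvectors are in z}), so $\mathrm{Sp}(\Delta_\chi)$ has no isolated points, i.e.\ is a nonempty perfect set, hence uncountable; as $\RR_+^\times$ is the only uncountable closed subgroup, $\overline{\langle\mathrm{Sp}(\Delta_\chi)\rangle}=\RR_+^\times$ and $\Gamma(L,\chi)$ is type $\mathrm{III}_1$. Returning to $\varphi$: set $z:=z_{ssf}(\varphi)^\perp\in M_\varphi$, so $\varphi|_{zMz}$ has no strictly semifinite part and $P:=\Gamma(zMz,\varphi|_{zMz})$ is a type $\mathrm{III}_1$ factor by the fact just proved. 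It embeds in $\Gamma(M,\varphi)$ as $W^*(\{Y(x):x\in z\mm_\varphi z\})$ — via the invariant sub-Fock space $\FF(L^2(zMz))\subseteq\FF(L^2(M))$ — which is $\sigma^{\varphi_\Omega}$-invariant (since $\sigma^{\varphi_\Omega}_t(Y(x))=Y(\sigma^\varphi_t(x))$ and $z\in M_\varphi$) and freely independent from $\Gamma(z^\perp Mz^\perp,\varphi|_{z^\perp Mz^\perp})\supseteq C:=\Gamma(M,\varphi)_{\varphi_\Omega}$; by Takesaki's criterion there is a $\varphi_\Omega$-preserving conditional expectation onto $P$. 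Given any projection $e\in C$, then $e$ is free from $P$, so $W^*(e,P)\cong W^*(e)\ast P$ is a type $\mathrm{III}_1$ factor by Ueda's theorem (its $T$-invariant is $\{0\}$, ruling out $\mathrm{III}_\lambda$ with $\lambda<1$, and Ueda already rules out $\mathrm{III}_0$); its corner $eW^*(e,P)e$ is again type $\mathrm{III}_1$ and carries a $(\varphi_\Omega)_e$-preserving conditional expectation from $e\Gamma(M,\varphi)e$. Hence $\sigma^{(\varphi_\Omega)_e}_t$ restricts to the modular flow of a $\mathrm{III}_1$ factor, whose Arveson spectrum is all of $\RR_+^\times$, so $\mathrm{Sp}(\sigma^{(\varphi_\Omega)_e})=\RR_+^\times$ for every such $e$; therefore $\Gamma(\sigma^{\varphi_\Omega})=\bigcap_e\mathrm{Sp}(\sigma^{(\varphi_\Omega)_e})=\RR_+^\times$ and $\Gamma(M,\varphi)$ is type $\mathrm{III}_1$.

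The main obstacle I anticipate is the third case: verifying carefully that $\Gamma(zMz,\varphi|_{zMz})$ genuinely embeds in $\Gamma(M,\varphi)$ as a state-preserving, $\sigma^{\varphi_\Omega}$-invariant, freely complemented subfactor, and then running the corner-by-corner computation of the Connes spectrum through the free products $W^*(e)\ast P$. By contrast, cases a) and b) are bookkeeping on top of the free-product structure theory and the identity $\Delta_{\varphi_\Omega}=\FF(\Delta_\varphi)$.
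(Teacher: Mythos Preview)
Your proof is correct and, for parts a) and b), proceeds exactly as the paper does. Part c) is also correct but the execution differs from the paper's in two places.

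For the auxiliary fact (the case $z_{ssf}(\chi)=0$), the paper simply invokes Longo's result \cite{longo1979notes} that a nontrivial factor admitting a faithful normal state with trivial centralizer is of type $\mathrm{III}_1$. Your route---showing that $\Delta_\chi$ has no eigenvectors (via Lemma~\ref{eigenvectors are in z}), hence $\mathrm{Sp}(\Delta_\chi)$ is perfect, hence uncountable, hence generates all of $\RR_+^\times$---is correct and more self-contained, though it re-proves a special case of Longo's theorem.

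For the general case $z_{ssf}(\varphi)\neq 0$, both arguments rest on the same structural input: the centralizer sits inside $\Gamma(z_{ssf}Mz_{ssf},\varphi)$ and is therefore free from the type $\mathrm{III}_1$ subfactor $P=\Gamma(z_{ssf}^\perp Mz_{ssf}^\perp,\varphi)$. The paper then argues directly at the $L^2$ level: for each nonzero projection $p$ in the centralizer, the map $x\Omega\mapsto\varphi_\Omega(p)^{-1}\,px\Omega p$ is (by freeness) an isometry from $L^2(P)\ominus\CC\Omega$ into $L^2(p\Gamma(M,\varphi)p)$ intertwining the two modular operators, so $\mathrm{Sp}(\Delta_{(\varphi_\Omega)_p})\supseteq\mathrm{Sp}(\Delta_\Omega|_{L^2(P)\ominus\CC\Omega})=[0,\infty)$. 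You instead pass through the von Neumann algebra $W^*(e,P)\cong W^*(e)\ast P$, invoke Ueda's Theorem~\ref{factoriality free product} a second time to see it is type $\mathrm{III}_1$, and then use the $\varphi_\Omega$-preserving conditional expectation onto its corner to transport the full Arveson spectrum into $e\Gamma(M,\varphi)e$. Both approaches are valid; the paper's $L^2$-isometry is a bit more direct and avoids re-citing Ueda, while yours makes the role of the free product structure of $W^*(e,P)$ more explicit.
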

\begin{proof}
	a) Since $ \Gamma(M, \varphi) \simeq L(\ZZ)\ast p \left((M,\frac{1}{\varphi(1)}\varphi)\ast ( \overset{p}{\underset{1/\varphi(1)}{\CC}}\oplus\overset{ 1-p}{\underset{1-1/\varphi(1)}{\CC}} )\right) p$, we have $$ T(\Gamma(M,\varphi)) = \{ t\in \RR: \sigma_t^{ p\left((\frac{1}{\varphi(1)}\varphi)\ast \tau\right)p  }=\text{id} \} = \{ t\in \RR: \sigma_t^{\varphi}=\text{id}  \} $$ by Theorem \ref{factoriality free product}. Also, by Theorem \ref{factoriality free product}. Therefore $ \Gamma(M,\varphi) $ is not of type $ \text{III}_0 $, the $ T $-invariant determines the type of $ \Gamma(M,\varphi) $.

	b) If $ \varphi(1)=\infty $ and $ \varphi $ is strictly semifinite, then $ (\Gamma(M, \varphi))_{\varphi_\Omega} $ is a factor. Therefore $ \Gamma(\sigma_{\varphi_\Omega}) = \text{Sp}( \sigma_{\varphi_\Omega})  =  \text{Sp}( \Delta_{\Omega }) $ with is precisely the closure of the subgroup of $ \RR_+^{\times} $ generated by $ \text{Sp}( \Delta_{\varphi}) $ by the Fock space structure.

	c) For any n.s.f. weight $\psi$, we denote in short the corner weight $ \psi_q := \psi|_{qMq} $. First, let us assume that $ z_{ssf}(\varphi) = 0 $, then $\Gamma(M, \varphi)$ has trivial centralizer $ \Gamma(M,\varphi)_{\varphi_\Omega} = \CC $ and thus is a factor of type $ \text{III}_1 $ by \cite[proof of Theorem 3]{longo1979notes}, see also \cite{marrakchi2024ergodic}. Now, if $ z_{ssf}(\varphi)\neq 0 $, then we have $ \Gamma(M,\varphi)_{\varphi_\Omega} = \Gamma(z_{ssf}Mz_{ssf},\varphi_{z_{ssf}})_{\varphi_\Omega}$. So, the Connes spectrum of $ \sigma_{\varphi_\Omega} $ is
	$$ \Gamma(\sigma^{\varphi_\Omega}) := \bigcap \{ \text{Sp}( \sigma^{(\varphi_\Omega)_p  } ): p \text{ nonzero projection of } \Gamma(z_{ssf}Mz_{ssf},\varphi_{z_{ssf}})_{\varphi_\Omega}\}.$$
	For each fixed such $ p $, $ p $ is freely independent from $ \Gamma(z_{ssf}^\perp Mz_{ssf}^\perp,\varphi_{z_{ssf}^{\perp}}) $. Let \begin{align*}
		\HHH_1 &= L^2( \Gamma(z_{ssf}^\perp Mz_{ssf}^\perp,\varphi_{z_{ssf}^{\perp}}), (\varphi_{z_{ssf}^{\perp}})_{\Omega} )= \mathring{\HHH}_1\oplus \CC\Omega \subseteq L^2( \Gamma(M,\varphi),\varphi_\Omega )\\
		 \HHH_2 &= pL^2( \Gamma(z_{ssf}^\perp Mz_{ssf}^\perp,\varphi_{z_{ssf}^{\perp}},(\varphi_{z_{ssf}^{\perp}})_{\Omega} ) )p = \mathring{\HHH}_2\oplus \CC p\Omega p \subseteq L^2( p\Gamma(M,\varphi)p,(\varphi_\Omega)_p ).
	\end{align*}
	By the definition of freeness, one has isometry $ L : \mathring{\HHH}_1 \to \mathring{\HHH}_2 $, $ x\Omega \to \varphi_\Omega(p)^{-1} p x\Omega p $ that intertwines $ \Delta^{it}_{\Omega}|_{ \mathring{\HHH}_1 } $ and $ \Delta^{it}_{(\varphi_\Omega)_p}|_{ \mathring{\HHH}_2 } $. Therefore,
	$$  \text{Sp}( \sigma^{(\varphi_\Omega)_p  }   ) = \text{Sp}( \Delta_{(\varphi_\Omega)_p} )\supseteq \text{Sp}( \Delta_{(\varphi_\Omega)_p}|_{ \mathring{\HHH}_2 } ) = \text{Sp}( \Delta_{\Omega}|_{ \mathring{\HHH}_1 } ).$$
	But since $ \varphi_{z_{ssf}^\perp} $ has no strictly semifinite part, $  \Gamma(z_{ssf}^\perp Mz_{ssf}^\perp,\varphi_{z_{ssf}^{\perp}}) $ has type $ \text{III}_1 $ with trivial centralizer, hence $\text{Sp} (\Delta_{\Omega}|_{ \HHH_1 }) = [0,\infty) $. As $ \Omega \in \HHH_1 $ is a fixed vector of $ \Delta_{\Omega} $, one obtain also $ \text{Sp} (\Delta_{\Omega}|_{ \mathring{\HHH}_1 }) = [0,\infty) $ and therefore $ \text{Sp}( \sigma^{(\varphi_\Omega)_p  } )  = [0,\infty)  $ for all $p\neq 0$. This implies that $ \Gamma(\sigma_{\varphi_\Omega}) =  [0,\infty) $.
\end{proof}

\begin{eg}
	Let $M=B(\HHH)$. Any n.s.f. weight $\varphi$ on $ M $ is of the form $ \text{Tr}(h\cdot) $ with $h\geq 0$ the density operator affiliated with $M$. It is shown in \cite{GP15} that $\varphi$ is strictly semifinite if and only if $h$ has pure point spectrum. Therefore, $ \Gamma(B(\HHH),\varphi) $ is a type $\text{III}_1$ factor as long as $ h $ does not have pure point spectrum. On the other hand, if $h$ has pure point spectrum, then $ \Gamma(\sigma_{\varphi_\Omega}) $ is simply the closure of the subgroup generated by the spectrum of $h$. If we furthermore assume that $ \{h\}'' $ is a maximal abelian subalgebra of $ B(\HHH) $, then $ \mathcal{Z}(M_\varphi)=M_\varphi = \{h\}'' $ and $z_{ssf}( \varphi )$ is exactly the projection onto the eigenvector space of $h$. In this case, $ (\Gamma(B(\HHH), \varphi),\varphi_\Omega) $ has trivial centralizer if and only if $ h $ has no point spectrum.
\end{eg}

\begin{rmk}
		We may compare some of our results with the corresponding results in \cite{chen2023noncommutative}. It was shown in \cite{chen2023noncommutative} that the classical Poisson algebra of a finite weight $\varphi$ (i.e. $\varphi(1)<1$) on $M$ is isomorphic to the direct sum of symmetric tensor powers $ \sum_{n=0}^{\infty} M^{\otimes_s n} $ with the state $ e^{-\varphi(1)}\bigoplus_{n\geq 0} \frac{\varphi^{\otimes n}}{n!} $. In particular, the classical Poisson algebra can never be a factor unless $ \varphi(1)=\infty $, while in the free case $ \Gamma(M,\varphi) $ is a factor as long as $ \varphi(1)\geq 1$ and $ M $ is nontrivial. This phenomenon is similar to the case of $q$-Araki-Woods algebras, where the free cases ($q=0$) are much easier to have factoriality due to freeness, while for the symmetric cases ($ q=1 $), we get a factor only when the underlining standard subspace has trivial intersection with its symplectic complement. \cite{chen2023noncommutative} also showed that a sufficient condition for the classical Poisson algebra to be a factor is that $\varphi$ has the form $ \varphi = \text{Tr}\otimes \omega $ with $ \text{Tr} $ the trace on $ B(\HHH) $ for a infinite dimensional $\HHH$ and $ \omega $ is another n.s.f. weight. For second quantization, \cite{chen2023noncommutative} discussed the second quantization construction of positive maps, while in our construction we require the maps to be completely positive. Finally, \cite{chen2023noncommutative} defined the Poisson algebra using a direct combinatorial description, while our approach emphasize on the Fock space construction with the help of preservation operators. 
\end{rmk}

\section{Infinitely divisible families, and L\'{e}vy-It\^{o}'s decomposition via pseudo Hilbert algebras}

Recall that for a Borel measure $\mu$ on $\RR$, its free cumulants generating function is defined as $ C_\mu(z):=zG_\mu^{\langle -1\rangle}(z)-1 $, where $ G_\mu(z) $ is the Cauchy transform of $\mu$: $ G_\mu(z):= \int_{\RR} \frac{1}{z-t}d\mu(t) $. When $ \mu $ is compactly supported, we have $ C_\mu(z) = \sum_{n=1}^{\infty}\kappa_{n}z^n $ with $ \{\kappa_{n}\}_{n\geq 1} $ the free cumulants of $ \mu $. By the free L\'{e}vy-Khintchine decomposition (see \cite{BOT02}\cite{BT05}), for any (possibly not compactly supported) freely infinitely divisible (F.I.D. in short) distribution $ \mu $, there exists a $\sigma$-finite Borel measure $\rho$ on $\RR$, $ a\in \RR $ and $b\geq 0$ such that $ \int_{\RR} \min\{ 1,t^2 \} d\rho(t) < \infty $, $ \rho(\{0\})=0 $ and its free cumulants generating function $C_{\mu}(z)$ is given by
\begin{equation}\label{unbounded Levy-Khinchine}
	C_{\mu}(z) = az + b^2z^2 + \int_{\RR} (\frac{1}{1-zt}-1 - zt\chi_{[-1,1]}(t))d\rho(t).
\end{equation}
The measure $ \rho $ is called the (free) L\'evy measure of $ \mu $, and $ (a,b,\rho) $ is called the (free) L\'evy triple of $ \mu $.

Let $ \rho' = \chi_{[-1,1]}\cdot\rho $ and $ \rho'' = \chi_{\RR/[-1,1]}\cdot \rho $, then we can rewrite $ C_{\mu}(z) $ as
$$ C_{\mu}(z) = az+b^2z^2 + \int_{\RR} (\frac{1}{1-zt}-1 - zt)d\rho'(t) + \int_{\RR} (\frac{1}{1-zt}-1)d\rho''(t). $$
This implies that, similiar to the classical case, we have the following free L\'{e}vy-It\^{o}'s decomposition: $\mu$ is the free additive convolution of a semicircular distribution $ \mu_1 $ with $ C_{\mu_1}(z) = az+b^2z^2 $, a compensated centered compound free Poisson distribution $\mu_2$ with $ C_{\mu_2}(z) =  \int_{\RR} (\frac{1}{1-zt}-1 - zt)d\rho'(t)$, and a compound free Poission distribution $\mu_3$ with $ C_{\mu_3}(z) = \int_{\RR} (\frac{1}{1-zt}-1)d\rho''(t) $. (In general, we say $\mu$ is compensated centered compound free Poisson if it has L\'{e}vy triple $ (0,0,\rho) $ with $ \rho $ support on $ [1,1] $. And we say $\mu$ is compound free Poisson if it has L\'{e}vy triple $ (a,0,\rho)$ with $ a= \rho([-1,1])<\infty $.)

Note that the semicircular distribution $ \mu_1 $ can be naturally realized via Voiculescu's free Gaussian functor whose underline information is a Hilbert space, while $\mu_2$ and $ \mu_3 $ can be instead realized using the free Poisson functor whose underline information is a measure or equivalently a (commutative) Hilbert algebra. It is therefore natural to consider pseudo left Hilbert algebras which is a combination of both Hilbert spaces and Hilbert algebras. We will see that for a (compactly supported) freely infinitely divisible distribution, its free cumulants provide us a natural pseudo Hilbert algebra whose decomposition matches the L\'{e}vy-It\^{o}'s decomposition (Theorem \ref{thm Levy Ito decomposition}).

\subsection{Multivariable free L\'{e}vy-It\^{o}'s decomposition in tracial probability space}
Recall that a family $ \{y_i\}_{i\in I} $ of bounded self-adjoint random variables of a non-commutative probability space $ (\mathcal{A},\varphi) $ is jointly freely infinitely divisible (J.F.I.D. in short) if for any $ t\in \RR_+ $, there is another family $ \{y_i^t\}_{i\in I} $ of random variables of a non-commutative probability space $ (\mathcal{A}_t,\varphi_t) $ such that for all $ i_1,\cdots, i_n\in I $.
$$ tR_n(y_{i_1},\cdots,y_{i_n})=R_n(y^t_{i_1},\cdots,y^t_{i_n}).$$
In particular, this implies that $ R_n(y_{i_1},\cdots,y_{i_n}) =\lim_{t\to 0} \frac{1}{t}M_n( y^t_{i_1},\cdots,y^t_{i_n} ) $ for $ n\geq 1 $, which then implies that $ R_n: y_{i_1}\cdots y_{i_n}\mapsto R_n(y_{i_1},\cdots,y_{i_n}  ) $ defines a positive linear functional on the algebra of noncommutative polynomials $\CC_{0}\langle \{ x_i \}_{i\in I} \rangle$ without constant terms.

From Proposition \ref{cumulants}, one can immediately see that the free poisson random weights $X$ and $Y$ always provide us a J.F.I.D. family.
\begin{cor}
	Let $ \frakA $ be a pseudo left Hilbert algebra, then the family $ \{ X(\xi): \xi\in \frakA, S\xi = \xi \} $ is J.F.I.D.
\end{cor}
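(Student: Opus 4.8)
The plan is to exhibit, for each $t>0$, a concrete noncommutative probability space carrying a family whose joint free cumulants are exactly $t$ times those of $\{X(\xi):S\xi=\xi\}$, obtained by applying the same free Poisson construction to a rescaled pseudo left Hilbert algebra. For $t>0$, let $\frakA_t$ denote the involution algebra $\frakA$ with its original multiplication and involution $S$, but equipped with the rescaled inner product $\langle\cdot,\cdot\rangle_t:=t\langle\cdot,\cdot\rangle$. First I would verify that $\frakA_t$ is again a pseudo left Hilbert algebra in the sense of Definition \ref{defn pseudo left Hilbert algebra}: scaling the inner product by a positive constant leaves every operator norm unchanged, so each $\pi_l(\xi)$ stays bounded; condition 2) is homogeneous of degree one in the inner product, hence persists; and $\|\cdot\|_t$ is equivalent to $\|\cdot\|$, so $S$ remains preclosed. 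Consequently the free Poisson von Neumann algebra $\Gamma(\frakA_t)\subseteq B(\FF(\overline{\frakA_t}))$ of Definition \ref{def free Poisson} is well defined, $\Omega$ is cyclic and separating for it, and its generators $X_t(\xi)$ are bounded self-adjoint whenever $S\xi=\xi$, since $X_t(\xi)^*=X_t(S\xi)=X_t(\xi)$.

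Next I would read off the joint free cumulants from Proposition \ref{cumulants} applied to $\frakA_t$: for $\xi_{i_1},\dots,\xi_{i_n}\in\frakA$ fixed by $S$ one gets, for $n\geq 2$,
$$R_n\bigl(X_t(\xi_{i_1}),\dots,X_t(\xi_{i_n})\bigr)=\langle S\xi_{i_1},\xi_{i_2}\cdots\xi_{i_n}\rangle_t=t\,\langle S\xi_{i_1},\xi_{i_2}\cdots\xi_{i_n}\rangle=t\,R_n\bigl(X(\xi_{i_1}),\dots,X(\xi_{i_n})\bigr),$$
while for $n=1$ both free cumulants vanish. Thus, taking $y_i:=X(\xi_i)$ in $(\Gamma(\frakA),\varphi_\Omega)$ and $y_i^t:=X_t(\xi_i)$ in $(\Gamma(\frakA_t),\varphi_\Omega)$ realizes precisely the identity $tR_n(y_{i_1},\dots,y_{i_n})=R_n(y_{i_1}^t,\dots,y_{i_n}^t)$ required in the definition of a jointly freely infinitely divisible family; the case $t=0$ is trivial, since then every generator is $0$.

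I do not expect a genuine obstacle here: the argument is a bookkeeping check that rescaling the inner product keeps us inside the class of pseudo left Hilbert algebras and that the cumulant formula of Proposition \ref{cumulants} is linear in that inner product. The one point to keep in mind is that the products $\xi_{i_2}\cdots\xi_{i_n}$ occurring in the formula live in the algebra $\frakA$, which the rescaling does not touch, so the entire effect of passing to $\frakA_t$ is the single scalar factor $t$ attached to the lone inner product in the expression, which also makes the $n=1$ case automatic. Alternatively, when $\frakA=\frakA_\varphi$ one may phrase the same proof using the free Poisson random weight $Y$ for the positive linear functional $t\varphi$, matching the compound free Poisson picture recalled just before Theorem \ref{rescaling}.
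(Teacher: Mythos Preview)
Your proposal is correct and is essentially identical to the paper's proof: the paper simply introduces the pseudo left Hilbert algebra $\frakA_t$ with rescaled inner product $t\langle\cdot,\cdot\rangle$ and invokes Proposition~\ref{cumulants}, which is precisely what you do with additional verification of the axioms. Your added details (checking that $\frakA_t$ remains a pseudo left Hilbert algebra, handling the $n=1$ case explicitly) are sound and fill in what the paper leaves implicit.
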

\begin{proof}
	We simply need to consider the pseudo left Hilbert algebra $ \frakA_t $ with $ \frakA_t $ and $ \frakA $ being same involution algebra but with the new inner product $ t\langle \cdot,\cdot\rangle $ for each $ t>0 $, and then apply Proposition \ref{cumulants}.
\end{proof}

In general, if $ \{y_i\}_{i\in I} $ is a family of unbounded self-adjoint random variables affiliated with some tracial $W^*$-probability space  $ (\mathcal{A},\tau) $, we say that $ \{y_i\}_{i\in I} $ is J.F.I.D. if for any $ m\in \NN $, there is another family $ \{z_i\}_{i\in I} $ of unbounded random variables affiliated with a tracial $ W^* $-probability space $ (\mathcal{B},\tau_1) $ such that there is a trace preserving embedding
$$ \ell: (\mathcal{A},\tau) \to \ast_{j=1}^{m}(\mathcal{B},\tau_1), $$
and $ \ell({y_i}) = \sum_{j=1}^{m} z_i^{(j)}$, where $ z_i^{(j)} $ is the copy of $ z_i $ for each free product components of $ (\mathcal{B},\tau_1) $.

\begin{lem}
	Let $ \{y_i\}_{i\in I} $ be a J.F.I.D. family of bounded self-adjoint variables in a tracial $W^*$-probability space $ (\mathcal{A},\tau) $. Consider the (non-unital) $*$-algebra of non-commutative polynomials without constant terms $ P = \CC_{0}\langle \{x_i\}_{i\in I} \rangle $ with the involution $ \alpha x_{i_1}\cdots x_{i_n}\mapsto \bar{\alpha}x_{i_n}\cdots x_{i_1} $ ($\forall\alpha\in \CC$) and the positive sesquilinear form $$ \langle x_{i_1}\cdots x_{i_n},x_{j_1}\cdots x_{j_{m}} \rangle:= R_{m+n}(y_{i_n},\cdots,y_{i_{1}},y_{j_1},\cdots,y_{j_m}).$$ Let $ N:= \{ p\in P:\langle p,p\rangle = 0 \}\subseteq P$ be the null $*$-ideal of this positive form, and $\frakA:= P/N$ be the quotient $*$-algebra. We have that $ \frakA $ is a unimodular (i.e. $\Delta = 1$) pseudo left Hilbert algebra.
\end{lem}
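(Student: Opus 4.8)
The plan is to extract from the cumulants a positive functional $\omega$ on $P=\CC_0\langle\{x_i\}_{i\in I}\rangle$, read the inner product off it, pass to $\frakA=P/N$ where $N$ is its kernel, and then verify the three conditions of a pseudo left Hilbert algebra together with $\Delta=1$; the only substantial point will be the boundedness of left multiplication. As recalled just before the statement, the J.F.I.D.\ hypothesis makes $\omega(x_{i_1}\cdots x_{i_n}):=R_n(y_{i_1},\dots,y_{i_n})$ a positive linear functional on $P$, and the given form is precisely $\langle p,q\rangle=\omega(p^{*}q)$, hence positive semidefinite. To see that $N=\{p\in P:\langle p,p\rangle=0\}$ is a two-sided $*$-ideal: Cauchy--Schwarz identifies $N$ with $\{p:\omega(p^{*}q)=0\ \forall q\in P\}$, so for $p\in N$ and $q\in P$, $|\omega((qp)^{*}(qp))|=|\omega(p^{*}(q^{*}qp))|\le\omega(p^{*}p)^{1/2}\omega((q^{*}qp)^{*}(q^{*}qp))^{1/2}=0$, whence $N$ is a left ideal; and since $\tau$ is tracial the moments $M_n$, hence the cumulants $R_n$, hence $\omega$, are cyclically invariant, so $\omega(pp^{*})=\omega(p^{*}p)$, giving $N=N^{*}$ and (applying $*$) that $N$ is a right ideal as well. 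Thus $\frakA$ is a $*$-algebra, $\langle\cdot,\cdot\rangle$ descends to a definite inner product on it, and $S\overline p:=\overline{p^{*}}$ is a well-defined antilinear involution.

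Conditions 2) and 3) of Definition \ref{defn pseudo left Hilbert algebra} and unimodularity then drop out. Condition 2) is the identity $\langle\overline{pq},\overline r\rangle=\omega(q^{*}p^{*}r)=\langle\overline q,\overline{p^{*}r}\rangle=\langle\overline q,(S\overline p)\overline r\rangle$. For condition 3), traciality gives $\|S\overline p\|^{2}=\omega(pp^{*})=\omega(p^{*}p)=\|\overline p\|^{2}$, so $S$ is an antilinear isometry on $\frakA$ with $S^{2}=\mathrm{id}$; it therefore extends to an antiunitary involution of the completion $\overline{\frakA}$, which in particular is closed, so $S$ is preclosed, and its polar decomposition $S=J\Delta^{1/2}$ has $\Delta=S^{*}S=S^{2}=\mathrm{id}$, i.e.\ $\frakA$ is unimodular.

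The hard part is condition 1): boundedness of $\pi_l(\xi)$ for every $\xi\in\frakA$. Because $\pi_l$ is multiplicative on $\frakA$ and $\frakA$ is generated as an algebra by the classes $\overline{x_i}$, it suffices to bound each $\pi_l(\overline{x_i})$, and by linearity I may take $x_i=x_i^{*}$. The first ingredient is that the joint cumulants grow at most exponentially: Möbius inversion $R_n=\sum_{\sigma\in\mathcal{NC}(n)}\mu(\sigma,\hat{1}_n)M_\sigma$, together with $|M_\sigma(y_{j_1},\dots,y_{j_n})|\le\prod_k\|y_{j_k}\|$ and the fact that both $|\mathcal{NC}(n)|$ and the Möbius coefficients of $\mathcal{NC}(n)$ are bounded by $4^{n}$, yields an absolute constant $C$ with $|R_n(y_{j_1},\dots,y_{j_n})|\le C^{n}\prod_k\|y_{j_k}\|$ for all $n$ and all indices; this uses only that the $y_i$ are bounded. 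The second ingredient is that $a:=\pi_l(\overline{x_i})$ is symmetric on $\frakA$ (by condition 2), since $S\overline{x_i}=\overline{x_i}$), so $\|a^{2^{k}}\xi\|^{2}=\langle\xi,a^{2^{k+1}}\xi\rangle\le\|\xi\|\,\|a^{2^{k+1}}\xi\|$, and an induction on $k$ gives the interpolation inequality $\|a\xi\|\le\|a^{2^{k}}\xi\|^{2^{-k}}\|\xi\|^{1-2^{-k}}$. For a fixed $\xi=\sum_l c_l\,\overline{w_l}\in\frakA$ written over words $w_l$, one has $a^{2^{k}}\xi=\sum_l c_l\,\overline{x_i^{2^{k}}w_l}$, so expanding the inner product and applying the cumulant bound gives $\|a^{2^{k}}\xi\|\le D_\xi\,(C\|y_i\|)^{2^{k}}$ with $D_\xi$ independent of $k$. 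Combining the two estimates and letting $k\to\infty$ gives $\|\pi_l(\overline{x_i})\xi\|\le C\|y_i\|\,\|\xi\|$; since $\frakA$ is dense in $\overline{\frakA}$, $\pi_l(\overline{x_i})$ extends to a bounded operator, hence so does $\pi_l(\xi)$ for every $\xi\in\frakA$, being a polynomial in bounded operators. This establishes condition 1) and completes the proof.

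I expect the only real difficulty to be in this last step, and even there the two ingredients — the exponential cumulant bound and the interpolation inequality $\|a\xi\|\le\|a^{2^{k}}\xi\|^{2^{-k}}\|\xi\|^{1-2^{-k}}$ for symmetric $a$ — are standard; the point requiring a little care is that the word-length contributions $D_\xi$ stay uniform in $k$ so that the limit is legitimate. Everything preceding it is formal bookkeeping once the positivity of $\omega$, which is exactly the J.F.I.D.\ property, is granted.
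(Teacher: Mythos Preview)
Your proof is correct. The setup (positivity of $\omega$, the null space $N$ being a two-sided $*$-ideal via Cauchy--Schwarz plus traciality, conditions 2) and 3), and the anti-unitarity of $S$ giving $\Delta=1$) matches the paper's one-line treatment via cyclic symmetry of $R_n$; the real content in both proofs is the boundedness of $\pi_l(\overline{x_i})$, and here your route is genuinely different.

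The paper argues through the Fock space, following \cite{GSS92}: it forms the a priori unbounded field operator $T_i=\ell(\overline{x_i})+\ell^*(\overline{x_i})+\Lambda(\pi_l(\overline{x_i}))+\tau(y_i)$ on $\FF(\overline{\frakA})$, invokes Theorem~\ref{cumulants of operators on Fock spaces} (which it notes still holds for unbounded $\Lambda$) to see that $\{T_i\}$ and $\{y_i\}$ have the same joint cumulants and hence moments with respect to the vacuum, and concludes that $T_i$ is bounded because $y_i$ is; subtracting off the already-bounded $\ell,\ell^*$ and the scalar leaves $\Lambda(\pi_l(\overline{x_i}))$ bounded, which forces $\pi_l(\overline{x_i})$ bounded. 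Your argument bypasses the Fock space entirely: the exponential cumulant bound $|R_n|\le C^n\prod_k\|y_{j_k}\|$ together with the symmetric-operator interpolation $\|a\xi\|\le\|a^{2^k}\xi\|^{2^{-k}}\|\xi\|^{1-2^{-k}}$ yields $\|\pi_l(\overline{x_i})\|\le C\|y_i\|$ directly on $\overline{\frakA}$. The paper's approach is natural here since the Fock model is its central object and the same mechanism is reused throughout; your approach is more elementary, self-contained, and would transplant to situations where no Fock realization is available. Your remark that the only delicate point is the $k$-uniformity of $D_\xi$ is accurate, and your handling of it is fine: $D_\xi$ depends only on a chosen representative of $\xi$, not on $k$, so $D_\xi^{2^{-k}}\to 1$.
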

\begin{proof}
	Since $ \tau $ is tracial, $R_n$ is cyclic symmetric, hence the involution $S:\frakA\to \frakA$ is anti-unitary on $\frakA$. In particular, $S$ is closable. It remains to show that for each polynomial $p$, the left multiplication by $p$ is bounded (we denote the image of $p$ in $\frakA$ still by $p$). For this, we only need to check the cases when $p=x_i$.
	
	We now follow the proof of Theorem 3 \cite{GSS92}. We notice that as the domain of $ \pi_{l}(x_i) $ contains $ \frakA $, we can still define the preservation operator $ \Lambda( \pi_{l}(x_i) ) $ as an unbounded operator on $ \FF(\overline{\frakA}) $. Note also that Theorem \ref{cumulants of operators on Fock spaces} still holds for unbounded $\Lambda( \pi_{l}(x_i) ) $ as long the right-hand side of the equation in Theorem \ref{cumulants of operators on Fock spaces} is well-defined. Similar to Proposition \ref{cumulant transform for X(x)}, we have that $ \{ \ell(x_i)+\ell^*(x_i)+ \Lambda( \pi_{l}(x_i) )+\tau(y_i)\}_{i\in I} $ and $ \{ y_i \}_{i\in I} $ has the same joint free cumulants. As $ y_i $ is bounded, $ \ell(x_i)+\ell^*(x_i)+ \Lambda( \pi_{l}(x_i) )+\tau(y_i) $ must also be bounded. But since $ \ell(x_i),\ell^*(x_i) $ are bounded, we must have that $  \Lambda( \pi_{l}(x_i) ) $ is bounded, which then implies that $ \pi_{l}(x_i) $ is bounded. In particular, this shows that $\frakA$ is a (unimodular) pseudo left Hilbert algebra.
\end{proof}

\begin{thm}\label{thm Levy Ito decomposition}
	Let $ \{y_i\}_{i\in I} $ be a J.F.I.D. family of self-adjoint variables affiliated with a tracial $W^*$-probability space $ (\mathcal{A},\tau) $. Assume that $ \mathcal{A} $ is generated by $ \{y_i\}_{i\in I} $, then there exists a larger tracial $W^*$-probablity space $ (\tilde{\mathcal{A}},\tau)  $, and a J.F.I.D. compound free Poisson family $ \{p_i\}_{i\in I} $, a J.F.I.D. compensated centered free Poisson family $ \{z_i\}_{i\in I} $, a semicircular family $ \{s_i\}_{i\in I} $, and a family $\{\lambda_i\}_{i\in I}$ of constants, all of which affiliated with $ (\tilde{\mathcal{A}},\tau)  $, such that the families $ \{p_i\}_{i\in I} $, $ \{z_i\}_{i\in I} $, and $ \{s_i\}_{i\in I} $ are freely independent from each other, and for each $ i\in I $, $$ y_i = \lambda_i + s_i + p_i+z_i.$$
\end{thm}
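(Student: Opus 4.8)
\emph{Proof proposal.} The plan is to realize the centred family inside the free Poisson algebra of the unimodular pseudo left Hilbert algebra attached to the free cumulants, and then to read off the decomposition from $\frakA''=(\frakA^2)''\oplus((\frakA^2)^\perp\cap D(S))$ (Proposition~\ref{decomposition of pseudo hilbert algebra}), the free product corollaries~\ref{direct sum} and~\ref{multi Levy-Khin}, and the finite-weight isomorphisms (Theorems~\ref{rescaling} and~\ref{rescaling2}).

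First I would do the case in which every $y_i$ is bounded. By the lemma preceding this theorem, the cumulant pairing turns $P=\CC_0\langle x_i\rangle/N$ into a unimodular (hence tracial) pseudo left Hilbert algebra $\frakA$; let $\xi_i$ be the image of $x_i$, so $S\xi_i=\xi_i$. By Proposition~\ref{cumulants}, $R_n(X(\xi_{i_1}),\dots,X(\xi_{i_n}))=R_n(y_{i_1},\dots,y_{i_n})$ for $n\ge 2$ and $R_1(X(\xi_i))=0$, so $\{X(\xi_i)\}$ has the same joint distribution as $\{y_i-\tau(y_i)1\}$ and $W^*(\{y_i\})\simeq\Gamma(\frakA)$ via $y_i\mapsto\tau(y_i)1+X(\xi_i)$. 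Since $\Delta_\frakA=1$ we have $D(S)=\overline{\frakA}$; writing $\xi_i=\xi_i^\flat+\eta_i$ for the decomposition along $\overline{\frakA^2}\oplus(\frakA^2)^\perp$ (both summands are fixed by $S$, since $S$ commutes with the projection onto $\overline{\frakA^2}$), Corollary~\ref{multi Levy-Khin} gives $\Gamma(\frakA)=\Gamma(\frakA^2)\ast\Gamma((\frakA^2)^\perp\cap D(S))$, the second factor being the free Gaussian algebra of the real Hilbert space $\{\xi\in(\frakA^2)^\perp:S\xi=\xi\}$. Hence $s_i:=X(\eta_i)=\ell(\eta_i)+\ell^*(\eta_i)$ is a semicircular family, free from $\Gamma(\frakA^2)$.

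It remains to split the tracial Poisson part. Put $(M,\varphi)=(\mathcal{R}_l(\frakA^2),\varphi)$ with $\varphi$ the associated (tracial) n.s.f.\ weight and $u_i=\pi_l(\xi_i^\flat)\in M_{s.a.}$, so $\varphi(u_i^2)<\infty$ and $X(\xi_i^\flat)$ has free cumulants $\varphi(u_{i_1}\cdots u_{i_n})$ ($n\ge 2$), $R_1=0$. If $\varphi(1)<\infty$, then $Y(\xi_i^\flat)=X(\xi_i^\flat)+\varphi(u_i)$ is already a compound free Poisson family and one takes $p_i=Y(\xi_i^\flat)$, $z_i=0$, $\lambda_i=\tau(y_i)-\varphi(u_i)$. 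In general I would pick a central projection $z\in\mathcal{Z}(M)$ with $\varphi$ finite on $z^\perp Mz^\perp$ and with no nonzero finite-weight central subprojection in $zMz$ (take $z^\perp=\sup\{e\in\mathcal Z(M):\varphi(e)<\infty\}$, so that $\varphi|_{z^\perp Mz^\perp}$ is strictly semifinite with each central summand of finite weight). This is a direct sum decomposition of pseudo left Hilbert algebras $\frakA^2\simeq z\frakA^2\oplus z^\perp\frakA^2$, so by Corollary~\ref{direct sum}, $\Gamma(M,\varphi)=\Gamma(zMz,\varphi_z)\ast\Gamma(z^\perp Mz^\perp,\varphi_{z^\perp})$. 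Decomposing $\xi_i^\flat=z\xi_i^\flat+z^\perp\xi_i^\flat$, one sets $z_i:=X(z\xi_i^\flat)$ (a compensated centred compound free Poisson family: bounded generators, $R_1=0$) and $p_i:=Y(z^\perp\xi_i^\flat)=X(z^\perp\xi_i^\flat)+\varphi(z^\perp\xi_i^\flat)$ (a compound free Poisson family); these are mutually free and free from $\{s_i\}$ by the free product structure, and collecting constants gives $\lambda_i=\tau(y_i)-\varphi(z^\perp\xi_i^\flat)$ and $y_i=\lambda_i+s_i+p_i+z_i$. The unbounded case would then be reduced to the bounded one by truncating $y_i$ and passing to the limit, or by running the same construction with $\pi_l(\xi_i)$ now an affiliated operator; the only delicate summand is the genuinely unbounded compound free Poisson part, for which the more explicit operator model used elsewhere in the paper is needed (the compensating constant $\varphi(z^\perp\xi_i^\flat)$ can fail to be finite).

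The main obstacle I anticipate is precisely the splitting of the infinite-weight, genuinely noncommutative Poisson part: replacing the single-variable statement ``the L\'evy measure is supported on $[-1,1]$'' by a purely von Neumann algebraic splitting requires a careful choice of the central projection $z$ — a naive choice as a supremum of spectral projections $\chi_{(1,\infty)}(|u_i|)$ fails for infinite $I$ because $\sum_i\varphi(\chi_{(1,\infty)}(|u_i|))$ need not be finite — and one must then check that $z_i=X(z\xi_i^\flat)$ and $p_i=Y(z^\perp\xi_i^\flat)$ really satisfy the definitions of the two kinds of free Poisson families, and in the unbounded regime that the compound piece admits a realization with finite centred cumulants.
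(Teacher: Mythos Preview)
Your overall plan matches the paper's: build the unimodular pseudo left Hilbert algebra $\frakA$ from the cumulants, realize $\{y_i-\tau(y_i)\}$ as $\{X(\xi_i)\}$ in $\Gamma(\frakA)$, and extract the semicircular part $s_i=X(\eta_i)$ from $(\frakA^2)^\perp$ via Corollary~\ref{multi Levy-Khin}. That part is correct and is exactly what the paper does.

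The genuine divergence, and the gap, is in how you split the Poisson part. You attempt to separate the compound and compensated pieces by a single \emph{central} projection $z\in\mathcal{Z}(M)$ chosen so that $\varphi|_{z^\perp Mz^\perp}$ is finite. This does not recover the required notions: ``compensated centred compound free Poisson'' means, in the paper's convention, that the L\'evy measure is supported in $[-1,1]$, which at the level of the Hilbert algebra is a condition on the \emph{size} of the generator (operator norm $\le 1$), not on living over a central summand of infinite weight. In the genuinely noncommutative situation $M$ may be a factor, making your central split vacuous; and even in the commutative single-variable case your finite/infinite-mass criterion does not coincide with the $[-1,1]$ cut when the L\'evy measure is itself finite. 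The paper instead splits each generator individually by its own spectral projection: with $a_i:=\pi_l(\xi_i^\flat)\in M_{\mathrm{s.a.}}$ it sets
\[
z_i=X\bigl(a_i\,\chi_{[-1,1]}(a_i)\bigr),\qquad
p_i=Y\bigl(a_i\,\chi_{(-\infty,-1)\cup(1,\infty)}(a_i)\bigr),
\]
using that $\tau_0(a_i^2)=\|\xi_i^\flat\|^2<\infty$ forces $\tau_0(\chi_{\{|a_i|>1\}})<\infty$, so the $Y$-term is well defined and compound free Poisson by~\eqref{cumulant transform for Y(x)}. No global central decomposition is needed, and your anticipated obstacle disappears.

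For the reduction from unbounded to bounded, the paper is also more concrete than your ``truncate and pass to the limit'': each single $y_i$ is first broken, via its \emph{one-variable} L\'evy--Khintchine data, into a free sum of bounded pieces $h_{i,0}$ (semicircular) and $h_{i,j}$ with L\'evy triple $(0,0,\chi_{(j-1,j]}\rho_i)$ for $j\ge 1$; the enlarged family $\{h_{i,j}\}_{i,j}$ is again J.F.I.D.\ and bounded, and one runs the bounded argument on it. This avoids the convergence and compensating-constant issues you flag at the end.
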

\begin{proof}
	First, for a general (unbounded) F.I.D. variable $ y $, if $ y $ has L\'evy triple $(a,b,\rho)$, then one can always write $y$ as a sum of freely independent (bounded) variables $ \{h_j\}_{j\geq 0} $, such that $ h_0 $ is semicircular and each $ h_n $ has L\'{e}vy triple $ (0,0,\chi_{(j-1,j]}\rho) $ for $j\geq 1$. Therefore, every $ y_i $ can be written as an (infinite) sum of free independently bounded variables $ \{h_{i,j}\}_{j\geq 0} $. Now the new family $ \{h_{i,j}\}_{i,j} $ is also a J.F.I.D. family of bounded variables. Therefore, by replacing $ \{y_i\}_{i\in I} $ with $ \{h_{i,j}\}_{i,j} $, it suffices to prove the statement for $ \{y_i\}_{i\in I} $ with each $ y_i $ bounded.
	
	Let $\frakA$ be the pseudo left Hilbert algebra from the previous Lemma, and consider its free Poisson algebra $\Gamma(\frakA)$. By Theorem \ref{multi Levy-Khin}, $$\Gamma(\frakA)=\Gamma\left( (\frakA^2)'' \oplus ( (\frakA^2)^\perp \cap D(S) ) \right) = \Gamma((\frakA^2)''\oplus (\frakA^2)^\perp)$$ where $ (\frakA^2)^\perp $ is equipped with the trivial multiplication $ uv =0 $. Since $ (\frakA^2)'' $ is unimodular, it is the (left) Hilbert algebra of a von Neumann algebra $M$ is equipped with a tracial weight $ \tau_0 $.
	
	Again, since the two families $\{ y_i\} $ and $ \{X(x_i)+ \tau(y_i) \in \Gamma(\frakA)\} $ has the same joint distribution, we can consider $ (\mathcal{A},\tau) $ as a subalgebra of $ (\Gamma(\frakA),\varphi_{\Omega}) $. For each $i\in I$, let $ {x_i}=a_i + b_i $ with $ a_i\in (\frakA^2)'' $ and $ b_i\in (\frakA^2)^{\perp} $. Now $ p_i := Y(a_i\cdot \chi_{(-\infty,1)\cup (1,\infty)}(a_i)) = X(\chi_{(-\infty,1)\cup (1,\infty)}(a_i)a_i)+\tau_0( a_i\cdot\chi_{(-\infty,1)\cup (1,\infty)}(a_i) ) \in \Gamma( \frakA^2)=\Gamma(M,\tau_0)$ is a J.F.I.D. compound free Poisson family, $z_i:=  X(a_i\cdot\chi_{[-1,1]}(a_i)) $ is a J.F.I.D. compensated centered free Poisson family, and $ s_i = X(b_i)\in \Gamma( (\frakA^2)^{\perp} ) $ is a semicircular family since the multiplication on $ (\frakA^2)^{\perp} $ is trivial. Finally, let $ \lambda_i = \tau(y_i)- \tau_0( a_i\cdot\chi_{(-\infty,1)\cup (1,\infty)}(a_i) )$, we then get
	$$   y_i = X(x_i)+\tau(y_i) = X(a_i)+X(b_i) +\tau(y_i) = \lambda_i + s_i + p_i+z_i. $$
\end{proof}

\begin{eg}
	If $ x $ is a bounded freely infinitely divisible random variables, then $ (M,\tau_0) $ is a separable abelian von Neumann algebra generated by a self-adjoint element $ a $, and thus we can assume $ M = L^\infty(\RR,\mu) $ for some measure $ \mu $ on $ \RR $ and $ a = t $ such that $ \int_{\RR}t^n d\mu(t) = R_n(x)$ for $n\geq 3$ and $ \int_{\RR} td\mu(t) =0 $. Therefore we can consider the measure $ \mu $ as the centered free L\'{e}vy measure of $x$.
\end{eg}

Note that for non-tracial cases, the argument in the above theorem still works if the null left-ideal $ N=\{p\in P: \langle p,p\rangle = 0 \} $ is a $*$-ideal of $ P = \CC_0\langle \{x_i\}_{i\in I} \rangle$. The simplest case is the free Araki-Woods algebra. However, in general, it is not clear whether $N$ is always a $*$-ideal.

\begin{eg}
	Consider the free Araki-Woods algebra $ \Gamma_{AW}(\RR^2,U_t) $ for two dimensional space with $$ U_t= \begin{pmatrix}
		\cos\log(\lambda)t\;& -\sin\log(\lambda)t\\
		\sin\log(\lambda)t\;& 	\cos\log(\lambda)t		
	\end{pmatrix} $$
	with $ 0<\lambda\leq 1 $ as in \cite{Shl97}. Then $ \Gamma_{AW}(\RR^2,U_t) $ is generated by two elements $ X_1 = l(e_1)+l^*(e_1) $ and $ X_2 = l(e_2)+l^*(e_2) $ where $ (e_1,e_2) $ is an orthogonal basis of $ \RR^2$. The free cumulants of $ X_i $ are $ R_{2}(X_1,X_1)=R_{2}(X_2,X_2)=1 $, $ R_2(X_1,X_2)=i(1-\lambda)/(1+\lambda)=-R_2(X_2,X_1) $, and $ R_{n}(X_{i_1},\cdots,X_{i_n}) =0 $ for any $ n\neq 2 $. Therefore for any $p\in P$ with no degree $1$ term, we have $ \langle p,p\rangle=0 $. So we get $ N = \{ p\in P: p \text{ has no degree } 1 \text{ term} \} $ is a $*$-ideal and $ \frakA = P/N $ is a two dimensional algebra $ \frakA = \CC {y_1} +\CC {y_2} $ with trivial multiplication. Obviously, the linear map $ L: {y_i}\mapsto e_i $ is an isometry from $ \frakA $ to $ (\RR^2\otimes \CC,\langle\cdot,\cdot \rangle_{U_t}  ) $. Moreover, $ L $ preserves the involutions.
\end{eg}

\subsection{Fock space realization of unbounded freely infinitely divisible variables}
For a bounded self-adjoint variable $x$, its Cauchy transform is $ G_x(z) = \varphi( 1/(z-x) ) = \sum_{n=0}^{\infty} m_n/z^{n+1} $, and its free cumulant transform and Voiculescu transform are $ C_x(z) = z(G_x)^{-1}(z)-1 = \sum_{n=1}^{\infty}\kappa_{n} z^n $ and $ \varphi_x(z) = zC_x(1/z) = \sum_{n=0}^{\infty}\kappa_{n+1}/z^{n} $, where $ m_n $ and $ \kappa_n $ are the moments and cumulants of $x$. Therefore, we can easily compute the free cumulant generating function of our field operators $ X(x) $ and $ Y(x) $. If $x\in (\nn_{\varphi}^*\cap \nn_{\varphi})_{s.a.}$ for some n.s.f. weight $ \varphi $, then the free cumulants transform is
\begin{equation}\label{cumulant transform for X(x)} C_{X(x)}(z) = \sum_{n=1}^\infty \kappa_{n}(X(x))z^n= \sum_{n=2}^{\infty} \varphi(x^n)z^n = \varphi( \frac{1}{1-zx}-1-zx ),\quad \varphi_{X(x)}(z) = \sum_{n=0}^{\infty}\frac{\varphi(x^{n+1})}{z^{n}}= \varphi( \frac{zx}{z-x} ).\end{equation}
If we furthermore assume that $ x\in \mm_\varphi $, then $ \varphi(x) $ exists and the free cumulant generating function of $Y(x)$ is
\begin{equation}\label{cumulant transform for Y(x)} C_{Y(x)}(z) = \sum_{n=1}^{\infty} \varphi(x^n)z^n = \varphi( \frac{1}{1-zx}-1) \end{equation} 
In particular, if $ \varphi $ is a state and $ \alpha >0 $, then $ Y_{\alpha}(x)\in \Gamma( M,\alpha\varphi ) $ has compound free Poisson distribution $ \pi_{\mu_{x},\alpha} $, and
$$ C_{Y_{\alpha}(x)}(z) = \alpha \varphi(\frac{1}{1-zx}-1) = \alpha ( G_x(1/z)/z-1 )$$

For a F.I.D. $\mu$, consider its L\'{e}vy It\^{o}'s decomposition $ \mu = \mu_1\boxplus\mu_2\boxplus \mu_3 $ as before. To realize $\mu_1$, $\mu_2$ and $\mu_3$ as variables on a single Fock space, we only need to consider the pseudo commutative Hilbert algebra $ \frakA_\rho^+ = \frakA_\rho\oplus \CC\xi =( L^\infty(\RR,\rho)\cap L^2(\RR,\rho) )\oplus \CC\xi $, where $\CC\xi$ is the one dimensional trivial algebra: $\xi = S\xi$, $ \xi^2=0 $, and $ \langle \xi,\xi \rangle=1$. Indeed, $ \mu_1$ and $\mu_2 $ can be realized in $ \Gamma(\frakA_\rho^+) $ as $ bX(\xi)+a $ and $ X(t\cdot\chi_{[-1,1]}(t)) $ (here $t\cdot\chi_{[-1,1]}(t)\in L^\infty(\RR,\rho)\cap L^2(\RR,\rho) $ since $ \int_{-1}^{1}t^2d\rho(t)\leq \int_{\RR} \min\{ 1,t^2 \}d\rho(t) < \infty $). If $ \rho $ is compactly supported, then $ \mu_3 $ can also be realized as $ Y( t\cdot \chi_{\RR/[-1,1]}(t) ) $ since $ t\cdot \chi_{\RR/[-1,1]} $ is bounded on the support of $ \rho $. Therefore, the only remained difficulty is to define $ Y(t\cdot \chi_{\RR/[-1,1]}(t)) $ when $ \rho $ has unbounded support and show that the free cumulants transform of $ Y(t\cdot \chi_{\RR/[-1,1]}(t)) $ is still of the form \eqref{cumulant transform for Y(x)}.

\begin{lem}\label{extension of Y}
	If $ \tau $ is a tracial n.s.f. weight on $M$, then the positive linear map $ Y: \mm_\tau \ni x\mapsto Y(x)\in \Gamma(M,\tau)$ can be extended to a positive linear map $ \tilde{Y}: (\mm_\tau+\mbox{FS}(\tilde{M}))\to \widetilde{\Gamma(M,\tau)} $, where $ \tilde{M} $ and $ \widetilde{\Gamma(M,\tau)} $ are the algebras of affiliated unbounded operators, and $ \mbox{FS}(\tilde{M}) $ is the algebra of operators in $ \tilde{M} $ with finite support (i.e. the support has finite trace). Moreover, for self-adjoint $x\in \mbox{FS}(\tilde{M})$, the free cumulants transform of $\tilde{Y}(x)$ is $$ C_{\tilde{Y}(x)}(z) = \tau( \frac{1}{1-zx}-1 ), \quad \forall z\notin \RR.$$
\end{lem}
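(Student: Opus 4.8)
Since $\tau$ is tracial, $\varphi_\Omega$ is a faithful normal trace on $\Gamma(M,\tau)$, so $\widetilde{\Gamma(M,\tau)}$ is a complete topological $*$-algebra for the measure topology; the plan is to obtain $\tilde Y$ by a limiting procedure inside it. For a self-adjoint $x\in\mbox{FS}(\tilde M)$ put $p:=s(x)$, so $\alpha:=\tau(p)<\infty$, and work inside the finite von Neumann algebra $pMp$ with the finite tracial weight $\tau_p:=\tau|_{pMp}=\alpha\varphi_0$ ($\varphi_0$ a faithful normal tracial state). There is a $\varphi_\Omega$-preserving inclusion $\Gamma(pMp,\tau_p)\subseteq\Gamma(M,\tau)$: for bounded $u\in pMp$ the operator $X(u)$ leaves $\FF(pL^2(M,\tau)p)$ invariant and, by Proposition~\ref{cumulants}, the joint free cumulants of $\{X(u)\}_{u\in pMp}$ involve only $\tau|_{pMp}$; hence $\widetilde{\Gamma(pMp,\tau_p)}$ sits, compatibly with the measure topology, inside $\widetilde{\Gamma(M,\tau)}$. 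Moreover, by Theorems~\ref{rescaling}--\ref{rescaling2}, $\Gamma(pMp,\tau_p)$ (or a corner of it when $\alpha<1$) is $\ast$-isomorphic, state-preservingly, to a tracial free product algebra $\mathcal N$ under which $Y(u)$ corresponds to $\beta\,w\,u\,w^\ast$ for a \emph{fixed bounded} element $w$ ($w=s$ or $w=ps$, $s$ the free Poisson generator) and a constant $\beta>0$.

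The map $u\mapsto\beta wuw^\ast$ is linear, positive, and measure-continuous (multiplication by fixed bounded operators is measure-continuous in a finite algebra), so it extends to a linear positive measure-continuous map $\widetilde{pMp}\to\widetilde{\mathcal N}$; transported back through the isomorphism this defines $\tilde Y(y)\in\widetilde{\Gamma(M,\tau)}$ for every self-adjoint $y$ affiliated with $pMp$, agreeing with $Y$ on bounded such $y$, and equal to $\lim_n Y(y\,\chi_{[-n,n]}(y))$. The value is independent of the auxiliary $p$ (two choices agree on bounded elements, hence everywhere by continuity). The convergence $Y(x_n)\to\tilde Y(x)$, $x_n:=x\,\chi_{[-n,n]}(x)$, can also be seen directly: for $n>m$, $Y(x_n)-Y(x_m)=Y(e)$ with $e=x\,\chi_{m<|x|\le n}(x)=e_+-e_-$, and since $Y$ is positive with $e_\pm\le\|e\|\,s(e_\pm)$ one gets $r(Y(e))\le s(Y(e_+))\vee s(Y(e_-))$, while $Y(s(e_\pm))$ has free Poisson law $\pi_{\tau(s(e_\pm))}$ with a $\max(1-\tau(s(e_\pm)),0)$ atom at $0$; hence $\varphi_\Omega\bigl(r(Y(x_n)-Y(x_m))\bigr)\le\tau\bigl(\chi_{m<|x|\le n}(x)\bigr)\to 0$, i.e.\ the difference vanishes off a projection of arbitrarily small trace.

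To get the map on the whole domain I would set $\tilde Y(a+x):=Y(a)+\tilde Y(x)$ for $a\in\mm_\tau$, $x\in\mbox{FS}(\tilde M)$. This is well defined: if $a+x=a'+x'$ then $a-a'=x'-x$ is a \emph{bounded} element of $\mbox{FS}(\tilde M)$, hence lies in $\mm_\tau$, where $\tilde Y$ coincides with $Y$ (for $x\in\mm_\tau$, $x_n\to x$ in $L^2(\tau)$ and $\tau(x_n)\to\tau(x)$, so $Y(x_n)\to Y(x)$ in $L^2(\varphi_\Omega)$, hence in measure). Linearity, and the fact that $\tilde Y$ extends $Y$ (every bounded element of $\mm_\tau+\mbox{FS}(\tilde M)$ already lies in $\mm_\tau$), are then immediate. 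For positivity, given $y=a+x\ge 0$ in the domain, the compressions $y_n:=k_nyk_n$ with $k_n:=(1-p)+\chi_{|x|\le n}(x)$ equal $k_nak_n+x\,\chi_{|x|\le n}(x)\in\mm_\tau^+$, so $\tilde Y(y_n)=Y(y_n)\ge 0$; since $k_nak_n\to a$ in $L^2(\tau)$ with $\tau(k_nak_n)\to\tau(a)$ and $x\,\chi_{|x|\le n}(x)\to x$ in the measure topology of $\widetilde{pMp}$ (on which $\tilde Y$ is continuous), $\tilde Y(y_n)\to\tilde Y(y)$, and the positive cone of $\widetilde{\Gamma(M,\tau)}$ is measure-closed.

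Finally, for the cumulant transform: each $x_n\in\mm_\tau$, so~\eqref{cumulant transform for Y(x)}, analytically continued in $z$, gives $C_{Y(x_n)}(z)=\tau\bigl(\tfrac{1}{1-zx_n}-1\bigr)=\tau\bigl(\tfrac{zx_n}{1-zx_n}\bigr)$ for $z\notin\RR$. For fixed $z\notin\RR$ the function $t\mapsto zt/(1-zt)$ is continuous on $\RR$ and tends to $-1$ at $\pm\infty$, hence bounded, so dominated convergence against the finite spectral measure of $x$ (total mass $\tau(p)$) yields $\tau\bigl(\tfrac{zx_n}{1-zx_n}\bigr)\to\tau\bigl(\tfrac{zx}{1-zx}\bigr)=\tau\bigl(\tfrac{1}{1-zx}-1\bigr)$; meanwhile $Y(x_n)\to\tilde Y(x)$ in measure forces weak convergence of spectral distributions, hence of Cauchy transforms and so of $C_{Y(x_n)}\to C_{\tilde Y(x)}$ locally uniformly off $\RR$, and equating the two limits proves the formula. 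The main obstacle is exactly this convergence step: the formal expression $X(x)+\tau(x)$ is meaningless for unbounded $x$ since $a^+(x_n)$ and the scalars $\tau(x_n)$ individually diverge and only their sum survives, forcing one to argue intrinsically in the measure topology, where the essential nontrivial input is that $Y$ maps small-support elements to small-support operators (supplied either by the explicit free Poisson law or by the free-product realization of Section~4).
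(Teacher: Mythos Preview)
Your proposal is correct and follows essentially the same route as the paper: reduce to the finite corner $pMp$ via $p=s(x)$, use the concrete free-product realization of $Y$ from Theorems~\ref{rescaling}--\ref{rescaling2} to extend it measure-continuously to $\widetilde{pMp}$, and pass to the limit along the truncations $x_n=x\,\chi_{[-n,n]}(x)$ for the cumulant formula. Your write-up is in fact more careful than the paper's (you address well-definedness on the sum $\mm_\tau+\mbox{FS}(\tilde M)$ and positivity explicitly); the one place where the paper is sharper is the final convergence step, where it invokes \cite[Prop.~5.7]{BV93} to go from convergence in law to convergence of Voiculescu transforms, whereas your passage ``Cauchy transforms converge, so $C_{Y(x_n)}\to C_{\tilde Y(x)}$'' hides exactly that nontrivial inversion step.
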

\begin{proof}
	Assume that the support $ p=s(x) $ of $ x\in \tilde{M} $ is finite, i.e. $ \tau(p)< \infty $. Then we can consider the subalgebra $ pMp \subseteq M$, and thus $ \Gamma(pMp,\tau) $ is a subalgebra of $ \Gamma(M,\tau) $. But since $ \tau(p)<\infty $, by the concrete form of $Y$ as in Theorem \ref{rescaling} and Theorem \ref{rescaling2}, we can extend $Y$ to the whole $ \widetilde{pMp} $, and we have $ \tilde{Y}(\widetilde{pMp}) \subseteq \widetilde{\Gamma(pMp,\tau)}\subseteq \widetilde{\Gamma(M,\tau)}$ as $ \tau $ is tracial.
	
	We now turn to prove the formula for $ C_{\tilde{Y}(x)}(z)$. Let $ x_n = xe_{x}([-n,n] ) $ where $ e_{x} $ is the spectral measure of $ x $, then $ x_n $ converges to $ x $ in probability and hence by the concrete form of $Y$ in Theorem \ref{rescaling} and Theorem \ref{rescaling2}, we also have $ Y(x_n) $ converges to $ \tilde{Y}(x) $ in probability. For each variable $u$, let $ \phi_u:=zC_u(1/z) $ be the Voiculescu transform of $u$. By equation \eqref{cumulant transform for Y(x)}, we have
	$$ \phi_{Y(x_n)}(z)= zC_{Y_{x_n}}(1/z) = \varphi(\frac{z^2}{z-x_n}) = \int_{-n}^{n}\frac{z^2}{z-t}d(\tau\circ e_x)(t)+ z\tau( e_x((-\infty,n)\cup (n,\infty)) ).$$
	By Proposition 5.7 \cite{BV93}, the Voiculescu transform of $ Y(x_n) $ also converges to the Voiculescu transform of $ \tilde{Y}(x) $: $$ \phi_{\tilde{Y}(x)}(z) = \lim_{n\to \infty} \phi_{Y(x_n)}(z) = \int_{\RR}\frac{z^2}{z-t}d(\tau\circ e_x)(t) = \tau(\frac{z^2}{z-x}).$$ Thus $\displaystyle C_{{\tilde{Y}(x)}}(z)  = z\phi_{\tilde{Y}(x)}(1/z) = \tau(\frac{1}{1-zx}-1 ).$
\end{proof}

The above discussion about the realization of $\mu$ can then be summarized into the following corollary.
\begin{cor}\label{realization of FID}
	For any freely infinitely divisible distribution $ \mu $ with L\'{e}vy triple $ (a,b,\rho) $, we can realize $ \mu $ as an affiliated operator $z$ of the free Poisson algebra $ \Gamma(\frakA_\rho^+) $ for the pseudo Hilbert algebra $ \frakA_\rho^+ := ( L^\infty(\RR,\rho)\cap L^2(\RR,\rho) )\oplus \CC\xi$ such that
	$$ z = a + bX(\xi) + X(x\cdot\chi_{[-1,1]}(x)) + \tilde{Y}( x\cdot\chi_{\RR/[-1,1]}(x) ).$$\qed
\end{cor}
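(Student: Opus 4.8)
The plan is to compute the free cumulant transform of $z$ and match it, term by term, with the free L\'evy--Khintchine formula \eqref{unbounded Levy-Khinchine}, using that a freely infinitely divisible law is uniquely determined by its free cumulant transform. First I would set up the ambient algebra: let $\tau:=\int_\RR\cdot\,d\rho$ be the trace on $M:=L^\infty(\RR,\rho)$ (tracial since $M$ is abelian), so that $\frakA_\rho=\nn_\tau\cap\nn_\tau^*=L^\infty(\RR,\rho)\cap L^2(\RR,\rho)$ is its full left Hilbert algebra, and observe that $\CC\xi$ (with $S\xi=\xi$, $\xi^2=0$, $\langle\xi,\xi\rangle=1$) carries trivial multiplication and is unimodular, so $\frakA_\rho^+$ is a unimodular pseudo left Hilbert algebra. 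Consequently, by the corollary on tracial vacuum states applied to $\Gamma(\frakA_\rho)=\Gamma(M,\tau)$ together with $\Gamma(\CC\xi)\cong L(\ZZ)$, the state $\varphi_\Omega$ is a trace on $\Gamma(\frakA_\rho^+)$, so affiliated unbounded operators and convergence in measure are at our disposal. By Corollary \ref{direct sum}, $\Gamma(\frakA_\rho^+)=\Gamma(\frakA_\rho)\ast\Gamma(\CC\xi)$; hence $a+bX(\xi)$ lies in the free factor $\Gamma(\CC\xi)$ while $X(x\chi_{[-1,1]}(x))$ lies in $\Gamma(\frakA_\rho)$. Moreover $\rho(\RR\setminus[-1,1])\le\int_\RR\min\{1,t^2\}\,d\rho<\infty$, so $x\chi_{\RR\setminus[-1,1]}(x)$ has finite $\tau$-support in $M$, and $\tilde Y(x\chi_{\RR\setminus[-1,1]}(x))$ is a well-defined self-adjoint operator affiliated with $\Gamma(\frakA_\rho)$ by Lemma \ref{extension of Y}. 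Thus $z$ is a self-adjoint affiliated operator of $\Gamma(\frakA_\rho^+)$.

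Second, I would compute the three cumulant transforms. Since $\xi^2=0$ forces $a^0(\xi)=\Lambda(\pi_l(\xi))=0$, the operator $X(\xi)=l(\xi)+l^*(\xi)$ is standard semicircular, so by Proposition \ref{cumulants} its only nonzero free cumulant is $R_2=\langle\xi,\xi\rangle=1$ and $C_{a+bX(\xi)}(w)=aw+b^2w^2$, which is exactly $C_{\mu_1}$. For the jump part, write $x'=t\chi_{[-1,1]}(t)\in\frakA_\rho$, $x''=t\chi_{\RR\setminus[-1,1]}(t)$, and set $x''_n:=x''\,e_{x''}([-n,n])=t\chi_{\{1<|t|\le n\}}(t)\in\mm_\tau$. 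The key identity, using linearity of $X$ and $Y(y)=X(y)+\tau(y)$, is
\[
X(x')+Y(x''_n)=X(x'+x''_n)+\tau(x''_n)=X\bigl(t\chi_{[-n,n]}(t)\bigr)+\int_{1<|t|\le n}t\,d\rho(t),
\]
so that \eqref{cumulant transform for X(x)} together with the splitting $[-n,n]=[-1,1]\,\sqcup\,\{1<|t|\le n\}$ yields
\[
C_{X(x')+Y(x''_n)}(w)=\int_{[-1,1]}\Bigl(\tfrac{1}{1-wt}-1-wt\Bigr)d\rho(t)+\int_{1<|t|\le n}\Bigl(\tfrac{1}{1-wt}-1\Bigr)d\rho(t).
\]
By Lemma \ref{extension of Y} we have $Y(x''_n)\to\tilde Y(x'')$ in measure, hence $X(x')+Y(x''_n)\to X(x')+\tilde Y(x'')$ in measure; continuity of the Voiculescu transform under such limits (Proposition 5.7 of \cite{BV93}, as already used in the proof of Lemma \ref{extension of Y}) and dominated convergence of the remaining integral let me pass to the limit and obtain
\[
C_{X(x')+\tilde Y(x'')}(w)=\int_\RR\Bigl(\tfrac{1}{1-wt}-1-wt\chi_{[-1,1]}(t)\Bigr)d\rho(t),
\]
i.e. the cumulant transform of $\mu_2\boxplus\mu_3$.

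Finally, since $a+bX(\xi)$ and $X(x')+\tilde Y(x'')$ are affiliated with the freely independent subalgebras $\Gamma(\CC\xi)$ and $\Gamma(\frakA_\rho)$, their free cumulant transforms add, and comparison with \eqref{unbounded Levy-Khinchine} gives
\[
C_z(w)=aw+b^2w^2+\int_\RR\Bigl(\tfrac{1}{1-wt}-1-wt\chi_{[-1,1]}(t)\Bigr)d\rho(t)=C_\mu(w).
\]
As $z$ is self-adjoint and a freely infinitely divisible law is determined by its free cumulant transform, $z$ has distribution $\mu$, which is the assertion. I expect the main obstacle to be the middle step: because the "identity function" $t$ is in general neither in $L^2(\rho)$ nor $\rho$-integrable, neither $X$ nor $Y$ applies to it directly, so the compensated bounded part $X(x')$ and the uncompensated tail $\tilde Y(x'')$ have to be reconciled through the truncations $x''_n$, and one must verify that the algebraic identity above, combined with the measure-convergence statement of Lemma \ref{extension of Y}, genuinely survives the limit $n\to\infty$ at the level of cumulant transforms; here one exploits that the integral defining $C_{\mu_3}$ already converges by the L\'evy--Khintchine hypothesis on $\rho$.
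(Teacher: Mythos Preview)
Your proof is correct and follows essentially the same approach as the paper: the corollary is presented there as a summary of the preceding discussion, which realizes $\mu_1$ via $bX(\xi)+a$, $\mu_2$ via formula \eqref{cumulant transform for X(x)} applied to $x'=t\chi_{[-1,1]}$, and $\mu_3$ via Lemma \ref{extension of Y} applied to $x''=t\chi_{\RR\setminus[-1,1]}$, then adds the cumulant transforms by free independence. The one minor variation is that you handle the jump part $X(x')+\tilde Y(x'')$ jointly through the truncation identity $X(x')+Y(x''_n)=X(t\chi_{[-n,n]})+\tau(x''_n)$ and a single limit, whereas the paper computes $C_{X(x')}$ and $C_{\tilde Y(x'')}$ separately and implicitly uses that $x'$ and $x''$ have disjoint supports (hence Corollary \ref{direct sum} gives their free independence); your route sidesteps that extra freeness appeal at the cost of re-running the Bercovici--Voiculescu continuity argument already inside Lemma \ref{extension of Y}.
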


\begin{rmk}
	We can see from the above formula that, unlike the classical case, a large family of freely infinitely divisible distributions have compact support. In particular, the compensated centered compound free Poisson distribution always has compact support, and the only possible unbounded part of $x$ is the compound free Poisson part, which, however, has a rather concrete representation as given in Theorem \ref{rescaling} and Theorem \ref{rescaling2}.
\end{rmk}

\section{Filtration algebras of free L\'{e}vy process}

With the help of Corollary \ref{realization of FID}, we can now realize a (one-parameter additive) free L\'{e}vy process in a single von Neumann algebra. For a free L\'{e}vy process with L\'{e}vy triplet $ (a,b,\rho) $, consider the Hilbert algebra $ \frakA_{\rho}^+\otimes \frakA_{\mathcal{L}} = \frakA_{\rho\times \mathcal{L}}\oplus (\CC\xi\otimes \frakA_\mathcal{L})  $ where $ \mathcal{L} $ is the Lebesgue measure on $\RR_+$, then by Corollary \ref{realization of FID}, we can realize the free L\'{e}vy process in $ \Gamma( \frakA_{\rho}^+\otimes \frakA_{\mathcal{L}} ) $ as
\begin{equation}\label{representation of free levy}
	 Z_t = at + bX(\xi\otimes\chi_{[0,t]}(y) ) + X(x\cdot\chi_{[-1,1]}(x) \chi_{[0,t]}(y)) + \tilde{Y}( x\cdot\chi_{\RR/[-1,1]}(x) \chi_{[0,t]}(y) ),
\end{equation}
where a point of $ \RR\times \RR_+ $ is denoted by $ (x,y) $. Note that since $ \CC\xi $ is the trivial algebra with zero multiplication, the multiplication of $\CC\xi\otimes \frakA_\mathcal{L}$ is also trivial and thus its free Poisson algebra $ \Gamma(  \CC\xi\otimes \frakA_\mathcal{L}) $ is simply the standard free Gaussian algebra $ \Gamma_{Gau}(L^2(\RR_+,\mathcal{L})) $.

In this section, we are mainly interested in the filtration algebras of $Z_t$, i.e. the von Neumann algebra $ M_t $ generated by $ \{Z_s\}_{0\leq s\leq t} $. We will show that if the free Gaussian part is nonzero, then for each $t>0$, $M_t$ is nothing other than $ \Gamma( \frakA_{\rho}^+\otimes \frakA_{\mathcal{L}|_{[0,t]}}   ) $. In particular, the algebra $ M_\infty $ generated by the whole process $\{Z_t\}_{t\geq 0}$ is exactly the entire algebra $ \Gamma( \frakA_{\rho}^+\otimes \frakA_{\mathcal{L}} ).$ In order to prove this, we first need to recall the higher variation processes for a free L\'{e}vy process introduced by \cite{AW18}. In terms of Fock space, those processes can be defined as follows.

\begin{defn}[\cite{AW18}]
	Let $ (Z_t)_{t\geq 0} $ be a free L\'{e}vy process with L\'{e}vy triple $(a,b,\rho)$ realized as \eqref{representation of free levy} in $\Gamma( \frakA_{\rho}^+\otimes \frakA_{\mathcal{L}} )$, then the $ k $-th variation of the process is the free L\'{e}vy process (in fact free compound Poisson)
	$$ Z^{\{k\}}_t := \delta_{2,k}b^2 t + \tilde{Y}(x^k \cdot\chi_{[0,t]}(y)), \quad \forall k\in \NN_+, k\geq 2.$$
\end{defn}

In \cite{AW18}, it is shown that for each $t>0$, $ \lim_{N\to \infty}\sum_{i=0}^{N-1}(Z_{{(i+1)t}/{N}}-Z_{{it}/{N}})^k$ converges to $ Z^{\{k\}}_t $ in distribution. And when $ Z_t $ is a free compound Poisson process with rate $1$, then the limit actually converges in probability. We will improve this result by showing that actually for all free L\'{e}vy processes, the limit converges in probability. Therefore, this implies that the $k$-th variation $Z_t^{\{k\}}$ is also affiliated with the filtration algebra $M_t$ of $Z_t$.

The following two lemmas will be useful to prove the convergence in probability. First, we need the following well known Khintchine inequality in \cite{Voi86} for the norm of sum of freely independent variables.
\begin{lem}\label{norm of sum}
	If $ X_1,\cdots, X_n $ are freely independent bounded self-adjoint random variables with identical distribution in a $C^*$ probability space $ (M,\varphi) $, then
	$$ \|\sum_{i=1}^{n} X_i\|\leq \|X_1\|+ 2\sqrt{n}\|X_1\|_{L^2}+ (n+1)| \varphi(X_1) | $$
\end{lem}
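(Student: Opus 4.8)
The plan is to reduce to freely independent \emph{centered} variables and then quote Voiculescu's Fock-space norm estimate \cite{Voi86}. First I would replace $X_i$ by $Y_i := X_i - \varphi(X_1)\,1$. Since the $X_i$ are freely independent with a common distribution and $\varphi(X_i)=\varphi(X_1)$, the $Y_i$ are freely independent, self-adjoint, identically distributed and centered, with $\|Y_i\|\le \|X_1\|+|\varphi(X_1)|$, $\|Y_i\|_{L^2}^2 = \varphi(X_1^2)-\varphi(X_1)^2\le \|X_1\|_{L^2}^2$, and $\sum_{i=1}^n X_i = \sum_{i=1}^n Y_i + n\varphi(X_1)\,1$. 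Hence $\bigl\|\sum_i X_i\bigr\|\le \bigl\|\sum_i Y_i\bigr\| + n|\varphi(X_1)|$, and it is enough to prove the centered inequality $\bigl\|\sum_{i=1}^n Y_i\bigr\|\le \|Y_1\| + 2\sqrt{n}\,\|Y_1\|_{L^2}$; combining the three estimates then produces exactly $\|X_1\|+2\sqrt n\,\|X_1\|_{L^2}+(n+1)|\varphi(X_1)|$.

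For the centered inequality I would use the full Fock space model of a free family. One represents $(Y_i)_{i=1}^n$ on $\mathcal{F}(\mathcal{H})$ with $\mathcal{H}=\bigoplus_{i=1}^n\mathcal{H}_i$ (pairwise orthogonal copies) so that $\sum_i Y_i$ has the same joint free cumulants --- hence, being self-adjoint, the same distribution and the same norm --- as an operator $B=\ell\bigl(\sum_i h_i\bigr)+\ell^*\bigl(\sum_i h_i\bigr)+\sum_i\Lambda(T_i)$, where $h_i\in\mathcal{H}_i$ with $\|h_i\|^2=\varphi(Y_i^2)=\|Y_1\|_{L^2}^2$ and the gauge operators $T_i$ are supported on $\mathcal{H}_i$; that the free cumulants match is exactly the content of Theorem \ref{cumulants of operators on Fock spaces}. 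Since the $h_i$ are orthogonal, $\ell\bigl(\sum_i h_i\bigr)+\ell^*\bigl(\sum_i h_i\bigr)$ is a semicircular element of norm $2\bigl\|\sum_i h_i\bigr\|=2\sqrt n\,\|Y_1\|_{L^2}$, while $\sum_i\Lambda(T_i)=\Lambda\bigl(\bigoplus_i T_i\bigr)$ has norm $\max_i\|T_i\|$, which can be arranged to be $\le \|Y_1\|$; the triangle inequality then gives the bound.

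The main obstacle is precisely the gauge term in the last step: for a general common distribution $\mu_0$ of the $Y_i$ (which need not be freely infinitely divisible) the preservation operator is \emph{not} simply multiplication by $t$ on $L^2$ of a measure supported in $[-\|Y_1\|,\|Y_1\|]$ --- indeed the shifted free cumulant sequence need not even be a Hamburger moment sequence --- so the clean estimate $\max_i\|T_i\|\le\|Y_1\|$ requires the self-adjoint dilation argument of \cite{Voi86}. This, together with the order-$\sqrt n$ (rather than order-$n$) behaviour that distinguishes free from classical independence, is where the real content lies, and I would therefore cite \cite{Voi86} for the centered estimate and keep the remaining bookkeeping --- the passage to centered variables and the collection of the three error terms --- as routine.
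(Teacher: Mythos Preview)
Your proposal is correct and follows exactly the same route as the paper: center the variables, invoke Voiculescu's inequality from \cite{Voi86} for the centered case, and then bound $\|X_1-\varphi(X_1)\|\le \|X_1\|+|\varphi(X_1)|$ and $\|X_1-\varphi(X_1)\|_{L^2}\le \|X_1\|_{L^2}$ to collect the terms. The paper simply cites Lemma~3.2 of \cite{Voi86} for the centered estimate without the Fock-space discussion, but your elaboration and your conclusion to cite \cite{Voi86} for that step match the paper's treatment.
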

\begin{proof}
	If $ \varphi(X_1) =0$, then the claim follows from Lemma 3.2 \cite{Voi86}. In general, apply Lemma 3.2 \cite{Voi86} to $ (X_i-\varphi(X_1))_{i=1}^n $, we have \begin{align*}
		\|\sum_{i=1}^{n} X_i\|&\leq \| \sum_{i=1}^{n} (X_i-\varphi(X_1)) \|+n|\varphi(X_1)|\leq \|X_1-\varphi(X_1)\|+2\sqrt{n}\|X_1-\varphi(X_1)\|_{L^2}+n|\varphi(X_1)|\\
		&\leq \|X_1\|+2\sqrt{n}\|X_1\|_{L^2}+(n+1)|\varphi(X_1)|
	\end{align*}
\end{proof}

In order to deal with unbounded operator, we also need the following lemma from \cite{BV93}.

\begin{lem}[Lemma 4.4 \cite{BV93}]\label{projection lemma}
	Let $ (M,\varphi) $ be a tracial $W^*$-probability space, $ X_1,X_2 $ be unbounded operators affiliated with $M$, $p_1,p_2\in M$ be two projections. Denote $ X'_1 = X_1p_1 $ and $ X'_2 = X_2p_2 $, then
	\begin{enumerate}[1)]
		\item There exists projection $ p\in M $, such that $ (X_1+X_2)p = (X'_1+X'_2)p $ and $ \varphi(p^\perp)\leq \varphi(p_1^\perp)+\varphi(p_2^\perp) $;
		\item There exists projection $ q\in M $, such that $ (X_1X_2)q = (X'_1X'_2)q $ and $ \varphi(q^\perp)\leq \varphi(p_1^\perp)+\varphi(p_2^\perp) $.
	\end{enumerate}
\end{lem}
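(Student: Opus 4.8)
The plan is to carry out everything inside the $*$-algebra $\tilde M$ of $\varphi$-measurable operators affiliated with the finite von Neumann algebra $M$, in which $X_1,X_2,X_1',X_2'$ are honest elements, $M$ sits as a unital subalgebra, and sum, product, distributivity and associativity are genuine identities. I will use two standard facts: (i) for projections $g,h\in M$ one has $\varphi(g\vee h)\le\varphi(g)+\varphi(h)$, equivalently $\varphi((p_1\wedge p_2)^\perp)\le\varphi(p_1^\perp)+\varphi(p_2^\perp)$; and (ii) every $B\in\tilde M$ has a polar decomposition $B=v|B|$ with $v\in M$ a partial isometry, so that its left support $l(B)=vv^*$ and right support $r(B)=v^*v$ satisfy $\varphi(l(B))=\varphi(r(B))$ by traciality, while $B\bigl(1-r(B)\bigr)=0$.

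For 1), I would write $X_i=X_i'+X_i(1-p_i)$ in $\tilde M$, so that $X_1+X_2=(X_1'+X_2')+\bigl(X_1(1-p_1)+X_2(1-p_2)\bigr)$, and then take $p:=p_1\wedge p_2$: since $(1-p_i)p=0$, right multiplication by $p$ annihilates the second bracket, giving $(X_1+X_2)p=(X_1'+X_2')p$, while $\varphi(p^\perp)=\varphi(p_1^\perp\vee p_2^\perp)\le\varphi(p_1^\perp)+\varphi(p_2^\perp)$ by (i). This part is immediate. For 2), I would expand $X_1X_2=(X_1p_1+X_1p_1^\perp)(X_2p_2+X_2p_2^\perp)$ and regroup to get $X_1X_2-X_1'X_2'=X_1X_2p_2^\perp+X_1B$, where $B:=p_1^\perp X_2p_2\in\tilde M$. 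Since $\operatorname{ran}B\subseteq\operatorname{ran}p_1^\perp$ we have $l(B)\le p_1^\perp$, hence $\varphi(r(B))=\varphi(l(B))\le\varphi(p_1^\perp)$ by (ii). I would then set $f:=1-r(B)$ (so $Bf=0$ and $\varphi(f^\perp)\le\varphi(p_1^\perp)$) and $q:=f\wedge p_2$. Now $q\le p_2$ gives $p_2^\perp q=0$, hence $(X_1X_2p_2^\perp)q=0$, and $q\le f$ gives $Bq=Bfq=0$, so $X_1Bq=0$; therefore $(X_1X_2)q=(X_1'X_2')q$, and $\varphi(q^\perp)=\varphi(f^\perp\vee p_2^\perp)\le\varphi(f^\perp)+\varphi(p_2^\perp)\le\varphi(p_1^\perp)+\varphi(p_2^\perp)$.

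The computation is short; the only point requiring real care — and the step I would treat as the main obstacle — is that these are identities among (generally unbounded) affiliated operators, so one must work in the ring $\tilde M$ and handle the support projections and the traciality estimate in (ii) correctly rather than reasoning naively with domains. (This is precisely Lemma 4.4 of \cite{BV93}, so alternatively one may simply invoke that reference.)
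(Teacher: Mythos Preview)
Your proof is correct. The paper does not supply its own proof of this lemma: it is stated as a direct citation of Lemma~4.4 in \cite{BV93} and used as a black box in the proof of Proposition~\ref{square variation}. Your argument (working in the $*$-algebra $\tilde M$ of measurable operators, taking $p=p_1\wedge p_2$ for the sum, and for the product using the right support of $B=p_1^\perp X_2p_2$ together with the trace identity $\varphi(r(B))=\varphi(l(B))\le\varphi(p_1^\perp)$) is exactly the standard proof from \cite{BV93}, so there is nothing to compare.
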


\begin{prop}\label{square variation}
	Let $ (Z_t)_{t\geq 0} $ be a free L\'{e}vy process with L\'{e}vy triple $(a,b^2,\rho)$ realized as \eqref{representation of free levy} in $\Gamma( \frakA_{\rho}^+\otimes \frakA_{\mathcal{L}} )$.
	\begin{enumerate}[a)]
		\item For a fixed $ t >0$ and $ k\geq 2 $,
		$$ \lim_{N\to \infty}\sum_{i=0}^{N-1}(Z_{{(i+1)t}/{N}}-Z_{{it}/{N}})^k = Z^{\{k\}}_t,$$
		with the limit being taken in probability in $ (\Gamma(\frakA_{\rho}^+\otimes \frakA_{\mathcal{L}} ),\varphi_\Omega) $.
		\item If furthermore $ \rho $ is compactly supported, then the limit actually converges in norm.
	\end{enumerate}
\end{prop}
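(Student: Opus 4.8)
The plan is to prove the two parts of Proposition~\ref{square variation} by reducing the convergence of the discrete $k$-th variation sums to the behaviour of each term in the L\'{e}vy--It\^{o} decomposition \eqref{representation of free levy} separately, and then to recombine using the two ``error-absorbing'' lemmas. Write $Z_t = at + bX(\xi\otimes\chi_{[0,t]}) + X(x\chi_{[-1,1]}\chi_{[0,t]}) + \tilde{Y}(x\chi_{\RR\setminus[-1,1]}\chi_{[0,t]})$, and abbreviate the increments over $[\tfrac{it}{N},\tfrac{(i+1)t}{N}]$ by $\Delta_i Z$ and similarly for the individual summands. Expanding $(\Delta_i Z)^k$ multinomially, we get a sum of products of increments of the four pieces. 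The heuristic is that the drift increments are $O(1/N)$, the semicircular increments are $O(1/\sqrt N)$ in $L^2$, the small-jump part is $O(1/\sqrt N)$ in $L^2$ as well (its free cumulants scale linearly in the length of the interval), while the large-jump part $\tilde{Y}(\cdot\,\chi_{[0,t]})$ is genuinely $O(1)$. So in the product of $k$ factors, the only term that survives in the limit is the one where every factor is a large-jump increment, plus (when $k=2$) the term where every factor is a semicircular increment, which contributes the $\delta_{2,k}b^2t$ correction. All mixed terms must be shown to vanish in probability.

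First I would handle the pure large-jump part in isolation: for the free compound Poisson process $W_t := \tilde{Y}(g\chi_{[0,t]})$ with $g = x\chi_{\RR\setminus[-1,1]}(x)$ bounded (case b) or merely affiliated with finite support (case a), one shows $\sum_i (\Delta_i W)^k \to \tilde{Y}(g^k\chi_{[0,t]})$ in probability. Here the key computational input is that $\Delta_i W = \tilde{Y}(g\chi_{[\,it/N,(i+1)t/N\,]})$ are freely independent (disjoint supports of the indicator functions in the second tensor leg), each with the compound-Poisson law of rate $t/N$, and by \eqref{cumulant transform for Y(x)}/Lemma~\ref{extension of Y} the free cumulants of $(\Delta_i W)^k$ are computable via $\tilde Y$ of powers; summing over $i$ and using that free cumulants of sums of free variables add, one identifies the limiting free cumulants with those of $\tilde{Y}(g^k\chi_{[0,t]})$. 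For case b) ($\rho$ compactly supported, so $g$ bounded), I would upgrade distributional convergence to norm convergence via Lemma~\ref{norm of sum}: the centered remainder $\sum_i (\Delta_i W)^k - \EE$-type corrections is a sum of $N$ i.i.d.\ free bounded variables whose $L^2$-norms are $O(1/\sqrt N)$ and whose means are $O(1/N)$, so Lemma~\ref{norm of sum} gives an $O(1/\sqrt N)$ bound on the operator norm; the mean/constant part is exactly $\varphi_\Omega$-evaluated and matches $Z_t^{\{k\}}$ up to an $O(1/N)$ discrepancy.

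Next I would show all cross terms vanish. In case b) this is again a direct norm estimate: any multinomial term containing at least one non-large-jump factor is a sum of $N$ free bounded variables each of $L^2$-norm $O(N^{-1/2}\cdot N^{-1/2}) = O(1/N)$ (because at least one factor contributes an extra $1/\sqrt N$ and the large-jump factors contribute $O(1)$ in $L^2$ but the product's $L^2$-norm still carries the small factor), with means $O(1/N)$ or smaller, so Lemma~\ref{norm of sum} bounds its norm by $O(1/\sqrt N)\to 0$; one has to be mildly careful with the $k=2$ all-semicircular term, which converges in norm to $b^2t\cdot 1$ by the classical free quadratic-variation computation (semicircular increments: $\sum_i (\Delta_i S)^2 \to b^2 t$, again via Lemma~\ref{norm of sum} applied to the centered version). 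In case a) I cannot pass to norms because $\tilde Y(g\chi_{[0,t]})$ is unbounded, so here I would use Lemma~\ref{projection lemma}: truncate $g$ to $g_n = g\,e_x([-n,n])$, which has finite support, so $W^{(n)}_t := \tilde Y(g_n\chi_{[0,t]})$ is bounded. On the compression by the spectral projection of $g$ corresponding to $[-n,n]$ (whose complement has trace $\rho(\RR\setminus[-1,1])\cap\{|x|>n\}\cdot t \to 0$ as $n\to\infty$, uniformly in $N$ since the projections in the second leg are disjoint) the truncated and untruncated processes agree, and Lemma~\ref{projection lemma} shows that replacing $Z$ by its truncation changes $\sum_i(\Delta_i Z)^k$ only on a projection of small trace. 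Thus: given $\varepsilon>0$, pick $n$ so that the truncation error is within $\varepsilon$ in the measure-topology; for the truncated (bounded) process apply case~b); then let $N\to\infty$ and $n\to\infty$.

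The main obstacle I anticipate is the bookkeeping in case~a): making the truncation uniform in $N$ and correctly tracking, via iterated applications of Lemma~\ref{projection lemma}, how the ``bad set'' projections accumulate when one expands $(\Delta_i Z)^k$ into $\sim 4^k$ multinomial terms and sums over $i=0,\dots,N-1$. Naively each of the $N$ increments contributes a bad-set projection of trace $O(c_n/N)$ where $c_n = t\cdot\rho(\{|x|>n\})\to 0$, and these add up to $O(c_n)$ total, which is what we want; but one must ensure the constant hidden in $O(\cdot)$ does not depend on $N$ or $k$ in a way that ruins the limit, and that the convergence-in-probability statement for the bounded truncation (case b) is genuinely uniform enough to interchange the two limits $N\to\infty$, $n\to\infty$. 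A clean way to organize this is to fix the metric $d(S,T) = \tau\big((|S-T|^2(1+|S-T|^2)^{-1})\big)$ (or any metric inducing the measure topology), prove an inequality of the shape $d\big(\sum_i(\Delta_iZ)^k, Z^{\{k\}}_t\big) \le d\big(\text{bounded truncation estimate}\big) + C\,c_n$ with $C$ independent of $N$, and then send $N\to\infty$ first (using case b for the truncation) and $n\to\infty$ afterwards. The other, more routine obstacle is verifying the free-cumulant identity $\sum_i \kappa_m\big((\Delta_iW)^k,\dots\big) \to \kappa_m\big(\tilde Y(g^k\chi_{[0,t]}),\dots\big)$, which follows from Proposition~\ref{cumulants}, Theorem~\ref{thm moments of product} (to compute cumulants of $k$-th powers of compound Poisson variables via Kreweras complements), and the additivity of free cumulants over free summands, but requires care with the $k=2$ semicircular contribution bleeding into the $b^2t$ term.
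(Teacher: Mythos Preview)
Your proposal is viable and for part~a) essentially matches the paper (truncate the L\'evy measure, use Lemma~\ref{projection lemma} to control the bad set, reduce to the bounded case), but for part~b) you take a genuinely different and harder route than the paper.

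You decompose $Z_t$ into four independent pieces and plan to expand $(\Delta_i Z)^k$ multinomially, handling the pure large-jump term via free-cumulant identification (invoking Theorem~\ref{thm moments of product} to compute cumulants of $k$-th powers) and then killing each cross term with Lemma~\ref{norm of sum}. The paper instead \emph{recombines} everything in the compactly supported case: since $x\in L^\infty\cap L^2$ when $\rho$ has compact support, one writes $Z_t = ct + X(f_t)$ with a single $f_t = (b\xi + x)\otimes\chi_{[0,t]}$, absorbs the constant $c$ via Lemma~\ref{norm of sum}, and then expands $X(f_{i,N})^k = (a^+(f_{i,N}) + a^0(f_{i,N}) + a^-(f_{i,N}))^k$ directly as a sum over words $\vec q\in\{+,0,-\}^k$. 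The key algebraic observation is that $f_{i,N}^k = x^k\chi_{[it/N,(i+1)t/N]}$ for $k\ge 2$ (the $b\xi$ part squares to zero), so the four words in the set $T=\{(0,\dots,0,+),(0,\dots,0),(-,0,\dots,0),(-,0,\dots,0,+)\}$ sum exactly to $Y(x^k\chi_{[it/N,(i+1)t/N]}) + \delta_{2,k}\langle f_{i,N},f_{i,N}\rangle$, which already contains the $\delta_{2,k}b^2t/N$ correction. Every remaining word has at least two $\pm$'s, hence picks up at least two factors of $\|f_{i,N}\|=O(N^{-1/2})$, and Lemma~\ref{norm of sum} finishes.

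What the paper's approach buys: it replaces your separate free-cumulant computation for the large-jump term and the case-by-case cross-term analysis with a single Fock-space identity plus a uniform word-counting estimate. Your approach would work, but the step ``free cumulants of $(\Delta_iW)^k$ match those of $\tilde Y(g^k\chi_{[0,t]})$'' via Kreweras complements is a nontrivial combinatorial exercise that the paper's method sidesteps entirely; likewise the $\delta_{2,k}b^2t$ term appears in the paper as $\langle f_{i,N},f_{i,N}\rangle$ automatically rather than as a separate semicircular quadratic-variation computation.
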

\begin{proof}
	We will use the short notation $ Z_{i,N} := Z_{{(i+1)t}/{N}}-Z_{{it}/{N}} $, and we need to show that $\displaystyle  \lim_{N\to \infty}\sum_{i=0}^{N-1} (Z_{i,N})^k = Z^{\{k\}}_t $.
	
	\begin{enumerate}[1)]
		\item 	We first prove the second claim. Assume that  $ \rho $ is supported in $ [-M,M] $ for some $0<1<M$, then in particular $ x\in L^2(\RR,\rho)\cap L^\infty(\RR,\rho) $. So, we can rewrite \eqref{representation of free levy} as
		$$ Z_t = t(a+\int_{\RR/[-1,1]} xd\rho(x)) +  bX(\xi\otimes\chi_{[0,t]}(y) ) + X( x\cdot \chi_{[0,t]}(y) ). $$
		Let $ c = a+\int_{\RR/[-1,1]} xd\rho(x) $, $ f_t= (b\xi+x)\otimes \chi_{[0,t]}(y)\in \frakA_{\rho}^+\otimes \frakA_{\mathcal{L}} $, and $ f_{i,N} = f_{(i+1)t/N}-f_{it/N} $. Then $ Z_t = tc+ X(f_t) $ and $ Z_{i,N} = tc/N + X(f_{i,N}) $. We claim that the constant $c$ does not contribute to the limit $\lim_{N\to \infty}\sum_{i=0}^{N-1} (Z_{i,N})^k $. Indeed, we have $$ \sum_{i=0}^{N-1} (Z_{i,N})^k = \sum_{i=0}^{N-1}X(f_{i,N})^k + \sum_{i=0}^{N-1}\sum_{p=1}^{k}{k\choose p}(tc/N)^pX(f_{i,N})^{k-p}.$$
		Notice that $ I_{i,N}:=(\sum_{p=1}^{k}{k\choose p}(tc/N)^pX(f_{i,N})^{k-p})_{i=0}^{N-1} $ are freely independent variables with the same distribution for $ i=0,\cdots,N-1 $, by Lemma \ref{norm of sum}, $$ \|\sum_{i=0}^{N-1} (Z_{i,N})^k - \sum_{i=0}^{N-1}X(f_{i,N})^k\|=\|\sum_{i=0}^{N-1}I_{i,N}\| \leq \|I_{1,N}\|+2\sqrt{N}\| I_{1,N}\|_{L^2}+ (N+1)|\varphi_{\Omega}(I_{1,N})|.$$
		So, we only need to estimate those three terms. Assume that $ N>tc $. Then since $ \|X(f_{i,N})\|\leq \|a^+(f_{i,N})\|+\|a^-(f_{i,N})\|+\|a^0(f_{i,N})\|\leq 2\|f_{i,N}\|+ M = 2\sqrt{1/N}\|f_t\|+M$, we have $$ \|I_{1,N}\|\leq \| \sum_{p=1}^{k}{k\choose p}(tc/N)^pX(f_{1,N})^{k-p} \| \leq (tc/N)2^k(2\|f_{t}\|+M)^k=O(1/N).$$ Also \begin{align*}
			\|I_{1,N}\|_{L^2}&\leq  \sum_{p=1}^{k}{k\choose p}(tc/N)^p\|X(f_{1,N})^{k-p}\|_{L^2}\leq (tc/N)^k+\sum_{p=1}^{k-1}{k\choose p}(tc/N)^p\|X(f_{1,N})\|^{k-p-1}\|f_{1,N}\|\\&\leq (tc/N)2^k (2\|f_t\|+M)^{k-p-1}\sqrt{1/N}\|f_t\|=O(1/N^{3/2}).
		\end{align*}
		Now, we have
		\begin{align*}
			&\|\sum_{i=0}^{N-1} (Z_{i,N})^k - \sum_{i=0}^{N-1}X(f_{i,N})^k\|\leq \|I_{1,N}\|+2\sqrt{N}\| I_{1,N}\|_{L^2}+ (N+1)|\varphi_{\Omega}(I_{1,N})|\\ \leq& \|I_{1,N}\|+2\sqrt{N}\| I_{1,N}\|_{L^2}+ (N+1)\| I_{1,N}\|_{L^1}
			\leq \|I_{1,N}\|+(2\sqrt{N}+N+1)\| I_{1,N}\|_{L^2}\\ \leq& O(1/N)+(2\sqrt{N}+N)O(1/N^{3/2}) = O(1/N^{1/2}).
		\end{align*}
		
		Therefore, the constant $c$ does not contribute to the limit, and we only need to show that $ \sum_{i=0}^{N-1}X(f_{i,N})^k $ converges to $ \delta_{2,k}b^2t+ Y(x^k \cdot \chi_{[0,t]}(y)) $ in norm. Let $ d_{i.N} = X(f_{i,N})^k - (\delta_{2,k}b^2t/N+ Y(x^k \cdot \chi_{[it/N,(i+1)t/N]}(y)))$, then again $ (d_{i,N})_{i=0}^{N-1} $ are freely independent and have the same distribution. Apply Lemma \ref{norm of sum}, we have
		$$ \|\sum_{i=0}^{N-1}X(f_{i,N})^k- \delta_{2,k}b^2t+ Y(x^k \cdot \chi_{[0,t]}(y))\|= \|\sum_{i=0}^{N-1}d_{i,N}\|\leq \| d_{1,N} \|+2\sqrt{N}\|d_{1,N}\|_{L^2}+(N+1)|\varphi_{\Omega}(d_{i,N})| . $$
		It remains to estimate $ \|d_{1,N}\|_{L^2} $, $\|d_{1,N}\|$ and $ \varphi_{\Omega}(d_{1,N}) $. Notice that since $ f_{i,N}^k = x^k\chi_{[it/N,(i+1)t/N]}(y) $ for $k\geq 2$, we can write $ d_{i,N} $ in terms of $ a^+(f_{i,N}) $, $ a^-(f_{i,N}) $, and $ a^0(f_{i,N}) $ as
		\begin{align*}
			d_{i,N} = &(a^+(f_{i,N})+a^0(f_{i,N})+ a^-(f_{i,N}))^k - (a^+(f_{i,N}^k)+a^0(f_{i,N}^k)+ a^-(f_{i,N}^k)+ \langle f_{i,N},f_{i,N}^{k-1}\rangle)\\
			=& (a^+(f_{i,N})+a^0(f_{i,N})+ a^-(f_{i,N}))^k\\&-  a^0(f_{i,N})^{k-1}a^+(f_{i,N})-a^0(f_{i,N})^k - a^-(f_{i,N})a^0(f_{i,N})^{k-1}- a^-(f_{i,N})a^0(f_{i,N})^{k-2}a^+(f_{i,N})\\
			=& \sum_{\substack{\vec{q}\in \{+,0,-\}^k\\ \vec{q}\notin T}}a^{q_1}(f_{i,N})\cdots a^{q_k}(f_{i,N})
		\end{align*}
		where $ T = \{ (0,\cdots,0,+),(0,\cdots,0),(-,0,\cdots,0),(-,0,\cdots,0,+) \} \subseteq \{+,0,-\}^k$. Note $ \|a^+(f_{i,N})\|=\|a^-(f_{i,N})\| =\|f_{i,N}\|= \sqrt{1/N}\|f_t\| $ and $ \|a^{0}(f_{i,N}) \|= M $. Assume that $ N $ is large enough so that $ \|a^{0}(f_{i,N}) \|= M> \|a^+(f_{i,N})\|=\|a^-(f_{i,N})\| = \sqrt{1/N}\|f_t\|  $, then $ \|a^{q_1}(f_{i,N})\cdots a^{q_k}(f_{i,N})\|\leq \sqrt{1/N}\|f_t\|M^{k-1} $ for all $\vec{q}\neq (0,\cdots,0)$. Therefore, $$ \|d_{i,N}\| = \| \sum_{\vec{q}\in \{+,0,-\}^k/T}a^{q_1}(f_{i,N})\cdots a^{q_k}(f_{i,N}) \|\leq \sqrt{1/N}3^k\|f_t\|M^{k-1}=O(1/N^{1/2}). $$
		On the other hand, $\|a^{q_1}(f_{i,N})\cdots a^{q_k}(f_{i,N})\|_{L^2} = \|a^{q_1}(f_{i,N})\cdots a^{q_k}(f_{i,N})\Omega\|$ is nonzero only when the last term is a creation operator, i.e. $ q_k = + $. But for $\vec{q}\notin T$, $\vec{q}\neq (0,\cdots,0,+)$ and thus $ a^{q_1}(f_{i,N}),\cdots, a^{q_{k-1}}(f_{i,N}) $ contains at least one creation or annihilation operator. Therefore, we have for all $\vec{q}\notin T  $, \begin{align*}
			&\|a^{q_1}(f_{i,N})\cdots a^{q_k}(f_{i,N})\|_{L^2}\leq \| a^{q_1}(f_{i,N})\cdots a^{q_{k-1}}(f_{i,N}) \|\|f_{i,N}\|\\ =& M^{k-1}\sqrt{1/N}\|f_t\|\sqrt{1/N}\|f_t\|= (1/N)M^{k-1}\|f_t\|^2 .
		\end{align*}  Thus,$$ \|d_{i,N}\|_{L^2}\leq \sum_{\vec{q}\notin T}\|a^{q_1}(f_{i,N})\cdots a^{q_k}(f_{i,N})\|_{L^2}\leq (1/N)3^k M^{k-1}\|f_t\|^2 = O(1/N).$$
		Finally, for $ \varphi(d_{i,N}) = \langle \Omega,   \sum_{\vec{q}\notin T}a^{q_1}(f_{i,N})\cdots a^{q_k}(f_{i,N})\Omega\rangle = \sum_{\vec{q}\notin T}\langle  \Omega,  a^{q_1}(f_{i,N})\cdots a^{q_k}(f_{i,N})\Omega\rangle$, we note that $ \langle  \Omega,  a^{q_1}(f_{i,N})\cdots a^{q_k}(f_{i,N})\Omega\rangle $ is nonzero only when $ q_1 = - $ and $q_k=+$. But again since $ (-,0,\cdots,0,+)\in T $, for $\vec{q}\notin T$, $ a^{q_{2}}(f_{i,N}),\cdots,a^{q_{k-1}}(f_{i,N}) $ must contains at least one creation or annihilation operator. Therefore, we have for all $\vec{q}\notin T$, $ | \langle \Omega,  a^{q_1}(f_{i,N})\cdots a^{q_k}(f_{i,N})\Omega\rangle |\leq \|a^{q_2}(f_{i,N})\cdots a^{q_{k-1}}(f_{i,N}) \|\|f_{i,N}\|^2 \leq \sqrt{1/N}\|f_t\|M^{k-2}(1/N)\|f_t\|^2 =  (1/N^{3/2})M^{k-2}\|f_t\|^3 $, and thus
		$$ |\varphi_{\Omega}(d_{i,N})| \leq \sum_{\vec{q}\notin T}\|\langle  \Omega,  a^{q_1}(f_{i,N})\cdots a^{q_k}(f_{i,N})\Omega\rangle\|\leq (1/N^{3/2})3^kM^{k-2}\|f_t\|^3= O(1/N^{3/2}). $$
		Finally, combine all the estimates, we obtain
		$$ \|\sum_{i=0}^{N-1}X(f_{i,N})^k- \delta_{2,k}b^2t+ Y(x^k \cdot \chi_{[0,t]}(y))\|= \|\sum_{i=0}^{N-1}d_{i,N}\|\leq \| d_{1,N} \|+2\sqrt{N}\|d_{1,N}\|_{L^2}+(N+1)|\varphi_{\Omega}(d_{i,N})|= O(1/N^{1/2}).$$

		\item For a general measure $\rho$ with $  \int \min\{1,x^2\}d\rho(x) <\infty $, we will reduce the case to 1) by multiplying the term we want to estimate by a large enough projection $q$. For each $ \varepsilon>0 $, we can choose $ M>1>0 $ such that $ \rho( (-\infty, -M)\cup (M,\infty) )< \varepsilon/(t(k+1)) $. For each $i\leq N-1$, let $ q_{M,i,N} $ be the orthogonal completement of the support of $ Y( \chi_{ (-\infty, -M)\cup (M,\infty) }(x)\cdot \chi_{[it/N,(i+1)t/N]}(y) ) $, then $ \varphi_{\Omega}(q_{M,i,N}) \geq 1-t/N\rho(  (-\infty, -M)\cup (M,\infty) ) \geq 1-\varepsilon/(N(k+1))  $ (by Lemma \ref{support of Y}). Denote $S_{M,i,N}$ the product space $ (  (-\infty, -M)\cup (M,\infty) ) \times [it/N,(i+1)t/N] $. Apply Lemma \ref{support of Y} to the subalgebra $ \Gamma( L^{\infty}( S_{M,i,N}  , \rho\otimes \mathcal{L}) )\subseteq \Gamma(\frakA_{\rho}^+\times \frakA_{\mathcal{L}|_{[0,t]}} ) $, we have $ \tilde{Y}(f ) q_{M,i,N}= q_{M,i,N}\tilde{Y}(f) = 0 $ for any $ f\in L^{0}( S_{M,i,N}  , \rho\otimes \mathcal{L})  $. Therefore, we have \begin{align*}
			&Z_{i,N}q_{M,i,N} \\= &[at/N + bX(\xi\otimes\chi_{[\frac{it}{N},\frac{(i+1)t}{N}]}(y) ) + X(x\cdot\chi_{[-1,1]}(x) \chi_{[\frac{it}{N},\frac{(i+1)t}{N}]}(y)) + \tilde{Y}( x\cdot\chi_{\RR/[-1,1]}(x) \chi_{[\frac{it}{N},\frac{(i+1)t}{N}]}(y) )]q_{M,i,N}\\
			=&[at/N + bX(\xi\otimes\chi_{[\frac{it}{N},\frac{(i+1)t}{N}]}(y) ) + X(x\cdot\chi_{[-1,1]}(x) \chi_{[\frac{it}{N},\frac{(i+1)t}{N}]}(y)) + {Y}( x\cdot\chi_{[-N,-1)\cup (1,M]}(x) \chi_{[\frac{it}{N},\frac{(i+1)t}{N}]}(y) )]q_{M,i,N},
		\end{align*}
		Let $ Z'_t= at + bX(\xi\otimes\chi_{[0,t]}(y) ) + X(x\cdot\chi_{[-1,1]}(x) \chi_{[0,t]}(y)) + {Y}( x\cdot\chi_{[-M,1)\cup (1,M]}(x) \chi_{[0,t]}(y) $ be the free L\'{e}vy process with L\'{e}vy triple $ (a,b^2,\rho|_{[-M,M]}) $, and set again $Z'_{i,N} = Z'_{(i+1)t/N}-Z'_{it/N}$. Then the equation above says exactly
		$$ Z_{i,N}q_{M,i,N} = Z'_{i,N}q_{M,i,N},\quad \forall 0\leq i\leq N-1.$$
		For the same reason, we have $ Z^{\{k\}}_{i,N}q_{M,i,N}= [ \delta_{2,k} b^2t/N+ Y(x^k\cdot \chi_{[it/N,(i+1)t/N]}(y))]q_{M,i,N} =  [ \delta_{2,k} b^2t/N+ Y(x^k\cdot \chi_{[-M,M]}(x)\chi_{[it/N,(i+1)t/N]}(y))]q_{M,i,N} =  {Z'}^{\{k\}}_{i,N}q_{M,i,N}$
		
		Now, since $ \sum_{i=0}^{N-1}(Z_{i,N})^k - Z_t^{\{k\}} = \sum_{i=0}( (Z_{i,N})^k - Z_{i,N}^{\{k\}} ) $ is a polynomial of $ Z_{i,N} $'s and $ Z_{i,N}^{\{k\}} $'s, apply Lemma \ref{projection lemma}, for each $N\geq 1$, there exists a projection $q_{M,N} \in \Gamma(\frakA_{\rho}^+\otimes \frakA_{\mathcal{L}} )$ such that $$ [\sum_{i=0}^{N-1}((Z_{i,N})^k - Z_{i,N}^{\{k\}})]q_{M,N} = [\sum_{i=0}^{N-1}((Z'_{i,N})^k - Z'^{\{k\}}_{i,N} )]{q_{M,N}}, $$ and $\varphi_{\Omega}( q_{M,N}^{\perp} )\leq \sum_{i=0}^{N-1} (k+1)\varphi_{\Omega}( q_{M,i,N}^\perp ) \leq N(k+1)\varepsilon/(N(k+1))=\varepsilon $. But since $Z'_t$ is a bounded free L\'{e}vy process, by 1), for fixed $M$, $\|\sum_{i=0}^{N-1}(Z'_{i,N})^k - Z'^{\{k\}}_t \| = O(1/N^{1/2})$, we can choose a $ N_0 $ such that $\forall N\geq N_0$, $ \|\sum_{i=0}^{N-1}(Z'_{i,N})^k - Z'^{\{k\}}_t \|\leq \varepsilon $, and in particular $ \|[\sum_{i=0}^{N-1}((Z_{i,N})^k - Z_{i,N}^{\{k\}})]q_{M,N}\| = \|[\sum_{i=0}^{N-1}((Z'_{i,N})^k - Z'^{\{k\}}_{i,N} )]{q_{M,N}}\| \leq \varepsilon$. Thus, we have shown that $ \forall \varepsilon $, there exists a $ N_0\in \NN_+ $ such that $ \forall N\geq N_0$, there exists a projection $ q=q_{M,N} $ such that $  \|[\sum_{i=0}^{N-1}(Z_{i,N})^k - Z_{t}^{\{k\}}]q\|\leq \varepsilon $, and $ \varphi_\Omega(q^\perp)\leq \varepsilon $, which is exactly saying that $ \sum_{i=0}^{N-1}(Z_{i,N})^k $ converges to $Z_{t}^{\{k\}}  $ in probability.
	\end{enumerate}
\end{proof}

\begin{thm}
	Let $ Z_t $ be the free L\'{e}vy process in $ \Gamma( \frakA_\rho^+\otimes \frakA_{\mathcal{L}} ) $ as \eqref{representation of free levy}. If the free Gaussian part of $Z_t$ is nonzero, i.e. $b \neq 0$, then the von Neumann algebra $M_t$ generated by $ (Z_s)_{s\leq t} $ is $ \Gamma( \frakA_\rho^+\otimes \frakA_{\mathcal{L}|[0,t]} ) $. If $Z_t$ has no free Gaussian part (i.e. $b=0$), then $M_t = \Gamma( \frakA_\rho\otimes \frakA_{\mathcal{L}|[0,t]} ) )$. Here $\frakA_{\mathcal{L}|[0,t]} $ is the Hilbert algebra $ L^\infty([0,t],\mathcal{L}|_{[0,t]}) $.
\end{thm}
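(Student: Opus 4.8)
The plan is to prove the two inclusions separately. That $M_t\subseteq\Gamma(\frakA_\rho^+\otimes\frakA_{\mathcal{L}|[0,t]})$ (and $M_t\subseteq\Gamma(\frakA_\rho\otimes\frakA_{\mathcal{L}|[0,t]})$ when $b=0$) is immediate from the realization \eqref{representation of free levy}: for each $s\le t$ every term of $Z_s$ is built from creation, annihilation and preservation operators attached to vectors supported on $\RR\times[0,s]$, or --- for the large-jump term --- is $\tilde Y$ of an affiliated operator of finite support on $\RR\times[0,s]$ (Lemma \ref{extension of Y}); hence $Z_s$ is affiliated with $\Gamma(\frakA_\rho^+\otimes\frakA_{\mathcal{L}|[0,s]})\subseteq\Gamma(\frakA_\rho^+\otimes\frakA_{\mathcal{L}|[0,t]})$, and when $b=0$ the $\CC\xi$-component never enters.

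For the reverse inclusion I first assume $\rho$ compactly supported, so the weight on $\frakA_\rho\otimes\frakA_{\mathcal{L}|[0,t]}$ is finite. By Proposition \ref{square variation}, for every $k\ge2$ and $0\le r\le s\le t$ the higher-variation increment $Z^{\{k\}}_s-Z^{\{k\}}_r=\delta_{2,k}b^2(s-r)+\tilde Y(x^k\chi_{[r,s]}(y))$ is a limit in probability of polynomials in increments of $Z$, hence lies in $\widetilde{M_t}$; thus $X(x^k\chi_A(y))\in M_t$ for all $k\ge2$ and every subinterval $A\subseteq[0,t]$. Now $\{x^k:k\ge2\}$ generates $L^\infty(\RR,\rho)$ as a von Neumann algebra (since $\rho(\{0\})=0$, so $x=x^3(x^2)^{-1}$ off the null set $\{x=0\}$) and spans an $L^2(\RR,\rho)$-dense subspace (a short moment-problem argument: if $\int x^k g\,d\rho=0$ for all $k\ge2$ then $g\,d\rho$ must be a multiple of $\delta_0$, hence $0$). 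Taking products and the differences $\chi_{[0,s]}-\chi_{[0,s']}$, the $*$-algebra generated by $\{x^k\chi_{[0,s]}(y):k\ge2,\ s\le t\}$ is the linear span of $\{x^k\chi_A(y):k\ge2,\ A\subseteq[0,t]\ \text{an interval}\}$, which is $\sigma$-weakly total in $L^\infty(\RR\times[0,t],\rho\times\mathcal{L})$ and has $L^2$-dense image under $\eta$; since $X$ is linear and maps this span into $M_t$, Proposition \ref{density results} (applied to this dense sub-left-Hilbert-algebra) gives $\Gamma(\frakA_\rho\otimes\frakA_{\mathcal{L}|[0,t]})\subseteq M_t$. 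If $b\ne0$, subtracting from $Z_s$ the $\rho$-parts --- now known to lie in $\widetilde{M_t}$ --- leaves $bX(\xi\otimes\chi_{[0,s]})\in\widetilde{M_t}$; as $\{\chi_{[0,s]}:s\le t\}$ is total in $L^2([0,t])$ these generate the free Gaussian factor $\Gamma(\CC\xi\otimes\frakA_{\mathcal{L}|[0,t]})$, and by Corollary \ref{direct sum} this factor and $\Gamma(\frakA_\rho\otimes\frakA_{\mathcal{L}|[0,t]})$ together generate $\Gamma(\frakA_\rho^+\otimes\frakA_{\mathcal{L}|[0,t]})$.

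To remove the compactness hypothesis I exhaust $\rho$ by the bounded bands $B=\{\varepsilon\le|x|\le R\}$: each $\rho|_B$ is compactly supported, so --- once the process with jumps truncated to $B$ is known to be affiliated with $M_t$ --- the previous step gives $\Gamma(\frakA_{\rho|_B}\otimes\frakA_{\mathcal{L}|[0,t]})\subseteq M_t$. Letting $\varepsilon\downarrow0$, $R\uparrow\infty$ and using that $g\mapsto X(g\chi_A(y))$ is continuous for the strong operator topology along $L^\infty$-bounded sequences converging in $L^2$ and pointwise a.e. (dominated convergence for the creation/annihilation parts, and on each fixed elementary tensor for the preservation part) recovers all generators $X(f\chi_A(y))$ with $f\in L^\infty\cap L^2(\RR,\rho)$; hence $\Gamma(\frakA_\rho\otimes\frakA_{\mathcal{L}|[0,t]})\subseteq M_t$, and the Gaussian generators and the large-jump term are then treated as above (via Lemma \ref{extension of Y} and Corollary \ref{direct sum}).

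The hard part is the step just invoked: showing that the part of $Z$ carrying jumps in a fixed bounded band is affiliated with $M_t$. This does \emph{not} follow from $Z_s\in\widetilde{M_t}$ alone --- a von Neumann algebra generated by a sum of freely independent operators need not contain the summands --- so the higher-variation data must be used quantitatively. The plan there is to iterate Proposition \ref{square variation} on the auxiliary free L\'evy processes $\tilde Y(x^2\chi_{[0,\cdot]}(y))$, $Z_s+\tilde Y(x^2\chi_{[0,s]}(y))-\text{(scalar)}$, and so on, to obtain $\tilde Y(p(x)\chi_{[r,s]}(y))\in\widetilde{M_t}$ for every polynomial $p$ with $p(0)=p'(0)=0$, and then to combine this with the concrete compound-Poisson model of Theorems \ref{rescaling}--\ref{rescaling2} and Lemma \ref{extension of Y}, which describe the finite-intensity large-jump algebra explicitly enough to isolate the band-truncated process inside $\widetilde{M_t}$ by measurable functional calculus. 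I expect disentangling the finite-intensity large jumps from the (compactly supported) small-jump and Gaussian parts to be where the bulk of the technical work lies.
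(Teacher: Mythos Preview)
Your treatment of the compactly supported case is correct and matches the paper's strategy: the higher variations from Proposition \ref{square variation} place $\tilde Y(x^k\chi_{[0,s]}(y))$ in $\widetilde{M_t}$, the density argument via Proposition \ref{density results} then gives $\Gamma(\frakA_\rho\otimes\frakA_{\mathcal{L}|[0,t]})\subseteq M_t$, and subtraction isolates the Gaussian generators.

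The gap is in the non-compact case, and it is exactly where you locate it --- but the resolution is not to truncate the \emph{process}. You try to show that the band-truncated L\'evy process is affiliated with $M_t$, which (as you note) does not follow formally and would require real work. The paper avoids this entirely by truncating the \emph{test functions} instead. The point is that Proposition \ref{square variation} already holds for general $\rho$, so $\tilde Y(x^2p(x)\chi_{[0,t]}(y))\in\widetilde{M_t}$ for every polynomial $p$, with no compactness assumption. Now fix $f\in C_c(\RR\setminus\{0\})$: choose $M_n$ with $t\rho(\RR\setminus[-M_n,M_n])<1/n$ and polynomials $p_n$ with $|p_n(x)-f(x)/x^2|<1/(nM_n^2)$ on $[-M_n,M_n]$. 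Split $\tilde Y(x^2p_n(x)\chi_{[0,t]}(y))-Y(f(x)\chi_{[0,t]}(y))$ into its contributions on $[-1,1]$, on $[-M_n,M_n]\setminus[-1,1]$, and on $\RR\setminus[-M_n,M_n]$; the first two go to $0$ in norm, and the third is killed by the projection $q_n$ complementary to the support of $Y(\chi_{\RR\setminus[-M_n,M_n]}(x)\chi_{[0,t]}(y))$, exactly as in the proof of Proposition \ref{square variation}(a). This gives $Y(f(x)\chi_{[0,t]}(y))\in M_t$ for all $f\in C_c(\RR\setminus\{0\})$, after which Proposition \ref{density results} applied to the sub-Hilbert-algebra $C_c(\RR\setminus\{0\})\otimes\operatorname{span}\{\chi_{[r,s]}:r,s\le t\}$ finishes as in your compact argument. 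No band-truncation of the process is ever needed.
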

\begin{proof}
	We first show that for any $t>0$ and a continous function $ f\in C_{c}(\RR/\{0\}) $ (i.e. the support of $f$ is compact and away from $0$), we have $ Y(f(x)\chi_{[0,t]}(y))\in \Gamma( \frakA_\rho^+\otimes \frakA_{\mathcal{L}|[0,t]}) $. For any $ \varepsilon>0 $, choose a $ M_{\varepsilon}>2 $ such that $ t\rho(\RR/[-M_{\varepsilon},M_{\varepsilon}])<\varepsilon $, and let $ p_{\varepsilon}(x) $ be a polynomial in $x$ such that $ |p_{\varepsilon}(x) - f(x)/x^2|<\varepsilon/M_{\varepsilon}^2<\varepsilon $ for all $ -M\leq x\leq M $. We now want to show that $ \tilde{Y}(x^2p_{1/n}(x)\chi_{[0,t]}(y))) $ converges to $ Y(f(x)\chi_{[0,t]}(y)) $ in probability as $n\to \infty$. $ \tilde{Y}(x^2p_{1/n}(x)\chi_{[0,t]}(y))) - Y(f(x)\chi_{[0,t]}(y))  = t\int_{[-1,1]}x^2[p_{1/n}(x)-f(x)/x^2]d\rho(x) + X(x^2[p_{1/n}(x)-f(x)/x^2]\chi_{[-1,1]}(x)\chi_{[0,t]}(y))) + \tilde{Y}(x^2[p_{1/n}-f(x)/x^2](x)\chi_{\RR/[-1,1]}(x)\chi_{[0,t]}(y))):=I_1+I_2+I_3 $. Since $ |p_{1/n}(x) - f(x)/x^2|<1/(nM_{1/n}^2)<1/n $, $|I_1|\leq  (t\int_{[-1,1]}x^2d\rho(x) )/n$ and $ \|I_2\|\leq 1/n + 2\sqrt{t( \int_{[-1,1]}x^4 d\rho(x) )}/n $. To estimate $ I_3 $, let $ q_{n} $ be the orthogonal complement of the support of $ Y( \chi_{\RR/[-M_{1/n},M_{1/n}]}(x)\chi_{[0,t]}(y) ) $, then $ \varphi_{\Omega}(1-q_n) = t\rho( \RR/[-M_{1/n},M_{1/n}] )<1/n $ and $  I_3q_{n} = Y(x^2[p_{1/n}-f(x)/x^2](x)\chi_{[-M_{1/n},M_{1/n}]/[-1,1]}(x)\chi_{[0,t]}(y)))q_n $. Therefore $ \|I_3q_n\|\leq t\rho(\RR/[-1,1])/n+ 1/n + 2\sqrt{ t\rho(\RR/[-1,1]) }/n $. So, overall, for each $ n $, we have $$ \| [\tilde{Y}(x^2p_{1/n}(x)\chi_{[0,t]}(y))) - Y(f(x)\chi_{[0,t]}(y)) ]q_n \|\leq C_{\rho,t}/n$$ where $ C_{\rho,t} $ is a constant only depending on $ \rho $ and $t$ and therefore $ \tilde{Y}(x^2p_{1/n}(x)\chi_{[0,t]}(y))) $ converges to $ Y(f(x)\chi_{[0,t]}(y)) $ in probability.
	
	Let $ M_t $ be the von Neumann algebra generated by $ (Z_s)_{s\leq t} $, then by the previous paragraph $ Y(C_c(\RR/\{0\})\otimes \chi_{[0,t]}(y)) \subseteq M_t$. Let $ A\subseteq \frakA_{\mathcal{L}} $ be the Hilbert subalgebra generated by the projections $ (\chi_{[s_1,s_2](y)})_{s_1,s_2\leq t} $, then we have $ Y(\frakA_{\rho}\otimes A)\subseteq M_t $. By the Proposition \ref{density results}, we have $ \Gamma(  C_c(\RR/\{0\})\otimes A )=\Gamma( [ C_c(\RR/\{0\})\otimes A]'' )=\Gamma(\frakA_{\rho}\otimes\frakA_{\mathcal{L}|[0,t]}) \subseteq M_t $. Finally, since for each $ t>0 $, $ bX(\xi\otimes\chi_{[0,t]}(y) )  = Z_t -(at + X(x\cdot\chi_{[-1,1]}(x) \chi_{[0,t]}(y)) + \tilde{Y}( x\cdot\chi_{\RR/[-1,1]}(x) \chi_{[0,t]}(y) )) $, if $b\neq 0$, then $ \Gamma(\CC\xi \otimes \frakA_{\mathcal{L}|[0,t]}) $ is also contained in $ M_t $ and hence $ M_t = \Gamma(\frakA_\rho \otimes \frakA_{\mathcal{L}|[0,t]})\ast \Gamma( \CC\xi \otimes \frakA_{\mathcal{L}|[0,t]} ) =\Gamma(\frakA^+_{\rho} \otimes \frakA_{\mathcal{L}}) $. If instead $b=0$, then $ Z_t $ is already affiliated with $ \Gamma(\frakA_\rho \otimes \frakA_{\mathcal{L}|[0,t]})\subseteq M_t $ and thus $ M_t = \Gamma(\frakA_\rho \otimes \frakA_{\mathcal{L}|[0,t]}) $.
\end{proof}
\begin{cor}\label{final}
	Let $ Z_t $ be a free L\'{e}vy process with L\'{e}vy triplet $ (a,b,\rho) $ in some $W^*$-probability space, then for any $t>0$, the filtration von Neumann algebra $ M_{t} = W^*((Z_s)_{s\leq t}) $ is an interpolated free group factor (with a possible additional atom)
	$$ M_{t} \simeq \begin{cases}
		L(\FFF_{\infty}), \quad \mbox{if }b\neq 0 \mbox{ or } \rho(\RR)=\infty,\\
		L(\FFF_{ 2t\rho(\RR) }), \quad \mbox{if }b= 0 \mbox{ and } 1\leq t\rho(\RR)<\infty\\
		\underset{t\rho(\RR)}{L(\FFF_{ 2})}\oplus \underset{1-t\rho(\RR)}{\CC}, \quad \mbox{if }b= 0 \mbox{ and } t\rho(\RR)<1.
	\end{cases} $$
\end{cor}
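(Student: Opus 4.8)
The starting point is the structure theorem just proved: $M_t\simeq\Gamma(\frakA_\rho^+\otimes\frakA_{\mathcal{L}|[0,t]})$ when $b\neq 0$, and $M_t\simeq\Gamma(\frakA_\rho\otimes\frakA_{\mathcal{L}|[0,t]})$ when $b=0$. Since $\frakA_\rho^+=\frakA_\rho\oplus\CC\xi$, the tensor product distributes over the direct sum to give $\frakA_\rho^+\otimes\frakA_{\mathcal{L}|[0,t]}=\frakA_{\rho\times\mathcal{L}|[0,t]}\oplus(\CC\xi\otimes\frakA_{\mathcal{L}|[0,t]})$, so by Corollary \ref{direct sum} the $b\neq 0$ case becomes $\Gamma(\frakA_{\rho\times\mathcal{L}|[0,t]})\ast\Gamma(\CC\xi\otimes\frakA_{\mathcal{L}|[0,t]})$. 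The factor $\Gamma(\CC\xi\otimes\frakA_{\mathcal{L}|[0,t]})$ has trivial multiplication, so by Corollary \ref{multi Levy-Khin} it is the free Araki--Woods algebra attached to the completion of $\CC\xi\otimes\frakA_{\mathcal{L}|[0,t]}$, which carries trivial modular operator; hence it is Voiculescu's free Gaussian algebra over the infinite-dimensional Hilbert space $L^2([0,t])$, namely $L(\FFF_\infty)$ \cite{VDN92}. Everything therefore reduces to identifying $\Gamma(\frakA_{\rho\times\mathcal{L}|[0,t]})=\Gamma(L^\infty(\RR\times[0,t],\rho\times\mathcal{L}))$.

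Because we have tensored with the diffuse Lebesgue measure on $[0,t]$, the measure space $(\RR\times[0,t],\rho\times\mathcal{L})$ is always diffuse and $\sigma$-finite, with total mass $t\rho(\RR)\in(0,\infty]$; it is therefore measure-isomorphic to $[0,t\rho(\RR))$ with Lebesgue measure (read as $[0,\infty)$ when $t\rho(\RR)=\infty$), and this isomorphism carries $\frakA_{\rho\times\mathcal{L}|[0,t]}$ onto the commutative Hilbert algebra of $[0,t\rho(\RR))$. If $t\rho(\RR)<\infty$, Corollary \ref{filtration of free poisson} evaluates $\Gamma(\frakA_{\rho\times\mathcal{L}|[0,t]})$ directly: it is $L(\FFF_{2t\rho(\RR)})$ when $t\rho(\RR)\geq 1$, and $\underset{t\rho(\RR)}{L(\FFF_2)}\oplus\underset{1-t\rho(\RR)}{\CC}$ when $t\rho(\RR)<1$. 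If $t\rho(\RR)=\infty$, I would partition $[0,\infty)$ into unit intervals, use Proposition \ref{density results} to realize $\Gamma(L^\infty[0,\infty))$ as the trace-preserving inductive limit of the algebras $\Gamma(L^\infty[0,n])\simeq L(\FFF_{2n})$, and conclude $\Gamma(\frakA_{\rho\times\mathcal{L}|[0,t]})\simeq L(\FFF_\infty)$.

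It remains to assemble the cases and run the free-product arithmetic. For $b=0$ the answer is exactly the previous paragraph: $L(\FFF_\infty)$ if $\rho(\RR)=\infty$, $L(\FFF_{2t\rho(\RR)})$ if $1\leq t\rho(\RR)<\infty$, and $\underset{t\rho(\RR)}{L(\FFF_2)}\oplus\underset{1-t\rho(\RR)}{\CC}$ if $t\rho(\RR)<1$, matching the claim. For $b\neq 0$ we have $M_t\simeq\Gamma(\frakA_{\rho\times\mathcal{L}|[0,t]})\ast L(\FFF_\infty)$, and the free product of any of the separably acting algebras listed above with $L(\FFF_\infty)$ is again $L(\FFF_\infty)$: this follows from Dykema's formulas \cite{Dyk93}, namely $L(\FFF_r)\ast L(\FFF_s)=L(\FFF_{r+s})$, the fact that an atomic direct summand contributes only a finite amount to the free dimension, and the fact that adding $\infty$ absorbs any finite contribution while the free product remains a $\text{II}_1$ factor. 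Hence $M_t\simeq L(\FFF_\infty)$ whenever $b\neq 0$, again matching the claim. The main obstacle I anticipate is the careful handling of the infinite-mass case: one must justify the inductive-limit identification $\Gamma(L^\infty[0,\infty))\simeq L(\FFF_\infty)$ and the stability of $L(\FFF_\infty)$ under free products with the possibly atomic algebra $\underset{\beta}{L(\FFF_2)}\oplus\underset{1-\beta}{\CC}$, tracking Dykema's free-dimension bookkeeping throughout; the remaining steps are direct applications of results already established above.
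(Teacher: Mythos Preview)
Your proposal is correct and follows essentially the same route as the paper: reduce to $\Gamma(\frakA_{\rho\times\mathcal{L}|[0,t]})$ via the previous theorem and Corollary~\ref{direct sum}, identify the Gaussian free factor as $L(\FFF_\infty)$, recognize $\rho\times\mathcal{L}|_{[0,t]}$ as a diffuse measure of total mass $t\rho(\RR)$, and invoke Corollary~\ref{filtration of free poisson}. You supply more detail than the paper does on two points the paper leaves implicit---the inductive-limit identification of $\Gamma(L^\infty[0,\infty))$ with $L(\FFF_\infty)$ and the Dykema free-product bookkeeping showing $L(\FFF_\infty)$ absorbs the other factor---but these are routine and your treatment of them is fine.
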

\begin{proof}
	If $b=0$, then $ M_t \simeq \Gamma(\frakA_{\rho}\otimes \frakA_{\mathcal{L}|[0,t]}) = \Gamma(\frakA_{\rho\times \mathcal{L}|[0,t]}) $. Since $ \rho \times \mathcal{L}|_{[0,t]} $ is a continuous measure with total variation $ t\rho(\RR) $, we have $  M_t\simeq \Gamma(L^\infty([0,t\rho(\RR)])) $, and the statement follows from Corollary \ref{filtration of free poisson}. If $ b\neq 0 $, then $ M_t \simeq \Gamma(\frakA_{\rho}\otimes \frakA_{\mathcal{L}|[0,t]})\ast \Gamma(\CC\xi \otimes \frakA_{\mathcal{L}|[0,t]}) $, but since $ \Gamma(\CC\xi \otimes \frakA_{\mathcal{L}|[0,t]})$ is simply the free Gaussian algebra $\Gamma_{Gau}(L^2([0,t]))\simeq L(\FFF_{\infty})$, we obtain $ M_t\simeq  L(\FFF_{\infty})$.
\end{proof}

\section*{Acknowledgments}
I thank Michael Anshelevich for suggesting this topic to me, and for many valuable discussions. I also thank Brent Nelson for his helpful comments.

\bibliographystyle{alpha}
\bibliography{test3}
\end{document}